\documentclass[11pt]{amsart}

\usepackage{amsmath,amssymb,mathtools,mathrsfs}
\usepackage[shortlabels]{enumitem}
\usepackage{hyperref}
\usepackage[expansion=false]{microtype}
\usepackage{tikz}
\usepackage{tikz-cd}
\usepackage{booktabs,tabularx,array}
\usetikzlibrary{arrows.meta,calc,positioning}
\usepackage{placeins}

\usepackage{xcolor}
\hypersetup{
	colorlinks,
	linkcolor={red!50!black},
	citecolor={blue!50!black},
	urlcolor={blue!80!black}
}

\usepackage[margin=1.1in]{geometry}

\newtheorem{theorem}{Theorem}[section]
\newtheorem{proposition}[theorem]{Proposition}
\newtheorem{lemma}[theorem]{Lemma}
\newtheorem{corollary}[theorem]{Corollary}
\newtheorem{conjecture}[theorem]{Conjecture}

\theoremstyle{definition}
\newtheorem{definition}[theorem]{Definition}

\theoremstyle{remark}
\newtheorem{remark}[theorem]{Remark}

\numberwithin{equation}{section}
\allowdisplaybreaks
\newcommand{\op}[1]{\operatorname{#1}}
\newcommand{\g}{{\sf g}}

\newcommand{\ZZ}{\mathbb Z}
\newcommand{\NN}{\mathbb N}
\newcommand{\CC}{\mathbb C}
\newcommand{\Sch}{\mathfrak S}
\newcommand{\cM}{\mathcal M}
\newcommand{\cO}{\mathcal O}
\newcommand{\cD}{\mathcal D}
\newcommand{\cT}{\mathcal T}
\newcommand{\cQ}{\mathcal Q}
\newcommand{\cS}{\mathcal S}
\newcommand{\Gen}{\mathsf G}
\newcommand{\Can}{\mathsf T}
\newcommand{\MV}{\mathsf V}
\newcommand{\Code}{\mathsf C}
\newcommand{\Des}{\operatorname{Des}}
\newcommand{\one}{\mathbf 1}
\newcommand{\qbinom}[2]{\genfrac{[}{]}{0pt}{}{#1}{#2}}
\newcommand{\eps}{\varepsilon}
\newcommand{\Span}{\operatorname{span}}
\newcommand{\Av}{\operatorname{Av}}

\newcommand{\Exc}{\operatorname{Exc}}
\DeclareMathOperator{\Gr}{Gr}
\DeclareMathOperator{\Quot}{Quot}
\DeclareMathOperator{\Hom}{Hom}
\DeclareMathOperator{\End}{End}
\DeclareMathOperator{\Ext}{Ext}
\DeclareMathOperator{\coker}{coker}
\DeclareMathOperator{\rank}{rank}
\DeclareMathOperator{\inv}{inv}

\title[Universal Schubert polynomials and geometric bases]
{Cluster Geometry of Universal Schubert Polynomials I:\\
Geometric Bases and Schubert Transitions}
\author{Jiarui Fei}
\address{School of Mathematical Sciences, Shanghai Jiao Tong University}
\email{jiarui@sjtu.edu.cn}
\thanks{The author was supported in part by the National Natural Science Foundation of China (Nos.~12131015 and 12571038).}
\thanks{An earlier version of this paper was written in 2019. Main results were announced in the SJTU-workshop on ``Cluster Algebras and Enumerative Geometry" in August 2024. 
}
\date{}
\dedicatory{Dedicated to Professor William Fulton}
\subjclass[2020]{Primary 05E14, 13F60; Secondary 16G20, 20G42}
\keywords{universal Schubert polynomial, cluster algebra, $Q$-system, basis transition, parabolic Kazhdan--Lusztig polynomial, Bruhat order, Schubert polytope}

\begin{document}

\begin{abstract}
We study Fulton's universal Schubert polynomials $\Sch_w(c)$ as regular functions on the upper unitriangular group $U_N$. 
The standard triangular cluster structure associates a Schubert $\g$-vector to every permutation.
Their convex hull is unimodularly equivalent to $\Delta_1\times\cdots\times\Delta_{N-1}$, 
and their root-degree fibers are parabolic Bruhat intervals realized by the strata of staircase quiver Grassmannians. 
The geometric (i.e., generic, canonical, and Mirkovi\'c--Vilonen) elements 
indexed by these vectors form integral bases of Fulton's standard-elementary module. 
We prove that $\Sch_w(c)$ is homogeneous under diagonal conjugation 
if and only if it is the corresponding canonical element, 
and that homogeneity of $\Sch_w(c)$ implies $\Lambda_Q$-rigidity of $Z_{\g_w}$. 
We also classify simultaneously the unit columns of the
geometric-to-Schubert transitions, determine support components of the
PBW-to-geometric and code-to-Schubert transitions, and exhibit a permutation
$w\in S_{10}$ for which the three geometric basis elements are distinct.
\end{abstract}

\maketitle
\enlargethispage{3pt}
\tableofcontents

\section*{Introduction}

Let $U_N$ be the upper unitriangular subgroup of $\mathrm{SL}_N$. Fulton
introduced the universal Schubert polynomials
$\Sch_w(c)$ for $w\in S_N$,
in triangular Chern-class variables. 
We study this family as regular functions on $U_N$ and compare it with three \emph{geometric bases} 
selected by the standard triangular cluster structure.

Let
\[ \cM_N=\bigoplus_{w\in S_N}\ZZ\Sch_w(c) \]
be Fulton's \emph{standard-elementary module}. Its standard-elementary, or
\emph{code}, basis is
\[ \Code_w=\prod_{j=2}^{N}c_{e_j(w)}(j-1), \qquad e_j(w)=\#\{i<j:w_i>w_j\}. \]
For each $w$, let $\g_w$ be the $\g$-vector of $\Code_w$ in the standard
triangular seed, and let
\[
 \mathsf B_w^{\mathrm{gen}}=\Gen_w,
 \qquad
 \mathsf B_w^{\mathrm{can}}=\Can_w,
 \qquad
 \mathsf B_w^{\mathrm{MV}}=\MV_w
\]
be the generic, canonical, and Mirkovi\'c--Vilonen elements with the
corresponding tropical point or crystal element. Here and below, the canonical basis
means the dual canonical basis of $\CC[U_N]$.
The cluster structure and the three ambient bases are recalled at their
first use. The same convention applies to the PBW-to-geometric transitions,
Sottile's Pieri rule, the framed-quiver correspondence, and the parabolic
intersection-cohomology results. The results proved here concern their
restriction and interaction on the finite families indexed by the Schubert
$\g$-vectors.

For $\bullet\in\{\mathrm{gen},\mathrm{can},\mathrm{MV}\}$, write
\[ \Code=D_N^\bullet\mathsf B^\bullet, \qquad \mathsf B^\bullet=A_N^\bullet\Sch, \qquad \Code=\Pi_N\Sch. \]
Thus $\Pi_N=D_N^\bullet A_N^\bullet$.
The following diagram records the two successive changes of basis:
\[
\begin{tikzcd}[column sep=huge,row sep=large]
 \Code
  \arrow[r,"D_N^\bullet"]
  \arrow[dr,"\Pi_N"']
 &\mathsf B^\bullet
  \arrow[d,"A_N^\bullet"]\\
 &\Sch.
\end{tikzcd}
\]
The factorization separates the PBW-to-geometric transition from the
geometric-to-Schubert transition.

\paragraph{\bf Schubert vertices and root-degree geometry.}
The Schubert $\g$-vectors are exactly the lattice points of a polytope
unimodularly equivalent to
\[ \Delta_1\times\Delta_2\times\cdots\times\Delta_{N-1}. \]
The inverse tropical Chamber Ansatz identifies the same permutations with
right-regular equioriented multisegments. Each root-degree fiber is a
parabolic Bruhat interval and indexes the strata of a smooth staircase quiver Grassmannian. The generic, canonical, and MV families restrict to integral
bases of $\cM_N$; see Theorems~\ref{thm:Schubert-polytope} and
\ref{thm:right-regular-chamber-ansatz}, and
Propositions~\ref{prop:Schubert-geometric-bases} and
\ref{prop:classical-right-regular-transitions}.

\paragraph{\bf Basis coincidence and factorization.}
The $321$-avoiding universal Schubert polynomials are strict minors and
extended cluster monomials, whereas the $312$-avoiding permutations are exactly
those for which the code element coincides with the three geometric basis
elements. More generally,
\[
 \Sch_w(c)\text{ is homogeneous under diagonal conjugation}
 \quad\Longleftrightarrow\quad
 \Sch_w(c)=\Can_w.
\]
In addition, every $w\in\mathcal H_N$ satisfies the independent rigidity statement
\[
 w\in\mathcal H_N
 \quad\Longrightarrow\quad
 Z_{\g_w}\text{ is $\Lambda_Q$-rigid}.
\]
Thus, if the rigid right-regular components indexed by $\mathcal H_N$ are
reachable from $\Sigma_N^U$ in the sense of
Definition~\ref{def:reachable}, then $\mathcal H_N=\mathcal K_N$;
equivalently, homogeneity
characterizes the universal Schubert polynomials that are extended cluster monomials. The right-root degrees of the three geometric elements are
the descent indicators; consequently, the right-root supports of the
nonconstant factors of $\mathsf B_w^\bullet$ partition $\Des(w)$. See
Theorems~\ref{thm:single-strict-minor} and
\ref{thm:homogeneous-canonical}, Corollaries~\ref{cor:code-transition-extremals}
and~\ref{cor:conditional-H-equals-K}, and
Propositions~\ref{prop:factor-descent} and~\ref{prop:homogeneous-rigidity}.

\paragraph{\bf Unit columns of the transition matrices.}
For every $\bullet\in\{\mathrm{gen},\mathrm{can},\mathrm{MV}\}$,
\[
 A_N^\bullet(:,w)=e_w
 \quad\Longleftrightarrow\quad
 w\in\Av_N(2341,3241,3412).
\]
Relative to the code basis, the unit-column sets for the Schubert and all
three geometric bases equal $\Av_N(231)$; see
Theorem~\ref{thm:geometric-unit-columns} and
Corollary~\ref{cor:code-transition-extremals}.

\paragraph{\bf Support components and basis separation.}
The first off-diagonal coefficients of the generic PBW-to-geometric
transition are completely determined:
\[
 z\lessdot_{\rm Z}w
 \quad\Longrightarrow\quad
 D_N^{\rm gen}(w,z)=1.
\]
For the MV transition we prove the stronger coefficientwise monotonicity
\[
 z\preceq_{\rm Z}w
 \quad\Longrightarrow\quad
 D_N^{\rm MV}(w,u)\geq D_N^{\rm MV}(z,u)\qquad(u\in S_N),
\]
and hence full support on the Zelevinsky order. Consequently, the weak support
components of all three PBW-to-geometric transitions are exactly the
root-degree fibers, and are counted by the Catalan number $C_N$.

For the code-to-Schubert transition, after $w_0$-reindexing the weak support
components are exactly the components generated by adjacent transfers of
Lehmer codes. The reverse transfers form a terminating and confluent
rewriting system; its irreducible codes are the canonical normal forms of the
components, and their number is $2^N-N$.
Every component contains a distinguished source and a distinguished sink respectively in
\[ \Av_N(132,2341,3124) \quad\text{ and } \quad \Av_N(213,2341,1423). \]
See Propositions~\ref{prop:generic-cover} and
\ref{prop:MV-full-support},
Theorem~\ref{thm:classical-transition-root-degree-components},
Propositions~\ref{prop:Pi-transfer-components} and
\ref{prop:Pi-transfer-normal-forms}, and
Theorem~\ref{thm:Pieri-components}. We also give common support bounds for
the three matrices $A_N^\bullet$ and exhibit a permutation $w\in S_{10}$ for which the
three geometric basis elements are distinct.

Part~I develops the indexing geometry and the geometric bases. Part~II
studies code and geometric basis coincidences, homogeneity, the
cluster-monomial problem, and geometric unit columns.
Part~III studies the supports of the three transition matrices. Appendix~A
contains the inverse-Pieri, parabolic-cancellation, and rigidity arguments
used in Part~II; Appendix~B contains the permutation-theoretic proofs for
the geometric unit-column theorem; Appendix~C records the rank-ten calculation; and
Appendix~D gives the microlocal proof of semicanonical multiplicity one on
covers.

This is the first of a three-paper series. Paper~II is organized around
residue duality, involuted orthogonality, and Donaldson--Thomas
transformations; the double and semi-double families, their base-affine
realization are developed there.
Paper~III develops triple Schubert calculus from
the full hive cluster model.
Main results in Paper I and II were announced in the SJTU-workshop on ``Cluster Algebras and Enumerative Geometry" in August 2024. 

\part{Schubert Vertices and Geometric Bases}

\section{Universal Schubert polynomials}
\label{sec:universal}

\subsection{Fulton's coordinates}

Let $U_N$ be the affine group scheme of upper unitriangular $N\times N$
matrices. Its coordinate ring is
\[ \ZZ[U_N]=\ZZ[z_{p,q}:1\leq p<q\leq N]. \]
Let $Z(c)$ be the tautological matrix, written in Fulton's coordinates by
\[ c_r(k)=z_{k+1-r,k+1} \qquad(1\leq r\leq k\leq N-1). \]
Thus
\[ Z(c)_{p,q}=c_{q-p}(q-1)\qquad(p\leq q), \]
with $c_0(k)=1$ and $c_r(k)=0$ for $r<0$ or $r>k$. Explicitly,
\[
 Z(c)=
 \begin{pmatrix}
  1&c_1(1)&c_2(2)&c_3(3)&\cdots&c_{N-1}(N-1)\\
  0&1&c_1(2)&c_2(3)&\cdots&c_{N-2}(N-1)\\
  0&0&1&c_1(3)&\cdots&c_{N-3}(N-1)\\
  \vdots&\vdots&\ddots&\ddots&\ddots&\vdots\\
  0&0&\cdots&0&1&c_1(N-1)\\
  0&0&\cdots&0&0&1
 \end{pmatrix}.
\]
Fulton introduced the same variables as universal Chern classes in his
degeneracy-locus formulas.

Every classical Schubert polynomial has a unique standard-elementary
expansion
\[ \Sch_w(x)= \sum a_{i_1,\ldots,i_{N-1}} e_{i_1}(x_1)\cdots e_{i_{N-1}}(x_1,\ldots,x_{N-1}). \]
Fulton defines the {\em universal Schubert polynomial} by replacing $e_{r}(x_1,\dots,x_k)$ by $c_{r}(k)$.
\[ \Sch_w(c)= \sum a_{i_1,\ldots,i_{N-1}} c_{i_1}(1)\cdots c_{i_{N-1}}(N-1). \]
For an upper unitriangular matrix $A$, we write $\Sch_w[A]$ for the
specialization $c_{q-p}(q-1)\mapsto A_{p,q}$; in particular,
$\Sch_w[Z(c)]=\Sch_w(c)$.

\subsection{Divided differences}
\label{subsec:divided-differences}

Let $x=(x_1,\ldots,x_N)$, and let $s_i$ interchange $x_i$ and $x_{i+1}$.
The divided difference operator is
\[ \partial_i f=\frac{f-s_if}{x_i-x_{i+1}} \qquad(1\leq i<N). \]
These operators satisfy the nil--Coxeter relations
\[ \partial_i^2=0,\qquad \partial_i\partial_j=\partial_j\partial_i\quad(|i-j|>1), \qquad
  \partial_i\partial_{i+1}\partial_i =\partial_{i+1}\partial_i\partial_{i+1}. \]
Thus $\partial_w$ is well defined from any reduced expression for $w$.
On classical Schubert polynomials,
\begin{equation*}
\partial_i\Sch_w(x)=
 \begin{cases}
  \Sch_{ws_i}(x),&\ell(ws_i)=\ell(w)-1,\\
  0,&\ell(ws_i)=\ell(w)+1.
 \end{cases}
\end{equation*}

\subsection{The standard-elementary module}

For $\alpha=(\alpha_1,\ldots,\alpha_{N-1})$ with
$0\leq\alpha_k\leq k$, put
\begin{equation}\label{eq:standard-elementary-monomial}
 \mathcal E_\alpha(c)=\prod_{k=1}^{N-1}c_{\alpha_k}(k),
 \qquad
 \cM_N=\bigoplus_{0\leq\alpha_k\leq k}
 \ZZ\mathcal E_\alpha(c).
\end{equation}
Fulton's specialization
\begin{equation}\label{eq:Fulton-classical-specialization}
 c_r(k)\longmapsto e_r(x_1,\ldots,x_k)
\end{equation}
is injective on $\cM_N$~\cite[Lemma~2.1]{Fulton}. The universal
Schubert polynomials form a $\ZZ$-basis of this module.

Fulton defines compatible additive endomorphisms of the
standard-elementary module, denoted by the same symbol,
\[ \partial_i:\cM_N\longrightarrow\cM_N \qquad(1\leq i<N). \]
They are defined directly in the $c$-coordinates. Fix all factors of a
standard-elementary monomial except those at levels $i-1$ and $i$, and
write
\[ [a,b]_i= c_a(i-1)c_b(i) \prod_{k\ne i-1,i}c_{\alpha_k}(k). \]
We use $c_0(0)=1$ and $c_r(0)=0$ for $r\ne0$; as usual, every term with
an index outside its allowed range is zero. Fulton's formula is
\begin{equation}\label{eq:Fulton-divided-difference}
\partial_i[a,b]_i=
\begin{cases}
 \displaystyle
 \sum_{t\geq0}[a+t,b-1-t]_i
 -\sum_{t\geq1}[b-1-t,a+t]_i,
 &a\geq b-1,\\[3mm]
 \displaystyle
 \sum_{t\geq0}[b-1+t,a-t]_i
 -\sum_{t\geq1}[a-t,b-1+t]_i,
 &a\leq b-2.
\end{cases}
\end{equation}
The sums are finite under the preceding convention. Under the
specialization \eqref{eq:Fulton-classical-specialization}, the image of
$\partial_i f$ is the usual divided difference in the $x$-variables applied to the image of $f$.
In particular these operators satisfy
the nil--Coxeter relations and
\[
 \partial_i\Sch_w(c)=
 \begin{cases}
  \Sch_{ws_i}(c),&\ell(ws_i)=\ell(w)-1,\\
  0,&\ell(ws_i)=\ell(w)+1.
 \end{cases}
\]
This is Fulton's direct recursive definition of the single universal
Schubert polynomials~\cite[formula~(6)]{Fulton}.

For $w\in S_N$, define the {\em inversion-end code}
\[ e_j(w)=\#\{i<j:w_i>w_j\},\qquad 1\leq j\leq N, \]
and put
\[ \Code_w=\prod_{j=2}^{N}c_{e_j(w)}(j-1). \]
The map $w\mapsto(e_2(w),\ldots,e_N(w))$ is a bijection from $S_N$ to
the sequences $0\leq e_j\leq j-1$. Hence the $\Code_w$ are the
standard-elementary basis reindexed by permutations.

The following proposition is the iterated form of Sottile's elementary
Pieri rule~\cite{Sottile}, lifted to Fulton's universal coordinates.

\begin{proposition}\label{prop:iterated-Pieri-transition}
For every $w\in S_N$, we have that
\begin{equation}\label{eq:Pi-definition}
 \Code_w=\sum_{u\in S_N}\Pi_N(w,u)\Sch_u(c),
 \qquad \Pi_N(w,u)\in\NN,\qquad \Pi_N(w,w)=1.
\end{equation}
The coefficient $\Pi_N(w,u)$ is the number of iterated
Sottile--Pieri chains prescribed by the inversion-end code of $w$.
\end{proposition}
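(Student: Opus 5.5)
Because Fulton's specialization \eqref{eq:Fulton-classical-specialization} is injective on $\cM_N$ and carries $\Sch_u(c)$ to $\Sch_u(x)$, it suffices to prove the expansion after specializing, for $\Code_w\mapsto\prod_{j=2}^N e_{e_j(w)}(x_1,\dots,x_{j-1})$. The images $\Sch_u(x)$, $u\in S_N$, span the image of $\cM_N$. Since they are linearly independent, the coefficients $\Pi_N(w,u)$ are uniquely determined and lie in $\ZZ$. All that remains is to show they are nonnegative, count chains, and equal $1$ on the diagonal.

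We build the product one factor at a time, in increasing order of $j$:
\[
\Code_w = \Bigl(\cdots\bigl(e_{e_2}(x_1)\bigr)\,e_{e_3}(x_1,x_2)\cdots\Bigr)\,e_{e_N}(x_1,\dots,x_{N-1}).
\]
At each step we apply Sottile's elementary Pieri rule: for $v\in S_\infty$,
\[
e_r(x_1,\dots,x_k)\,\Sch_v=\sum \Sch_{v'},
\]
where the sum runs over $v'=v\,t_{a_1b_1}\cdots t_{a_rb_r}$ with $a_i\le k<b_i$, the $b_i$ distinct, and each transposition raising length by exactly one (so $\ell(v')=\ell(v)+r$). Starting from $\Sch_{\mathrm{id}}=1$ and iterating, $\Code_w$ becomes a sum of $\Sch_u$ with coefficient equal to the number of iterated chains $\mathrm{id}=v_1\to v_2\to\cdots\to v_N=u$, where step $j$ is a Pieri $e_{e_j(w)}$-chain at level $k=j-1$. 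This gives $\Pi_N(w,u)\in\NN$ together with the combinatorial interpretation. Lifting back to the $c$-coordinates is immediate from injectivity.

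Two points need care. First, the intermediate permutations might leave $S_N$. Every $u$ that appears must lie in $S_N$, because the left-hand side lies in the image of $\cM_N$, which is spanned by the $\Sch_u(x)$ with $u\in S_N$; Schubert polynomials of $S_\infty$ are independent, so no other terms can survive, and nonnegativity rules out cancellation. Moreover, each step uses only $a\le k<b$ with $k\le N-1$, so a permutation of $S_N$ stays in $S_{N}$ as long as $b\le N$. I would justify $b\le N$ directly from the same span argument, or simply accept chains through $S_\infty$ and note that they are absent.

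Second, and this is the main obstacle, we need $\Pi_N(w,w)=1$. Here I would compare degrees and leading terms. Both sides have degree $\ell(w)=\sum_j e_j(w)$. The natural approach is triangularity: order permutations suitably, for example by the reverse-lexicographic order on the inversion-end code $(e_N,\dots,e_2)$, and exhibit a unique chain ending at $w$. The chain I would use is $v_j=$ the permutation whose inversion-end code agrees with $w$ in positions $\le j$ and is zero afterward. The step from $v_{j-1}$ to $v_j$ multiplies by $e_{e_j}(x_1,\dots,x_{j-1})$, and I expect it is realized by a cyclic shift that moves the value at position $j$ past $e_j$ larger entries. This is a Pieri chain, since it is a product of $e_j$ length-increasing transpositions $t_{a,j}$ with distinct $b$'s after relabeling.

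For uniqueness, the cleanest route is to note that the final entries at positions $>j$ cannot change after step $j$, because later steps use $b>k$ only at larger levels. It is then easier to run the induction backwards, from level $N-1$ down. The value in position $N$ is fixed only by the last step, which forces $e_N$ in the final step to be the prescribed cycle. I would verify the forcing claim by observing that position-$j$ entries are moved only at steps with $k\ge j$ and $k<b$ for that index. If this turns out to be delicate, a fallback is to restrict the count to the Lehmer-type lexicographic leading monomial $x^{\text{code}}$, using the known fact that $\Sch_u$ has leading monomial $x^{c(u)}$, together with the observation that the product of the $e$'s has the expected unique leading term.
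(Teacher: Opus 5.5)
Your skeleton is the paper's proof: apply Sottile's elementary Pieri rule to the factors in the order $j=2,\dots,N$, work after specialization, and lift by Fulton's injectivity. The difference is that the paper takes $\Pi_N(w,w)=1$ from Fulton's leading-term theorem rather than from chains.

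First, a correctable error: the rule you quote is the wrong one. For $e_r(x_1,\dots,x_k)$ the $a_i$ must be distinct, with a monotonicity condition on the $b_i$ that makes the sum multiplicity-free. Distinct $b_i$ is the rule for $h_r$. For example, from $\Sch_{\mathrm{id}}$ with $k=r=2$, your version admits $t_{23}t_{24}$ and produces $\Sch_{1423}=h_2(x_1,x_2)$. But $e_2(x_1,x_2)=x_1x_2=\Sch_{231}$ comes from $t_{23}t_{13}$. With the correct rule, your diagonal chain of transpositions $t_{a,j}$ (common $b=j$, distinct $a$) is legal as it stands, and no ``relabeling'' is needed.

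The genuine gap is the uniqueness half of $\Pi_N(w,w)=1$. The justifications you give are false. A step of level $k$ may swap any position $a\le k$ with any $b>k$, so positions $>j$ are not frozen after step $j$. As far as the local rule is concerned, position $N$ could move at every step.

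Your fallback also fails. In the order in which $\Sch_u$ has leading monomial $x^{L(u)}$ (lex with $x_N>\cdots>x_1$), the leading monomial of $\Code_w(x)$ is the product of the $x_{r_j(w)}\cdots x_{j-1}$, namely $x^{d(w)}$. This detects the $u$ with $L(u)=d(w)$, not $w$. For example, $\Code_{312}=x_1^2+x_1x_2=\Sch_{312}+\Sch_{231}$ has leading monomial $x^{L(231)}$.

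The missing idea is to apply your span-and-positivity argument, which you used only for the final product, to every partial product $\prod_{j\le m}e_{e_j(w)}(x_1,\dots,x_{j-1})$. Each is the specialization of an element of $\cM_m$, so every chain lies in $S_m$ after stage $m$. The induction then runs as follows:
\begin{itemize}
\item In the last step the chain starts in $S_{N-1}$ and stays Bruhat-below $w\in S_N$, so every transposition is some $t_{a,N}$.
\item The entry in position $N$ therefore strictly decreases from $N$ to $w_N=N-e_N(w)$ in exactly $e_N(w)$ steps, so it runs through $N-1,\dots,N-e_N(w)$.
\item This forces the last step, and forces the preceding permutation to be $\op{std}(w_1\cdots w_{N-1})$, whose inversion-end code is $(e_2(w),\dots,e_{N-1}(w))$.
\item Induction on $N$ then yields exactly one chain.
\end{itemize}
Alternatively, cite Fulton's leading-term theorem, as the paper does.
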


\begin{proof}
Apply Sottile's elementary Pieri rule~\cite{Sottile} to the factors of $\Code_w$ in the fixed order $j=2,3,\ldots,N$. The resulting identity may be checked after
\eqref{eq:Fulton-classical-specialization}; Fulton's injectivity then
lifts it uniquely to $\cM_N$. Fulton's leading-term theorem gives the
diagonal coefficient one.
\end{proof}

Let $w_0$ be the longest permutation, and for $\pi\in S_N$ let
\[ L(\pi)_i=\#\{j>i:\pi_i>\pi_j\} \]
be its {\em Lehmer code}.

\begin{lemma}[Cauchy transpose]
\label{prop:Cauchy-transpose-Pieri}
For $\pi,\sigma\in S_N$, we have
\[ \Pi_N(\pi w_0,\sigma w_0) =[x^{L(\pi)}]\Sch_\sigma(x). \]
Every monomial exponent of every $\Sch_\sigma(x)$ is the Lehmer
code of a unique permutation in $S_N$.
\end{lemma}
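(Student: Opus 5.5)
The plan is to pass everything through Fulton's injective specialization \eqref{eq:Fulton-classical-specialization} and compare two expansions of a Cauchy kernel. Since the specialization is injective on $\cM_N$ and both the $\Code_w$ and the $\Sch_u(c)$ are bases there, it suffices to prove the identity for the classical polynomials $\Code_w(x)=\prod_j e_{e_j(w)}(x_1,\dots,x_{j-1})$.

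\textbf{Step 1: rewrite the codes.} For $\pi\in S_N$ we have $(\pi w_0)_j=\pi_{N+1-j}$. Hence
\[ e_j(\pi w_0)=\#\{i<j:\pi_{N+1-i}>\pi_{N+1-j}\}=\#\{k>N+1-j:\pi_k<\pi_{N+1-j}\}=L(\pi)_{N+1-j}. \]
Putting $m=N+1-j$, this gives
\[ \Code_{\pi w_0}(x)=\prod_{m=1}^{N-1}e_{L(\pi)_m}(x_1,\dots,x_{N-m}). \]
Recall that $\pi\mapsto L(\pi)$ is a bijection from $S_N$ onto the sequences with $0\le L_m\le N-m$.

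\textbf{Step 2: generating function and comparison.} Form $F(x,y)=\sum_\pi \Code_{\pi w_0}(x)\,y^{L(\pi)}$. By Step 1 and the bijection, this factors as
\[ F(x,y)=\prod_{m}\sum_{r\le N-m}e_r(x_1,\dots,x_{N-m})y_m^r=\prod_{i+m\le N}(1+x_iy_m). \]
I will then invoke the Cauchy identity for Schubert polynomials in its dehomogenized form $\prod_{i+j\le N}(1+x_iy_j)=\sum_{\sigma}\Sch_\sigma(y)\,\Sch_{\sigma w_0}(x)$. This form is obtained from $\prod_{i+j\le N}(x_i+y_j)=\sum_w\Sch_w(x)\Sch_{ww_0}(y)$ by the standard substitutions. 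Next, substitute Proposition~\ref{prop:iterated-Pieri-transition} into $F$, reindexing $u=\sigma w_0$:
\[ F=\sum_{\pi,\sigma}\Pi_N(\pi w_0,\sigma w_0)\,\Sch_{\sigma w_0}(x)\,y^{L(\pi)}. \]
The $\Sch_{\sigma w_0}(x)$ are linearly independent, so comparing coefficients of $\Sch_{\sigma w_0}(x)\,y^{L(\pi)}$ with the Cauchy side yields $\Pi_N(\pi w_0,\sigma w_0)=[y^{L(\pi)}]\Sch_\sigma(y)$, which is the claim.

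\textbf{Step 3: the second assertion.} Every monomial $x^a$ of $\Sch_\sigma(x)$ with $\sigma\in S_N$ satisfies $a_i\le N-i$. This follows, for instance, from the Cauchy expansion above, whose left side only contains such exponents in $y$, or from the pipe-dream description. Such sequences are exactly the Lehmer codes of the elements of $S_N$, bijectively, which gives existence and uniqueness of $\pi$.

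\textbf{Main obstacle.} The delicate point is getting the correct variant of the Cauchy identity, with the roles of $\sigma$ versus $\sigma w_0$ and of $x$ versus $y$ exactly as required. One must track how the dehomogenization $x_i+y_j\rightsquigarrow 1+x_iy_j$ interacts with the staircase complement $a\mapsto \delta-a$, $\delta=(N-1,\dots,1)$. I would first verify the indexing in $S_3$ directly against Sottile's Pieri rule, and if necessary correct it using the symmetry $\Sch_{w_0ww_0}$ or an inversion $w\mapsto w^{-1}$.
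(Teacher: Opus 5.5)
Your strategy is the paper's route: reduce to classical polynomials by Fulton's injectivity, form $F(x,y)=\sum_\pi\Code_{\pi w_0}(x)\,y^{L(\pi)}$, and compare with a Cauchy identity. However, Steps~1 and~2 as written contain a genuine error.

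\textbf{Step 1 flips an inequality.} In the reindexing $k=N+1-i$ the condition must remain $\pi_k>\pi_{N+1-j}$. With $m=N+1-j$ one gets
\[
 e_j(\pi w_0)=\#\{k>m:\pi_k>\pi_m\}=(N-m)-L(\pi)_m ,
\]
not $L(\pi)_m$. Already for $N=2$ and $\pi=12$ one has $e_2(21)=1\neq0=L(12)_1$. The corrected formula is consistent with the paper's identity $\theta(v)=L(vw_0)$.

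\textbf{The kernel and the Cauchy identity are then wrong.} Your kernel $\prod(1+x_iy_m)$ does not equal $F$. The ``dehomogenized Cauchy identity'' you invoke, $\prod_{i+j\le N}(1+x_iy_j)=\sum_\sigma\Sch_\sigma(y)\Sch_{\sigma w_0}(x)$, is false. For $N=2$ it reads $1+x_1y_1=x_1+y_1$. In general its right side is bihomogeneous of total degree $\binom N2$, because $\ell(\sigma)+\ell(\sigma w_0)=\binom N2$, while its left side has constant term $1$. No standard substitution produces it. Inverting the $x_i$ in the true identity yields the staircase complements $x^{\delta}\Sch_u(x_1^{-1},\ldots,x_N^{-1})$ on the $x$-side, and these are not Schubert polynomials in general. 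Your final formula comes out right only because these two mistakes compensate each other. A chain through a false identity is not a proof.

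\textbf{The repair.} It is immediate and removes the obstacle you anticipated. With the corrected exponents, use $\sum_{a=0}^{n}e_{n-a}(x_1,\ldots,x_n)\,y^a=\prod_{i=1}^{n}(x_i+y)$. This gives
\[
 F(x,y)=\prod_{m=1}^{N-1}\prod_{i=1}^{N-m}(x_i+y_m)=\prod_{i+m\leq N}(x_i+y_m)=\sum_{\sigma\in S_N}\Sch_{\sigma w_0}(x)\,\Sch_\sigma(y),
\]
which is exactly the homogeneous Cauchy identity the paper expands, with $u=\sigma w_0$. Your comparison of the coefficients of $\Sch_{\sigma w_0}(x)\,y^{L(\pi)}$ then goes through verbatim, with no dehomogenization needed. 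Your Step~3 also works for this kernel, since $y_m$ occurs in only $N-m$ of its factors.
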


\begin{proof}
Expand the finite Schubert Cauchy identity
\[ \prod_{i+j\leq N}(x_i+y_j) =\sum_{u\in S_N}\Sch_u(x)\Sch_{uw_0}(y) \]
using the elementary products defining $\Pi_N$; this gives the stated
coefficient identity. See \cite{FominStanley} for the Cauchy formula.
The pipe-dream formula places every exponent in the staircase
$0\leq a_i\leq N-i$, and such vectors are precisely the Lehmer codes of
permutations in $S_N$; see \cite{BJS}.
\end{proof}

\section{The Cluster Structure}\label{sec:triangular-seed}

\subsection{Cluster conventions}\label{sec:cluster-conventions}
We follow Fomin and Zelevinsky \cite{FominZelevinskyI,FominZelevinskyIV} for seeds, mutation, cluster monomials,
coefficients, $F$-polynomials, and $\g$-vectors. We use ice seeds
$\Sigma=(\mathbf x,B)$, with mutable index set
$\mu$ and frozen index set $\op{fr}$; the rows of $B$ are indexed by
$\mu$, and its columns by $\mu\sqcup\op{fr}$. Frozen variables are
regular functions and are not inverted unless a localization is written
explicitly. Thus $\mathcal U(\Sigma)$ denotes the upper cluster algebra
with polynomial coefficients.

For $i\in\mu$, set
$\widehat y_i=\prod_{j\in\mu\sqcup\op{fr}}x_j^{b_{ij}}$.
An element is \emph{pointed at} $\g\in\mathbb Z^{\mu\sqcup\op{fr}}$ if its
Laurent expansion in the initial seed has the form
\[ \mathbf x^{\g}\left(1+ \sum_{0\ne n\in\mathbb N^{\mu}}a_n\widehat y^{\,n}\right). \]
Throughout the paper, $\g$-vectors include the frozen coordinates; their
projection to the mutable coordinates is denoted by $\g_\mu$. An
\emph{extended cluster monomial} is a cluster monomial multiplied by a
monomial in the frozen variables with nonnegative exponents.

\subsection{The standard triangular seed}

We call the following ice seed the \emph{standard triangular seed}. Put
\[ \mathcal I_N=\{(a,r)\in\ZZ_{>0}^2:a+r\leq N\}. \]
For $(a,r)\in\mathcal I_N$, define the flag minor
\begin{equation}\label{eq:unipotent-seed-variable}
 x_{a,r}=\Delta_{[1,a],[r+1,r+a]}(Z(c)).
\end{equation}
The vertices with $a+r=N$ are frozen. The arrows are
\[ (a,r)\longrightarrow(a+1,r-1),\qquad (a,r)\longrightarrow(a-1,r),\qquad (a,r)\longrightarrow(a,r+1), \]
whenever the indicated vertices belong to $\mathcal I_N$. We use the
polynomial coefficient convention: frozen variables are not inverted
unless we explicitly localize.

For $(a,r)\in\mathcal I_N$, put
\[ w_{a,r}=(a+1,\ldots,a+r,1,\ldots,a,a+r+1,\ldots,N). \]
This is the Grassmannian permutation of rectangular shape $(a^r)$. Under
the classical specialization, $\Sch_{w_{a,r}}$ becomes
$Q_{r,a}=s_{(a^r)}$.
The type-$A$ $Q$-system recurrence explains the triangular adjacency of the seed; 
see \cite{DiFrancescoKedem2010}.

\FloatBarrier
\begin{figure}[ht]
\centering
\begin{tikzpicture}[
 every node/.style={font=\scriptsize},
 seed/.style={draw,rounded corners=1pt,inner sep=1.6pt,
 minimum width=0.72cm,minimum height=0.40cm},
 frozen/.style={seed,double,double distance=0.7pt},
 qarrow/.style={-{Latex[length=1.55mm]},thin}
]
 \node[frozen] (x14) at (0,3.6) {$x_{1,4}$};

 \node[seed] (x13) at (-0.9,2.7) {$x_{1,3}$};
 \node[frozen] (x23) at (0.9,2.7) {$x_{2,3}$};

 \node[seed] (x12) at (-1.8,1.8) {$x_{1,2}$};
 \node[seed] (x22) at (0,1.8) {$x_{2,2}$};
 \node[frozen] (x32) at (1.8,1.8) {$x_{3,2}$};

 \node[seed] (x11) at (-2.7,0.9) {$x_{1,1}$};
 \node[seed] (x21) at (-0.9,0.9) {$x_{2,1}$};
 \node[seed] (x31) at (0.9,0.9) {$x_{3,1}$};
 \node[frozen] (x41) at (2.7,0.9) {$x_{4,1}$};

 \draw[qarrow] (x13)--(x14);
 \draw[qarrow] (x12)--(x13);
 \draw[qarrow] (x22)--(x23);
 \draw[qarrow] (x11)--(x12);
 \draw[qarrow] (x21)--(x22);
 \draw[qarrow] (x31)--(x32);

 \draw[qarrow] (x23)--(x13);
 \draw[qarrow] (x22)--(x12);
 \draw[qarrow] (x32)--(x22);
 \draw[qarrow] (x21)--(x11);
 \draw[qarrow] (x31)--(x21);
 \draw[qarrow] (x41)--(x31);

 \draw[qarrow] (x13)--(x22);
 \draw[qarrow] (x12)--(x21);
 \draw[qarrow] (x22)--(x31);
\end{tikzpicture}
\caption{The standard triangular seed on $U_5$. Double boxes are frozen,
and arrows between frozen vertices are omitted.}
\label{fig:unipotent-triangular-seed}
\end{figure}
\FloatBarrier

We use the following generalized-minor cluster structure on the maximal
unipotent subgroup; the second assertion identifies its initial minors
with Fulton's universal Schubert polynomials.

\begin{proposition}\label{thm:unipotent-seed}
Let $\Sigma_N^U$ be the ice seed above. The generalized-minor
realization induces an isomorphism
\[ \mathcal U(\Sigma_N^U)\xrightarrow{\ \sim\ }\CC[U_N]. \]
Moreover,
\begin{equation}\label{eq:initial-single-Schubert-label}
 x_{a,r}=\Sch_{w_{a,r}}(c).
\end{equation}
\end{proposition}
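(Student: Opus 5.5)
The first assertion I will not reprove from scratch. I will identify $\Sigma_N^U$ with a known initial seed for $\CC[U_N]$ and invoke the Berenstein--Fomin--Zelevinsky theorem (with the equality of cluster and upper cluster algebra in the unipotent case, as in Gei\ss--Leclerc--Schr\"oer). Concretely, I will take the reduced word $\mathbf i$ of $w_0$ built from consecutive blocks $(s_1)(s_2s_1)(s_3s_2s_1)\cdots$, or its reverse as needed. The BFZ chamber minors attached to the prefixes of this word are flag minors $\Delta_{[1,a],\,u[1,a]}$, where $u$ runs over prefixes. For this word the sets $u[1,a]$ are exactly the consecutive intervals $[r+1,r+a]$. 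The $\mathbf i$-trivial frozen minors correspond to $a+r=N$, i.e.\ the minors $\Delta_{[1,a],[N-a+1,N]}$. Restricted to $U_N$, the minors with $r=0$ are identically $1$ and drop out.

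Next I will compute the BFZ exchange matrix for this word using the standard rule for the quiver of a reduced word. I will then check that it equals the triangular quiver: the arrows $(a,r)\to(a+1,r-1)$, $(a,r)\to(a-1,r)$, $(a,r)\to(a,r+1)$, with arrows between frozen vertices ignored. This is a finite combinatorial matching. I expect it to be the main obstacle, mainly because of sign and orientation conventions (left versus right, rows versus columns, BFZ versus GLS). If direct matching is awkward, my fallback is to check that the triangular quiver is mutation-equivalent to the GLS/BFZ initial quiver for some reduced word. Alternatively, I can note that it is the familiar $Q$-system quiver for $U_N$ and verify the exchange relations directly via Desnanot--Jacobi for the interval minors. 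Either route gives that the seed is a seed of the known cluster structure, and hence the isomorphism $\mathcal U(\Sigma_N^U)\cong\CC[U_N]$ with polynomial frozen coefficients.

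For \eqref{eq:initial-single-Schubert-label} I will argue via Fulton's injectivity. Both sides lie in $\cM_N$. For the left side, expanding the minor $\Delta_{[1,a],[r+1,r+a]}(Z(c))$ gives a sum of products of entries $Z(c)_{p,q}=c_{q-p}(q-1)$, one from each column $q=r+1,\ldots,r+a$. Each term therefore uses one $c(k)$ from each level $k=r,\ldots,r+a-1$, so each term is a standard-elementary monomial. Under \eqref{eq:Fulton-classical-specialization} the minor becomes $\det\big(e_{q-p}(x_1,\ldots,x_{q-1})\big)$. This is a flagged dual Jacobi--Trudi determinant, which equals the flagged Schur function for the shape $(a^r)$ with flag $(r,r+1,\ldots)$ in the conjugate description. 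That flagged Schur function is the Schubert polynomial of the Grassmannian permutation $w_{a,r}$ (Wachs; Lascoux--Sch\"utzenberger). Equivalently, it is $s_{(a^r)}(x_1,\ldots,x_r)$, since the descent of $w_{a,r}$ is at $r$. I will verify the flag bookkeeping by checking $a=1$, which should give $e_r(x_1,\ldots,x_r)=c_r(r)$ and hence $\Delta_{\{1\},\{r+1\}}$, as required. Since $\Sch_{w_{a,r}}(c)$ specializes to the same polynomial, Fulton's injectivity on $\cM_N$ gives the equality in the $c$-variables.
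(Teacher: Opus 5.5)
Your proposal is correct. For the first assertion you follow the paper, which simply invokes the GLS generalized-minor cluster structure on $U_N$. Your explicit word $(1)(21)(321)\cdots$ spells out what the paper leaves implicit. The prefix $w_0^{(m)}s_ms_{m-1}\cdots s_a$ sends $[1,a]$ to $w_0^{(m)}\bigl([1,a-1]\cup\{m+1\}\bigr)=[m-a+2,m+1]$. This gives exactly the sets $[r+1,r+a]$ with $a+r=m+1$, and the last occurrences of each letter are the frozen vertices with $a+r=N$. The BFZ/GLS rule then yields the triangular arrows, at worst up to $B\mapsto-B$, which changes neither the exchange relations nor $\mathcal U$.

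Do not rely on the Desnanot--Jacobi fallback alone. Checking the one-step exchange relations gives at best $\mathcal U(\Sigma_N^U)\subseteq\CC[U_N]$ by a starfish-type argument, not equality. It is the identification with a known seed that proves the isomorphism.

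For \eqref{eq:initial-single-Schubert-label} your route genuinely differs. The paper quotes Fulton's Grassmannian determinant. You instead prove this case from membership in $\cM_N$, classical specialization, and Fulton's injectivity. This is the same mechanism the paper itself uses for strict minors in Theorem~\ref{thm:single-strict-minor}.

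The classical input is cleanest without flagged Schur functions. Since
$e_{r+j-i}(x_1,\ldots,x_{r+j-1})=\sum_{k<j}e_k(x_{r+1},\ldots,x_{r+j-1})\,e_{r+j-k-i}(x_1,\ldots,x_r)$,
the specialized matrix equals $\bigl(e_{r+j-i}(x_1,\ldots,x_r)\bigr)_{i,j}$ times a unitriangular matrix. By dual Jacobi--Trudi its determinant is therefore $s_{(a^r)}(x_1,\ldots,x_r)=(x_1\cdots x_r)^a=\Sch_{w_{a,r}}(x)$. This avoids checking any monotonicity hypotheses in Wachs' flagged formula.

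Your route is self-contained, while the paper's is a one-line citation.
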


\begin{proof}
The maximal unipotent subgroup carries the generalized-minor cluster structure. In the present indexing its initial functions are the minors
\eqref{eq:unipotent-seed-variable}. The equality with the upper cluster
algebra with polynomial coefficients follows from the type-$A$ unipotent
construction; see \cite{GLSinitial}.

The permutation $w_{a,r}$ is Grassmannian of rectangular shape $(a^r)$.
Fulton's Grassmannian determinant gives
\[ \Sch_{w_{a,r}}(c) =\Delta_{[1,a],[r+1,r+a]}(Z(c)), \]
which proves \eqref{eq:initial-single-Schubert-label}.
\end{proof}

\begin{definition}\label{def:reachable}
All reachability in this paper is relative to the standard triangular seed
$\Sigma_N^U$. A vector $\g\in\mathbb Z^{\mathcal I_N}$ is
\emph{reachable} if there is a seed obtained from $\Sigma_N^U$ by a finite
sequence of mutations and an extended cluster monomial in that seed whose
$\g$-vector, computed relative to $\Sigma_N^U$, is $\g$.
\end{definition}

\begin{remark}[Fulton's Hessenberg coordinates and the quantum specialization]
\label{rem:Hessenberg-quantum}
Fulton also presents the universal ring through Hessenberg variables
$g_i[j]$. The $c$- and $g$-coordinates generate the same polynomial ring,
and
\[ c_r(k)=c_r(k-1)+ \sum_{s=1}^{r}g_{k-s+1}[s-1]c_{r-s}(k-s); \]
see \cite[Section~4, especially (17)]{Fulton}. Equivalently, if
$\mathcal I_{k,r}$ is the set of finite families of pairwise disjoint
nonempty intervals in $[k]$ with total cardinality $r$, then
\[ c_r(k)= \sum_{\mathcal P\in\mathcal I_{k,r}} \prod_{[a,b]\in\mathcal P}g_a[b-a]. \]
Under
\[ g_i[0]\mapsto x_i, \qquad g_i[1]\mapsto q_i, \qquad g_i[j]\mapsto0\quad(j\geq2), \]
only singleton intervals and adjacent two-element intervals survive. Thus
the specialization of $c_r(k)$ is the monomer--dimer matching sum in which
$[i,i]$ has weight $x_i$ and $[i,i+1]$ has weight $q_i$; equivalently it
satisfies
\[ E_r(k)=E_r(k-1)+x_kE_{r-1}(k-1)+q_{k-1}E_{r-2}(k-2). \]
These are the quantum elementary functions of
Fomin--Gelfand--Postnikov~\cite[Section~11.3]{FominGelfandPostnikov1997}.
Fulton's nilpotent characteristic fiber therefore gives
\[ R_{N-1} \simeq \ZZ[c_r(k):1\leq r\leq k\leq N-1] \simeq \ZZ[U_N], \]
so Proposition~\ref{thm:unipotent-seed} is also a cluster realization of
Fulton's universal ring after extension of scalars to $\CC$.
\end{remark}

\section{Schubert vertices and the Schubert polytope}
\label{sec:schubert-gvectors}
Throughout this paper, the $\g$-vector
of a pointed element means its full initial exponent vector, including the
frozen coordinates. For a $\g$-vector $\g$, we write $\g_\mu$ for its
restriction to the mutable coordinates.

For $w\in S_N$, define
\begin{equation}\label{eq:Schubert-g-vector}
 \g_w(a,r)=
 \one_{\{e_{a+r}(w)=r\}}-
 \one_{\{e_{a+r+1}(w)=r\}},
\end{equation}
where an out-of-range indicator is zero. The inverse tropical Chamber
Ansatz for the code monomial gives
\[ \g_{\Sigma_N^U}(\Code_w)=\g_w. \]
We call $\g_w$ the \emph{Schubert vertex}. It is the cluster-theoretic vector associated with the permutation $w$ and
the code element $\Code_w$.
No pointedness assertion is made here for the universal Schubert
polynomial $\Sch_w(c)$; in general it is not homogeneous for diagonal
conjugation and need not be pointed at a full extended $\g$-vector.

For $2\leq j\leq N$, let $E_j\simeq\ZZ^{j-1}$ have basis
$e_{j,1},\ldots,e_{j,j-1}$, and put $e_{j,0}=0$. For
$\g\in\ZZ^{\mathcal I_N}$ and $1\leq r<j\leq N$, set
\[ \eta_{j,r}(\g)=\sum_{k=j}^{N}\g(k-r,r). \]
Then
\begin{equation}\label{eq:eta-Schubert-code}
 \eta_{j,r}(\g_w)=\one_{\{e_j(w)=r\}}.
\end{equation}
Indeed, \eqref{eq:Schubert-g-vector} telescopes in $k$. Thus the integral
linear map
\begin{equation*}
\Phi_N:\ZZ^{\mathcal I_N}\longrightarrow\bigoplus_{j=2}^{N}E_j,
 \qquad
 \Phi_N(\g)=\sum_{j=2}^{N}\sum_{r=1}^{j-1}\eta_{j,r}(\g)e_{j,r},
\end{equation*}
has inverse
\[ \g(a,r)=\eta_{a+r,r}-\eta_{a+r+1,r}, \qquad \eta_{N+1,r}=0. \]
Hence $\Phi_N$ is unimodular and
\[ \Phi_N(\g_w)=\sum_{j=2}^{N}e_{j,e_j(w)}. \]

The code elements $\{\Code_w:w\in S_N\}$ form the
standard-elementary basis of $\cM_N$, and their $\g$-vectors are the
vectors $\g_w$. 
\begin{definition}
We call $\mathcal P_N^{\mathrm{Sch}} =\op{conv}\{\g_w:w\in S_N\}$ the $N$-th \emph{Schubert polytope}. 
\end{definition}
\noindent Its vertices are the $\g$-vectors of a distinguished basis of $\cM_N$. 
The generic, canonical, and MV elements defined in Section~\ref{sec:right-regular-representations} are
indexed by the same vertices through the inverse Chamber Ansatz.

\begin{theorem}\label{thm:Schubert-polytope}
The convex hull of the Schubert vertices is integrally unimodularly
equivalent to
\[ \Delta_1\times\Delta_2\times\cdots\times\Delta_{N-1}. \]
Its lattice points are exactly its $N!$ vertices $\g_w$.
\end{theorem}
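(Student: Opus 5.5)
The proof will transport everything through the unimodular map $\Phi_N$ constructed just before the statement. We already know that $\Phi_N:\ZZ^{\mathcal I_N}\to\bigoplus_{j=2}^N E_j$ is an integral linear isomorphism. Its inverse is given by $\g(a,r)=\eta_{a+r,r}-\eta_{a+r+1,r}$, so it preserves lattices and convex hulls. Moreover, by \eqref{eq:eta-Schubert-code} it sends $\g_w$ to $\sum_{j=2}^N e_{j,e_j(w)}$. Since $w\mapsto(e_2(w),\ldots,e_N(w))$ is a bijection from $S_N$ onto the sequences with $0\le e_j\le j-1$, the image of the Schubert vertex set is exactly
\[ \Bigl\{\textstyle\sum_{j=2}^N e_{j,k_j}: 0\le k_j\le j-1\Bigr\}=\prod_{j=2}^N\{e_{j,0},e_{j,1},\ldots,e_{j,j-1}\}, \]
a product of vertex sets. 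Taking convex hulls, which commute with Cartesian products, gives
\[ \Phi_N(\mathcal P_N^{\mathrm{Sch}})=\prod_{j=2}^N\op{conv}\{0,e_{j,1},\ldots,e_{j,j-1}\}. \]

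Each factor $\op{conv}\{0,e_{j,1},\ldots,e_{j,j-1}\}\subset E_j\simeq\ZZ^{j-1}$ is the standard unimodular simplex $\Delta_{j-1}$, that is, $\{x\in\mathbb R^{j-1}:x_i\ge0,\ \sum x_i\le1\}$. As $j$ runs from $2$ to $N$, this yields $\Delta_1\times\cdots\times\Delta_{N-1}$. Composing with $\Phi_N$, which is unimodular on the ambient lattices $\ZZ^{\mathcal I_N}$ and $\bigoplus_j E_j$, gives the claimed integral unimodular equivalence. A dimension check is consistent: $|\mathcal I_N|=\binom N2=\sum_{j=2}^N(j-1)$.

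For the lattice-point statement, unimodularity reduces us to counting lattice points of the product. A lattice point of a product of polytopes in a direct sum of lattices is a tuple of lattice points of the factors. In the standard simplex, an integral point satisfies $x_i\ge0$ and $\sum x_i\le1$, so it is $0$ or some $e_{j,i}$. Hence the lattice points of the product are exactly the $\prod_{j=2}^N j=N!$ products of vertices. These are precisely the images $\Phi_N(\g_w)$, and they are pairwise distinct because the code map is a bijection. Every product of vertices of the simplex factors is a vertex of the product, so the $N!$ points $\g_w$ are exactly the vertices of $\mathcal P_N^{\mathrm{Sch}}$ and also exactly its lattice points.

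There is no serious obstacle here. The only step that needs care is the telescoping identity \eqref{eq:eta-Schubert-code}, which depends on the convention that out-of-range indicators vanish, together with checking that the inverse formula really inverts $\Phi_N$. Both were already recorded in the text preceding the statement, so the remaining work is the elementary convex-geometry bookkeeping above.
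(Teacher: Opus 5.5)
Your proposal is correct and is essentially the paper's proof: both transport the Schubert vertices through the unimodular map $\Phi_N$, use the bijectivity of the inversion-end code to see the image as the product of the vertex sets $\{0,e_{j,1},\ldots,e_{j,j-1}\}$, identify each factor's convex hull with the standard simplex $\Delta_{j-1}$, and observe that the integral points of a standard simplex, and hence of the product, are vertices. You only make explicit a few steps the paper leaves implicit, namely that convex hulls commute with products and that the $N!$ image points are distinct vertices of the product.
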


\begin{proof}
The inversion-end code is a bijection
\[ S_N\longrightarrow [0,1]\times[0,2]\times\cdots\times[0,N-1]. \]
By \eqref{eq:eta-Schubert-code}, the $j$th component of
$\Phi_N(\g_w)$ is one of
$0,e_{j,1},\ldots,e_{j,j-1}$, 
independently for each $j$. Their convex hull is the standard simplex
$\Delta_{j-1}$. Since $\Phi_N$ is unimodular, the Schubert polytope is
the stated product. An integral point of a standard simplex has
nonnegative integral coordinates with sum at most one, and is therefore a
vertex. The same is true componentwise for the product.
\end{proof}

\section{Right-regular representations and staircase quiver Grassmannians}
\label{sec:right-regular-representations}

\paragraph{Notation.}\label{par:Schubert-label-notation}
The following notation will be used throughout the paper.
\begin{center}
\renewcommand{\arraystretch}{1.08}
\begin{tabularx}{0.94\textwidth}{@{}lX@{}}
\toprule
symbol & meaning \\
\midrule
$\g_w$ & the Schubert vertex in the standard triangular seed; \\
$r(w)$ & the rank word associated with the inversion-end code of $w$; \\
$d(w)$ & the root degree of $r(w)$; \\
$\mathfrak m_w$ & the corresponding equioriented multisegment; \\
$M_w$ & the equioriented representation with multisegment $\mathfrak m_w$; \\
$Z_{\g_w}=Z_{\mathfrak m_w}$ & the corresponding preprojective component; \\
$X_w$ & a general point of $Z_{\g_w}$. \\
\bottomrule
\end{tabularx}
\end{center}
The equality $Z_{\g_w}=Z_{\mathfrak m_w}$ identifies the tropical and
PBW parametrizations of one preprojective component; it does not identify
the equioriented module $M_w$ with a preprojective module.

\subsection{The equioriented realization}

For a legal rank word $r=(r_1,\ldots,r_N)$, with $1\leq r_j\leq j$, put
\begin{equation}\label{eq:rank-word-root-degree}
 d_i(r)=\#\{j:r_j\leq i<j\},\qquad 1\leq i<N.
\end{equation}
We call $d(r)=(d_1(r),\ldots,d_{N-1}(r))$ the \emph{root degree} of $r$.
Under the equioriented realization below, it is also the dimension vector
of the associated representation. For $w\in S_N$, set
\[ r_j(w)=j-e_j(w),\qquad d_i(w)=d_i(r(w)). \]
The inverse tropical Chamber Ansatz identifies the presentation weight
$-\g_w$ with the PBW datum
\begin{equation}\label{eq:schubert-multisegment}
 \mathfrak m_w=
 \sum_{\substack{2\leq j\leq N\\ e_j(w)>0}}
 [r_j(w),j-1]
\end{equation}
for the equioriented quiver
\[ 1\longleftarrow2\longleftarrow\cdots\longleftarrow N-1. \]
Let $M_w$ be the equioriented representation with interval decomposition
\eqref{eq:schubert-multisegment}. Empty intervals may be retained when it
is useful to remember the right-endpoint slot. By construction,
\[ d_i(w)=d_i(r(w))=\dim(M_w)_i =\#\{j:r_j(w)\leq i<j\}. \]

Let $\Lambda_Q$ be the preprojective algebra of $Q_{N-1}$. For a
dimension vector $\mathbf d$, let
\[ E_{\mathbf d}=\op{Rep}(Q_{N-1},\mathbf d) \]
be the equioriented representation space, and let $\Lambda_{\mathbf d}$
be Lusztig's nilpotent variety. If $\mathfrak m$ is a multisegment of
dimension vector $\mathbf d$, denote by
$\cO_{\mathfrak m}\subset E_{\mathbf d}$ its orbit and set
\[ Z_{\mathfrak m} =\overline{T^*_{\cO_{\mathfrak m}}E_{\mathbf d}} \subset\Lambda_{\mathbf d}. \]
For an irreducible component $Z\subset\Lambda_{\mathbf d}$, let
$\rho_Z\in\CC[U_N]$ be the corresponding dual semicanonical function.
Write $b^*_{\mathfrak m}$ for the dual canonical basis element with
Lusztig datum $\mathfrak m$, and write $b^{\rm MV}_{\mathfrak m}$ for
the MV basis element corresponding to the same crystal element.

\begin{proposition}\label{prop:Schubert-geometric-bases}
For every $w\in S_N$, we set
\begin{equation*}
 Z_{\g_w}:=Z_{\mathfrak m_w},\qquad
 \Gen_w:=\rho_{Z_{\mathfrak m_w}},\qquad
 \Can_w:=b^*_{\mathfrak m_w},\qquad
 \MV_w:=b^{\rm MV}_{\mathfrak m_w}.
\end{equation*}
Then $\Gen_w$ is the generic basis element pointed at $\g_w$,
$\Can_w$ is the dual canonical element with Lusztig datum $\mathfrak m_w$,
and $\MV_w$ is the MV basis element corresponding to the same bicrystal
element. Thus the three families are indexed by the correspondence
$\g_w\longleftrightarrow\mathfrak m_w$.
\end{proposition}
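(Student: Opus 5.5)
The substantive content of Proposition~\ref{prop:Schubert-geometric-bases} is that the definitions by Lusztig datum $\mathfrak m_w$ and the indexing by the tropical point $\g_w$ pick out the same element in each of the three bases. I will reduce everything to a single compatibility: the inverse tropical Chamber Ansatz for $\Sigma_N^U$, which converts $\g$-vectors into Lusztig (PBW) data for the reduced word of $w_0$ attached to the equioriented quiver $1\leftarrow2\leftarrow\cdots\leftarrow N-1$. Granting this, the three statements follow from known parametrization theorems applied to one crystal element.

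\textbf{Step 1: the code element and $\mathfrak m_w$.} First I will verify that the tropical point $-\g_w$ corresponds to the multisegment $\mathfrak m_w$ in \eqref{eq:schubert-multisegment}. The code element is a product over $j$ of the factors $c_{e_j(w)}(j-1)=z_{j-e_j(w),j}$. Each such entry is a one-segment function: it is the dual PBW/dual canonical element attached to the interval $[r_j(w),j-1]$, with $r_j(w)=j-e_j(w)$. Its $\g$-vector in the triangular seed is the telescoping pair of indicators in \eqref{eq:Schubert-g-vector}; this follows from the Laurent expansion of $z_{p,q}$ in the flag minors $x_{a,r}$, whose leading term is $x_{a,r}/x_{a,r'}$-type along one diagonal. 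Since $\g$-vectors are additive on products of pointed elements, and PBW data are additive on products of dual PBW generators taken in the convex order, the two parametrizations of $\Code_w$ agree. I can check this pointwise using the unimodular map $\Phi_N$ and \eqref{eq:eta-Schubert-code}: $\eta_{j,r}$ counts exactly the segments ending at $j-1$ with length $r$.

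\textbf{Step 2: transfer to each basis.}
\begin{itemize}
\item \emph{Generic basis.} By Geiss--Leclerc--Schr\"oer, $\rho_Z$ for $Z=Z_{\mathfrak m}=\overline{T^*_{\cO_{\mathfrak m}}E_{\mathbf d}}$ is pointed in the initial seed at the $\g$-vector given by the generic $\Lambda_Q$-presentation. For components of conormal-bundle type, this $\g$-vector is obtained from the PBW datum $\mathfrak m$ by the same tropical Chamber Ansatz. Hence $\rho_{Z_{\mathfrak m_w}}$ is pointed at $\g_w$, and by the uniqueness of generic elements per $\g$-vector it is the generic basis element.
\item \emph{Dual canonical basis.} Here there is nothing to prove beyond the definition.
\item \emph{MV basis.} This follows from Baumann--Kamnitzer--Knutson: MV cycles, Lusztig's nilpotent components, and dual canonical elements are all parametrized compatibly by the same $B(\infty)$ crystal. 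Under this parametrization, the Lusztig datum $\mathfrak m$ for the chosen reduced word matches the component $Z_{\mathfrak m}$, by the Kashiwara--Saito and Lusztig compatibility for conormal bundles in type $A$.
\end{itemize}

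\textbf{Main obstacle.} The hardest point is the sign and orientation convention: showing that the initial seed's Chamber Ansatz uses exactly the reduced word adapted to this equioriented orientation, and that the presentation weight is $-\g_w$ rather than $\g_w$ or a $\ast$-twisted version. I would settle this first by checking $N=3$ and the frozen and rectangular initial minors $x_{a,r}=\Sch_{w_{a,r}}(c)$ from Proposition~\ref{thm:unipotent-seed}. For these, $\mathfrak m_{w_{a,r}}$ should be $r$ copies of a length-$a$ segment stacked diagonally, and Step 1 can be confirmed directly. The general case then follows from additivity.
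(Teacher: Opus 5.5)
Your proposal follows essentially the paper's route: the inverse tropical Chamber Ansatz gives $\g_w\leftrightarrow\mathfrak m_w$, the Geiss--Leclerc--Schr\"oer generic-basis comparison identifies $\rho_{Z_{\mathfrak m_w}}$ as the generic element pointed at $\g_w$, the canonical case holds by definition, and the MV case is the Baumann--Kamnitzer--Knutson $B(\infty)$ crystal identification; your Step~1 additivity computation on $\Code_w$ only makes explicit what the paper asserts just before the proposition. Two small slips in your sanity checks: the leading monomial of $Z_{a,j}$ is $x_{a,j-a}/x_{a-1,j-a}$ (the first index drops, not the second), and $\mathfrak m_{w_{a,r}}=\sum_{i=1}^{a}[i,i+r-1]$ consists of $a$ segments of length $r$, not $r$ segments of length $a$.
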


\begin{proof}
The first correspondence is the inverse tropical Chamber Ansatz. The
comparison between generic cluster characters and dual semicanonical
functions identifies the element attached to this presentation weight with
$\rho_{Z_{\mathfrak m_w}}$; see \cite{GLSgeneric}. The dual canonical
basis is indexed by the same Lusztig data. Finally, the MV basis is a
biperfect basis indexed by the bicrystal $B(\infty)$ \cite{BKKMV}; we use
the crystal identification to denote by $b^{\rm MV}_{\mathfrak m_w}$ the
element whose Lusztig datum in the chosen equioriented word is
$\mathfrak m_w$.
\end{proof}

\begin{remark}For the corresponding quantum unipotent subgroup, Qin proved that, 
after normalization and localization at the frozen quantum minors, 
the dual canonical basis is the common triangular basis \cite[Theorem~1.2.1\textup{(I)}]{QinTriangular}. 
This comparison explains the cluster-theoretic terminology, but it is not used in the arguments below.
\end{remark}

For $\bullet\in\{\mathrm{gen},\mathrm{can},\mathrm{MV}\}$, put
\[
 \mathsf B_w^{\mathrm{gen}}=\Gen_w,\qquad
 \mathsf B_w^{\mathrm{can}}=\Can_w,\qquad
 \mathsf B_w^{\mathrm{MV}}=\MV_w.
\]
We write $X_w$ for a general point of $Z_{\g_w}=Z_{\mathfrak m_w}$.

A multisegment is called \emph{right-regular} if at most one of its
segments ends at each vertex. The map $w\mapsto\mathfrak m_w$ is a
bijection from $S_N$ to the right-regular multisegments supported on
$1,\ldots,N-1$.

For a representation $M$ of the equioriented quiver, put
\[ t_i(M)=\dim\coker(M_{i+1}\longrightarrow M_i). \]
In the interval decomposition of $M$, the integer $t_i(M)$ is the number of
segments ending at $i$.

\begin{lemma}\label{lem:right-regular-generization}
Suppose $\cO_M\subseteq\overline{\cO_{M'}}$. If $M$ is right-regular,
then $M'$ is right-regular.
\end{lemma}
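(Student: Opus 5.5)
The plan is to use the cokernel invariants $t_i$, which measure how many segments end at each vertex. By definition $M$ is right-regular exactly when $t_i(M)\le 1$ for every $i$. The one property we need is that each $t_i$ is upper semicontinuous on $E_{\mathbf d}$. Granting that, the inclusion $\cO_M\subseteq\overline{\cO_{M'}}$ gives $t_i(M')\le t_i(M)\le 1$ for all $i$, which is the claim.

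For semicontinuity, write
\[ t_i(M)=\dim M_i-\rank(M_{i+1}\to M_i). \]
The set where a linear map has rank at most $k$ is closed, being cut out by minors. Hence the rank of the arrow map is lower semicontinuous on $E_{\mathbf d}$, and $t_i$ is upper semicontinuous, because $\dim M_i=d_i$ is constant on $E_{\mathbf d}$. Concretely, $\{X\in E_{\mathbf d}: t_i(X)\le 1\}$ is open, since it is the locus where $\rank(X_{i+1\to i})\ge d_i-1$. It is also $\mathrm{GL}_{\mathbf d}$-stable. The intersection over $i$ is therefore an open union of orbits that contains $\cO_M$.

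Now suppose $M'$ were not right-regular. Then $\cO_{M'}$ would lie in the closed complement of this open set. That complement contains $\overline{\cO_{M'}}$, and hence contains $\cO_M$, a contradiction.

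It remains to justify the interval interpretation of $t_i$. For an interval module $[a,b]$ on $1\leftarrow2\leftarrow\cdots\leftarrow N-1$, the map from vertex $i+1$ to vertex $i$ is an isomorphism when $a\le i<b$. It has one-dimensional cokernel exactly when $i=b$. At every other vertex the space at $i$ is zero, so the cokernel vanishes. Summing over the interval decomposition shows that $t_i(M)$ counts the segments ending at $i$, as stated before the lemma.

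There is no real obstacle here. The only point needing care is the direction of the inequality: degeneration can only lower ranks, so it can only raise the cokernel dimension. Consequently $M$, the degenerate point, has the larger $t_i$, which is exactly the direction we need.
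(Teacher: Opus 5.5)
Your proposal is correct and follows essentially the same route as the paper: lower semicontinuity of the arrow ranks on $E_{\mathbf d}$, with the dimension vector fixed, gives $t_i(M')\le t_i(M)\le 1$. Your check that $t_i$ counts the segments ending at $i$ is a useful addition, since the paper states that interpretation just before the lemma without verifying it.
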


\begin{proof}
The rank of each arrow is lower semicontinuous. Hence
\[ \rank(M'_{i+1}\to M'_i)\geq\rank(M_{i+1}\to M_i). \]
The dimension vector is fixed, so $t_i(M')\leq t_i(M)$. If all the latter
integers are at most one, the same is true for $M'$.
\end{proof}

\subsection{Rank words and parabolic Bruhat intervals}

For a rank word $r=(r_1,\ldots,r_N)$, retain the root degree
$d(r)$ from \eqref{eq:rank-word-root-degree}, and set $d_0=d_N=0$.
Thus $d(r)$ is the dimension vector of the right-regular representation
with interval decomposition $\sum_j[r_j,j-1]$; for $r=r(w)$ it is
$d(w)$.

For $p<q$, suppose $r_p<r_q\leq p$.
Interchanging $r_p$ and $r_q$ is the right-regular form of the elementary
Zelevinsky operation \cite{Zelevinsky}
\[ [r_p,p-1]+[r_q,q-1] \longmapsto [r_q,p-1]+[r_p,q-1]. \]
We orient this move from the first rank word to the second and call the
resulting directed graph the \emph{right-regular linked-move graph}. Its
transitive closure is the right-regular Zelevinsky order. It records the
elementary Zelevinsky incidences inside each root-degree fiber.

Let
\[ m_i(r)=\#\{j:r_j=i\}. \]

\begin{lemma}\label{lem:d-content}
One has
\[ m_i(r)=1+d_i(r)-d_{i-1}(r). \]
Consequently, two rank words have the same root degree if and only if
they have the same multiset of entries.
\end{lemma}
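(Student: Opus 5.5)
The plan is to prove the identity by a direct count from the definition $d_i(r)=\#\{j:r_j\leq i<j\}$ in \eqref{eq:rank-word-root-degree}, and then deduce the content criterion by inverting the resulting difference relation. We use the conventions $d_0=d_N=0$, and the legality condition $1\leq r_j\leq j$, for $1\leq i\leq N$.

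For the identity, compare the index sets
\[
 S_i=\{j: r_j\leq i<j\}\quad\text{and}\quad S_{i-1}=\{j:r_j\leq i-1<j\}.
\]
An index $j$ lies in $S_i\setminus S_{i-1}$ exactly when $r_j\leq i<j$ and either $r_j=i$ or $j=i$. The index $j=i$ is excluded because it violates $i<j$. So this difference is $\{j>i: r_j=i\}$. By legality, $r_j=i$ forces $j\geq i$, so the difference equals $\{j:r_j=i\}\setminus\{i\}$ if $r_i=i$, and equals $\{j:r_j=i\}$ otherwise.

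Similarly, $j$ lies in $S_{i-1}\setminus S_i$ exactly when $r_j\leq i-1$ and $j\leq i$ and $j>i-1$, that is, when $j=i$ and $r_i\leq i-1$. Combining the two cases gives
\[
 d_i-d_{i-1}=m_i-[r_i=i]-[r_i<i]=m_i-1,
\]
because legality gives $r_i\leq i$. This is the claimed identity. The boundary cases need a separate check. At $i=1$, $d_0=0$ is consistent because $S_0$ is empty, since $r_j\geq1$. At $i=N$, we have $S_N=\emptyset$ and $d_N=0$ as well.

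For the consequence, first suppose two rank words have the same root degree. Then the identity gives the same $m_i$ for all $1\leq i\leq N$, so they have the same multiset of entries. Conversely, suppose the multisets agree. Summing $d_i=\sum_{k\leq i}(m_k-1)$ telescopically from $d_0=0$ recovers $d$ from the content, so the root degrees agree.

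The only delicate step is the bookkeeping at $j=i$, which is where legality $r_i\leq i$ is needed to produce the constant $1$. I expect no real obstacle beyond this step.
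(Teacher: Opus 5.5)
Your argument is correct and is essentially the paper's: both are direct counts from the definition using only legality $r_j\le j$. The paper packages it as the cumulative identity $\#\{j:r_j\le i\}=i+d_i(r)$ (positions $j\le i$ always count, positions $j>i$ are exactly the crossing segments) and then takes successive differences, which avoids your case analysis at $j=i$. One harmless wording slip in your first case: given $r_j\le i$, the condition $j\notin S_{i-1}$ reads ``$r_j=i$ or $j\le i-1$'', not ``$r_j=i$ or $j=i$''; since $i<j$ excludes the second alternative either way, your set $\{j>i:r_j=i\}$ is still correct.
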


\begin{proof}
Since $r_j\leq j$, all positions $j\leq i$ contribute to
$\#\{j:r_j\leq i\}$. The remaining contributions are the
segments crossing vertex $i$. Therefore
$\#\{j:r_j\leq i\}=i+d_i(r)$.
Taking successive differences gives the formula.
\end{proof}

\begin{lemma}\label{lem:root-degree-fiber-connectivity}
The weak connected components of the linked-move graph are the root-degree
fibers.
\end{lemma}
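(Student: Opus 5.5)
Two things must be shown: the elementary move preserves the root degree, and any two legal rank words in one fiber are joined by a chain of moves in the undirected graph.

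\emph{Invariance.} A move interchanges two entries of $r$, so it preserves the multiset of entries. Legality is also preserved. The new entry in position $p$ is $r_q\leq p$ by hypothesis, and the new entry in position $q$ is $r_p<r_q\leq q$. By Lemma~\ref{lem:d-content}, equal multisets of entries give equal root degrees, so every weak component lies inside one fiber.

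\emph{Connectivity.} Fix a fiber, that is, a fixed multiset $\{m_i\}$ of entries. I will show that every legal rank word in it is joined to one distinguished word $r^{\min}$. Take $r^{\min}$ to be the legal arrangement that places the entries in weakly increasing order, $r^{\min}_1\leq r^{\min}_2\leq\cdots\leq r^{\min}_N$. This arrangement is legal whenever some arrangement is. Indeed, legality of any arrangement is the Hall-type condition $\#\{j:r_j\leq i\}\geq i$, which equals $i+d_i$ with $d_i\geq0$. That condition implies that the $j$-th smallest entry is at most $j$.

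Now let $r$ be a legal word in the fiber with $r\neq r^{\min}$. Then $r$ has an inversion: positions $p<q$ with $r_p>r_q$. Choose the inversion so that the reverse move (which swaps them back into increasing order) is a legal linked move. This requires $r_q<r_p\leq p$, and $r_p\leq p$ holds automatically since $r$ is legal. So the swap turning $r_p,r_q$ into $r_q,r_p$ is exactly the move from the sorted pair to the unsorted pair, read backwards. It is therefore an edge of the graph, and the result is still legal. Each such swap strictly decreases the number of inversions. Induction on the inversion number then connects $r$ to $r^{\min}$ by undirected edges.

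\emph{Main obstacle.} The only delicate point is confirming that the move condition $r_p<r_q\leq p$ matches the swap in the correct direction. Every inversion $p<q$ with $r_p>r_q$ gives such an edge, because legality supplies $r_p\leq p$. Hence no careful choice of inversion is needed (adjacent transpositions would also suffice). I would also check separately that the sorted word is legal, which is the Hall-type step above.
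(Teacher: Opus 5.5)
Your proof is correct and follows the paper's argument essentially verbatim. Invariance comes from the entry multiset being preserved together with Lemma~\ref{lem:d-content}; connectivity comes from repeatedly removing inversions $p<q$, $r_p>r_q$, each of which is the reverse of a linked move because legality gives $r_p\leq p$, until the weakly increasing arrangement is reached. Your separate Hall-type check that the sorted word is legal is harmless but redundant, since each swap preserves legality and the process terminates at that word.
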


\begin{proof}
A linked move preserves the multiset of entries and hence preserves $d$ by
Lemma~\ref{lem:d-content}. Conversely, suppose $p<q$ and $r_p>r_q$.
Interchanging these two entries gives another rank word, because
$r_q<r_p\leq p<q$. This is the reverse of a linked move. Repeatedly
removing inversions connects any rank word to the weakly increasing
arrangement of its entries.
\end{proof}

Fix an admissible vector $d$, and put
$m_i=1+d_i-d_{i-1}$.
Rank words of root degree $d$ are multiset permutations of
$1^{m_1}\cdots N^{m_N}$ satisfying $r_j\leq j$. They may be viewed as
minimal representatives for
$S_N/(S_{m_1}\times\cdots\times S_{m_N})$.
Let $r_d^{\min}$ be the weakly increasing word and $r_d^{\max}$ the greedy
word.

\begin{proposition}\label{prop:bruhat-interval}
The rank words of root degree $d$ form the parabolic Bruhat interval
$[r_d^{\min},r_d^{\max}]$. 
The transitive closure of the linked moves is the Bruhat order on this
interval.
\end{proposition}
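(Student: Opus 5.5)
We view a rank word $r$ of root degree $d$ as a word in the multiset $1^{m_1}\cdots N^{m_N}$. Such words correspond bijectively to minimal coset representatives in $S_N/(S_{m_1}\times\cdots\times S_{m_N})$: standardize $r$ by labelling equal letters left to right. Under this correspondence, the parabolic Bruhat order on multiset permutations has a tableau criterion. We have $r\le r'$ if and only if, for every position $k$ and every threshold $i$,
\[
 \#\{j\le k: r_j\ge i\}\le \#\{j\le k: r'_j\ge i\}.
\]
Its covers are given by transpositions of two positions $p<q$ with $r_p<r_q$, moving the larger letter to the left. The Zelevinsky-type move $r_p\leftrightarrow r_q$ with $r_p<r_q$ is exactly such an inversion-increasing transposition. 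It is admissible, meaning the result is again a rank word, precisely when $r_q\le p$. The plan is to show three things: the set $R_d$ of rank words of root degree $d$ is an order ideal cut out by a single upper bound; $r_d^{\min}$ is its unique minimum; and the admissible moves generate the induced order.

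\textbf{Step 1 (the interval).} The constraint $r_j\le j$ for all $j$ is equivalent to
\[
 \#\{j\le k: r_j\ge i\}\le \#\{j\le k: g_j\ge i\}\quad\text{for all } k,i,
\]
where $g=r_d^{\max}$ is the greedy word. The greedy word places at each position $j$ the largest remaining letter that is $\le j$. This is a standard exchange argument: the greedy word maximizes every prefix count $\#\{j\le k:r_j\ge i\}$ subject to $r_j\le j$. Conversely, if some $r_j>j$, then the prefix count at $k=j$ with threshold $i=j+1$ is positive, whereas for $g$ it is zero. So the upper bound encodes the legality condition exactly. On the other side, the weakly increasing word minimizes every prefix count, so $r_d^{\min}\le r$ for all multiset permutations $r$. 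Combining the two bounds gives $R_d=[r_d^{\min},r_d^{\max}]$ in parabolic Bruhat order. Lemma~\ref{lem:d-content} guarantees that $R_d$ is exactly the set of multiset permutations of $1^{m_1}\cdots N^{m_N}$ satisfying $r_j\le j$.

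\textbf{Step 2 (moves generate the order).} Each linked move increases the word in Bruhat order, so the transitive closure of the moves is contained in the Bruhat order. For the converse, take $r<r'$ in $R_d$. In a parabolic quotient, Bruhat order is graded by inversions, and any strict relation $r<r'$ refines to a chain of covers $r=r^0\lessdot r^1\lessdot\cdots\lessdot r'$, each given by a transposition. Every intermediate word lies in the interval $[r,r']\subseteq R_d$, hence is a rank word. Each cover swaps $r_p<r_q$ with $p<q$ to produce a rank word, and legality of the result forces $r_q\le p$. So each cover is a linked move. This chain argument is the reason Step 1 establishes the interval property first: it makes legality automatic along chains.

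\textbf{Main obstacle.} The delicate point is the greedy-maximality claim in Step 1, that the prefix-count characterization of $r_j\le j$ holds with the greedy word as the top element. The difficulty is in showing that the greedy word dominates every legal word simultaneously for all thresholds $i$. I would prove this by induction on $k$: suppose a legal word $r$ agrees with $g$ up to position $k-1$ and has $r_k<g_k$. Swap $r_k$ with a later occurrence of the letter $g_k$; the letter $g_k$ is still available and $g_k\le k$, so the result remains legal and weakly increases every prefix count. Iterating transforms $r$ into $g$ while the prefix counts only increase, which gives the required domination.
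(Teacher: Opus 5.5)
Your proof is correct and follows essentially the same route as the paper. The greedy exchange shows that every legal word lies below $r_d^{\max}$, and in a cover inside the interval, legality of the upper word forces $r_q\le p$, so each cover is a linked move. The one difference is cosmetic: you get downward closure of legality from the prefix-count criterion at threshold $j+1$, whereas the paper checks directly that a downward cover preserves legality.
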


\begin{proof}
The condition $r_j\leq j$ is downward closed in multiset Bruhat order.
Indeed, a downward cover exchanges an inversion $b,a$ at positions $p<q$
into $a,b$. Legality of the upper word gives $b\leq p$, and the lower word
is again legal.

Let $r$ be legal and different from $r_d^{\max}$, and let $p$ be the first
position where the two words differ. The greedy entry $b$ at $p$ is an
unused entry with $r_p<b\leq p$ and occurs later in $r$. Interchanging the
two entries is a legal Bruhat increase. Iterating reaches $r_d^{\max}$.
Hence every legal word lies in the stated interval.

A Bruhat cover inside the interval moves a larger entry $b$ to an earlier
position $p$. The upper word is legal, so $b\leq p$; the cover is therefore
a linked move. Conversely, every linked move is a Bruhat increase. This
proves the last assertion.
\end{proof}

\subsection{Extremal elements and pattern avoidance}

For a fixed admissible $d$, Proposition~\ref{prop:bruhat-interval}
identifies the component with the interval
$I_d=[r_d^{\min},r_d^{\max}]$. 
The minimum is the weakly increasing arrangement of the entries. The maximum is obtained by the greedy rule:
at position $p$, choose the largest unused entry not exceeding $p$. Under
the linked-move orientation these are, respectively, the unique source and
the unique sink.

\begin{lemma}\label{lem:rank-patterns}
For $w\in S_N$:
\begin{enumerate}[(i)]
 \item $r(w)$ is weakly increasing if and only if $w$ avoids $231$;
 \item no pair $p<q$ satisfies $r_p(w)<r_q(w)\leq p$ if and only if $w$
 avoids $312$.
\end{enumerate}
\end{lemma}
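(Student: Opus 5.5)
The plan is to rewrite both conditions in terms of the ``smaller-predecessor count''
\[ s_j(w)=\#\{i<j:w_i<w_j\}=j-1-e_j(w)=r_j(w)-1, \]
so that $r_p<r_q$ becomes $s_p<s_q$ and $r_q\leq p$ becomes $s_q\leq p-1$. Each part then becomes a statement about $s$, and I compare it with pattern occurrences. The main tool is to reduce to occurrences in which two of the three letters sit in adjacent positions.

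\emph{Part (i).} It suffices to compare $s_j$ and $s_{j+1}$ for $1\leq j<N$.
\begin{itemize}
\item If $w_j<w_{j+1}$, then every $i<j$ counted in $s_j$ is counted in $s_{j+1}$, and so is $j$ itself. Hence $s_{j+1}>s_j$.
\item If $w_j>w_{j+1}$, then $s_{j+1}=\#\{i<j:w_i<w_{j+1}\}\leq s_j$. Equality holds iff there is no $i<j$ with $w_{j+1}<w_i<w_j$. Such an $i$ would give an occurrence $w_i,w_j,w_{j+1}$ of $231$ whose last two letters are adjacent.
\end{itemize}
So $r(w)$ is weakly increasing iff $w$ has no $231$ occurrence with adjacent ``$3$'' and ``$1$''.

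It remains to see that any $231$ occurrence at positions $a<b<c$ (with $w_c<w_a<w_b$) yields one of this adjacent form. Walk from position $b$ to position $c$. The values start above $w_a$ and end below it, so there is a first $j\in[b,c)$ with $w_j>w_a>w_{j+1}$. Then $a<j$, and $(a,j,j+1)$ is the required adjacent occurrence.

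\emph{Part (ii), one direction.} Suppose $p<q$ with $s_p<s_q\leq p-1$. I split on the relative order of $w_p$ and $w_q$.
\begin{itemize}
\item If $w_p<w_q$: were every $i<p$ to satisfy $w_i<w_q$, the count $s_q$ would include all $p-1$ such $i$ together with $p$, giving $s_q\geq p$, a contradiction. Hence some $i<p$ has $w_i>w_q>w_p$, and $(i,p,q)$ is a $312$.
\item If $w_p>w_q$: every $i<p$ with $w_i<w_q$ also satisfies $w_i<w_p$. So $s_p<s_q$ forces some $i$ with $p<i<q$ and $w_i<w_q$, while $s_q\leq p-1$ forces some $i'<p$ with $w_{i'}>w_q$. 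I then want to extract a $312$ from these letters; the natural candidates are $(i',i,q)$ or a triple through $p$.
\end{itemize}

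\emph{Part (ii), converse.} Given a $312$ at $a<b<c$ (with $w_b<w_c<w_a$), the aim is to produce a pair $(p,q)$ as in the statement. I would first reduce to the adjacent case $c=b+1$ with $w_b<w_{b+1}<w_a$. Then take $p=b$ and $q=b+1$:
\begin{itemize}
\item $s_q\geq s_p+1$, because position $p$ itself is counted in $s_q$;
\item $s_q\leq q-2=p-1$, because position $a$ is not counted in $s_q$.
\end{itemize}
For the reduction, I would take the ascent $j\to j+1$ in $[b,c]$ at which the values last cross from below $w_c$ to a value in $[w_c,w_a)$. I expect this adjacency reduction, together with the second case of the forward direction, to be the main obstacle.
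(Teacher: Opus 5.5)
Part (i) is correct. Reducing to adjacent positions is legitimate there, because weak monotonicity of $r(w)$ is a condition on consecutive entries. The forward direction of (ii) is also already finished. In the case $w_p>w_q$ your letters satisfy $i'<p<i<q$ and $w_i<w_q<w_{i'}$, so $(i',i,q)$ is itself a $312$ occurrence, as in the paper; this case is not an obstacle.

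The genuine gap is the converse of (ii). The proposed reduction to an occurrence with $c=b+1$ is not merely hard: it is false. Take $w=3142$. Its only $312$ occurrence is at positions $1,2,4$. Its only ascent is $w_2=1<w_3=4$, and no earlier letter exceeds $4$. So no $312$ occurrence has its ``$1$'' and ``$2$'' adjacent. The crossing you want to locate does not exist, because the values jump from below $w_c$ to above $w_a$ before dropping back to $w_c$. The lemma still holds for this $w$: $r(3142)=(1,1,3,2)$, and $(p,q)=(2,4)$ satisfies $r_p<r_q\le p$. But no adjacent pair satisfies it, so the witness pair must be allowed to be nonadjacent, and no adjacency reduction can succeed.

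The repair is to ask for less than adjacency. Your bound $s_q\le p-1$ only needs at least $q-p$ letters before position $q$ to exceed $w_q$. Given a $312$ at $a<b<c$, let $p$ be the largest position in $[b,c)$ with $w_p<w_c$, and put $q=c$. Then:
\begin{enumerate}[(a)]
 \item $(a,p,q)$ is still a $312$;
 \item every letter strictly between $p$ and $q$ exceeds $w_q$, and these $q-p-1$ letters together with $w_a$ give $e_q(w)\ge q-p$, that is, $r_q\le p$;
 \item $w_p<w_q$ gives $r_q\ge r_p+1$, exactly as in your adjacent computation.
\end{enumerate}
This is the paper's argument. The paper reaches the same configuration by choosing the occurrence with $q$ minimal and then $p$ maximal.
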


\begin{proof}
Suppose $p<q$ and $r_p>r_q$. Then $w_p>w_q$. If there were no
$i<p$ with $w_q<w_i<w_p$, every earlier entry below $w_p$ would also lie
below $w_q$, giving $r_p\leq r_q$. Hence such an $i$ exists and
$i,p,q$ form a $231$ pattern.

Conversely, choose an occurrence $i<p<q$ of $231$ with $q$ minimal. No
entry between $p$ and $q$ is smaller than $w_q$, for otherwise there would
be an occurrence with smaller final position. Hence all entries before
$q$ and below $w_q$ occur before $p$, while $w_i$ is below $w_p$ but above
$w_q$. Thus $r_p>r_q$. This proves (i).

Suppose next that $p<q$ and $r_p<r_q\leq p$. If $w_p<w_q$, then
$e_q=q-r_q\geq q-p$. Since there are only $q-p-1$ positions strictly
between $p$ and $q$, some $i<p$ satisfies $w_i>w_q>w_p$, giving a $312$
pattern. If $w_p>w_q$, the inequality $r_p<r_q$ forces an index
$p<t<q$ with $w_t<w_q$; the bound $r_q\leq p$ then yields $i<p$ with
$w_i>w_q$, and $i,t,q$ form a $312$ pattern.

Conversely, choose $i<p<q$ with $w_i>w_q>w_p$, first with $q$ minimal and
then with $p$ maximal. The inequality $w_p<w_q$ gives $r_p<r_q$. The
choices of $p,q$ imply that every entry strictly between $p$ and $q$ is
larger than $w_q$. Together with $w_i>w_q$, this gives
$e_q\geq q-p$, and hence $r_q\leq p$. This proves (ii).
\end{proof}

\subsection{The staircase quiver Grassmannian}

Let
\[ Q_{N-1}:1\longleftarrow2\longleftarrow\cdots\longleftarrow N-1, \]
and let $P_j=[1,j]$ be the indecomposable projective at $j$. Put
\[ P_\triangle=\bigoplus_{j=1}^{N-1}P_j. \]
A representation is right-regular if and only if it is a quotient of
$P_\triangle$: the multiplicity of the simple top at $j$ is the number of
segments ending at $j$.

For a dimension vector $d$, define the \emph{staircase quiver Grassmannian}
\[
 X_d=\Gr_{\underline{\dim}P_\triangle-d}(P_\triangle)
 \cong \Quot_d(P_\triangle).
\]
We use the quotient realization when discussing right-regular quotient strata.

\begin{theorem}\label{thm:right-regular-chamber-ansatz} We have the following:
\begin{enumerate}[(i)]
 \item The linked-move graph has $C_N$ weak connected components, indexed by
 the vectors
 \[ d_0=d_N=0,\qquad d_i\geq0,\qquad d_i-d_{i-1}\geq-1. \]
 \item The root-degree fiber $d$ is the parabolic Bruhat interval
 $[r_d^{\min},r_d^{\max}]$. Its source is the unique $231$-avoiding
 permutation of root degree $d$, and its sink is the unique $312$-avoiding
 permutation of root degree $d$.
 \item The variety $X_d$ is nonempty exactly for these vectors. It is a
 smooth projective iterated Grassmannian bundle with successive fibers
 \[ \Gr(d_i,1+d_{i+1}),\qquad i=N-1,N-2,\ldots,1. \]
 Its quotient-isomorphism strata are affine Schubert cells indexed by the
 root-degree fiber $d$, and their closure order is the Bruhat interval
 above.
 \item Its cell-dimension Poincar\'e polynomial and Euler characteristic are
 \[
  \sum_{w:d(w)=d}t^{\inv(r(w))}
  =\prod_{i=1}^{N-1}\qbinom{1+d_{i+1}}{d_i}_t,
  \qquad
  \chi(X_d)=\prod_{i=1}^{N-1}\binom{1+d_{i+1}}{d_i}.
 \]
\end{enumerate}
\end{theorem}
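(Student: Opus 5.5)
Part (i) reduces to counting the admissible root degrees. By Lemma~\ref{lem:root-degree-fiber-connectivity} the weak components are the root-degree fibers, so we must determine which $d$ occur. By Lemma~\ref{lem:d-content}, $m_i=1+d_i-d_{i-1}\geq0$, which gives $d_i-d_{i-1}\geq-1$. Also $d_i\geq0$, and $d_0=d_N=0$ holds because no segment crosses vertex $0$ or $N$.

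For the converse, given such a $d$ we form the multiset $1^{m_1}\cdots N^{m_N}$ with $\sum m_i=N$. Its weakly increasing arrangement is legal exactly when $\#\{j:r_j\le i\}=i+d_i\geq i$, which holds. Legality $r_j\leq j$ at position $j$ amounts to this cumulative inequality at level $i=j$. The vectors $(d_i-d_{i-1}+1)$ are then Łukasiewicz-type paths, and the standard Dyck-path bijection (steps $m_i-1\geq-1$, staying nonnegative, returning to $0$) gives the count $C_N$.

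Part (ii) is Proposition~\ref{prop:bruhat-interval} together with Lemma~\ref{lem:rank-patterns}. The source is the weakly increasing word, which is the unique $231$-avoider in the fiber. The sink is the unique word admitting no linked move, i.e.\ no pair with $r_p<r_q\le p$, which is the unique $312$-avoider. Uniqueness holds because a component of a finite poset that is an interval has a unique minimum and a unique maximum.

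For (iii) and (iv) we build $X_d=\Quot_d(P_\triangle)$ inductively from vertex $N-1$ downward. A quotient $M$ of $P_\triangle$ with $\dim M=d$ is determined by a compatible family of surjections. At vertex $N-1$, $(P_\triangle)_{N-1}$ is one-dimensional, so $M_{N-1}$ is a $d_{N-1}$-dimensional quotient of $\CC^{1}$. Given $M_{i+1}$, the space $M_i$ must be a quotient of the pushout of $(P_\triangle)_i$ and $M_{i+1}$. This pushout is $M_{i+1}\oplus\CC e_i$, where the new generator is the top of $P_i$, so it has dimension $1+d_{i+1}$. Choosing the kernel gives a point of $\Gr(d_i,1+d_{i+1})$ in quotient form.

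The step I expect to be the main obstacle is checking that this pushout description is exact: every quotient of $(P_\triangle)_{\le i}$ restricted appropriately factors through it, and the choices are independent. This yields a smooth projective iterated Grassmannian bundle, nonempty iff $0\le d_i\le 1+d_{i+1}$ for all $i$, which is the condition of (i).

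For the stratification, the isomorphism type of $M$ is recorded by which segments end where, i.e.\ by a rank word. On each fiber $\Gr(d_i,1+d_{i+1})$ we use the Schubert cell decomposition relative to the flag induced by segment lengths in $M_{i+1}$ (ordered by left endpoint). The relative position of the kernel then records which segment of $M_{i+1}$ gets cut off, or whether the new generator $e_i$ survives as a new segment. The Schubert cells of the fibers thus assemble into affine cells indexed by rank words of degree $d$. I will argue that the cell of $r$ has dimension $\inv(r)$ by counting, at each step, the pairs of a surviving larger-entry segment placed before a smaller one. The closure order comes from rank semicontinuity (as in Lemma~\ref{lem:right-regular-generization}) combined with Proposition~\ref{prop:bruhat-interval}.

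Finally, (iv) follows by multiplying the $q$-binomial Poincaré polynomials of the successive Grassmannian fibers, since the cell decomposition is compatible with the bundle structure. Setting $t=1$ gives the Euler characteristic.
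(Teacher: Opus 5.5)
Parts (i), (ii), and the iterated Grassmannian structure in (iii) follow the paper's argument. Write $V_i=(P_\triangle)_i=\langle e_i,\ldots,e_{N-1}\rangle$ and let $K_{N-1}\subseteq\cdots\subseteq K_1$ be the kernel flag. Then your pushout $M_{i+1}\oplus\CC e_i$ is just $V_i/K_{i+1}$. So the ``exactness'' you flag as the main obstacle is automatic: $K_i/K_{i+1}$ is an arbitrary subspace of codimension $d_i$ in $V_i/K_{i+1}$, independent of the earlier choices.

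You diverge from the paper in the stratification. The paper notes that the composite $M_j\to M_i$ of $P_\triangle/K$ has rank $\dim V_j-\dim(K_i\cap V_j)$. Hence the isomorphism strata are exactly the Schubert cells of the kernel flag relative to the \emph{fixed} flag $V_{N-1}\subset\cdots\subset V_1$, and affineness and the closure order are then standard Schubert calculus. You instead use, fibre by fibre, Schubert cells relative to the image flag $\op{Im}(M_b\to M_{i+1})$, $b>i$, completed by $\CC e_i$ on top. This flag moves with the base point, and it must be indexed by right endpoints; left endpoints are not yet determined at stage $i$.

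Your route does make the dimension count and (iv) transparent. At stage $i$, a cell contributes one for each pair consisting of a segment $j$ cut off there ($r_j=i+1$) and a survivor $j'>j$ ($r_{j'}\le i$). Summing over $i$ gives $\inv(r)$, and the product of $q$-binomials is immediate.

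The genuine gap is the closure order. Semicontinuity of ranks gives only one implication: $S_{r'}\subseteq\overline{S_r}$ forces the rank function of $r'$ to be dominated by that of $r$. Proposition~\ref{prop:bruhat-interval} is purely combinatorial. It neither identifies rank domination with the Bruhat order on the interval, nor shows that a Bruhat relation $r'\le r$ is realized by an actual degeneration $S_{r'}\subseteq\overline{S_r}$. Your relative cells do not help here, because with a moving flag the closure of an iterated cell is not simply computed fibrewise. Likewise, you still have to show that your iterated affine-space bundles over a cell are actually affine spaces.

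There are two ways to close this:
\begin{enumerate}
\item[(a)] Adopt the fixed-flag identification, where cell closures are cut out by the inequalities on $\dim(K_i\cap V_j)$.
\item[(b)] Transport orbit closures to $X_d$ through the smooth correspondence of Lemma~\ref{prop:framed-right-regular}, and use the Abeasis--Del Fra and Zelevinsky description of equioriented type-$A$ degenerations as generated by elementary operations on linked pairs. On a right-regular multisegment such an operation preserves the set of right endpoints and is exactly a linked move. Proposition~\ref{prop:bruhat-interval} then identifies the closure order with the Bruhat interval.
\end{enumerate}
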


\begin{proof}
Parts (i) and (ii) follow from Lemmas~\ref{lem:d-content},
\ref{lem:root-degree-fiber-connectivity}, and \ref{lem:rank-patterns}, together with
Proposition~\ref{prop:bruhat-interval}. The admissible vectors are
Łukasiewicz excursions, and are counted by $C_N$.

For (iii), identify
\[ P_{\triangle,i}=V_i=\langle e_i,\ldots,e_{N-1}\rangle, \]
with the arrows $V_{i+1}\hookrightarrow V_i$. A quotient of dimension
$d$ is determined by a kernel flag
\[ K_{N-1}\subseteq\cdots\subseteq K_1, \qquad K_i\subseteq V_i, \qquad \dim K_i=N-i-d_i. \]
Given $K_{i+1}$, the quotient $K_i/K_{i+1}$ has dimension
$1+d_{i+1}-d_i$ in the $(1+d_{i+1})$-dimensional space
$V_i/K_{i+1}$. This gives the stated Grassmannian fiber and the
nonemptiness inequalities.

Finally, the isomorphism class of an equioriented type-$A$ representation is
determined by the ranks of all compositions. For the quotient
$P_\triangle/K$, these ranks are the dimensions of the intersections
$K_i\cap V_j$. They are the Schubert rank conditions of the kernel flag.
Hence the quotient strata are the Schubert cells, with the asserted
closure order. In part~\textup{(iv)}, the variable $t$ records complex
cell dimension; the cohomological Poincar\'e polynomial is
obtained by replacing $t$ with $t^2$. The product formula and the Euler
characteristic are those of the iterated Grassmannian bundle.
\end{proof}

Let $E_d^{\rm rr}$ be the union of right-regular orbits in the
representation space of dimension vector $d$, and set
\[ \widetilde E_d^{\rm rr} =\{(M,\phi):M\in E_d^{\rm rr},\ \phi:P_\triangle\twoheadrightarrow M\}. \]
There is a correspondence
\[ E_d^{\rm rr}\xleftarrow{\ p\ } \widetilde E_d^{\rm rr} \xrightarrow{\ q\ }X_d. \]

\begin{lemma}\label{prop:framed-right-regular}
The map $p$ is smooth; its fiber over $M$ is the open set of
surjective maps in $\Hom_Q(P_\triangle,M)$. The map $q$ is a
principal $G_d=\prod_i\op{GL}_{d_i}$-bundle. In particular,
the relative closures $\overline{\cO_M}\cap E_d^{\rm rr}$
and the corresponding Schubert-cell closures in $X_d$ are smoothly
equivalent; equivalently, their germs along right-regular strata are
smoothly equivalent.
\end{lemma}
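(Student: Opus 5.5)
The plan is to treat the correspondence as the standard framing construction for quiver Grassmannians: in the spirit of Reineke's framing or of the Hall-type "quot" correspondence, $\widetilde E_d^{\rm rr}$ is an open subset of a vector bundle over $E_d^{\rm rr}$ carrying a free $G_d$-action. I will verify the two bundle statements separately and then read off the smooth equivalence from the standard fact that both maps in a smooth correspondence with principal-bundle quotients preserve singularity types of orbit closures.

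\emph{Step 1: the map $p$.} Over $E_d=\op{Rep}(Q_{N-1},d)$, consider the total space $H=\{(M,\phi):\phi\in\Hom_Q(P_\triangle,M)\}$. Since $P_\triangle$ is projective and $\Hom_Q(P_j,M)\cong M_j$, we get $\Hom_Q(P_\triangle,M)\cong\bigoplus_j M_j=\CC^{\sum_j d_j}$. This identification depends only on the underlying vector spaces and not on the arrow maps, so $H\cong E_d\times\prod_j\CC^{d_j}$ is a trivial vector bundle. Surjectivity of $\phi$ is an open condition, namely maximal rank of the induced map on each vertex. The surjective locus is nonempty over $M$ exactly when $t_i(M)\leq1$ for all $i$, i.e.\ $M$ is right-regular; this uses the earlier observation that right-regular representations are precisely the quotients of $P_\triangle$, via tops. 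Hence $\widetilde E_d^{\rm rr}$ is open in $H|_{E_d^{\rm rr}}$, and $p$ is the restriction of a vector bundle projection to an open subset. It is therefore smooth, with the stated fibers. Here $E_d^{\rm rr}$ is open in $E_d$ by Lemma~\ref{lem:right-regular-generization}, since the condition $t_i\leq1$ is upper-semicontinuous.

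\emph{Step 2: the map $q$.} Define $q(M,\phi)=\ker\phi$, a subrepresentation of $P_\triangle$ of dimension $\underline{\dim}P_\triangle-d$, so $q$ lands in $X_d$. The group $G_d$ acts on $\widetilde E_d^{\rm rr}$ by $g\cdot(M,\phi)=(gMg^{-1},g\phi)$. This action is free, because $\phi$ is surjective and so $g\phi=\phi$ forces $g=1$. The fibers of $q$ are exactly the $G_d$-orbits: two surjections with the same kernel differ by a unique isomorphism of quotients. For local triviality, I would work over the standard affine charts of the iterated Grassmannian bundle from Theorem~\ref{thm:right-regular-chamber-ansatz}(iii). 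On such a chart, choose complements $C_i\subset V_i$ to $K_i$ depending algebraically on the point. Identifying $V_i/K_i\cong C_i\cong\CC^{d_i}$ gives a section of $q$, and hence a trivialization of $q$ as a $G_d$-torsor. Alternatively, I could cite the general result (Reineke, Caldero--Reineke) that $\Hom^{\rm surj}(P,-)\to\Quot$ is a principal bundle. I expect this local triviality step to be the only genuinely technical point, and the chart argument should handle it.

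\emph{Step 3: smooth equivalence.} Both $p$ and $q$ are equivariant for the stratifications. For a right-regular $M$, $p^{-1}(\cO_M)=q^{-1}(\text{cell of }M)$, since both are the locus of pairs whose quotient is isomorphic to $M$, and Theorem~\ref{thm:right-regular-chamber-ansatz}(iii) identifies the quotient-isomorphism strata with the Schubert cells. Taking closures inside $\widetilde E_d^{\rm rr}$ and using that smooth morphisms are open, so that preimages of closures are closures of preimages, gives
\[
p^{-1}\bigl(\overline{\cO_M}\cap E_d^{\rm rr}\bigr)=q^{-1}\bigl(\overline{C_M}\bigr).
\]
A smooth surjective morphism identifies germs up to smooth factors. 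Therefore the germ of $\overline{\cO_M}\cap E_d^{\rm rr}$ at a point of $\cO_{M'}$ times a smooth germ is isomorphic to the germ of $\overline{C_M}$ at a point of $C_{M'}$ times a smooth germ, which is the asserted smooth equivalence.
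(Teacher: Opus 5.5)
Your proposal is correct and takes the same route as the paper, which invokes Reineke's framed presentation: $p$ is the open locus of surjections in the vector bundle $\Hom_Q(P_\triangle,-)$, which is trivial via $\Hom_Q(P_\triangle,M)\cong\bigoplus_j M_j$, and $q$ is the change-of-basis principal $G_d$-bundle. Your chart trivialization of $q$ and your open-map argument giving $p^{-1}(\overline{\cO_M}\cap E_d^{\rm rr})=q^{-1}(\overline{C_M})$ supply details the paper leaves implicit; they do not change the approach.
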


\begin{proof}
This is the quotient presentation of a quiver Grassmannian; see \cite{ReinekeFramed}. Indeed, $p$ is the open locus of
surjections in the vector bundle $\Hom_Q(P_\triangle,-)$, and $q$ is the
change-of-basis principal $G_d$-bundle.
\end{proof}

We use two different rigidity notions. A representation $M$ of the
oriented quiver $Q$ is called \emph{$Q$-rigid} if
$\Ext_Q^1(M,M)=0$; this is equivalent to the $G_d$-orbit of $M$ being
open in its representation variety. An irreducible component $Z$ in the
preprojective or conormal space is called \emph{$\Lambda_Q$-rigid} if a
general point $X\in Z$ satisfies $\Ext_{\Lambda_Q}^1(X,X)=0$. We never
identify these two conditions without an explicit argument.

\begin{proposition}\label{prop:self-extension-codimension}
Let $\displaystyle M_w=\bigoplus_{\substack{2\leq j\leq N\\r_j(w)<j}}
 [r_j(w),j-1]$ and $\zeta(w)=\inv(r(w))$.
Then
\begin{equation}\label{eq:self-extension-linked-pairs}
 \dim\Ext^1(M_w,M_w)
 =\#\{(p,q):p<q,\ r_p(w)<r_q(w)\leq p\}
 =\dim X_{d(w)}-\zeta(w).
\end{equation}
Thus the stratum indexed by $w$ has codimension
$\dim\Ext^1(M_w,M_w)$ in $X_{d(w)}$. In particular, the unique
$312$-avoiding element in a root-degree fiber is its unique $Q$-rigid quotient
and indexes its open stratum.
\end{proposition}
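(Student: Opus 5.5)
We first compute $\dim\Ext^1(M_w,M_w)$ from the interval decomposition. For the equioriented type-$A$ quiver $1\leftarrow2\leftarrow\cdots\leftarrow N-1$, the standard interval formula gives $\Ext^1([a,b],[c,e])\cong\CC$ exactly when $a<c\leq b+1\leq e$, and $0$ otherwise. The orientation convention has to be checked on one small example, say $[1,1]$ and $[2,2]$ for $N=3$. Here $[r_p,p-1]$ and $[r_q,q-1]$ are the segments of $M_w$ attached to positions $p<q$, and empty segments contribute nothing. Only ordered pairs of segments with $p<q$ can contribute. The condition becomes $r_p<r_q\leq p\leq q-1$, and the last inequality is automatic. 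Pairs with $q<p$ contribute zero because the right endpoints are in the wrong order. Summing over pairs gives the first equality. An empty segment has $r_p=p$, which is incompatible with $r_p<r_q\leq p$, so the convention of dropping empty segments is harmless.

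For the second equality we use Theorem~\ref{thm:right-regular-chamber-ansatz}(iii). The stratum of $w$ is an affine Schubert cell of dimension $\inv(r(w))=\zeta(w)$, since (iv) records cell dimension by $t^{\inv}$. Its codimension in $X_{d(w)}$ is therefore $\dim X_{d(w)}-\zeta(w)$.

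It remains to show that this codimension equals the number of linked pairs. We use Lemma~\ref{prop:framed-right-regular}. The orbit $\cO_{M_w}$ has codimension $\dim\Ext^1(M_w,M_w)$ in $E_{d}$ by Voigt's lemma, since $\dim E_d-\dim\cO_M=\dim\Ext^1(M,M)$ for hereditary algebras. The right-regular locus $E_d^{\rm rr}$ is open by Lemma~\ref{lem:right-regular-generization}. Because $p$ is smooth and $q$ is a principal bundle, codimensions of strata are preserved: $\operatorname{codim}_{X_d}(\text{cell}_w)=\operatorname{codim}_{E_d^{\rm rr}}(\cO_{M_w})$. This gives the equality with $\dim X_{d(w)}-\zeta(w)$ and proves \eqref{eq:self-extension-linked-pairs}.

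For the final assertion, $M_w$ is $Q$-rigid iff the linked-pair count vanishes. By Lemma~\ref{lem:rank-patterns}(ii) this happens iff $w$ avoids $312$. That permutation is unique in its fiber by Theorem~\ref{thm:right-regular-chamber-ansatz}(ii). Its cell has codimension zero, so it is the open stratum.

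The main obstacle is the direction convention in the interval $\Ext$ formula, which must be fixed consistently with the arrows $V_{i+1}\hookrightarrow V_i$. As a cross-check, $\dim X_d-\inv(r)$ should equal the number of non-inversion linked pairs, using $\dim X_d=\sum_i d_i(1+d_{i+1}-d_i)$. We can confirm this on the greedy word, where both sides vanish.
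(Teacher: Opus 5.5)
Your proposal is correct and is essentially the paper's proof. The interval $\Ext$ formula gives the linked-pair count. Voigt's codimension formula, transported through the smooth framed correspondence of Lemma~\ref{prop:framed-right-regular} and combined with the cell dimension $\inv(r(w))$, gives the second equality, and Lemma~\ref{lem:rank-patterns}(ii) gives the final assertion.

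The orientation issue you flag is harmless, although your formula is in fact reversed for $1\leftarrow2\leftarrow\cdots\leftarrow N-1$. The resolution $0\to P_{a-1}\to P_b\to[a,b]\to0$ gives $\Ext^1([a,b],[c,d])\neq0$ iff $c<a\leq d+1\leq b$; for example, $\Ext^1(S_2,S_1)\neq0$. Since $\dim\Ext^1(M_w,M_w)$ sums over all ordered pairs of summands, the total is still the number of linked pairs.
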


\begin{proof}
For the quiver $1\leftarrow\cdots\leftarrow N-1$, the projective
resolution
\[ 0\longrightarrow P_{a-1}\longrightarrow P_b \longrightarrow[a,b]\longrightarrow0 \]
gives
\[
 \dim\Ext^1([a,b],[c,d])=
 \begin{cases}
  1,&c<a\leq d+1\leq b,\\
  0,&\text{otherwise}.
 \end{cases}
\]
Applied to the interval summands of $M_w$, this condition is exactly
$r_p(w)<r_q(w)\leq p$ for $p<q$, proving the first equality in
\eqref{eq:self-extension-linked-pairs}.

By Lemma~\ref{prop:framed-right-regular}, the quotient stratum and the
orbit $\cO_{M_w}$ in the open right-regular locus are smoothly equivalent,
so they have the same codimension. For a representation $M$ of a hereditary
quiver, the Euler-form identity, together with
$\dim\cO_M=\dim G_d-\dim\End(M)$, gives
\[ \op{codim}_{\op{Rep}(Q,d)}\cO_M =\dim\Ext_Q^1(M,M). \]
The right-regular locus is open, hence the same formula computes the
codimension in $X_d$. The Schubert-cell description in
Theorem~\ref{thm:right-regular-chamber-ansatz} identifies the stratum
dimension with $\inv(r(w))$, proving the second equality. The final
assertion follows from the characterization of the sink in
Lemma~\ref{lem:rank-patterns}.
\end{proof}

\subsection{The root-degree projection of the Schubert polytope}

The root-degree map sends the vertex set of the Schubert polytope onto
the admissible root degrees. Its image polytope is their convex hull.

Introduce independent variables $\xi_{a,j}$ for $1\leq a\leq j\leq N-1$
and set
\begin{equation}\label{eq:separated-Schubert-generating-function}
 \widetilde{\mathcal F}_N(\boldsymbol\xi)
 =\prod_{j=1}^{N-1}\left(1+\sum_{a=1}^{j}\xi_{a,j}\right).
\end{equation}
Its Newton polytope is
$\Delta_1\times\cdots\times\Delta_{N-1}$. Under the substitution
\begin{equation}\label{eq:root-degree-substitution}
 \xi_{a,j}\longmapsto u_a u_{a+1}\cdots u_j,
\end{equation}
write the resulting polynomial as $\mathcal F_N(\mathbf u)$.

\begin{corollary}\label{prop:root-degree-projection}
Under the unimodular identification of Theorem~\ref{thm:Schubert-polytope},
\eqref{eq:root-degree-substitution} is the linear map
$\g_w\mapsto d(w)$. Its root-degree fiber on the vertices is the interval $I_d$. Moreover,
\[
\mathcal F_N(\mathbf u)
 =\prod_{j=1}^{N-1}
  \left(1+u_j+u_{j-1}u_j+\cdots+u_1u_2\cdots u_j\right)
 =\sum_d \chi(X_d)\mathbf u^d,
\]
where $d_N=0$ and the sum is over the admissible root degrees. The
product formula for $\chi(X_d)$ is given in
Theorem~\ref{thm:right-regular-chamber-ansatz}\textup{(iv)}.
\end{corollary}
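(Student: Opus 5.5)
The plan has three steps: identify the substitution with the root-degree map on vertices, read off the fibers, and then check the generating-function identity.

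\textbf{Step 1: the substitution is $\g_w\mapsto d(w)$.} Under $\Phi_N$ of Theorem~\ref{thm:Schubert-polytope}, the vertex $\g_w$ corresponds to $\sum_j e_{j,e_j(w)}$. I would match the monomial $\xi_{a,j}$ with the basis vector $e_{j+1,\,j+1-a}$, for $1\leq a\leq j\leq N-1$. Then the $j$th factor $1+\sum_a\xi_{a,j}$ of $\widetilde{\mathcal F}_N$ lists the choices $0,e_{j+1,1},\ldots,e_{j+1,j}$ for the $(j+1)$st simplex factor. The exponent of $\widetilde{\mathcal F}_N$ picked by $w$ is therefore $\prod_{j:\,e_{j+1}(w)>0}\xi_{r_{j+1}(w),\,j}$, because $a=j+1-e_{j+1}(w)=r_{j+1}(w)$. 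The substitution $\xi_{a,j}\mapsto u_a\cdots u_j$ is the linear map sending $e_{j+1,j+1-a}$ to the indicator vector of the interval $[a,j]$. So $\g_w$ goes to the dimension vector of $\sum_{j}[r_{j+1}(w),j]$. This is exactly the multisegment $\mathfrak m_w$ of \eqref{eq:schubert-multisegment}, whose dimension vector is $d(w)$. Linearity is automatic because the substitution is a monomial map on exponents, composed with the unimodular $\Phi_N$.

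\textbf{Step 2: the fibers.} The fiber over $d$ on the vertices is $\{w:d(w)=d\}$. By Proposition~\ref{prop:bruhat-interval} and Theorem~\ref{thm:right-regular-chamber-ansatz}(ii), this set is the interval $I_d=[r_d^{\min},r_d^{\max}]$.

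\textbf{Step 3: the generating function.} The first equality is immediate, since the $j$th factor becomes $1+\sum_{a=1}^j u_a\cdots u_j$. For the second equality, I would expand the product over vertices:
\[\mathcal F_N=\sum_{w\in S_N}\mathbf u^{d(w)}=\sum_d\#\{w:d(w)=d\}\,\mathbf u^d.\]
By Theorem~\ref{thm:right-regular-chamber-ansatz}(iii),(iv), the number of $w$ in the fiber equals the number of affine Schubert cells of $X_d$, which is $\chi(X_d)$. Setting $t=1$ in the $q$-binomial product gives the stated product formula for $\chi(X_d)$. Only admissible $d$ occur, by part (i).

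\textbf{Main obstacle.} There is no real obstacle; the only delicate point is bookkeeping. One must get the index shift between $j$ in $\xi_{a,j}$ and the code position $j+1$ right. One must also check that $e_{j+1}(w)=0$ corresponds to the constant term $1$, that is, to an empty segment.
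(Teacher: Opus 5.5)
Your proposal is correct and takes essentially the same route as the paper. The paper reads each monomial of $\widetilde{\mathcal F}_N$ as a choice of at most one interval $[a,j]$ per right endpoint $j$, that is, a right-regular multisegment and hence a permutation; it notes that $[a,j]$ contributes to $d_i$ exactly for $a\leq i\leq j$, and counts each fiber via part (iv) of the staircase-Grassmannian theorem. Your explicit matching $\xi_{a,j}\leftrightarrow e_{j+1,\,j+1-a}$ under $\Phi_N$ simply spells out the coordinate identification that the paper leaves implicit.
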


\begin{proof}
A monomial of \eqref{eq:separated-Schubert-generating-function} chooses,
for each right endpoint $j$, either no interval or one interval $[a,j]$.
These choices are the completed right-regular multisegments, and
hence are indexed by permutations. The interval $[a,j]$ contributes one
to $d_i$ exactly for $a\leq i\leq j$, which proves the assertion about the
linear map and its root-degree fibers. The coefficient of $\mathbf u^d$ therefore counts the permutations of
root degree $d$, and Theorem~\ref{thm:right-regular-chamber-ansatz}\textup{(iv)} identifies this number
with $\chi(X_d)$.
\end{proof}

\section{Bases of the standard-elementary module}
\label{sec:bases-standard-elementary}


Let $\kappa_{\mathfrak m}$ be the PBW basis and let
$f_{\mathfrak n}$ be the corresponding constructible functions on the
representation varieties. The PBW-to-semicanonical relation is
\[ f_{\mathfrak n} =\sum_{\mathfrak m} f_{\mathfrak n}(X_{\mathfrak m})\kappa_{\mathfrak m}. \]
Dualizing gives
\begin{equation}\label{eq:PBW-semicanonical-dual}
 \kappa_{\mathfrak m}^{*}
 =\sum_{\mathfrak n}
 f_{\mathfrak n}(X_{\mathfrak m})\rho_{Z_{\mathfrak n}}.
\end{equation}
Its support is contained in the orbit-closure order and the diagonal
coefficient is one \cite{YinZhang}. For the dual canonical basis one has
\begin{equation}\label{eq:PBW-canonical-dual}
 \kappa_{\mathfrak m}^{*}
 =\sum_{\mathfrak n}
 IC_{\mathfrak m,\mathfrak n}(1)b^*_{\mathfrak n},
\end{equation}
where $IC_{\mathfrak m,\mathfrak n}(q)$ is the local intersection
cohomology polynomial of $\overline{\cO_{\mathfrak n}}$ along
$\cO_{\mathfrak m}$. For linear quivers these are type-$A$
Kazhdan--Lusztig polynomials \cite{Henderson}.

For $1\leq a\leq j\leq N$, put
\[ Z_{a,j}:=Z(c)_{a,j}=c_{j-a}(j-1). \]
The dual PBW element attached to \eqref{eq:schubert-multisegment} is
Fulton's code monomial:
\[ \kappa_{\mathfrak m_w}^{*} =\prod_{j=2}^{N}Z_{r_j(w),j}=\Code_w. \]
Let $m_N^{\rm fus}(w,z)$ denote the multiplicity of the stable MV cycle
indexed by $\mathfrak m_z$ in the iterated Beilinson--Drinfeld fusion of
the flag-minor cycles occurring in $\Code_w$.

The following proposition collects the PBW--semicanonical,
PBW--canonical, and PBW--MV transition formulas in the present
right-regular indexing.

\begin{proposition}\label{prop:classical-right-regular-transitions}
These three classical transitions involve only right-regular multisegments.
Explicitly,
\begin{align}
 \Code_w&=\sum_{z\preceq_{\rm Z}w}
 D_N^{\rm gen}(w,z)\Gen_z,
 &D_N^{\rm gen}(w,z)&=f_{\mathfrak m_z}(X_{\mathfrak m_w}),
 \label{eq:Dgen}\\
 \Code_w&=\sum_{z\preceq_{\rm Z}w}
 D_N^{\rm can}(w,z)\Can_z,
 &D_N^{\rm can}(w,z)&=IC_{\mathfrak m_w,\mathfrak m_z}(1),
 \label{eq:Dcan}\\
 \Code_w&=\sum_{z\preceq_{\rm Z}w}
 D_N^{\rm MV}(w,z)\MV_z,
 &D_N^{\rm MV}(w,z)&=[\MV_z]\Code_w=m_N^{\rm fus}(w,z).
 \label{eq:DMV}
\end{align}
The three matrices are integral and unitriangular. Moreover,
\[ \cM_N =\bigoplus_{w\in S_N}\ZZ\Gen_w =\bigoplus_{w\in S_N}\ZZ\Can_w =\bigoplus_{w\in S_N}\ZZ\MV_w, \]
\[ D_N^{\rm can}(w,z)>0 \quad\Longleftrightarrow\quad z\preceq_{\rm Z}w, \qquad D_N^{\rm MV}(w,z)\in\NN. \]
\end{proposition}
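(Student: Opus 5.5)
The argument has three steps: close the PBW transitions on the right-regular locus, then read off unitriangularity and integrality, then deduce the basis statements and the sign and support claims.

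\emph{Closure on the right-regular locus.} I would start from the general PBW-to-dual-basis expansions \eqref{eq:PBW-semicanonical-dual} and \eqref{eq:PBW-canonical-dual}, and from their MV analogue given by the fusion product. By the identity $\kappa^*_{\mathfrak m_w}=\Code_w$, these expand $\Code_w$ in the three geometric bases indexed by all multisegments of dimension vector $d(w)$. The coefficients in each expansion are supported on orbit-closure data. In the semicanonical case, $f_{\mathfrak n}(X_{\mathfrak m_w})\neq0$ forces $\cO_{\mathfrak m_w}\subseteq\overline{\cO_{\mathfrak n}}$. In the canonical case, $IC_{\mathfrak m_w,\mathfrak n}$ is nonzero only if $\cO_{\mathfrak m_w}\subseteq\overline{\cO_{\mathfrak n}}$. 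For MV, the same holds after identifying the support of the fused cycle with the conormal-closure order; I would cite the standard degeneration-order bound \cite{BKKMV}. Since $\mathfrak m_w$ is right-regular, Lemma~\ref{lem:right-regular-generization} shows that every such $\mathfrak n$ is right-regular. Hence $\mathfrak n=\mathfrak m_z$ for a unique $z$ with $d(z)=d(w)$. Inside a root-degree fiber, the degeneration order on right-regular orbits coincides with the Zelevinsky order generated by elementary Zelevinsky moves \cite{Zelevinsky}. Restricted to rank words, this is exactly the linked-move order of Proposition~\ref{prop:bruhat-interval}, which is what the notation $z\preceq_{\rm Z}w$ records. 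This yields the three displayed expansions with the stated coefficient formulas.

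\emph{Unitriangularity and integrality.} The diagonal coefficients equal $1$: we have $f_{\mathfrak m_w}(X_{\mathfrak m_w})=1$, we have $IC_{\mathfrak m,\mathfrak m}=1$, and the MV basis is PBW-unitriangular \cite{YinZhang,BKKMV}. Integrality holds because constructible functions take integer values, because $IC(1)\in\NN$ by Kazhdan--Lusztig positivity \cite{Henderson}, and because fusion multiplicities are nonnegative integers. Since $\preceq_{\rm Z}$ is a partial order, each matrix is unitriangular over $\ZZ$ and hence invertible over $\ZZ$. The code elements form a $\ZZ$-basis of $\cM_N$, so each geometric family $\{\mathsf B^\bullet_w\}$ is a $\ZZ$-basis of the same module. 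This gives the direct sum decompositions.

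\emph{The positivity and support claims.} The statement $D^{\rm MV}\in\NN$ was already obtained above. For the canonical matrix, $IC_{\mathfrak m_w,\mathfrak m_z}(1)>0$ holds exactly when $\cO_{\mathfrak m_w}\subseteq\overline{\cO_{\mathfrak m_z}}$. The forward implication is the support bound. The converse holds because the stalk of the IC sheaf at any point of the closure has nonzero degree-zero cohomology, so the polynomial has constant term $1$.

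The main obstacle I expect is the identification of the Zelevinsky order on right-regular multisegments with the rank-word linked-move order. One must check that every degeneration between right-regular orbits factors through right-regular orbits. Lemma~\ref{lem:right-regular-generization} gives this, since intermediate orbits lie above a right-regular orbit. One must also check that the elementary moves between right-regular multisegments are exactly the linked moves. This requires the rank criterion: a type-A orbit degeneration is determined by the ranks of composite maps, and Zelevinsky covers are elementary moves. Restricting these covers to right-regular multisegments reproduces the condition $r_p<r_q\le p$.
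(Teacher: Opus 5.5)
Your generic and canonical steps coincide with the paper's proof. These are the orbit-closure support of \eqref{eq:PBW-semicanonical-dual} and \eqref{eq:PBW-canonical-dual}, closure of the right-regular set by Lemma~\ref{lem:right-regular-generization}, and diagonal one from \cite{YinZhang} and from the constant term of the IC polynomial. Your check that the closure order on right-regular multisegments is generated by the linked moves is correct, and it makes explicit something the paper leaves implicit.

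\textbf{The gap is the MV case.} You assert that the MV expansion of $\Code_w$ is supported on $\{\mathfrak n:\cO_{\mathfrak m_w}\subseteq\overline{\cO_{\mathfrak n}}\}$ ``after identifying the support of the fused cycle with the conormal-closure order,'' citing a degeneration-order bound in \cite{BKKMV}. That identification is exactly what has to be proved, and \cite{BKKMV} does not supply it. Likewise, \cite{YinZhang} concerns the semicanonical basis, so it does not give the MV diagonal coefficient. Without this step, neither the right-regular closure of the MV expansion nor $\cM_N=\bigoplus_w\ZZ\MV_w$ is established.

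\textbf{The missing idea} is the passage from MV-polytope inclusion to orbit generization. Triangularity of MV elements is known in terms of polytope inclusion. In the equioriented type-$A$ orientation, a smaller MV polytope corresponds to a generization of the orbit \cite[Proposition~4.1]{Baumann}.

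\textbf{How the paper does it.} The paper never bounds fusion supports directly. It composes the already right-regular expansion $\Code=D_N^{\rm gen}\Gen$ with the semicanonical-to-MV transition. That transition is unitriangular for polytope inclusion, with strictly smaller polytopes off the diagonal \cite[Propositions~3.9 and~3.11, Theorem~4.8]{BaumannMV}. Baumann's proposition and Lemma~\ref{lem:right-regular-generization} then keep every index right-regular. The support restriction and the diagonal coefficient one follow by composing two unitriangular matrices.

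\textbf{Where fusion enters.} Fusion is used only for nonnegativity. There you need an observation you left implicit: each factor $Z_{a,j}$ of $\Code_w$ is a Levi flag minor and hence itself an MV basis element \cite[Remark~2.10]{BKKMV}. So $\Code_w$ is a product of MV basis elements, and its MV coefficients are nonnegative fusion multiplicities. A direct fusion-support argument is conceivable; compare the paper's later remark on Anderson--Kogan's fusion order \cite{AndersonKoganAlgebra}. It would need that specific input, not a general citation to \cite{BKKMV}.
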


\begin{proof}
Apply \eqref{eq:PBW-semicanonical-dual} to
$\mathfrak m=\mathfrak m_w$. A nonzero term satisfies
$\cO_{\mathfrak m_w}\subseteq\overline{\cO_{\mathfrak n}}$.
Lemma~\ref{lem:right-regular-generization} shows that $\mathfrak n$ is
right-regular, hence $\mathfrak n=\mathfrak m_z$ for a unique $z\in S_N$.
This gives \eqref{eq:Dgen}; unitriangularity follows from \cite{YinZhang}.
The same argument applied to \eqref{eq:PBW-canonical-dual} gives
\eqref{eq:Dcan}. The intersection-cohomology polynomial is nonzero
exactly on the orbit-closure order and has constant term one.

For crystal elements $b,b'$, write
\[ b'\preceq_{\rm pol}b \quad\Longleftrightarrow\quad \op{Pol}(b')\subseteq\op{Pol}(b). \]
The transition from the dual semicanonical basis to the MV basis is
unitriangular with respect to inclusion of MV polytopes
\cite[Proposition~3.11 and Theorem~4.8]{BaumannMV}.
With the present indexing convention, an off-diagonal term has a smaller MV
polytope for $\preceq_{\rm pol}$
\cite[Proposition~3.9]{BaumannMV}. In the equioriented type-$A$
orientation, this polytope inclusion implies that the corresponding
orbit is a generization of the original equioriented orbit
\cite[Proposition~4.1]{Baumann}. Lemma
\ref{lem:right-regular-generization} therefore closes the right-regular
indexing set under the transition, so the MV elements form a basis of
$\cM_N$.

Each factor $Z_{a,j}$ of $\Code_w$ is a Levi flag minor and hence an MV
basis element \cite[Remark~2.10]{BKKMV}. Products of MV basis elements
expand with nonnegative fusion multiplicities \cite{BKKMV}. This gives
\eqref{eq:DMV}; the support restriction and diagonal statement follow from
the two unitriangular transitions through the semicanonical basis.
\end{proof}

Kang--Kashiwara--Kim--Oh proved that every quantum cluster
monomial in the unipotent quantum coordinate algebra belongs, up to a
power of $q^{1/2}$, to the upper global basis \cite{KKKO}. After
specialization at $q=1$, every extended cluster monomial for the standard
cluster structure on $\CC[U_N]$ is therefore a dual canonical basis
element.

\begin{corollary}\label{cor:generic-cluster-monomial-canonical}
If $\Gen_w$ is an extended cluster monomial, then $\Gen_w=\Can_w$.
\end{corollary}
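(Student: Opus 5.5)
Suppose $\Gen_w$ is an extended cluster monomial, say $\Gen_w=\mathbf m$ for some seed $\Sigma$ reachable from $\Sigma_N^U$. The plan is to compare two bases at a single $\g$-vector. First, pointedness: $\Gen_w$ is pointed at $\g_w$ by Proposition~\ref{prop:Schubert-geometric-bases}, so $\mathbf m$ is also pointed at $\g_w$. Second, the KKKO theorem recalled just before the corollary applies: after specialization $q=1$, $\mathbf m$ lies in the dual canonical basis. So $\mathbf m=b^*_{\mathfrak n}$ for some Lusztig datum $\mathfrak n$. What remains is to show that $\mathfrak n=\mathfrak m_w$, that is, $b^*_{\mathfrak n}=\Can_w$.

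For this identification I would use the fact that the dual canonical basis is compatible with the $\g$-vector parametrization. Qin's comparison (the remark after Proposition~\ref{prop:Schubert-geometric-bases}) shows that, after localization at the frozen minors, the dual canonical basis is the common triangular basis. The triangular basis is pointed, and its elements are parametrized bijectively by their $\g$-vectors. The parametrization of $\Can$ by $\g$-vectors is the same as the inverse tropical Chamber Ansatz correspondence $\g\leftrightarrow\mathfrak m$ used to define $\Can_w$, so $b^*_{\mathfrak n}$ is pointed at the $\g$-vector attached to $\mathfrak n$. Two dual canonical elements pointed at the same $\g$-vector must coincide, because a basis of pointed elements has pairwise distinct $\g$-vectors. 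Since $b^*_{\mathfrak n}=\mathbf m$ is pointed at $\g_w$, we get $\mathfrak n=\mathfrak m_w$, and hence $\Gen_w=\mathbf m=\Can_w$.

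An alternative that avoids Qin's theorem is to argue through the semicanonical basis. GLS showed that cluster monomials are dual semicanonical elements $\rho_Z$ for $\Lambda_Q$-rigid components $Z$. The tropical indexing of $\rho_Z$ matches the crystal indexing of $b^*$; this is the Kashiwara--Saito comparison of crystal structures on irreducible components. One then observes that $\mathbf m$ is simultaneously $\rho_{Z_{\mathfrak m_w}}$ and a dual canonical element carrying the same crystal label, which again gives $\mathbf m=b^*_{\mathfrak m_w}$.

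I expect the main obstacle to be exactly this label-matching step: checking that the crystal label of the dual canonical element produced by KKKO is $\mathfrak m_w$ and not some other datum. Everything else is formal. I would settle it by citing the crystal compatibility between the semicanonical and canonical parametrizations of $B(\infty)$, and by recording that the unitriangularity in Proposition~\ref{prop:classical-right-regular-transitions} is taken with respect to the same labels.
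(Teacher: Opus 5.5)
There is a genuine gap, and it is the step you flag yourself. Your first route depends on the claim that $b^*_{\mathfrak n}$ is pointed at the $\g$-vector that the inverse tropical Chamber Ansatz attaches to $\mathfrak n$. Qin's theorem, in the form recalled in the paper, says the localized dual canonical basis is a pointed basis bijectively parametrized by $\g$-vectors. It does not say which $\g$-vector goes with which Lusztig datum. The Chamber Ansatz identification in Proposition~\ref{prop:Schubert-geometric-bases} concerns $\rho_Z$ (via GLS), not $b^*$. So the sentence ``the parametrization of $\Can$ by $\g$-vectors is the same as the inverse tropical Chamber Ansatz correspondence'' restates what must be proved; it does not follow from what you cite.

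Your auxiliary justification is also false in general: ``a basis of pointed elements has pairwise distinct $\g$-vectors'' fails for $\mathbf x^{\g}$ and $\mathbf x^{\g}(1+\widehat y_i)$, which are linearly independent and pointed at the same vector. You need the bijectivity supplied by triangular-basis theory, not a general principle.

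The second route has the same hole. ``$\mathbf m$ is a dual canonical element carrying the same crystal label'' is exactly the claim in question. Kashiwara--Saito gives a crystal isomorphism between components and $B(\infty)$, but two things are still unproved: that an element lying in both bases receives the same label in each, and that $Z_{\mathfrak m}$ matches the Lusztig datum $\mathfrak m$.

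The missing idea is that you need no $\g$-vector or crystal information about the canonical basis, because the Code basis already ties the labels together. Since $\Gen_w\in\cM_N=\bigoplus_v\ZZ\Can_v$ and the dual canonical basis is linearly independent, the KKKO element equal to $\Gen_w$ must be $\Can_v$ for some $v\in S_N$. Inverting the unitriangular transitions $D_N^{\rm gen}$ and $D_N^{\rm can}$ of Proposition~\ref{prop:classical-right-regular-transitions} gives
\[
 \Gen_w\in\Code_w+\sum_{z\prec_{\rm Z}w}\ZZ\,\Code_z,
 \qquad
 \Can_v\in\Code_v+\sum_{z\prec_{\rm Z}v}\ZZ\,\Code_z .
\]
Comparing Code coefficients in $\Gen_w=\Can_v$ yields $v\preceq_{\rm Z}w$ and $w\preceq_{\rm Z}v$, hence $v=w$. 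This is the paper's proof. Your closing remark about unitriangularity ``with respect to the same labels'' is this argument, and it suffices by itself without Qin or Kashiwara--Saito.
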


\begin{proof}
	By the theorem of Kang--Kashiwara--Kim--Oh, one has
	$\Gen_w=\Can_v$ for some $v\in S_N$.  Inverting the generic and canonical
	Code transitions gives unitriangular Code expansions with leading terms
	$\Code_w$ and $\Code_v$, respectively. Hence $v=w$.
\end{proof}

The proof of the following proposition is given in Appendix~\ref{app:classical-transitions}.
\begin{proposition}\label{prop:generic-cover}
For every right-regular Zelevinsky cover $z\lessdot_{\rm Z}w$, one has
\[ D_N^{\rm gen}(w,z)=1. \]
\end{proposition}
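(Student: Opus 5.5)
The plan is to reduce the coefficient to a local, microlocal computation at a codimension-one degeneration, show the relevant orbit closure is smooth there, and then read off multiplicity one. By \eqref{eq:Dgen},
\[
D_N^{\rm gen}(w,z)=f_{\mathfrak m_z}(X_{\mathfrak m_w}).
\]
By Lusztig's description of the semicanonical basis through constructible functions on $\Lambda_{\mathbf d}$, this value is a microlocal multiplicity. Concretely, it is the coefficient of the conormal cycle $[T^*_{\cO_{\mathfrak m_w}}E_{\mathbf d}]$ in the characteristic cycle of the orbit indicator $\one_{\cO_{\mathfrak m_z}}$, up to a global sign fixed by the convention making the diagonal coefficient equal to one. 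I would first state this identification carefully, checking the sign on the diagonal term. The problem is then local along a general point of $\cO_{\mathfrak m_w}$ inside $\overline{\cO_{\mathfrak m_z}}$.

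\emph{Step 1: the cover is a codimension-one degeneration.} I would show that a right-regular Zelevinsky cover $z\lessdot_{\rm Z}w$ satisfies $\op{codim}(\cO_{\mathfrak m_w},\overline{\cO_{\mathfrak m_z}})=1$. By Proposition~\ref{prop:bruhat-interval}, covers in the linked-move order are Bruhat covers in the parabolic interval $I_d$, and these raise $\inv(r)$ by exactly one. By Proposition~\ref{prop:self-extension-codimension}, orbit codimension equals $\dim X_d-\inv(r)$, so the codimension drops by exactly one. I would also use that the right-regular locus is open (Lemma~\ref{lem:right-regular-generization}), so that the local geometry near $\cO_{\mathfrak m_w}$ is seen entirely inside $E_d^{\rm rr}$.

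\emph{Step 2: smoothness of the closure along the divisor.} By Lemma~\ref{prop:framed-right-regular}, the germ of $\overline{\cO_{\mathfrak m_z}}$ along $\cO_{\mathfrak m_w}$ is smoothly equivalent to the germ of a Schubert variety in the iterated Grassmannian bundle $X_d$ of Theorem~\ref{thm:right-regular-chamber-ansatz}(iii), along a Schubert divisor. Schubert varieties in partial flag and Grassmannian bundles are normal, hence regular in codimension one. So $\overline{\cO_{\mathfrak m_z}}$ is a smooth manifold near a general point $x\in\cO_{\mathfrak m_w}$, and $\cO_{\mathfrak m_w}$ is a smooth hypersurface in it. As a cross-check independent of normality, I would note Bongartz's theorem that codimension-one minimal degenerations for representation-directed algebras are smooth.

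\emph{Step 3: the local characteristic cycle.} Near $x$, $\one_{\cO_{\mathfrak m_z}}$ is the indicator of the complement of a smooth hypersurface $H$ in a smooth manifold $S$. Then
\[
\one_{\cO_{\mathfrak m_z}}=\one_S-\one_H ,
\]
so locally
\[
CC(\one_{\cO_{\mathfrak m_z}})=[T^*_SE]\mp[T^*_HE].
\]
The coefficient of $[T^*_{\cO_{\mathfrak m_w}}E]$ therefore has absolute value one, and the sign convention fixed at the start gives $f_{\mathfrak m_z}(X_{\mathfrak m_w})=1$.

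I expect the main obstacle to be the normalization in the opening identification. One must match the constructible function $f_{\mathfrak n}$ in \eqref{eq:PBW-semicanonical-dual}, evaluated at a general point of $Z_{\mathfrak m_w}$, with a signed characteristic-cycle multiplicity, where the sign is governed by $(-1)^{\dim}$ of the strata. The key point is that the parity shift between $\cO_{\mathfrak m_z}$ and $\cO_{\mathfrak m_w}$ is exactly one, and this is the shift that cancels the minus sign in Step 3. I would calibrate this on the smallest example, $N=3$ with the cover $[1,1]+[2,2]\to[1,2]+[2,1]$ (the latter with an empty slot), where $D^{\rm gen}$ can be computed by hand from $\Code_w$. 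Only after that check would I run the general argument.
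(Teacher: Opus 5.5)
\textbf{There is a genuine gap, and it sits in your opening identification, not in Steps~1--3.}

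By \eqref{eq:Dgen}, $D_N^{\rm gen}(w,z)$ is the value of Lusztig's semicanonical function $f_{\mathfrak m_z}$ at a zero-section point $(x,0)$ with $x\in\cO_{\mathfrak m_w}$. Lusztig's construction characterizes $f_{\mathfrak m_z}$ only by its \emph{generic} values on the irreducible components of $\Lambda_{\mathbf d}$: it is $1$ on $Z_{\mathfrak m_z}$ and $0$ on the other components. The functions in his algebra are Euler characteristics of varieties of $(x,y)$-stable flags.

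Nothing in that description gives $\mathrm{CC}(f_{\mathfrak m_z}|_{E_{\mathbf d}})=\pm[Z_{\mathfrak m_z}]$, i.e.\ that the semicanonical basis is the characteristic-cycle basis. Euler characteristics of stable-flag varieties and multiplicities in a characteristic cycle are different kinds of invariants, and you give no argument that they match. So the obstacle is not a sign normalization, as you suggest; it is the existence of the identification itself.

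Even if you grant $\mathrm{CC}(f_Z|_E)=\pm[Z]$ for every component, there is a second problem. Inverting $f_{\mathfrak m_u}|_E=\sum_v D_N^{\rm gen}(v,u)\one_{\cO_{\mathfrak m_v}}$ shows that the coefficient of $[T^*_{\cO_{\mathfrak m_w}}E_{\mathbf d}]$ in $\mathrm{CC}(\one_{\cO_{\mathfrak m_z}})$ is $\pm(D_N^{\rm gen})^{-1}(w,z)$, not $\pm D_N^{\rm gen}(w,z)$. These agree up to sign only because a cover $z\lessdot_{\rm Z}w$ has no intermediate element. Your ``concretely'' formula therefore has to be restricted to covers and derived in this way.

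\textbf{How the paper bridges this.} It uses a concrete, computable element of Lusztig's algebra instead of characteristic cycles.
\begin{enumerate}
\item Take a Reineke desingularization $\pi:\widetilde E_{\mathbf s,\mathbf a}\to X=\overline{\cO_{\mathfrak m_z}}$. By normality it is an isomorphism over $\cO_{\mathfrak m_w}$; your Step~2 yields the same conclusion.
\item Its flag function $h_{\mathbf s,\mathbf a}$ satisfies $h_{\mathbf s,\mathbf a}(x,0)=\chi(\pi^{-1}(x))=1$.
\item By Lemma~\ref{lem:conormal-flag-criterion}, the generic value of $h_{\mathbf s,\mathbf a}$ is $1$ on $Z_{\mathfrak m_z}$ and $0$ on $Z_{\mathfrak m_w}$. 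The vanishing holds because the unique flag over $x$ is stable only for covectors in the hyperplane $N^*_{X/E,x}\subsetneq N^*_{\cO_{\mathfrak m_w}/E,x}$. This is exactly the local geometry of your Step~3.
\item Expand $h_{\mathbf s,\mathbf a}=\sum_B c_Bf_B$, where $c_B$ are these generic values. The support of $h_{\mathbf s,\mathbf a}$ over $X$ and the PBW--semicanonical support of $f_B$ leave only $B\in\{\mathfrak m_z,\mathfrak m_w\}$.
\item Evaluating at $(x,0)$ gives $f_{\mathfrak m_z}(x,0)=1$, with the correct sign and no convention-chasing.
\end{enumerate}
Your Steps~1--3 can be kept. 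They need to feed into an argument of this kind, or into an actual proof that the semicanonical and characteristic-cycle bases coincide, rather than into the unproved identification.
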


\noindent\textbf{The transition triangle.}
For $\bullet=\mathrm{gen},\mathrm{can},\mathrm{MV}$, write
$A_N^\bullet$ for the geometric-to-Schubert transition:
\begin{equation}\label{eq:A-bullet-definition}
 \mathsf B_w^\bullet
 =\sum_{u\in S_N}A_N^\bullet(w,u)\Sch_u(c).
\end{equation}
The classical transitions of
Proposition~\ref{prop:classical-right-regular-transitions} are written
$\Code=D_N^\bullet\mathsf B^\bullet$.
Together with the code-to-Schubert
transition \eqref{eq:Pi-definition}, this gives
\[ \Pi_N=D_N^\bullet A_N^\bullet \qquad (\bullet\in\{\mathrm{gen},\mathrm{can},\mathrm{MV}\}). \]
Thus the transition matrices in the main text compare the five bases
\[ \Code;\quad \Gen,\ \Can,\ \MV;\quad \Sch \]
of the standard-elementary module.

\section{Right-root actions and factorization}
\label{sec:root-crystal-actions}

\subsection{Root-subgroup actions and divided differences}

The right-root strings impose a constraint on every factorization of the
three geometric elements. For $1\leq i<N$, put $u_i(s)=I+sE_{i,i+1}$. For
$f\in\CC[U_N]$, define
\begin{align*}
 (\mathcal L_i(s)f)(Z)
 &=f(u_i(s)Z)=\sum_{r\geq0}s^r\mathcal L_i^{(r)}f(Z),\\
 (\mathcal R_i(s)f)(Z)
 &=f(Zu_i(s))=\sum_{r\geq0}s^r\mathcal R_i^{(r)}f(Z).
\end{align*}
On minors, $\mathcal L_i^{(1)}$ and $\mathcal R_i^{(1)}$ are the
corresponding elementary row and column replacements.

Write $\mathcal R_i=\mathcal R_i^{(1)}$. Right multiplication by
$u_i(s)$ changes only column $i+1$ of $Z(c)$, and hence
\begin{equation}\label{eq:right-root-on-c-coordinate}
 \mathcal R_i(s)c_r(k)
 =c_r(k)+\one_{\{k=i\}}s\,c_{r-1}(i-1).
\end{equation}
Thus
\[ \mathcal R_i c_r(k) =\one_{\{k=i\}}c_{r-1}(i-1). \]
The root operator is a derivation of $\CC[U_N]$, whereas
Fulton's divided difference is an additive endomorphism of the
standard-elementary module. They agree on the coordinate generators, but
not on arbitrary products.

\begin{proposition}\label{prop:divided-difference-straightened-root-action}
For the standard-elementary monomial $[a,b]_i$ of
\eqref{eq:Fulton-divided-difference}, one has
\begin{equation}\label{eq:root-action-nonstandard-product}
 \mathcal R_i[a,b]_i
 =c_a(i-1)c_{b-1}(i-1)
  \prod_{k\ne i-1,i}c_{\alpha_k}(k).
\end{equation}
Fulton's divided difference $\partial_i[a,b]_i$ is the
standard-elementary expression in
\eqref{eq:Fulton-divided-difference}; its classical specialization equals
that of \eqref{eq:root-action-nonstandard-product}. Thus $\partial_i$ is
the standard-elementary lift of the right-root derivation, but the two
operators need not agree in $\CC[U_N]$. On a coordinate generator the
straightening is trivial and
\begin{equation}\label{eq:root-divided-difference-on-generator}
 \partial_i c_r(k)
 =\mathcal R_i c_r(k)
 =\one_{\{k=i\}}c_{r-1}(i-1).
\end{equation}
\end{proposition}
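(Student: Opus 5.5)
The plan is to compute everything from the single formula \eqref{eq:right-root-on-c-coordinate}, using that $\mathcal R_i=\mathcal R_i^{(1)}$ is a derivation of $\CC[U_N]$. The derivation property holds because $\mathcal R_i(s)$ is a ring homomorphism in $s$: it is pullback along right translation. So its coefficient of $s^1$ satisfies the Leibniz rule.

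\textbf{Computing $\mathcal R_i[a,b]_i$.} Right multiplication by $u_i(s)=I+sE_{i,i+1}$ adds $s$ times column $i$ of $Z(c)$ to column $i+1$. Since $Z(c)_{p,q}=c_{q-p}(q-1)$, only the variables $c_r(i)$ with column index $q=i+1$ change. Their increment is $s\,Z(c)_{p,i}=s\,c_{i-p}(i-1)=s\,c_{r-1}(i-1)$, where $r=i+1-p$. This yields \eqref{eq:right-root-on-c-coordinate} and its first-order part $\mathcal R_i c_r(k)=\one_{\{k=i\}}c_{r-1}(i-1)$. The boundary conventions are consistent: $c_0(i)=1$ is sent to $c_{-1}(i-1)=0$, and for $i=1$ we use $c_{r-1}(0)=\delta_{r,1}$.

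Now apply Leibniz to $[a,b]_i$. Every factor at a level $k\neq i$ is killed, so only the factor $c_b(i)$ contributes. This gives $c_a(i-1)\,c_{b-1}(i-1)\prod_{k\ne i-1,i}c_{\alpha_k}(k)$, which is \eqref{eq:root-action-nonstandard-product}. The product contains two factors at level $i-1$ and none at level $i$, so it is not standard-elementary.

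\textbf{Comparing with Fulton's $\partial_i$.} Under \eqref{eq:Fulton-classical-specialization}, write $E=e_a(x_1,\dots,x_{i-1})$ and $F=e_b(x_1,\dots,x_i)=e_b(x_{<i})+x_i\,e_{b-1}(x_{<i})$. The other factors are symmetric in $x_i,x_{i+1}$, either because they involve neither variable or both. Also $E$ is $s_i$-invariant. So the divided difference acts only on $F$, and gives $\partial_iF=e_{b-1}(x_{<i})$. Hence the classical image of $\partial_i[a,b]_i$ equals the classical image of the right-hand side of \eqref{eq:root-action-nonstandard-product}.

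Fulton's formula \eqref{eq:Fulton-divided-difference} is, by the cited property, the unique standard-elementary element with this classical image; uniqueness comes from injectivity on $\cM_N$. It is therefore the straightening of $e_a(x_{<i})e_{b-1}(x_{<i})$ into the basis $e_p(x_{<i})e_q(x_{\le i})$.

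\textbf{Disagreement in $\CC[U_N]$.} To see that the two operators need not agree, I will exhibit one explicit case. Take $i=2$, $a=b=1$. Then $\mathcal R_2[1,1]_2=c_1(1)^2$. Fulton's formula gives $[1,0]_2-[0,1]_2=c_1(1)-c_1(2)$ times the fixed factor, using the conventions on out-of-range indices. These are distinct polynomials in the free variables $c_r(k)$; the difference in degree already shows it. I expect this step to carry the only real subtlety, namely getting the index conventions of \eqref{eq:Fulton-divided-difference} right; the first branch $a\ge b-1$ applies here. If in doubt, I would instead simply note that $c_a(i-1)c_{b-1}(i-1)$ lies outside $\cM_N$ as a monomial whenever $a,b-1\ge1$.

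\textbf{Generators.} Finally, for a single generator $c_r(k)$ with $k\ne i$, the element is symmetric in $x_i,x_{i+1}$, so $\partial_i$ gives $0$; the derivation also gives $0$. For $k=i$, the computation above with $a=0$ gives $c_{r-1}(i-1)$ for both operators. This is already standard, so no straightening occurs, which proves \eqref{eq:root-divided-difference-on-generator}.
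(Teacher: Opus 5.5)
Your treatment of \eqref{eq:root-action-nonstandard-product}, of the classical specialization, and of the generators is the paper's argument, and it is fine. That argument is: column addition, so that only the level-$i$ factor moves; then $s_i$-invariance of the other factors; then Fulton's compatibility together with injectivity on $\cM_N$.

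The one step that fails is your explicit witness for ``need not agree.'' At $i=2$, $a=b=1$, the right-root derivation gives $\mathcal R_2[1,1]_2=c_1(1)c_0(1)\cdot(\text{fixed})=c_1(1)\cdot(\text{fixed})$, not $c_1(1)^2$. In \eqref{eq:Fulton-divided-difference} the subtracted sum starts at $t=1$, and $[-1,2]_2=0$. Hence $\partial_2[1,1]_2=[1,0]_2=c_1(1)\cdot(\text{fixed})$ as well, so the two operators agree on this monomial. Your claimed values also contradict what you had already established. Both operators lower degree by one, yet $c_1(1)^2$ has degree two. Moreover $c_1(1)^2\mapsto x_1^2$ while $c_1(1)-c_1(2)\mapsto -x_2$, which violates the equality of classical images you had just proved.

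Your fallback is the correct argument and should be the main one. Take $a\ge1$ and $b\ge2$ within range (so $a\le i-1$, $b\le i$, e.g.\ $i=2$, $a=1$, $b=2$). Then $\mathcal R_i[a,b]_i$ is a monomial with two nonunit factors at level $i-1$. The $c_r(k)$ with $1\le r\le k\le N-1$ are just the free coordinates $z_{p,q}$, so distinct monomials in them are linearly independent. Hence such a monomial is not in the span of the standard-elementary monomials, whereas $\partial_i[a,b]_i\in\cM_N$. Concretely, with fixed factor $1$, $\mathcal R_2[1,2]_2=c_1(1)^2$ but $\partial_2[1,2]_2=c_1(1)c_1(2)-c_2(2)$. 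This is the paper's own example with $\Sch_{321}(c)=c_1(1)c_2(2)$, given in the remark after the proposition; the paper's proof itself does not argue the disagreement.
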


\begin{proof}
Equation~\eqref{eq:right-root-on-c-coordinate} follows directly from
column addition. Since every standard-elementary monomial has one factor
at level $i$, its coefficient of $s$ is
\eqref{eq:root-action-nonstandard-product} and all higher coefficients
vanish.

Let $X_{i-1}=(x_1,\ldots,x_{i-1})$. Every factor of the classical
specialization of $[a,b]_i$ other than the level-$i$ factor is invariant
under $s_i$, and
\[ \partial_i\bigl(e_a(X_{i-1})e_b(X_{i-1},x_i)\bigr) =e_a(X_{i-1})e_{b-1}(X_{i-1}). \]
The right-hand side is the classical specialization of
\eqref{eq:root-action-nonstandard-product}. By Fulton's compatibility,
it is also the specialization of
\eqref{eq:Fulton-divided-difference}. Taking all other factors equal to
$1$ proves \eqref{eq:root-divided-difference-on-generator}.
\end{proof}

\begin{remark}
The standard-elementary expansion is essential because $\mathcal R_i$
need not preserve $\cM_N$. For example,
$\mathcal R_2\Sch_{321}(c)=c_1(1)^2$, whereas
$\partial_2\Sch_{321}(c) =\Sch_{312}(c) =c_1(1)c_1(2)-c_2(2)$.
Thus the two operators agree on the coordinate generators, while on a
standard-elementary monomial the root derivation may first produce a
nonstandard product.
\end{remark}

\subsection{Right-root degrees and factorization}

We use the bicrystal convention in which the unstarred multisegment
operators of \cite{ClaxtonTingley} record the right-root action. For the
multisegment \eqref{eq:schubert-multisegment}, let $\eps_i$ be the
corresponding string statistic.

\begin{lemma}\label{lem:descent-statistic}
For $w\in S_N$, one has $\eps_i(\mathfrak m_w)=\one_{\{i\in\Des(w)\}}$.
\end{lemma}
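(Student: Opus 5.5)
The plan is to compute $\eps_i(\mathfrak m_w)$ directly from the Claxton--Tingley signature rule for unstarred operators on equioriented multisegments. We then compare the outcome with a purely permutation-theoretic reformulation of ``$i\in\Des(w)$'' in terms of the rank word $r(w)$.

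First, the combinatorial side. We have $r_j=j-e_j(w)$. If $w_i>w_{i+1}$, then every entry before position $i$ that exceeds $w_i$ also exceeds $w_{i+1}$, and $w_i$ itself does too. Hence $e_{i+1}\geq e_i+1$, which gives $r_{i+1}\leq r_i$, and in particular $e_{i+1}\geq1$. If $w_i<w_{i+1}$, then every entry exceeding $w_{i+1}$ also exceeds $w_i$, so $e_{i+1}\leq e_i$ and $r_{i+1}\geq r_i+1>r_i$. Thus
\[ i\in\Des(w)\iff r_{i+1}(w)\leq r_i(w), \]
and in that case the slot $[r_{i+1},i]$ of \eqref{eq:schubert-multisegment} is a genuine nonempty segment.

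Second, the crystal side. For the equioriented Lusztig data, the $i$-signature of $\mathfrak m$ is read off from segments ending at $i$ and segments ending at $i-1$:
\begin{itemize}
\item each segment ending at $i$ contributes a ``$-$'' (it can lose its endpoint $i$ under $\tilde e_i$);
\item each segment ending at $i-1$ contributes a ``$+$'' (it can be extended to $i$ under $\tilde f_i$).
\end{itemize}
These signs are ordered by starting point, and $\eps_i$ is the number of unmatched ``$-$''. Because $\mathfrak m_w$ is right-regular, there is at most one segment $[r_{i+1},i]$ ending at $i$ and at most one segment $[r_i,i-1]$ ending at $i-1$. Therefore $\eps_i\in\{0,1\}$, and $\eps_i=1$ exactly when the minus exists and is not cancelled.

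The matching rule I expect, to be checked against \cite{ClaxtonTingley}, is that $[r_i,i-1]$ cancels $[r_{i+1},i]$ precisely when $r_i<r_{i+1}$, i.e.\ when the longer segment ends at $i-1$. Granting this, the cases are as follows.
\begin{itemize}
\item If $e_{i+1}=0$, there is no minus, so $\eps_i=0$; and $w_{i+1}$ exceeds all earlier entries, so $i\notin\Des(w)$.
\item If $e_{i+1}>0$ and $e_i=0$, there is no plus, so $\eps_i=1$; and $r_{i+1}\leq i=r_i$, so $i$ is a descent.
\item If both segments exist, then $\eps_i=1\iff r_{i+1}\leq r_i\iff i\in\Des(w)$.
\end{itemize}

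The main obstacle is pinning down the exact orientation and tie-breaking of the Claxton--Tingley bracketing under the paper's convention that unstarred operators record the right-root action. This covers which end of a segment is modified, and whether cancellation uses $<$ or $\leq$ on starting points. I would first fix the convention on small cases. In $N=3$ one can compute $\mathcal R_i$-string lengths directly from \eqref{eq:right-root-on-c-coordinate} on $\Code_w$, whose top-degree $\mathcal R_i$-behaviour agrees with that of the geometric element by unitriangularity. For instance, $\mathcal R_2$ kills $c_1(1)=\Code_{213}$, matching $2\notin\Des(213)$. An alternative cross-check is the description $\eps_i=\max\{r:\mathcal R_i^{(r)}\mathsf B_w\neq0\}$ together with Proposition~\ref{prop:divided-difference-straightened-root-action}. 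I would use this route if the convention remains ambiguous.
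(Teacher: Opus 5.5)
Your argument is the paper's: right-regularity leaves only $[r_i(w),i-1]$ and $[r_{i+1}(w),i]$, and your rule ``cancel iff $r_i(w)<r_{i+1}(w)$'' is exactly the paper's reading of the Claxton--Tingley right-endpoint signature, since $e_{i+1}(w)\le e_i(w)\iff r_i(w)<r_{i+1}(w)$; the paper orders by length with ties putting the segment ending at $i$ first, so the rule means the segment ending at $i-1$ is \emph{weakly} longer and equal lengths cancel, and the convention is not a real obstacle. Drop the proposed $\Code_w$ cross-check, however, because unitriangularity does not preserve right-root degrees: $\Code_{312}=c_1(1)c_1(2)$ has $\op{rdeg}_2=1$, whereas $\Can_{312}=\Sch_{312}(c)=c_1(1)c_1(2)-c_2(2)$ has $\op{rdeg}_2=0$, matching $2\notin\Des(312)$.
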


\begin{proof}
Use the right-endpoint signature of Claxton--Tingley
\cite[Definition~3.2]{ClaxtonTingley}. Since $\mathfrak m_w$ is
right-regular, only the segments ending at $i-1$ and $i$ can occur:
\[ [r_i(w),i-1],\qquad [r_{i+1}(w),i]. \]
The first contributes a right parenthesis and has length $e_i(w)$; the
second contributes a left parenthesis and has length $e_{i+1}(w)$. If
$w_i<w_{i+1}$, then $e_{i+1}(w)\leq e_i(w)$. In the equality case,
the secondary ordering in the right-endpoint signature places the left
parenthesis before the right one. Thus the left parenthesis is absent or
precedes and cancels the right parenthesis. If
$w_i>w_{i+1}$, then $e_{i+1}(w)\geq e_i(w)+1$, so the right parenthesis
precedes the left parenthesis and exactly one left parenthesis remains.
\end{proof}

For $0\ne f\in\CC[U_N]$, define its $i$th \emph{right-root degree} by
\[ \op{rdeg}_i(f) =\deg_s f(Zu_i(s)) =\max\{r\geq0:\mathcal R_i^{(r)}f\ne0\}. \]

\begin{proposition}\label{prop:factor-descent}
Let $w\in S_N$ and $\bullet\in\{\mathrm{gen},\mathrm{can},\mathrm{MV}\}$. If
$\mathsf B_w^\bullet=f_1\cdots f_m$ in $\CC[U_N]$, with every $f_a$ nonconstant, then
\[ m\leq |\Des(w)|. \]
More precisely, the sets
\[ S_a=\{i:\op{rdeg}_i(f_a)>0\},\qquad 1\leq a\leq m, \]
are nonempty and form a set partition of $\Des(w)$.
\end{proposition}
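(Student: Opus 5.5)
The plan is to combine two facts: right-root degree is additive under products, and for each of the three geometric elements it equals the descent indicator.

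\textbf{Step 1: additivity.} Since $\CC[U_N]$ is a domain, the polynomial-in-$s$ map $f\mapsto f(Zu_i(s))$ is multiplicative. Leading coefficients in $s$ multiply to a nonzero element, so
\[ \op{rdeg}_i(f_1\cdots f_m)=\sum_{a=1}^m\op{rdeg}_i(f_a). \]

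\textbf{Step 2: the degree of $\mathsf B_w^\bullet$.} We show $\op{rdeg}_i(\mathsf B_w^\bullet)=\eps_i(\mathfrak m_w)$, which equals $\one_{\{i\in\Des(w)\}}$ by Lemma~\ref{lem:descent-statistic}. All three bases are biperfect (or at least perfect for the right action) with respect to the bicrystal $B(\infty)$, in the convention fixed before Lemma~\ref{lem:descent-statistic}.
\begin{enumerate}[(i)]
\item For the dual canonical basis this is Kashiwara's theory: the maximal $r$ with $e_i^{(r)}b^*\neq0$ is $\eps_i(b)$. Here the divided-power operators $e_i^{(r)}$ correspond to $\mathcal R_i^{(r)}$ on $\CC[U_N]$.
\item For the dual semicanonical basis we cite Lusztig and Geiss--Leclerc--Schr\"oer. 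The value $\eps_i$ of a component equals the dimension of the $i$-socle or top of a general point, and $\mathcal R_i^{(\eps_i)}\rho_Z=\rho_{Z'}\neq0$ while higher powers vanish.
\item For the MV basis we cite Baumann--Kamnitzer--Knutson \cite{BKKMV}, whose biperfectness supplies the same property.
\end{enumerate}
The main obstacle is matching conventions. We must check that the paper's "unstarred right-root" operators are exactly the derivations $\mathcal R_i$ rather than $\mathcal L_i$, so that the string statistic is the $s$-degree of $f(Zu_i(s))$. A sanity check on a generator confirms this: $\op{rdeg}_i(c_r(k))=\one_{\{k=i,\,r\geq1\}}$ by \eqref{eq:right-root-on-c-coordinate}. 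This matches Lemma~\ref{lem:descent-statistic} for $\Code_w$ when $w$ is a single cycle-type permutation whose code element is a geometric element.

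\textbf{Step 3: combine.} By Steps 1 and 2, $\sum_a\op{rdeg}_i(f_a)\in\{0,1\}$, with value $1$ exactly when $i\in\Des(w)$. Degrees are nonnegative integers, so for each $i\in\Des(w)$ exactly one $a$ has $\op{rdeg}_i(f_a)=1$, and for $i\notin\Des(w)$ none has positive degree. Hence the sets $S_a$ are pairwise disjoint with union $\Des(w)$.

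\textbf{Step 4: nonemptiness.} Suppose $f_a$ is nonconstant but $\op{rdeg}_i(f_a)=0$ for every $i$. Then $f_a$ is invariant under right multiplication by every $u_i(s)$. These elements generate $U_N$, so $f_a$ is invariant under right translation by $U_N$, which acts transitively on $U_N$. Thus $f_a$ is constant, a contradiction. So every $S_a$ is nonempty, and therefore $m\leq|\Des(w)|$.
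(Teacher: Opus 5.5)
Your proposal is correct and follows the paper's proof essentially step for step. The three steps match: additivity of $\op{rdeg}_i$ from multiplicativity of the right-root coaction in the domain $\CC[U_N][s]$; the identification $\op{rdeg}_i(\mathsf B_w^\bullet)=\eps_i(\mathfrak m_w)=\one_{\{i\in\Des(w)\}}$ via (bi)perfectness together with Lemma~\ref{lem:descent-statistic}; and nonemptiness of each $S_a$ because the simple right-root subgroups generate $U_N$. The convention issue you flag is the one the paper fixes by fiat just before Lemma~\ref{lem:descent-statistic}, and your sanity check on $\Code_w$ is not needed for the argument.
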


\begin{proof}
For nonzero $f,g\in\CC[U_N]$, the right-root coaction is multiplicative:
\[ (fg)(Zu_i(s))=f(Zu_i(s))g(Zu_i(s)). \]
Both factors on the right are nonzero, since their specialization at $s=0$
is respectively $f$ and $g$. Since $\CC[U_N][s]$ is a domain,
\begin{equation}\label{eq:right-root-degree-additive}
 \op{rdeg}_i(fg)
 =\op{rdeg}_i(f)+\op{rdeg}_i(g).
\end{equation}
If $\op{rdeg}_i(f)=0$ for every $i$, then $f$ is invariant under
each simple right-root subgroup. These subgroups generate $U_N$, so $f$
is invariant under every right translation of $U_N$ and is therefore
constant. Hence each $S_a$ is nonempty.

The coefficient operators $\mathcal R_i^{(r)}$ are the divided powers of
the infinitesimal right-root operator. By
Proposition~\ref{prop:Schubert-geometric-bases}, the three elements are the
dual semicanonical, dual canonical, and MV basis elements corresponding to the crystal element $\mathfrak m_w$. These are biperfect bases, so perfectness for the
right action identifies the largest nonzero divided power with the
corresponding crystal string statistic; see
\cite[Sections~1.2, 1.4, and 1.7]{KamnitzerPerfect}. Thus
Lemma~\ref{lem:descent-statistic} gives
\[ \op{rdeg}_i(\mathsf B_w^\bullet) =\eps_i(\mathfrak m_w) =\one_{\{i\in\Des(w)\}}. \]
Applying \eqref{eq:right-root-degree-additive} to the given factorization
yields
\[ \one_{\{i\in\Des(w)\}} =\sum_{a=1}^{m}\op{rdeg}_i(f_a) \qquad(1\leq i<N). \]
Therefore every positive summand is equal to one, no index belongs to two
of the sets $S_a$, and their union is $\Des(w)$. The asserted bound
follows.
\end{proof}

\part{Basis Coincidences, Homogeneity, and Unit Columns}

\medskip
\noindent\textbf{Unit rows and columns.}
Let $T=(T(v,u))$ be the transition from a basis $\{B_v\}$ of $\cM_N$ to
the universal Schubert basis. The condition
\[ T(w,u)=\delta_{w,u}\qquad(u\in S_N) \]
is a unit-row condition and is equivalent to the basis coincidence
$B_w=\Sch_w(c)$. The condition
\[ T(v,w)=\delta_{v,w}\qquad(v\in S_N) \]
is a unit-column condition. Since $T$ is invertible, the latter is
equivalent to the $w$th column of $T^{-1}$ being the unit column.
These two conditions need not coincide.

\section{The two Catalan pattern classes}
\label{sec:pattern-coincidence}

\subsection{Strict minors and \texorpdfstring{$321$}{321}-avoidance}
\label{sec:strict-minors}

For $w\in S_N$, let
\[ F(w)=\{i:w(i)>i\},\qquad G(w)=w(F(w)). \]
If $w$ avoids $321$, write $F(w)=\{f_1<\cdots<f_t\}$ and
$G(w)=\{g_1<\cdots<g_t\}$, and put
\[ h_r(p)=\det Z(c)_{[p,p+r-1],[p+1,p+r]} =(-1)^r(Z(c)^{-1})_{p,p+r}. \]
For equal-cardinality subsets $I=\{i_1<\cdots<i_s\}$ and
$J=\{j_1<\cdots<j_s\}$, call $(I,J)$ \emph{strict} if $i_a<j_a$ for
all $a$. If $v$ avoids $321$, then
\[ I(v)=\Exc(v),\qquad J(v)=v(I(v)) \]
is strict; conversely, every strict pair determines a unique
$321$-avoiding permutation.

\begin{theorem}\label{thm:single-strict-minor}
Let $w\in\Av_N(321)$ and put $v=w^{-1}$. Then
\begin{equation}\label{eq:single-minor}
 \Sch_w(c)
 =\det\bigl(h_{g_i-f_j}(f_j)\bigr)_{1\leq i,j\leq t}
 =\Delta_{G(w)^c,F(w)^c}(Z(c))
 =\Delta_{I(v),J(v)}(Z(c)).
\end{equation}
\end{theorem}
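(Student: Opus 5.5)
Throughout, the plan is to work in $\cM_N$ and use Fulton's injectivity of \eqref{eq:Fulton-classical-specialization}: all three right-hand sides will be shown to lie in $\cM_N$ (after straightening), and their classical specializations will be compared with $\Sch_w(x)$. Since a product of minors of $Z(c)$ generally leaves $\cM_N$, the first task is to show that each of the three determinants is already a $\ZZ$-combination of standard-elementary monomials. For the minor $\Delta_{I(v),J(v)}(Z(c))$ with $(I,J)$ strict, expand the determinant. Each term is a product over columns $j_a$ of the entries $Z_{i_{\sigma(a)},j_a}=c_{j_a-i_{\sigma(a)}}(j_a-1)$, one factor per distinct level $j_a-1$. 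The $j_a$ are distinct, so every term is a standard-elementary monomial. Hence $\Delta_{I,J}(Z(c))\in\cM_N$, and the same argument applies to $\Delta_{G^c,F^c}$ because it uses one entry from each column in $F(w)^c$.

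\textbf{Step 1: the classical identity.} Under the specialization, $Z(c)$ becomes the matrix $E$ with $E_{p,q}=e_{q-p}(x_1,\dots,x_{q-1})$. The known formula I would use here is that a $321$-avoiding Schubert polynomial is a flagged skew Schur function (Billey--Jockusch--Stanley), equivalently a single minor of this flagged Jacobi--Trudi matrix in elementary functions. Concretely, I will first prove the identity $\Sch_w(x)=\Delta_{I(v),J(v)}(E)$ by induction on $\ell(w)$ using divided differences. In the base case $w$ is the identity, where $I=J=\emptyset$ and both sides are $1$. For the inductive step, choose a descent $i$ of $w$. Then $ws_i$ is again $321$-avoiding, and one checks that $\partial_i$ acts on the minor by moving the index $i+1\in J$ to $i$, or kills it. Since $\partial_i$ acts on column $i+1$ through \eqref{eq:root-divided-difference-on-generator}, it is a derivation on the entries of that column, and these are the only entries involving $x_i$ asymmetrically. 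One then also needs uniqueness: a polynomial of degree $\ell(w)$ whose divided differences $\partial_i$ agree with those of $\Sch_w$ for all $i$, and which lies in the span of the $\Code$, equals $\Sch_w$. This follows from the nil-Coxeter characterization together with the vanishing of constant terms in positive degree.

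\textbf{Step 2: the other two forms.} The equality $\Delta_{G^c,F^c}(Z)=\Delta_{I(v),J(v)}(Z)$ is Jacobi's complementary-minor identity applied to the unitriangular $Z$, combined with $h_r(p)=(-1)^r(Z^{-1})_{p,p+r}$. Note that $I(v)=\Exc(w^{-1})=G(w)$-type data, and complementation exchanges rows and columns. The sign cancels because $\det Z=1$ and the index sums match. The remaining identity, $\det(h_{g_i-f_j}(f_j))=\Delta_{G^c,F^c}(Z)$, is again Jacobi, now read through $Z^{-1}$: the $t\times t$ minor of $Z^{-1}$ on rows $F$ and columns $G$ equals the complementary minor of $Z$, and the entries of $Z^{-1}$ are the $\pm h$. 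These are polynomial identities in $\ZZ[U_N]$, so they hold before specialization.

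\textbf{Main obstacle.} I expect the hardest part to be the inductive divided-difference step in Step 1. The difficulty is verifying that applying $\partial_i$ to the strict minor gives exactly the strict minor attached to $ws_i$, in particular when $i$ and $i+1$ both lie in $J$ or both appear as rows. In that case the result must vanish, or the minor must be re-expressed using Fulton's straightening. To handle it, I would first try to reduce $\partial_i$ on the specialized minor to a column operation, using the identity $e_r(X_i)=e_r(X_{i-1})+x_ie_{r-1}(X_{i-1})$. If that reduction proves too messy, the fallback is to cite Fulton's Grassmannian/vexillary determinant together with the fact that $321$-avoiding permutations have skew flagged shapes.
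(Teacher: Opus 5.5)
Your architecture (prove one form classically, lift it by Fulton's injectivity, connect the forms by polynomial identities in $\ZZ[U_N]$) is sound, and Step~1 is a genuinely different route from the paper's. The paper cites the Billey--Jockusch--Stanley flagged Jacobi--Trudi formula for the $h$-determinant and transports it by Jacobi complementation. However, the middle link of Step~2 is wrong as stated.

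Jacobi's complementary-minor identity relates a minor of $Z$ to a minor of $Z^{-1}$, so it cannot turn $\Delta_{G(w)^c,F(w)^c}(Z)$ into another minor of $Z$. Your identification of $I(v)=\Exc(w^{-1})$ with ``$G(w)$-type data'' is also false. In fact $G(w)=\{w(i):w(i)>i\}$, whereas $I(v)=\{w(i):w(i)<i\}$ and $J(v)=\{i:w(i)<i\}$. The correct relation is
\[
 G(w)^c=I(v)\sqcup\{f:w(f)=f\},\qquad F(w)^c=J(v)\sqcup\{f:w(f)=f\}.
\]
The equality of the two minors is then deletion of aligned unit pivots, which is what the paper does. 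The lemma you need is this: if $w$ avoids $321$ and $w(f)=f$, then $w([1,f-1])=[1,f-1]$, since otherwise some $i<f<j$ has $w(i)>f>w(j)$. Consequently the chosen rows and the chosen columns have equally many indices below $f$. The submatrix is therefore block upper triangular with the $1\times1$ block $Z_{f,f}=1$, and row and column $f$ can be removed. Your other Jacobi step, $\det(h_{g_i-f_j}(f_j))=\Delta_{G^c,F^c}(Z)$, is correct.

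The same lemma is what your ``main obstacle'' in Step~1 actually requires. Fulton's straightening plays no role there, because you are computing with honest polynomials, and row indices are irrelevant to $s_i$-invariance. The mechanism you describe is right: only column $i+1$ of $E$ fails to be $s_i$-invariant, the minor is linear in that column with $s_i$-invariant cofactors, and $\partial_i$ sends column $i+1$ of $E$ to column $i$. What ``one checks'' must contain is the following.

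\begin{enumerate}
 \item If $i\in\Des(w)$, put $a=v^{-1}(i+1)<b=v^{-1}(i)$. Avoidance of $321$ forces $a\le i<b$, so $i+1\in J(v)$ and $i\notin J(v)$.
 \item If moreover $a<i$, the moved pair is exactly $(I(s_iv),J(s_iv))$.
 \item If instead $a=i$, then $i$ becomes a fixed point of $s_iv$. The moved minor is $\Delta_{I(s_iv)\sqcup\{i\},J(s_iv)\sqcup\{i\}}$, and you need the aligned-pivot deletion above.
 \item If $i\notin\Des(w)$ and $i+1\in J(v)$, then the value $i$ sits at an excedance of $v$. Hence $i\in J(v)$, and the moved minor has a repeated column.
\end{enumerate}

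Your uniqueness argument is valid, because the minor lies in the span of the $\Sch_u$, $u\in S_N$, and has no constant term. With the checks above, Step~1 gives a self-contained proof of the classical identity without quoting BJS, at the cost of this case analysis.
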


\begin{proof}
The Billey--Jockusch--Stanley formula is the corresponding flagged
Jacobi--Trudi determinant after classical specialization
\cite{BJS,ACT}. Reversing rows and columns gives the first determinant
in \eqref{eq:single-minor}. The identity
$h_r(p)=(-1)^r(Z(c)^{-1})_{p,p+r}$ and Jacobi complementation give
$\Delta_{G(w)^c,F(w)^c}(Z(c))$. Every determinant monomial contains at
most one factor of each level $k$, because level $k$ occurs only in column
$k+1$; after adjoining the missing factors $c_0(k)=1$, it is a
standard-elementary monomial. Hence the minor lies in $\cM_N$. Its
classical specialization is $\Sch_w(x)$, so Fulton's injectivity proves
the universal identity. Deleting the common fixed points in the
complementary row and column sets gives
$\Delta_{I(v),J(v)}(Z(c))$.
\end{proof}

Every nonzero minor of an upper unitriangular matrix has a unique strict
reduction, obtained by deleting aligned unit pivots. A strict pair
$(I,J)$ is cut after $a$ whenever $i_{a+1}>j_a$; upper triangularity gives
\begin{equation}\label{eq:strict-factorization}
 \Delta_{I,J}(Z(c))
 =\prod_{\nu}\Delta_{I_\nu,J_\nu}(Z(c)),
\end{equation}
where the factors are the connected pieces of the strict pair. For a seed
index $(a,r)$, let $\varepsilon_{a,r}$ denote the corresponding standard
basis vector, with $\varepsilon_{0,r}=0$.

\begin{proposition}\label{prop:strict-minor-cluster}
For every strict pair $(I,J)$, the minor $\Delta_{I,J}(Z(c))$ is an
extended cluster monomial for the standard cluster structure on
$\CC[U_N]$. Its nonunit factors are the connected factors in
\eqref{eq:strict-factorization}. If $(I,J)$ is connected, the minor is either a mutable cluster variable
or one of the frozen flag minors. Its $\g$-vector is
\begin{equation}\label{eq:strict-g-vector}
 \g_{I,J}
 =\sum_{a=1}^{s}
 \bigl(
  \varepsilon_{i_a,j_a-i_a}
  -\varepsilon_{i_a-1,j_a-i_a}
 \bigr).
\end{equation}
\end{proposition}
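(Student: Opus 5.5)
The plan is to reduce to connected strict pairs, identify each connected strict minor with a cluster variable of a seed reachable from $\Sigma_N^U$, and then assemble the $\g$-vector additively. The factorization \eqref{eq:strict-factorization} comes first. If $(I,J)$ is cut after $a$, i.e.\ $i_{a+1}>j_a$, then the submatrix $Z(c)_{I,J}$ is block upper triangular. The entries in rows $i_{a+1},\dots$ and columns $j_1,\dots,j_a$ vanish because $i_b>j_c$ for $b>a\geq c$. Hence the determinant is the product of the diagonal block minors. Since $\g$-vectors are additive under products of pointed elements, it then suffices to treat connected pairs, provided all connected pieces of a given strict pair lie in a common cluster.

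For a connected strict pair, the plan is to relate $\Delta_{I,J}$ to the generalized-minor description of the cluster structure, as in \cite{GLSinitial}. Every minor $\Delta_{I,J}$ of $Z\in U_N$ is a generalized minor $\Delta_{\varpi_s,\,u\varpi_s}$ restricted to $U_N$, for some Weyl element $u$ determined by $(I,J)$. The GLS/BFZ cluster structure on $\CC[U_N]$ contains as cluster variables all chamber minors attached to reduced words of $w_0$. So I would show that a connected strict pair arises as a chamber minor $\Delta_{[1,s],J'}$ after the left translation $Z\mapsto$ (row-shift). Concretely, connectedness means $i_{a+1}\leq j_a$, and I would construct a reduced word of $w_0$ in which this minor appears as a chamber set. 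I expect to use the fact that strict connected pairs are exactly the ``interval-like'' pairs appearing in the double wiring diagrams for $U_N$. The frozen case is the one in which $I=[1,s]$ and $J=[N-s+1,N]$ up to the normalization of the triangular seed.

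The compatibility of pieces is what I expect to be the main obstacle. I would argue it via weak separation (Leclerc--Zelevinsky): the connected pieces of one strict pair occupy disjoint row and column intervals, separated by the cuts. They are therefore pairwise weakly separated, and by Oh--Postnikov--Speyer type maximality they extend to a common cluster. An alternative route is to check, using the $\Lambda_Q$-rigidity criterion, that the corresponding modules have vanishing mutual $\Ext^1$. This would follow from the disjointness of supports in the multisegment picture of Section~\ref{sec:right-regular-representations}. Reachability from $\Sigma_N^U$ then follows because all wiring-diagram seeds are mutation equivalent.

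Finally, for the $\g$-vector formula \eqref{eq:strict-g-vector}, I would compute the leading exponent by the inverse tropical Chamber Ansatz, or directly. The diagonal monomial $\prod_a Z_{i_a,j_a}=\prod_a c_{j_a-i_a}(j_a-1)$ is the dual PBW leading term. Each factor $Z_{i,j}=c_{j-i}(j-1)$ has $\g$-vector $\varepsilon_{i,j-i}-\varepsilon_{i-1,j-i}$. This is a telescoping expression using $x_{i,r}/x_{i-1,r}$, consistent with \eqref{eq:Schubert-g-vector} for a single nonzero code entry. Pointedness of cluster monomials and additivity of $\g$-vectors over the diagonal term then give \eqref{eq:strict-g-vector}. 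I would verify this against \eqref{eq:Schubert-g-vector} on one factor as a sanity check.
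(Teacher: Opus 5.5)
Your factorization step (block upper triangularity) and your $\g$-vector computation from the diagonal term are fine and match the paper. The central step does not work, however. A connected strict minor is in general \emph{not} the restriction to $U_N$ of any flag minor $\Delta_{[1,s],J'}$. So it cannot be a chamber minor of any wiring diagram, whichever reduced word of $w_0$ you choose.

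Already for $N=4$, the connected strict pair $(\{1,3\},\{3,4\})$ gives $\Delta_{\{1,3\},\{3,4\}}(Z(c))=Z_{1,3}Z_{3,4}-Z_{1,4}$, of torus weight $\varepsilon_1-\varepsilon_4$. The only flag minors $\Delta_{[1,s],J}$ of that weight are $Z_{1,4}$, $Z_{1,2}Z_{2,4}-Z_{1,4}$ and $x_{3,1}=Z_{1,2}(Z_{2,3}Z_{3,4}-Z_{2,4})-Z_{1,3}Z_{3,4}+Z_{1,4}$, and none of them equals it. It is nevertheless a cluster variable: mutating $\Sigma_4^U$ at $(2,1)$ gives
\[
 x_{1,2}x_{3,1}+x_{1,1}x_{2,2}=x_{2,1}\,(Z_{1,3}Z_{3,4}-Z_{1,4}).
\]
The ``left translation by a row shift'' cannot close this gap. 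Translating by a permutation matrix leaves $U_N$ and is not a symmetry of the cluster structure on $\CC[U_N]$. Allowing column-flag minors $\Delta_{K,[N-s+1,N]}$ as well does not help either: $(\{1,3\},\{3,5\})$ in $U_5$ gives $Z_{1,3}Z_{3,5}-Z_{1,5}$, which is neither a row-flag nor a column-flag minor.

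The same defect undermines your compatibility step. Weak and strong separation (Leclerc--Zelevinsky, Oh--Postnikov--Speyer) are criteria for families of flag minors, and the connected pieces are not flag minors in general. Your $\Ext^1$-vanishing alternative only shows that the direct sum is rigid. You would still need each piece to be reachable from $\Sigma_N^U$, and a theorem turning pairwise compatibility of reachable cluster variables into a common reachable cluster.

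What is missing is a source of cluster variables large enough to contain every connected strict minor. The paper gets this from the Hernandez--Leclerc dictionary. The segments $[i_a,j_a-1]$ form a type-$A$ snake, and Duan--Schiffler show that the corresponding snake module yields a cluster monomial, realized as $\Delta_{I,J}(Z(c))$. The cut condition $i_{a+1}>j_a$ is exactly the break between prime snake factors, and prime snakes give mutable cluster variables or frozen flag minors. This single input gives both the connected case and the fact that all connected pieces lie in one cluster.
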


\begin{proof}
Associate to $(I,J)$ the ordered type-$A$ segment data $[i_1,j_1-1],\ldots,[i_s,j_s-1]$.
Under the Hernandez--Leclerc dictionary these data form a snake; the cut
condition $i_{a+1}>j_a$ is exactly a break between its prime snake factors.
The dictionary and the prime factorization are recalled in
\cite[Sections~3.3 and~5.1]{DLL}. Duan--Schiffler prove that the truncated
$q$-character of every type-$A$ snake module is a cluster monomial
\cite[Theorem~1.4]{DuanSchiffler}; under the generalized-minor realization
of Proposition~\ref{thm:unipotent-seed}, the resulting element is
$\Delta_{I,J}(Z(c))$. Thus the minor is an extended cluster monomial for
the standard cluster structure. Moreover, a snake is prime exactly when it
has no such cut, and prime type-$A$ snakes give mutable cluster variables
\cite[Theorems~3.3 and~5.9]{DLL}; the boundary cases are the frozen flag
minors of the standard seed. This proves the assertions about variables and
factorization.

The diagonal term of the minor is
$\prod_a Z_{i_a,j_a}$. The inverse tropical Chamber Ansatz sends the
factor $Z_{i_a,j_a}$ to
$\varepsilon_{i_a,j_a-i_a} -\varepsilon_{i_a-1,j_a-i_a}$. 
Additivity of $\g$-vectors on an extended cluster monomial gives
\eqref{eq:strict-g-vector}.
\end{proof}

\begin{corollary}\label{cor:321-transition}
If $w$ avoids $321$, then
$\Gen_w=\Can_w=\MV_w=\Sch_w(c)$, 
and, for $\bullet\in\{\mathrm{gen},\mathrm{can},\mathrm{MV}\}$,
\[ A_N^\bullet(w,u)=\delta_{w,u}. \]
\end{corollary}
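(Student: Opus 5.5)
The plan is to combine Theorem~\ref{thm:single-strict-minor} and Proposition~\ref{prop:strict-minor-cluster} to see that $\Sch_w(c)$ is an extended cluster monomial, and then pin down which geometric element it equals by matching leading terms.

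\textbf{Step 1.} For $w\in\Av_N(321)$, Theorem~\ref{thm:single-strict-minor} gives $\Sch_w(c)=\Delta_{I(v),J(v)}(Z(c))$ with $v=w^{-1}$, and $(I(v),J(v))$ is strict. Proposition~\ref{prop:strict-minor-cluster} then says this minor is an extended cluster monomial. By the Kang--Kashiwara--Kim--Oh theorem recalled before Corollary~\ref{cor:generic-cluster-monomial-canonical}, it equals $\Can_v'$ for some $v'\in S_N$.

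\textbf{Step 2.} Identify $v'=w$ by a unitriangularity argument, as in Corollary~\ref{cor:generic-cluster-monomial-canonical}. Since $\Can_{v'}=\Sch_w(c)$, applying Proposition~\ref{prop:iterated-Pieri-transition} and inverting $\Pi_N$ gives
\[
\Sch_w=\Code_w+\sum_{u\ne w}(\Pi_N^{-1})(w,u)\Code_u .
\]
Inverting $D_N^{\rm can}$ expresses $\Can_{v'}$ as $\Code_{v'}$ plus lower terms in $\preceq_{\rm Z}$. I would instead compare $\g$-vectors. The extended cluster monomial is pointed at $\g_{I,J}$ from \eqref{eq:strict-g-vector}, and $\Can_{v'}$ is pointed at $\g_{v'}$. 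So the task reduces to checking $\g_{I(v),J(v)}=\g_w$.

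That identity is read off from the diagonal term $\prod_a Z_{i_a,j_a}$. After reinserting the unit pivots, it is the diagonal of the complementary minor $\Delta_{G(w)^c,F(w)^c}$, namely a code-type monomial $\prod_j Z_{r_j,j}$. I would verify that $r_j=r_j(w)$, that is, that the row paired with column $j$ is $j-e_j(w)$. For a $321$-avoiding $w$ this should follow from the strict pair structure: for a non-excedance value $j$, the number of larger earlier entries is the offset. This combinatorial matching is the main obstacle, and I would check it directly from $F(w),G(w)$. An alternative is to note that the leading $\Code$-term of the minor in the standard-elementary basis is $\Code_w$, by Fulton's leading-term theorem.

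\textbf{Step 3.} Once $\Can_w=\Sch_w(c)$ is known, the claim $\Gen_w=\Can_w$ follows from Corollary~\ref{cor:generic-cluster-monomial-canonical}, provided $\Gen_w$ is the cluster monomial. This holds because extended cluster monomials are generic basis elements pointed at their $\g$-vector, and $\g_w$ is that vector. For $\MV_w$, cluster monomials (or products of flag minors with compatible supports) lie in the MV basis. Concretely, the MV and dual semicanonical elements coincide on $\Lambda_Q$-rigid components, and the cluster monomial component is rigid; I would cite \cite{BaumannMV} for this. Crystal indexing then forces $\MV_w=\Can_w$. Finally, $A_N^\bullet(w,u)=\delta_{w,u}$ is the unit-row reformulation of $\mathsf B^\bullet_w=\Sch_w(c)$.
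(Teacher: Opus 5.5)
Your treatment of $\Gen_w$ and $\Can_w$ is essentially the paper's route, taken in the opposite order. The strict minor is an extended cluster monomial, hence a generic element and, by Kang--Kashiwara--Kim--Oh, a canonical element, with the index fixed by the $\g$-vector or by triangularity. The matching you call the main obstacle is short. If $w$ avoids $321$ and some earlier entry exceeds $w_j$, then every value below $w_j$ must occur before position $j$, so $e_j(w)=\max(0,j-w_j)$. The deficiency positions of a $321$-avoiding permutation carry increasing values. Hence the diagonal term of $\Delta_{I(v),J(v)}$ is $\prod_{w_j<j}Z_{w_j,j}=\Code_w$, and so $\g_{I(v),J(v)}=\g_w$.

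The genuine gap is the Mirkovi\'c--Vilonen step. You assert that MV and dual semicanonical elements coincide on $\Lambda_Q$-rigid components, equivalently that cluster monomials (or products of compatible flag minors) lie in the MV basis, and attribute this to \cite{BaumannMV}. This is not an available result. What the paper uses from \cite{BaumannMV} is only unitriangularity of the semicanonical-to-MV transition with respect to MV-polytope inclusion, which does not force equality on rigid components. The paper also states explicitly that equality of MV elements with cluster monomials is known only for generalized minors; this is why Corollary~\ref{cor:strict-minor-factorable} needs a separate MV hypothesis.

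No such general statement is needed. $\Sch_w(c)=\Delta_{I(v),J(v)}(Z(c))$ is a single minor, i.e.\ the restriction to $U_N$ of a generalized minor of $\mathrm{SL}_N$, and such restrictions are MV basis elements \cite[Remark~2.10]{BKKMV}. This element lies in $\cM_N$, which is spanned by $\{\MV_u\}_{u\in S_N}$, so it equals some $\MV_{v''}$. Comparing the Zelevinsky-unitriangular Code expansions of $\MV_{v''}$ and $\Can_w=\Sch_w(c)$, as in Corollary~\ref{cor:generic-cluster-monomial-canonical}, gives $v''=w$. With this replacement the argument closes.
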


\begin{proof}
Theorem~\ref{thm:single-strict-minor} and
Proposition~\ref{prop:strict-minor-cluster} identify $\Sch_w(c)$ with an
extended cluster monomial of degree $\g_w$. The generic basis contains
every extended cluster monomial, so $\Sch_w(c)=\Gen_w$.
Corollary~\ref{cor:generic-cluster-monomial-canonical} then gives
$\Gen_w=\Can_w$. The strict minor $\Delta_{I(v),J(v)}$ is a generalized
minor of $\mathrm{SL}_N$;
its restriction to $U_N$ is the Mirkovi\'c--Vilonen basis element with the
same right-regular Lusztig datum
\cite[Remark~2.10]{BKKMV}. Hence $\MV_w=\Sch_w(c)$. The unit-row
statements follow.
\end{proof}

\begin{remark}[Increasing injections]
Let $\nu:[N]\hookrightarrow[M]$ be increasing. Matrix insertion defines
an injective homomorphism
\[
 \op{Ins}_\nu:\CC[U_N]\longrightarrow\CC[U_M],
 \qquad
 Z^{(N)}_{p,q}\longmapsto Z^{(M)}_{\nu(p),\nu(q)}.
\]
For every equal-cardinality pair $(I,J)$,
$\op{Ins}_\nu(\Delta_{I,J}) =\Delta_{\nu(I),\nu(J)}$.
Consequently, if $w\in\Av_N(321)$ corresponds to the strict pair
$(I,J)$ and $w^\nu\in\Av_M(321)$ corresponds to
$(\nu(I),\nu(J))$, then
\[ \op{Ins}_\nu(\Sch_w(c))=\Sch_{w^\nu}(c). \]
\end{remark}

\subsection{Code monomials and \texorpdfstring{$312$}{312}-avoidance}
For $1\leq p\leq q\leq N$, put
\[ J_{p,q}=[1,p-1]\cup\{q\}. \]
The matrix entry $Z(c)_{p,q}$ is the flag minor
\[ Z(c)_{p,q}=\Delta_{[1,p],J_{p,q}}(Z(c)). \]
Associate to this entry the arc $(p,q+1)$ in a convex $(N+1)$-gon. For
$i<j$, the two code factors $Z(c)_{r_i(w),i}$ and
$Z(c)_{r_j(w),j}$ are incompatible exactly when
$r_i(w)<r_j(w)\leq i$,
which is equivalent to a $312$ pattern by
Lemma~\ref{lem:rank-patterns}. Strongly separated flag minors occur in a common cluster. Conversely, compatible flag minors quasi-commute and
are weakly separated.

\begin{proposition}\label{prop:312-code}
The code monomial $\Code_w$ is an extended cluster monomial for the
standard cluster structure if and only if $w$ avoids $312$.
\end{proposition}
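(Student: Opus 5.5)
The plan is to reduce the statement to pairwise compatibility of the flag-minor factors of $\Code_w$, using the arc dictionary just introduced. Write
\[ \Code_w=\prod_{j=2}^N Z(c)_{r_j(w),j}=\prod_{j=2}^N\Delta_{[1,r_j(w)],J_{r_j(w),j}}(Z(c)). \]
Factors with $e_j(w)=0$ are equal to $1$. Among the remaining factors, the frozen flag minors $x_{a,N-a}$, i.e.\ those with $r_j(w)=1$, $j=N$ and similar boundary cases, are compatible with everything. Each nonunit factor is a flag minor, and hence a cluster variable (mutable or frozen) of the standard structure under Proposition~\ref{thm:unipotent-seed}. This follows from the Plücker/Scott-type realization of flag minors, or directly from Proposition~\ref{prop:strict-minor-cluster}: after deleting the aligned unit pivots $1,\ldots,p-1$, the minor $Z(c)_{p,q}$ is the connected strict minor $\Delta_{\{p\},\{q\}}$.

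For the direction ``$w$ avoids $312$ implies cluster monomial'', Lemma~\ref{lem:rank-patterns}(ii) shows that no pair $i<j$ satisfies $r_i(w)<r_j(w)\leq i$. For each pair $i<j$ I will check that the arcs $(r_i,i+1)$ and $(r_j,j+1)$ do not cross. The arcs cross exactly when $r_i<r_j<i+1<j+1$, which is the excluded condition. Noncrossing then means that the corresponding flag minors are strongly separated; here I use the fact that $J_{p,q}$ sets are intervals plus one point. Strongly separated collections of flag minors extend to a maximal one, and this maximal collection is a cluster reachable from $\Sigma_N^U$ by Leclerc--Zelevinsky/Oh--Postnikov--Speyer. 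The initial seed itself consists of such minors, since $x_{a,r}=\Delta_{[1,a],[r+1,r+a]}$ corresponds to a noncrossing configuration. Therefore $\Code_w$ is a monomial in one extended cluster.

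For the converse, suppose $w$ contains $312$. Lemma~\ref{lem:rank-patterns}(ii) then gives $i<j$ with crossing arcs, so the two flag minors $Z_{r_i,i}$ and $Z_{r_j,j}$ are not weakly separated. Assume for contradiction that $\Code_w$ is an extended cluster monomial $\prod x_k^{m_k}$. I would first argue that its cluster-variable factorization coincides with the code factorization. By Proposition~\ref{prop:classical-right-regular-transitions}, $\Code_w=\kappa^*_{\mathfrak m_w}$. Unique factorization in the UFD $\CC[U_N]$ applies because each $Z_{p,q}$ is irreducible, being a nonconstant polynomial of degree one in the variable $c_{q-p}(q-1)$. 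Cluster variables are irreducible by the Geiss--Leclerc--Schröer result for $\CC[U_N]$. Hence each cluster factor is, up to a scalar, some $Z_{p,q}$, and both $Z_{r_i,i}$ and $Z_{r_j,j}$ would lie in one cluster. Cluster variables in a common cluster quasi-commute in the quantum deformation and are therefore weakly separated (Leclerc--Zelevinsky), a contradiction. Alternatively, one can avoid quantum arguments: compatible cluster variables have $\Ext^1_{\Lambda_Q}$-vanishing between the corresponding rigid modules, whereas by the formula in Proposition~\ref{prop:self-extension-codimension} the intervals $[r_i,i-1]$ and $[r_j,j-1]$ have a nonzero $Q$-extension. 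This extension lifts to a nonzero $\Lambda_Q$-extension of the associated preprojective modules.

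The main obstacle is the converse. It must ensure that weak separation really fails for crossing arcs of this special form, and that the irreducibility/unique-factorization step forces the cluster factors to be exactly the code factors, not some other product with the same value. I would first attempt the irreducibility plus Leclerc--Zelevinsky route and check the non-weak-separation combinatorially for $[1,p-1]\cup\{q\}$ versus $[1,p'-1]\cup\{q'\}$ with $p<p'\leq q<q'$.
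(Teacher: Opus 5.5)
Your proposal is correct and follows the paper's proof. In the forward direction, noncrossing hook arcs give a strongly separated family of flag minors that lies in a common wiring-diagram cluster. In the converse, unique factorization in $\CC[U_N]$ forces the crossing pair $Z_{r_i,i},Z_{r_j,j}$ into one cluster, which contradicts Leclerc--Zelevinsky quasi-commutation through weak separation. Your explicit appeal to irreducibility of cluster variables (Geiss--Leclerc--Schr\"oer) fills in a step the paper leaves implicit when it says that all nonunit entry factors occur in one cluster.
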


\begin{proof}
If $w$ avoids $312$, its code arcs are noncrossing; their flag minors form a
strongly separated family and occur together in a cluster associated
with a reduced wiring diagram. Hence their product is an extended cluster
monomial.

Conversely, $\CC[U_N]$ is a unique factorization domain, and the nonunit
entries $Z(c)_{p,q}$ are pairwise nonassociate irreducibles. If $\Code_w$
is an extended cluster monomial, all its nonunit entry factors occur in one cluster; the frozen variables
are available in every cluster. Their quantum lifts quasi-commute, so their
index sets are
weakly separated. For the hook sets $J_{p,q}$ this excludes the crossing
condition above, and
therefore excludes $312$.
\end{proof}

\begin{proposition}\label{prop:MV-full-support}
If $z\preceq_{\rm Z}w$, then
\begin{equation}\label{eq:MV-row-monotonicity}
 D_N^{\rm MV}(w,u)\geq D_N^{\rm MV}(z,u)
 \qquad(u\in S_N).
\end{equation}
Consequently,
\begin{equation}\label{eq:MV-full-support}
 D_N^{\rm MV}(w,z)>0
 \quad\Longleftrightarrow\quad
 z\preceq_{\rm Z}w.
\end{equation}
\end{proposition}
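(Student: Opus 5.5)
It suffices to prove \eqref{eq:MV-row-monotonicity} for a single linked move $z\lessdot_{\rm Z}w$; transitivity of $\geq$ then gives it along any chain. The plan is to realize the linked move as a factor-level statement about products of flag minors and then use positivity of MV structure constants.

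\textbf{Step 1: two-factor comparison.} A linked move replaces the factors $Z_{r_p,p}Z_{r_q,q}$ of $\Code_w$ by $Z_{r_q,p}Z_{r_p,q}$, the factors of $\Code_z$ (here $r_p<r_q\leq p<q$), and leaves all other factors unchanged. So the first step is to show that
\[
 Z_{r_p,p}Z_{r_q,q}-Z_{r_q,p}Z_{r_p,q}
\]
has a nonnegative MV expansion. Consider the $2\times2$ minor $\Delta_{\{r_p,r_q\},\{p,q\}}(Z(c))$, which equals exactly this difference. Since $r_p<r_q\leq p<q$, the pair $(\{r_p,r_q\},\{p,q\})$ is a nonzero minor of $Z(c)$. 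After removing aligned unit pivots (possible only when $r_q=p$, where the minor degenerates to a single entry times a unit), it is a strict minor and hence a generalized minor of $\mathrm{SL}_N$. By \cite[Remark~2.10]{BKKMV}, as in the proof of Corollary~\ref{cor:321-transition}, it is an MV basis element. Therefore
\[
 Z_{r_p,p}Z_{r_q,q}=Z_{r_q,p}Z_{r_p,q}+\Delta,
\]
with $\Delta$ an MV basis element.

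\textbf{Step 2: multiply by the common factors.} Let $C$ be the product of the remaining entries, so that $\Code_w=\Code_z\cdot(\ldots)$ reads
\[
 \Code_w=\Code_z+C\,\Delta.
\]
Each factor of $C$, and $\Delta$ itself, is an MV basis element. By the positivity of fusion products of MV basis elements \cite{BKKMV}, already used in \eqref{eq:DMV}, the product $C\Delta$ expands in the MV basis with coefficients in $\NN$. Taking the coefficient of $\MV_u$ gives
\[
 D_N^{\rm MV}(w,u)=D_N^{\rm MV}(z,u)+[\MV_u](C\Delta)\geq D_N^{\rm MV}(z,u).
\]

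\textbf{Step 3: support.} For \eqref{eq:MV-full-support}, the implication $\Leftarrow$ follows by taking $u=z$ and using $D_N^{\rm MV}(z,z)=1$ from unitriangularity in Proposition~\ref{prop:classical-right-regular-transitions}. The implication $\Rightarrow$ is the support statement already in \eqref{eq:DMV}.

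The main obstacle is Step 2. Positivity of MV products is known for products of MV basis elements, but $C\Delta$ is such a product only if each factor is an MV element. This holds for the entries $Z_{a,j}$ and for $\Delta$ by BKKMV. The remaining worry is whether the fusion positivity of \cite{BKKMV} covers arbitrary products, or only the MV cycles arising via Beilinson--Drinfeld fusion of stable cycles. If positivity is available only for products of the flag-minor cycles themselves, I would instead expand the $2\times2$ minor $\Delta$ via the Levi flag minors it is built from, and check that its cycle is still a fusion component.
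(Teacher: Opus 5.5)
Your proposal is correct and follows the paper's proof essentially step for step. You reduce to one linked move, write $\Code_w-\Code_z$ as the common factor times the strict $2\times2$ minor $\Delta_{\{r_p,r_q\},\{p,q\}}(Z(c))$, and invoke positivity of MV multiplication (the paper takes this directly from \cite{BKKMV}, so your worry is unnecessary); then \eqref{eq:MV-full-support} follows from $D_N^{\rm MV}(z,z)=1$ and the support restriction in \eqref{eq:DMV}. One harmless slip: when $r_q=p$ there is no aligned unit pivot, because $(\{r_p,p\},\{p,q\})$ is already a connected strict pair, with the shared index $p$ in different positions, so the minor $Z_{r_p,p}Z_{p,q}-Z_{r_p,q}$ does not degenerate.
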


\begin{proof}
It is enough to prove \eqref{eq:MV-row-monotonicity} for an elementary
linked move. Thus let $p<q$ and
\[
 a=r_p(w)<r_q(w)=b\leq p,
\]
and let $z$ be obtained by interchanging $a$ and $b$ in positions $p$ and
$q$. Put
\[
 H=\prod_{\substack{2\leq j\leq N\\j\neq p,q}}Z_{r_j(w),j}.
\]
The $2\times2$ determinantal identity gives
\begin{equation}\label{eq:linked-Plucker-code-difference}
 \Code_w-\Code_z
 =H\bigl(Z_{a,p}Z_{b,q}-Z_{b,p}Z_{a,q}\bigr)
 =H\Delta_{\{a,b\},\{p,q\}}(Z(c)).
\end{equation}
Since $a<b\leq p<q$, the pair $(\{a,b\},\{p,q\})$ is strict. Hence the
minor in \eqref{eq:linked-Plucker-code-difference} is an MV basis element by
Corollary~\ref{cor:321-transition}. Each nonunit factor of $H$ is a flag
minor and hence an MV basis element. Positivity of multiplication in the MV
basis therefore gives a nonnegative MV expansion. The product belongs to
$\cM_N$ by \eqref{eq:linked-Plucker-code-difference}, so only the
Schubert-indexed MV elements occur:
\[
 H\Delta_{\{a,b\},\{p,q\}}(Z(c))
 =\sum_{u\in S_N}c_u\MV_u,
 \qquad c_u\in\ZZ_{\geq0};
\]
see \cite{BKKMV}. Comparing MV coefficients in
\eqref{eq:linked-Plucker-code-difference} yields
\[
 D_N^{\rm MV}(w,u)-D_N^{\rm MV}(z,u)=c_u\geq0.
\]
Iteration along a chain of linked moves proves
\eqref{eq:MV-row-monotonicity}. Taking $u=z$ gives
\[
 D_N^{\rm MV}(w,z)\geq D_N^{\rm MV}(z,z)=1.
\]
The converse implication in \eqref{eq:MV-full-support} is the support
restriction in \eqref{eq:DMV}.
\end{proof}

\begin{remark}
Anderson and Kogan's fusion order gives the corresponding support restriction
for general type-$A$ fusion products, but the converse fails in general
\cite{AndersonKoganAlgebra}. Proposition~\ref{prop:MV-full-support} shows that the
converse holds for the right-regular products $\Code_w$.
\end{remark}

If $w$ avoids $312$, then its right-regular multisegment is the open
orbit in its dimension vector, hence the minimal element of
$\preceq_{\rm Z}$. Proposition~\ref{prop:classical-right-regular-transitions} therefore gives
one implication in the next statement.

\begin{corollary}\label{cor:code-transition-extremals}
For $w\in S_N$, the following conditions are equivalent:
\begin{enumerate}[label=\textup{(\roman*)}]
 \item $w$ avoids $312$;
 \item $M_w$ is $Q$-rigid, equivalently its stratum is open in $X_{d(w)}$;
 \item $\Code_w$ is an extended cluster monomial for the standard cluster structure;
 \item $\Code_w=\Can_w$;
 \item $\Code_w=\MV_w$;
 \item $\Gen_w=\Can_w=\MV_w=\Code_w$.
\end{enumerate}
When these conditions hold, for
$\bullet\in\{\mathrm{gen},\mathrm{can},\mathrm{MV}\}$,
$ A_N^\bullet(w,u)=\Pi_N(w,u)$.
Equivalently, these coefficients count the iterated Sottile--Pieri chains
prescribed by the inversion-end code of $w$.

For $\bullet\in\{\mathrm{gen},\mathrm{can},\mathrm{MV}\}$, the
coincidence and unit-column sets relative to the Code basis are
\[
\begin{array}{c|c|c}
 X_w&\{w:\Code_w=X_w\}&\{w:\text{the $w$th column is a unit column}\}\\ \hline
 \Sch_w&\Av_N(312,1432)&\Av_N(231)\\
 \mathsf B_w^\bullet&\Av_N(312)&\Av_N(231).
\end{array}
\]
Here the unit-column condition is independent of the direction of the
transition.
\end{corollary}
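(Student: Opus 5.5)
The equivalences fall into two groups: (i)$\Leftrightarrow$(ii)$\Leftrightarrow$(iii) come from results already proved, and (iii)$\Rightarrow$(vi)$\Rightarrow$(iv),(v)$\Rightarrow$(i) close the loop.

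\textbf{First group.} (i)$\Leftrightarrow$(ii) is Proposition~\ref{prop:self-extension-codimension} combined with Lemma~\ref{lem:rank-patterns}(ii): $\dim\Ext^1(M_w,M_w)$ counts exactly the pairs $p<q$ with $r_p<r_q\leq p$, and there are none iff $w$ avoids $312$. (i)$\Leftrightarrow$(iii) is Proposition~\ref{prop:312-code}.

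\textbf{Closing the loop.} For (iii)$\Rightarrow$(vi), note that $\Code_w$ is an extended cluster monomial with $\g$-vector $\g_w$. Hence it is the generic element $\Gen_w$. By KKKO it is also a dual canonical element, equal to $\Can_w$ by the leading-term argument of Corollary~\ref{cor:generic-cluster-monomial-canonical}. For the MV statement I use (i): under (i), $\mathfrak m_w$ is the open orbit, i.e.\ $\preceq_{\rm Z}$-minimal in its fiber. The unitriangular support in \eqref{eq:DMV} then leaves only the diagonal term, so $\Code_w=\MV_w$; the same argument with \eqref{eq:Dgen} and \eqref{eq:Dcan} reproves the other two equalities. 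For (iv)$\Rightarrow$(i) and (v)$\Rightarrow$(i), suppose $w$ contains $312$. Then there is a linked move $z\prec_{\rm Z}w$. Proposition~\ref{prop:classical-right-regular-transitions} gives $D^{\rm can}_N(w,z)>0$, and Proposition~\ref{prop:MV-full-support} gives $D^{\rm MV}_N(w,z)>0$. So $\Code_w\neq\Can_w$ and $\Code_w\neq\MV_w$. Finally, the formula $A_N^\bullet(w,\cdot)=\Pi_N(w,\cdot)$ follows from $\Pi_N=D_N^\bullet A_N^\bullet$ once row $w$ of $D_N^\bullet$ is $e_w$, and Proposition~\ref{prop:iterated-Pieri-transition} gives the chain interpretation.

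\textbf{Unit columns of $D_N^\bullet$.} The $w$th column of $D_N^\bullet$ is a unit column iff no $w'\neq w$ has $w\preceq_{\rm Z}w'$ with nonzero entry. For can and MV, nonvanishing holds on the whole order, so the condition is that $w$ is $\preceq_{\rm Z}$-maximal, i.e.\ the source of its fiber. By Theorem~\ref{thm:right-regular-chamber-ansatz}(ii) this is the $231$-avoiding element. For gen, a non-source $w$ has an upper cover $w'$ with $D^{\rm gen}_N(w',w)=1$ by Proposition~\ref{prop:generic-cover}, while a source has only the diagonal entry by support. Inverse matrices of unitriangular matrices keep unit columns (a unit column of $T$ is $e_w$, so $T^{-1}e_w=e_w$), which gives direction independence.

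\textbf{Schubert row, and the main obstacle.} For the Schubert row I work with $\Pi_N$. By Lemma~\ref{prop:Cauchy-transpose-Pieri}, $\Pi_N(\pi w_0,\sigma w_0)$ is the coefficient of $x^{L(\pi)}$ in $\Sch_\sigma$. A unit column at $\sigma w_0$ means $\Sch_\sigma$ has a single monomial, $x^{L(\sigma)}$. Single-monomial Schubert polynomials are exactly the dominant permutations, i.e.\ $132$-avoiding $\sigma$, and $\sigma$ avoids $132$ iff $\sigma w_0$ avoids $231$. A unit row at $\pi w_0$ means $x^{L(\pi)}$ occurs in no Schubert polynomial other than $\Sch_\pi$. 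I expect this combinatorial step to be the main obstacle: showing that the monomials appearing uniquely are exactly those indexed by the translate of $\Av_N(312,1432)$. My plan is to characterize, via pipe dreams and ladder moves, when $x^{L(\pi)}$ appears in some $\Sch_\sigma$ with $\sigma\neq\pi$, and to produce such a $\sigma$ explicitly from an occurrence of $312$ or $1432$.
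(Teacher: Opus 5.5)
Most of your argument matches the paper's proof. The shared steps are:
\begin{itemize}
\item (i)$\Leftrightarrow$(ii)$\Leftrightarrow$(iii) via Propositions~\ref{prop:self-extension-codimension} and~\ref{prop:312-code};
\item minimality of the $312$-avoiding label for (i)$\Rightarrow$(vi);
\item positivity on comparable pairs for (iv),(v)$\Rightarrow$(i);
\item unit columns of $D_N^\bullet$ via the $\preceq_{\rm Z}$-maximal label;
\item the Schubert unit columns via Lemma~\ref{prop:Cauchy-transpose-Pieri} and dominant permutations.
\end{itemize}
Your detour through KKKO for (iii)$\Rightarrow$(vi) is correct but redundant.

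The genuine gap is the Schubert coincidence entry $\{w:\Code_w=\Sch_w(c)\}=\Av_N(312,1432)$, which you leave as a plan. Nothing in your argument supplies it. The geometric coincidences only give $\Av_N(312)$. Moreover, $1432$ is $312$-avoiding, yet $\Code_{1432}=\Sch_{1432}(c)+\Sch_{2341}(c)$; classically, $e_1(x_1,x_2)e_2(x_1,x_2,x_3)=\Sch_{1432}(x)+x_1x_2x_3$. So a separate Schubert-theoretic criterion is unavoidable.

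The paper does not prove this either; it quotes Woodruff's criterion~\cite{Woodruff}. If you want a self-contained pipe-dream proof, your sketch covers only one direction. That direction is: from an occurrence of $312$ or $1432$ in $w$ (equivalently $213$ or $2341$ in $\pi=ww_0$), produce some $\sigma\neq\pi$ with $[x^{L(\pi)}]\Sch_\sigma\neq0$. The converse is still missing. You would need to show that for $\pi\in\Av_N(213,2341)$, no reduced pipe dream of any $\sigma\neq\pi$ has row weight $L(\pi)$. This is a global exclusion statement, and it is the substantive content of the cited criterion.

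A smaller omission: the table also asserts $\{w:\Code_w=\Gen_w\}=\Av_N(312)$, and this is not among (i)--(vi). Your argument for (iv),(v)$\Rightarrow$(i) uses an arbitrary linked move $z\prec_{\rm Z}w$. That suffices for the canonical and MV cases by full positivity. For the generic case, nonvanishing of $D_N^{\rm gen}(w,z)$ on arbitrary comparable pairs is only Conjecture~\ref{conj:generic-full-support}. Instead, as you already did for columns, take a lower cover $z\lessdot_{\rm Z}w$, which exists whenever $w$ is not $\preceq_{\rm Z}$-minimal. Then Proposition~\ref{prop:generic-cover} gives $D_N^{\rm gen}(w,z)=1$, so $\Code_w\neq\Gen_w$.
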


\begin{proof}
The equivalence of \textup{(i)} and \textup{(ii)} is
Proposition~\ref{prop:self-extension-codimension}, and the equivalence of
\textup{(i)} and \textup{(iii)} is Proposition~\ref{prop:312-code}.
Moreover, \textup{(i)} implies \textup{(vi)} by
Proposition~\ref{prop:classical-right-regular-transitions}, and
\textup{(vi)} implies \textup{(iv)} and \textup{(v)}. If \textup{(iv)}
holds, the canonical Code expansion has no off-diagonal term. Since it
contains every permutation $z\preceq_{\rm Z}w$ with positive coefficient,
$w$ is minimal in its Zelevinsky component. If \textup{(v)} holds, the same
conclusion follows from Proposition~\ref{prop:MV-full-support}. By
Theorem~\ref{thm:right-regular-chamber-ansatz}, minimality is equivalent to
$312$-avoidance. The row identity follows from \eqref{eq:Pi-definition}.

For $X=\Sch$, the coincidence set is given by Woodruff's criterion
\cite{Woodruff}. Write $w=\sigma w_0$. Lemma
\ref{prop:Cauchy-transpose-Pieri} identifies the $w$th column of $\Pi_N$
with the monomial support of $\Sch_\sigma(x)$. It is a unit column exactly
when $\Sch_\sigma(x)=x^{L(\sigma)}$, equivalently when $\sigma$ is
dominant, or $132$-avoiding \cite{BJS}; reversing positions sends $132$
to $231$.

For $X\in\{\Gen,\Can,\MV\}$, fix a root-degree fiber. Its minimum is
the $312$-avoiding label and its maximum is the $231$-avoiding label. Every
nonminimal row has a lower cover. The coefficient on that cover is one for
$X=\Gen$ by Proposition~\ref{prop:generic-cover}, and is positive for
$X=\Can,\MV$ by \eqref{eq:Dcan} and
Proposition~\ref{prop:MV-full-support}. Hence the coincidence rows are
exactly the minimal labels. Dually, every nonmaximal column has an upper
cover with nonzero coefficient for each of the three transitions, whereas
unitriangularity makes the maximal column a unit column. This proves the
remaining entries of the table.
\end{proof}

\subsection{Root actions and the \texorpdfstring{$321$}{321}-avoiding coalgebra}
By Theorem~\ref{thm:single-strict-minor}, the $321$-avoiding universal
Schubert polynomials are strict minors of $Z(c)$. Under the bijection with
$321$-avoiding permutations, the two root actions agree with the left and
right weak-order descents. Let
\[ \mathscr V_N=\bigoplus_{r=0}^{N}\bigwedge^r\CC^N. \]
For $1\leq i<N$, let $a_i$ move a particle from $i+1$ to $i$, when
possible, and annihilate the basis vector otherwise. These operators
generate the finite nil--Temperley--Lieb algebra
\cite{FominStanley,Stembridge}; for $321$-avoiding $w$, the element $a_w$
is independent of the chosen reduced word.

Let
\[ \mathscr C_N^{321}= \Span_{\CC}\{\Sch_w(c):w\in\Av_N(321)\} \subseteq\CC[U_N]. \]
For $f\in\CC[U_N]$, write
\[ \op{cop}(f)(A,B)=f(AB) \qquad(A,B\in U_N). \]

\begin{theorem}\label{thm:root-coalgebra}
The space $\mathscr C_N^{321}$ is the largest Schubert-basis lower ideal in weak
order on which the left and right root-subgroup operators agree with
Schubert descent. It is a graded subcoalgebra of $\CC[U_N]$. For a strict
pair,
\begin{equation}\label{eq:Cauchy-Binet-strict}
 \op{cop}\Delta_{I,J}
 =\sum_{I\leq K\leq J}\Delta_{I,K}\otimes\Delta_{K,J},
\end{equation}
where aligned unit pivots are deleted in each factor. Equivalently,
\begin{equation}\label{eq:coalgebra-321}
 \op{cop}\Sch_w(c)
 =\sum_{\substack{uv=w\\\ell(u)+\ell(v)=\ell(w)}}
 \Sch_u(c)\otimes\Sch_v(c)
 \qquad(w\in\Av_N(321)).
\end{equation}
Under the pairing with the elements $a_w$, this coalgebra is the graded
dual of the finite nil--Temperley--Lieb algebra.
\end{theorem}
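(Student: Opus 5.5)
The plan is to obtain the coproduct formula \eqref{eq:Cauchy-Binet-strict} directly from the Cauchy--Binet formula, and then derive everything else from it by translating between strict pairs and $321$-avoiding permutations. For $A,B\in U_N$ and equal-cardinality $I,J$, Cauchy--Binet gives
\[
 \Delta_{I,J}(AB)=\sum_{K}\Delta_{I,K}(A)\Delta_{K,J}(B).
\]
Since $A$ and $B$ are upper unitriangular, $\Delta_{I,K}(A)\neq0$ forces $i_a\leq k_a$ for all $a$, and $\Delta_{K,J}(B)\neq0$ forces $k_a\leq j_a$. So only $I\leq K\leq J$ contribute. Each such pair $(I,K)$ or $(K,J)$ is a weakly strict pair. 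Deleting its aligned unit pivots ($i_a=k_a$, resp.\ $k_a=j_a$) gives its strict reduction, exactly as in \eqref{eq:strict-factorization}. This proves \eqref{eq:Cauchy-Binet-strict}.

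Next I would translate \eqref{eq:Cauchy-Binet-strict} into \eqref{eq:coalgebra-321} using Theorem~\ref{thm:single-strict-minor}, which identifies $\Sch_w(c)=\Delta_{I(v),J(v)}(Z(c))$ with $v=w^{-1}$. The dictionary is the particle model on $\mathscr V_N$. For $321$-avoiding $w$, the operator $a_w$ sends the basis vector $e_{J}$ to $e_{I}$, where $(I,J)$ is the strict pair of $w$ (up to the inversion $v=w^{-1}$, which I would fix once and for all). The particles that do not move are precisely the aligned pivots. For a reduced factorization $w=uv$ with $\ell(u)+\ell(v)=\ell(w)$, the factors $u,v$ are again $321$-avoiding, and we have $a_w=a_ua_v$. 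The intermediate configuration $K$ with $a_ve_J=e_K$ and $a_ue_K=e_I$ then satisfies $I\leq K\leq J$. Conversely, every $K$ with $I\leq K\leq J$ arises this way. The strict reductions of $(I,K)$ and $(K,J)$ are $321$-avoiding words $u,v$, since particles move only leftward without passing. Their product is $w$, and it is reduced because particle displacements add. The assignment $K\mapsto(u,v)$ is bijective, since $321$-avoiding elements are fully commutative and determined by their strict pair. This gives \eqref{eq:coalgebra-321}. The subcoalgebra and grading assertions (grading by $\ell(w)=\sum_a(j_a-i_a)$) follow at once. For the duality, consider the pairing $\langle\Sch_w(c),a_u\rangle=\delta_{w,u}$. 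In the nil--Temperley--Lieb algebra, $a_ua_v=a_{uv}$ when $uv$ is reduced and $321$-avoiding, and $a_ua_v=0$ otherwise. This is exactly dual to \eqref{eq:coalgebra-321}.

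For the root-operator statement, $\mathcal R_i^{(1)}f$ (resp.\ $\mathcal L_i^{(1)}f$) is obtained from $\op{cop}f$ by evaluating the second (resp.\ first) tensor factor to first order along $u_i(s)$. This evaluation picks out precisely the terms $\Sch_{s_i}(c)=c_1(i)$, i.e.\ $v=s_i$ (resp.\ $u=s_i$). Hence \eqref{eq:coalgebra-321} gives $\mathcal R_i\Sch_w(c)=\Sch_{ws_i}(c)$ for a right descent and $0$ otherwise, and symmetrically on the left. All higher divided powers vanish on these elements. The space $\mathscr C_N^{321}$ is a weak-order lower ideal because $321$-avoidance is inherited by weak-order predecessors.

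I expect maximality to be the main obstacle. Any strictly larger Schubert-basis lower ideal contains a minimal $321$-containing permutation $w$, and every proper weak-order predecessor of such a $w$ is $321$-avoiding. I would first try to show that such $w$ has a reduced word containing $s_is_{i+1}s_i$ (or its mirror) positioned so that a root operator applied to $\Sch_w(c)$ differs from the Schubert descent. The model computation is the Remark after Proposition~\ref{prop:divided-difference-straightened-root-action}: there $\mathcal R_2\Sch_{321}(c)=c_1(1)^2\neq\Sch_{312}(c)$. To transport this failure to general minimal $w$, I would combine the increasing-injection homomorphisms $\op{Ins}_\nu$ with the equivariance of root operators under insertion. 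If that transport fails, the fallback is a direct check on the nonstandard monomial term produced by $\mathcal R_i$ on the standard-elementary expansion, whose absence is forced by the straightening in Proposition~\ref{prop:divided-difference-straightened-root-action}.
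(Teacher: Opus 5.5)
\textbf{Correct parts.} Your proof of \eqref{eq:Cauchy-Binet-strict} is the paper's argument: Cauchy--Binet, upper triangularity, and strict reduction. Your particle-model translation to \eqref{eq:coalgebra-321} is correct. The minimal configuration on which $a_w$ acts nonzero is $J=\{j:w(j)<j\}$, with $a_we_J=e_{w(J)}$. This is exactly the pair $(I(w^{-1}),J(w^{-1}))$ of Theorem~\ref{thm:single-strict-minor}, so no convention has to be fixed by hand.

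For the agreement of $\mathcal L_i,\mathcal R_i$ with Schubert descent on $\mathscr C_N^{321}$, you take a different route from the paper. The paper replaces one row or column index of a minor; you evaluate one tensor factor of $\op{cop}\Sch_w(c)$ at $u_i(s)$. Since $\Sch_v[u_i(s)]$ is homogeneous of degree $\ell(v)$ in $s$ and $\Sch_v(c)$ is standard-elementary, only $v\in\{e,s_i\}$ survive. This is valid, and it also gives the vanishing of higher divided powers.

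\textbf{The gap: maximality.} Your primary plan does not work.
\begin{enumerate}
\item The identity $\op{Ins}_\nu(\Sch_w(c))=\Sch_{w^\nu}(c)$ is available only for $321$-avoiding $w$, and it genuinely fails at $w=321$. For $\nu(p)=p+1$ one gets
\[ \op{Ins}_\nu\Sch_{321}(c)=Z_{23}Z_{24}=c_1(2)c_2(3)=\Sch_{1432}(c)+\Sch_{2341}(c). \]
\item For the right weak order, the minimal $321$-containing permutations are not all fixed-point insertions of $321$. For example, $4213$ has right descents $\{1,2\}$, and both $2413$ and $4123$ avoid $321$. Yet its only fixed point is $2$, and deleting it leaves the pattern $312$.
\end{enumerate}
Your fallback is not carried out; it would require controlling cancellations among the standard-elementary coefficients of $\Sch_w(c)$. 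The paper's one-line remark about the first short braid points to the same place, so what is missing is precisely a proof that agreement breaks there.

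\textbf{A way to close it.} Suppose a weak-order lower ideal $S$ contains a $321$-containing permutation. Then it contains one, $w$, all of whose right weak predecessors avoid $321$.
\begin{itemize}
\item Every $321$-occurrence in $w$ must use both positions of every right descent, since otherwise it survives the swap.
\item $w$ has at least two descents, because it contains $321$.
\item Hence $\Des(w)=\{i,i+1\}$ and $w=xs_is_{i+1}s_i$ with lengths adding. Consequently $x,\,xs_i,\,xs_{i+1},\,xs_is_{i+1},\,xs_{i+1}s_i$ all lie in $S$.
\end{itemize}
The $\mathcal R_j$ are the left-invariant vector fields of $E_{j,j+1}$, so they satisfy the Serre relation
\[ \mathcal R_i^2\mathcal R_{i+1}-2\mathcal R_i\mathcal R_{i+1}\mathcal R_i+\mathcal R_{i+1}\mathcal R_i^2=0. \]
If the $\mathcal R_j$ agreed with Schubert descent on $S$, apply this relation to $\Sch_w(c)$:
\begin{itemize}
\item the first term is $0$, via $\Sch_{xs_{i+1}s_i}\mapsto\Sch_{xs_{i+1}}\mapsto0$;
\item the middle term is $\Sch_x(c)$, via $\Sch_{xs_is_{i+1}}\mapsto\Sch_{xs_i}\mapsto\Sch_x$;
\item the third term is $0$.
\end{itemize}
This gives $-2\Sch_x(c)=0$, which is absurd.

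For the two-sided weak order, the minimal elements are exactly $s_js_{j+1}s_j$, and your ``direct check'' can also be made precise. By Proposition~\ref{prop:divided-difference-straightened-root-action} and Fulton's injectivity, agreement at $y$ is equivalent to $\mathcal R_k\Sch_y(c)\in\cM_N$. The nonstandard monomial $c_1(j)^2$ occurs in $\mathcal R_{j+1}\Sch_{s_js_{j+1}s_j}(c)$ with coefficient one, coming only from the code term $c_1(j)c_2(j+1)$.
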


\begin{proof}
On a minor, a simple root operator changes one row or one column index.
The move is nonzero exactly when the corresponding weak descent is present.
The first short braid produces a $321$ pattern, so closure under all such
moves holds on the $321$-avoiding lower ideal; this also proves
maximality.

Cauchy--Binet gives
$\Delta_{I,J}(AB) =\sum_{|K|=|I|}\Delta_{I,K}(A)\Delta_{K,J}(B)$.
Upper triangularity leaves only $I\leq K\leq J$, and strict reduction
removes the aligned unit pivots. This proves
\eqref{eq:Cauchy-Binet-strict}. Formula \eqref{eq:coalgebra-321} is the
same identity in the nil--Temperley--Lieb basis.
\end{proof}

\section{Homogeneous universal Schubert polynomials}
\label{sec:homogeneous-canonical}

The diagonal torus acts on $U_N$ by conjugation. Since
$Z(c)_{p,q}=c_{q-p}(q-1)$
has weight $\eps_p-\eps_q$, this action defines a grading finer than the
cohomological grading. Put
\[ \mathcal H_N= \{w\in S_N:\Sch_w(c)\text{ is homogeneous for this action}\}. \]
The following is the main result of Part~II.

\begin{theorem}\label{thm:homogeneous-canonical}
For every $N$ and every $w\in S_N$, the following are equivalent:
\begin{enumerate}[(i)]
 \item $w\in\mathcal H_N$;
 \item $\Sch_w(c)=\Can_w$.
\end{enumerate}
\end{theorem}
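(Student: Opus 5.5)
The direction \textup{(ii)}$\Rightarrow$\textup{(i)} should be formal. The dual canonical basis of $\CC[U_N]$ consists of weight vectors for the torus acting by conjugation. Concretely, $\Can_w=b^*_{\mathfrak m_w}$ has the weight of its Lusztig datum, namely $\sum_{j}(\eps_{r_j(w)}-\eps_j)$, which is the weight of the code monomial $\Code_w=\prod_j Z_{r_j(w),j}$. So if $\Sch_w(c)=\Can_w$, then $\Sch_w(c)$ is homogeneous. I would record along the way that this weight depends only on the multiset of rank entries. By Lemma~\ref{lem:d-content} it is therefore a function of the root degree $d(w)$ alone, and distinct root degrees give distinct weights, since the weight determines $d$ through $d_i=\#\{j:r_j\le i<j\}$.

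For \textup{(i)}$\Rightarrow$\textup{(ii)}, I would first localize to a single root-degree fiber. Write $\Sch_w(c)=\sum_u \Pi_N^{-1}(w,u)\Code_u$. Each $\Code_u$ is a weight vector, and the codes form a basis of $\cM_N$. Hence homogeneity of $\Sch_w(c)$ forces $\Pi_N^{-1}(w,u)=0$ unless $d(u)=d(w)$. The coefficient of $\Code_w$ is $1$ by Proposition~\ref{prop:iterated-Pieri-transition}. Composing with the unitriangular expansion \eqref{eq:Dcan}, which preserves fibers by Proposition~\ref{prop:classical-right-regular-transitions}, gives
\[
 \Sch_w(c)=\Can_w+\sum_{z\in I_{d(w)},\,z\ne w}\beta_z\Can_z ,\qquad \beta_z\in\ZZ .
\]
It remains to show that all $\beta_z$ vanish.

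I would do this by induction on $\ell(w)$, using the right-root operators. On the canonical side, perfectness (as used in Proposition~\ref{prop:factor-descent}) gives $\op{rdeg}_i(\Can_z)=\one_{\{i\in\Des(z)\}}$. Moreover, $\mathcal R_i\Can_z$ is the canonical element of $\tilde e_i\mathfrak m_z$ modulo elements with smaller $\eps_i$. On the Schubert side I need an identity of the form $\mathcal R_i\Sch_w(c)=\Sch_{ws_i}(c)$ for $i\in\Des(w)$, and $\mathcal R_i\Sch_w(c)=0$ otherwise. The obstruction is the Remark after Proposition~\ref{prop:divided-difference-straightened-root-action}: $\mathcal R_i$ and $\partial_i$ differ by straightening. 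The plan is to prove that when $\Sch_w(c)$ is homogeneous, the nonstandard products $c_a(i-1)c_{b-1}(i-1)$ created by $\mathcal R_i$ straighten with no correction. The test case is $\Sch_{321}$: there the failure $c_1(1)^2\ne c_1(1)c_1(2)-c_2(2)$ comes exactly from mixing terms of different torus weight.

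Granting this identity, homogeneity passes to $\Sch_{ws_i}(c)$ for $i\in\Des(w)$. The induction hypothesis then gives $\Sch_{ws_i}(c)=\Can_{ws_i}$. Applying $\mathcal R_i$ to the displayed expansion, and comparing canonical coefficients via injectivity of $\tilde e_i$ on crystal elements with $\eps_i=1$, kills every $\beta_z$ with $i\in\Des(z)$. Finally, let $f=\sum_{z\ne w}\beta_z\Can_z$ and suppose some $\beta_z\neq 0$. By the previous step every such $z$ satisfies $\Des(z)\cap\Des(w)=\emptyset$. For $i\notin\Des(w)$, both $\mathcal R_i\Sch_w(c)$ and $\mathcal R_i\Can_w$ vanish, so $\mathcal R_i f=0$, which forces $\beta_z=0$ whenever $i\in\Des(z)$. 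Hence every $z$ with $\beta_z\ne0$ has empty descent set, so $z=\mathrm{id}$. But $\mathrm{id}$ lies in the fiber $d=0$, which is $\{\mathrm{id}\}$, contradicting $w\ne z$ in the same fiber. The main obstacle is the straightening-compatibility step $\mathcal R_i\Sch_w(c)=\Sch_{ws_i}(c)$ under homogeneity. I would attack it by comparing torus weights term by term in Fulton's formula \eqref{eq:Fulton-divided-difference}, showing that the correction terms always carry a weight different from that of $\Sch_w(c)$.
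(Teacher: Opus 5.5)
Your direction (ii)$\Rightarrow$(i) is correct and matches the paper. Your reduction to $\Sch_w(c)=\Can_w+\sum_{z\in I_{d(w)},\,z\ne w}\beta_z\Can_z$ is also correct and is the paper's first step: Lemma~\ref{lem:torus-weight-criterion} together with the fiberwise form of \eqref{eq:Dcan}.

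\textbf{The inductive step fails.} Homogeneity of $\Sch_w(c)$ does not give $\mathcal R_i\Sch_w(c)=\Sch_{ws_i}(c)$ for $i\in\Des(w)$, and your own test case refutes it. $\Sch_{321}(c)=c_1(1)c_2(2)=\Code_{321}$ is a single standard-elementary monomial, hence homogeneous. Since $321$ avoids $312$, it even equals $\Can_{321}$. Yet $\mathcal R_2\Sch_{321}(c)=c_1(1)^2\neq\Sch_{312}(c)$. Both sides are torus-homogeneous: $z_{12}^2$ has weight $2\eps_1-2\eps_2$, and $z_{12}z_{23}-z_{13}$ has weight $\eps_1-\eps_3$. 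So your diagnosis that the discrepancy comes from mixing weights is wrong.

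\textbf{The mismatch is systematic.} Write $\alpha_k=\eps_k-\eps_{k+1}$.
\begin{itemize}
\item $\mathcal R_i$ is homogeneous of weight $-\alpha_i$, so $\mathcal R_i\Sch_w(c)$ lowers $d_i(w)$ by one.
\item Passing from $w$ to $ws_i$ replaces the rank-word letter $r_i(w)$ by $r_i(w)+1$. By Lemma~\ref{lem:d-content}, $d(ws_i)$ is $d(w)$ with $d_{r_i(w)}$ lowered by one.
\end{itemize}
These two agree only when $e_i(w)=0$. In crystal terms, once $e_i(w)>0$ the multisegment $\tilde e_i\mathfrak m_w$ has two segments ending at $i-1$; for example $\tilde e_2([1,1]+[1,2])=2[1,1]$. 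So applying $\mathcal R_i$ to your expansion produces canonical elements outside $\cM_N$. These cannot be compared with $\Can_{ws_i}$ from the induction hypothesis. Fulton's $\partial_i$ does stay inside $\cM_N$, but it cannot repair the step: it is not torus-homogeneous, since the terms $[a+t,b-1-t]_i$ of \eqref{eq:Fulton-divided-difference} have weights depending on $t$. It also has no perfectness property, so a term-by-term weight comparison of the kind you propose cannot work.

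\textbf{What is missing} is a way to kill the $\beta_z$ that does not descend along right descents. The paper does this combinatorially, in three steps.
\begin{itemize}
\item It uses the nil-Hecke path expansion $\Pi_N^{-1}(w,v)=\partial_{ww_0}x^{\theta(v)}$ to show that an inverse Pieri row supported on one fiber equals the M\"obius row $\mu_{I_d}(w,\cdot)$ of the parabolic Bruhat interval (Proposition~\ref{prop:single-fiber-Mobius-row}).
\item It shows that homogeneity forces the gap condition \eqref{eq:rank-gap-condition}, via the recursive record obstructions of Theorem~\ref{thm:homogeneous-record-outer-bound}.
\item Under the gap condition, the parabolic Kazhdan--Lusztig cancellation of Theorem~\ref{thm:canonical-Mobius-row} shows that this M\"obius row is exactly the $w$th row of $(D_N^{\rm can})^{-1}$. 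That is precisely the statement $\beta_z=0$.
\end{itemize}
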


\subsection{The torus grading}

Let $\alpha=(\alpha_1,\ldots,\alpha_{N-1})$ be a
standard-elementary index as in
\eqref{eq:standard-elementary-monomial}. The factor with
$\alpha_k=a>0$ represents the interval $[k+1-a,k]$. Define
\[ d_i(\alpha)= \#\{k:\alpha_k>0,\ k+1-\alpha_k\leq i\leq k\} \qquad(1\leq i<N). \]
Then
\begin{equation}\label{eq:hom-profile-cut-weight}
 \op{wt}(\mathcal E_\alpha)
 =\sum_{i=1}^{N-1}d_i(\alpha)(\eps_i-\eps_{i+1}).
\end{equation}
Thus $d(\alpha)$ is the root degree determined by the torus weight.

For $w\in S_N$, put
\[ \alpha(w)=(e_2(w),\ldots,e_N(w)). \]
Then $\Code_w=\mathcal E_{\alpha(w)}$. Since
$r_{k+1}(w)=k+1-e_{k+1}(w)$,
\[ d_i(\alpha(w)) =\#\{k:k+1-e_{k+1}(w)\leq i\leq k\} =\#\{j:r_j(w)\leq i<j\} =d_i(w). \]
Thus $d(\alpha)$ is the root degree attached to a general
standard-elementary monomial, while $d(w)$ is its value on the code index
$\alpha(w)$. Write
\[ \Sch_w(c)=\sum_\alpha a_{w,\alpha}\mathcal E_\alpha(c). \]
Since $\Code_w$ occurs with coefficient one, a homogeneous
$\Sch_w(c)$ has the same torus weight as $\Code_w$.

\begin{lemma}\label{lem:torus-weight-criterion}
For $w\in S_N$, the following are equivalent:
\begin{enumerate}[(i)]
 \item $w\in\mathcal H_N$;
 \item $a_{w,\alpha}\ne0$ implies $d(\alpha)=d(w)$;
 \item $\Pi_N^{-1}(w,v)\ne0$ implies $d(v)=d(w)$.
\end{enumerate}
\end{lemma}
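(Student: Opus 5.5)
The plan is to reduce everything to the torus weights of the standard-elementary monomials. The monomials $\mathcal E_\alpha(c)$ form a $\ZZ$-basis of $\cM_N$, and each one is a torus weight vector by \eqref{eq:hom-profile-cut-weight}. Since the simple roots $\eps_i-\eps_{i+1}$ are linearly independent, the weight of $\mathcal E_\alpha$ determines $d(\alpha)$ and conversely.

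For (i)$\Leftrightarrow$(ii), I would decompose $\Sch_w(c)=\sum_\alpha a_{w,\alpha}\mathcal E_\alpha(c)$ into weight components. Distinct monomials are linearly independent in $\CC[U_N]$, so $\Sch_w(c)$ is homogeneous exactly when all $\alpha$ with $a_{w,\alpha}\ne0$ share one value of $d(\alpha)$. The code monomial $\Code_w=\mathcal E_{\alpha(w)}$ appears with coefficient one; this is the leading term of Fulton's theorem, i.e.\ the inverse of the unitriangular $\Pi_N$ has diagonal entries one. Hence the common value must be $d(\alpha(w))=d(w)$, which gives (ii). Conversely, (ii) says every term has weight $\sum_i d_i(w)(\eps_i-\eps_{i+1})$, which is (i).

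For (ii)$\Leftrightarrow$(iii), I would reindex by permutations. Since $\alpha\mapsto$ the unique $v$ with $\alpha(v)=\alpha$ is a bijection, $a_{w,\alpha(v)}$ is the coefficient of $\Code_v$ in $\Sch_w(c)$. Inverting \eqref{eq:Pi-definition} gives $\Sch_w(c)=\sum_v\Pi_N^{-1}(w,v)\Code_v$, so $a_{w,\alpha(v)}=\Pi_N^{-1}(w,v)$. Combined with $d(\alpha(v))=d(v)$, computed just before the lemma, condition (ii) translates directly into (iii).

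The only point needing care is the orientation of the inverse matrix: the rows of $\Pi_N^{-1}$ must be indexed by Schubert polynomials. Everything else is bookkeeping.
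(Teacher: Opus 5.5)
Your proposal is correct and follows the paper's proof. The paper derives (i)$\Leftrightarrow$(ii) from the weight formula \eqref{eq:hom-profile-cut-weight}, using the remark before the lemma that $\Code_w$ occurs with coefficient one; it derives (ii)$\Leftrightarrow$(iii) from the bijection between indices $\alpha$ and permutations $v$ together with $\Sch_w(c)=\sum_v\Pi_N^{-1}(w,v)\Code_v$, exactly as you do.
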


\begin{proof}
Equation~\eqref{eq:hom-profile-cut-weight} proves the equivalence of
\textup{(i)} and \textup{(ii)}. The indices $\alpha$ are in bijection
with inversion-end codes, and
\[ \Sch_w(c)=\sum_{v\in S_N}\Pi_N^{-1}(w,v)\Code_v. \]
This proves the equivalence with \textup{(iii)}.
\end{proof}

\subsection{Inverse Pieri rows on one root-degree fiber}

For a staircase vector $a=(a_1,\ldots,a_N)$, with
$0\leq a_i\leq N-i$, let $y_a\in S_N$ be the permutation with Lehmer
code $a$. For $v\in S_N$ put
\[
 \theta(v)=(r_N(v)-1,r_{N-1}(v)-1,\ldots,r_1(v)-1).
\]
Thus $\theta(v)=L(vw_0)$ and $y_{\theta(v)}=vw_0$.

\begin{lemma}\label{lem:complementary-code}
For $w,v\in S_N$, we have
\[
 \Pi_N^{-1}(w,v)=\partial_{y_{\theta(w)}}x^{\theta(v)}
 =\partial_{ww_0}x^{\theta(v)}.
\]
The right side is interpreted as zero unless $\ell(w)=\ell(v)$.
\end{lemma}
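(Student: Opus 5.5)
The plan is to read the identity off from Lemma~\ref{prop:Cauchy-transpose-Pieri}, by inverting the ``monomial coefficient'' matrix of Schubert polynomials using divided differences. Throughout I use $\theta(v)=L(vw_0)$, stated just before the lemma, and $y_{\theta(w)}=ww_0$.

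\textbf{Step 1: rewrite $\Pi_N$ through monomial coefficients.} Put $\pi=ww_0$ and $\sigma=uw_0$. Since $w_0^2=1$, Lemma~\ref{prop:Cauchy-transpose-Pieri} gives
\[
 \Pi_N(w,u)=[x^{\theta(w)}]\Sch_{uw_0}(x).
\]
Define the square matrix $M(\sigma,a)=[x^a]\Sch_\sigma(x)$, with rows indexed by $\sigma\in S_N$ and columns by staircase vectors $a$. The second assertion of Lemma~\ref{prop:Cauchy-transpose-Pieri} says that every $\Sch_\sigma$ lies in the span $V_N$ of the staircase monomials $x^a$, $0\le a_i\le N-i$. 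The Schubert polynomials $\Sch_\sigma$, $\sigma\in S_N$, are linearly independent and $N!$ in number, while $\dim V_N=N!$. So they form a basis of $V_N$, and $M$ is invertible.

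\textbf{Step 2: the inverse matrix via divided differences.} Expand $x^a=\sum_\sigma C(a,\sigma)\Sch_\sigma$ in $V_N$. I claim $C(a,\sigma)=\partial_\sigma x^a\big|_{x=0}$. Indeed, apply $\partial_\sigma$ using the descent rule for $\partial_i$ on Schubert polynomials. This gives $\partial_\sigma\Sch_\tau=\Sch_{\tau\sigma^{-1}}$ when $\ell(\tau\sigma^{-1})=\ell(\tau)-\ell(\sigma)$, and $0$ otherwise. Evaluating at $x=0$ kills every nonconstant Schubert polynomial, so only $\tau=\sigma$ survives, with value $\Sch_{\mathrm{id}}=1$.

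Homogeneity then handles the degree condition. The polynomial $\partial_\sigma x^a$ is homogeneous of degree $|a|-\ell(\sigma)$. Hence its constant term vanishes unless $|a|=\ell(\sigma)$, and in that case $\partial_\sigma x^a$ is itself the constant. For $a=\theta(v)$ and $\sigma=ww_0$ we have
\[
 |\theta(v)|=\sum_j (j-1-e_j(v))=\ell(w_0)-\ell(v),\qquad \ell(ww_0)=\ell(w_0)-\ell(w).
\]
So the two degrees match exactly when $\ell(v)=\ell(w)$. This is the stated convention that the right-hand side is zero otherwise, and in the matching case $\partial_{ww_0}x^{\theta(v)}$ is an honest integer.

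\textbf{Step 3: verify the inverse.} Set $\Pi'(w,v)=C(\theta(v),ww_0)$; the goal is $\Pi_N\Pi'=1$. As $u$ runs over $S_N$, $\sigma=uw_0$ runs over $S_N$, so
\[
 \sum_u\Pi_N(w,u)\Pi'(u,v)=\sum_\sigma [x^{\theta(w)}]\Sch_\sigma\cdot C(\theta(v),\sigma)
 =[x^{\theta(w)}]\,x^{\theta(v)}=\delta_{\theta(w),\theta(v)}=\delta_{w,v}.
\]
The last equality holds because $v\mapsto\theta(v)=L(vw_0)$ is injective. This gives $\Pi_N\Pi'=1$, and since the matrices are square, $\Pi'=\Pi_N^{-1}$. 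Substituting $\sigma=ww_0=y_{\theta(w)}$ into the formula for $C$ gives both expressions in the statement.

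The only step I expect to need care is the spanning claim in Step~1, that the Schubert polynomials of $S_N$ span exactly the staircase monomials. I would cite it from the Artin basis / BJS statement already invoked in Lemma~\ref{prop:Cauchy-transpose-Pieri} rather than reprove it.
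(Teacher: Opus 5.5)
Your proposal is correct and follows the paper's argument. Both read $\Pi_N(w,u)=[x^{\theta(w)}]\Sch_{uw_0}(x)$ off the Cauchy transpose lemma and then invert the monomial-coefficient matrix by the nil-Hecke pairing $\partial_\sigma x^a$. Your Steps~2--3 spell out the constant-term extraction, the degree matching $\ell(v)=\ell(w)$, and the explicit inverse check, which the paper simply cites from Fomin--Stanley and Bj\"orner--Brenti.
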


\begin{proof}
See Appendix~\ref{app:record-obstructions}.
\end{proof}

Put $Q_N=\Pi_N^{-1}$. The following statement isolates the precise role
of the root-degree support condition. We write $\mu_{I_d}$ for the M\"obius
function of this finite interval, normalized by
\[
\sum_{u\leq z\leq v}\mu_{I_d}(u,z)=\delta_{u,v}.
\]

\begin{proposition}\label{prop:single-fiber-Mobius-row}
Let $w\in S_N$ and put $d=d(w)$. If
$Q_N(w,u)=0$ whenever $d(u)\ne d$,
then
\[
 Q_N(w,v)=\mu_{I_d}(w,v)\qquad(v\in I_d).
\]
\end{proposition}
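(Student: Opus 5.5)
The plan is to read the statement off the identity $Q_N\Pi_N=I$. The support hypothesis truncates this identity to the single fiber $I_d$. The real content is then that $\Pi_N$, restricted to $I_d\times I_d$, is the zeta matrix of the Bruhat interval $I_d$ of Proposition~\ref{prop:bruhat-interval}.

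\textbf{Step 1 (truncation).} Fix $v\in I_d$ and take the $(w,v)$ entry of $Q_N\Pi_N=I$:
\[ \sum_{u\in S_N}Q_N(w,u)\Pi_N(u,v)=\delta_{w,v}. \]
By hypothesis only $u$ with $d(u)=d$ contribute, so
\[ \sum_{u\in I_d}Q_N(w,u)\Pi_N(u,v)=\delta_{w,v}\qquad(v\in I_d). \]
Let $\Pi_d$ denote the restriction of $\Pi_N$ to $I_d\times I_d$. If $\Pi_d$ is invertible, the row vector $(Q_N(w,u))_{u\in I_d}$ equals $e_w\Pi_d^{-1}$.

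\textbf{Step 2 (the key claim).} For $u,v\in I_d$ I will show $\Pi_N(u,v)=\zeta_{I_d}(u,v)$, that is, it equals $1$ if $u\le v$ in $I_d$ and $0$ otherwise. Granting this, $\Pi_d^{-1}=\mu_{I_d}$, using the stated normalization $\sum_{u\le z\le v}\mu(u,z)=\delta_{u,v}$, i.e.\ $\mu\zeta=I$. Hence $Q_N(w,v)=\mu_{I_d}(w,v)$ for $v\in I_d$.

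\textbf{Step 3 (proof of the claim).} I would use the Cauchy-transpose Lemma~\ref{prop:Cauchy-transpose-Pieri} together with $\theta(u)=L(uw_0)$:
\[ \Pi_N(u,v)=[x^{\theta(u)}]\,\Sch_{vw_0}(x). \]
By Lemma~\ref{lem:d-content}, $u$ and $v$ lie in the same fiber exactly when their rank words have the same multiset of entries. So $\theta(u)$ is a rearrangement of $\theta(v)=L(vw_0)$.

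The coefficients of $\Sch_\sigma$ on rearrangements of its own code $L(\sigma)$ are controlled by the leading key (Demazure) component. Lascoux--Schützenberger write $\Sch_\sigma=\kappa_{L(\sigma)}+\sum(\text{keys }\kappa_\beta)$, where every other $\beta$ is strictly dominated by $L(\sigma)$. Only the leading key $\kappa_{L(\sigma)}$ contains monomials that are rearrangements of $L(\sigma)$, each with coefficient one. These are exactly the rearrangements $\beta$ lying below $L(\sigma)$ in the Bruhat order on compositions, i.e.\ on the orbit of the sorted partition. Translating through $\beta=\theta(u)$ and $L(\sigma)=\theta(v)$, the order-reversing reindexing $r\mapsto\theta$ turns this into the multiset Bruhat order on legal rank words. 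That is the order of Proposition~\ref{prop:bruhat-interval}.

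I expect Step 3 to be the main obstacle. It requires two checks. First, the Demazure-character support statement must be correct with coefficient exactly one; this can be verified by a direct pipe-dream argument, since a pipe dream whose weight is a rearrangement of the code is forced to be a "bottom" pipe dream moved by ladder moves. Second, the direction conventions must match, so that the result is $\zeta$ and not its transpose. Only the former is compatible with the given normalization of $\mu_{I_d}$ and with unitriangularity $\Pi_N(w,w)=1$.

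As a consistency check, the result should reproduce the unit-column facts of Corollary~\ref{cor:code-transition-extremals} on the source and sink of each fiber. An alternative route to the same claim is available. One could compute $\Pi_d$ through $\Pi_N=D_N^{\rm can}A_N^{\rm can}$, but the Cauchy-transpose route appears most direct.
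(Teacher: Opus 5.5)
\textbf{There is a genuine gap.} Your Step~1 is correct. Under the row hypothesis, the vector $q=(Q_N(w,u))_{u\in I_d}$ satisfies $q\,\Pi_d=e_w$, where $\Pi_d$ is the $I_d\times I_d$ block of $\Pi_N$.

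The gap is the key claim of Steps~2--3, that $\Pi_d=\zeta_{I_d}$. This claim is false. Take $N=5$, $u=54132$ and $v=45231$.
\begin{itemize}
\item The rank words $r(u)=(1,1,1,2,2)=r_d^{\min}$ and $r(v)=(1,2,1,2,1)$ have the same content. So $u\le v$ in $I_d$ and $\zeta_{I_d}(u,v)=1$.
\item However, $\theta(u)=(1,1,0,0,0)$ and $vw_0=13254$. The Cauchy transpose lemma then gives
\[
 \Pi_5(u,v)=[x_1x_2]\,\Sch_{13254}(x)
 =[x_1x_2]\,(x_1+x_2)(x_1+x_2+x_3+x_4)=2 .
\]
\end{itemize}

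The error is in your key-polynomial step. The non-leading keys in $\Sch_\sigma$ have shapes that \emph{dominate} $L(\sigma)^+$; they are not dominated by it. You can already see this in $\Sch_{2143}=\kappa_{(1,0,1,0)}+\kappa_{(2,0,0,0)}$. Their Demazure characters can contain monomials in the orbit of $L(\sigma)$. In the example, $\Sch_{13254}=\kappa_{(0,1,0,1,0)}+\kappa_{(0,2,0,0,0)}$, and $\kappa_{(0,2)}=x_1^2+x_1x_2+x_2^2$ supplies the extra $x_1x_2$.

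Consequently $\Pi_d^{-1}\neq\mu_{I_d}$ in general. Step~1 then only tells you that $Q_N(w,\cdot)|_{I_d}=e_w\Pi_d^{-1}$, and you have no means of evaluating that row.

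What is missing is a \emph{row-level} argument that uses the hypothesis for more than truncation. No block-level statement can work here. The paper's remark with $w=53142$, $v=34521$ shows that the fiber block of $Q_N$ is not the M\"obius matrix, so the conclusion genuinely depends on the row.

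The paper's proof proceeds as follows.
\begin{itemize}
\item It writes $Q_N(w,v)=\partial_{ww_0}x^{\theta(v)}$ as a signed sum over box-by-box paths in the divided-difference word.
\item Paths using only endpoint choices stay inside $I_d$. Through Deodhar's distinguished subexpressions, they sum to $\mu_{I_d}(w,v)$.
\item Paths with an interior choice factor, by the first-interior factorization lemma, as $\sum_{d(u)\neq d}h_w(u,v)\,Q_N(w,u)$.
\item The row hypothesis kills this last sum.
\end{itemize}
Your argument lacks an identity of this kind: one expressing the deviation of the $w$th row from the M\"obius row in terms of the off-fiber entries of that same row.
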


\begin{proof}
See Appendix~\ref{app:single-fiber-rows}.
\end{proof}

We shall also use the following condition on a rank word:
\begin{equation}\label{eq:rank-gap-condition}
 q<j,\quad 1<r_j(w)\leq r_q(w)<q
 \quad\Longrightarrow\quad
 \exists t\ (q<t<j),\quad r_j(w)-1\leq r_t(w)\leq r_q(w).
\end{equation}

\begin{proposition}\label{prop:hom-transition-inputs}
Let $w\in\mathcal H_N$ and put $d=d(w)$. Then:
\begin{enumerate}[(i)]
 \item $Q_N(w,v)=\mu_{I_d}(w,v)$ for every $v\in I_d$;
 \item the rank word $r(w)$ satisfies \eqref{eq:rank-gap-condition}.
\end{enumerate}
\end{proposition}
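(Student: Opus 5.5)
Part~(i) follows directly from the earlier results. Since $w\in\mathcal H_N$, Lemma~\ref{lem:torus-weight-criterion}\textup{(iii)} gives $Q_N(w,u)=\Pi_N^{-1}(w,u)=0$ whenever $d(u)\neq d(w)$. This is exactly the hypothesis of Proposition~\ref{prop:single-fiber-Mobius-row}, so $Q_N(w,v)=\mu_{I_d}(w,v)$ for all $v\in I_d$.

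For part~(ii) I would argue by contraposition. Suppose $q<j$ with $1<r_j(w)\leq r_q(w)<q$, and suppose no $t$ with $q<t<j$ has $r_j(w)-1\leq r_t(w)\leq r_q(w)$. From these data I will produce $v$ with $d(v)\neq d(w)$, $\ell(v)=\ell(w)$ and $Q_N(w,v)\neq0$, which contradicts Lemma~\ref{lem:torus-weight-criterion}.

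The candidate $v$ comes from a modified rank word $r'$. Decrease the entry at position $j$ from $r_j$ to $r_j-1$, and increase the entry at position $q$ from $r_q$ to $r_q+1$. Both entries remain legal because $r_q<q$ and $r_j>1$. The multiset of entries changes, so Lemma~\ref{lem:d-content} gives $d(v)\neq d(w)$. The sum of the $e$'s is unchanged, so $\ell(v)=\ell(w)$.

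To compute $Q_N(w,v)$ I would use Lemma~\ref{lem:complementary-code}: $Q_N(w,v)=\partial_{ww_0}x^{\theta(v)}$. Here $\theta(v)$ differs from $\theta(w)$ by moving one unit of exponent between the reversed positions $N+1-q$ and $N+1-j$. Since $\partial_{ww_0}x^{\theta(w)}=Q_N(w,w)=1$, the divided difference $\partial_{ww_0}$ pairs monomials with $\Sch$-coefficients. Equivalently, $Q_N(w,v)$ is the coefficient obtained by expanding $x^{\theta(v)}$ in Schubert polynomials and reading off the coefficient of $\Sch_{ww_0}$.

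The key computation is a localized one. The absence of an intermediate $t$ with $r_t\in[r_j-1,r_q]$ means that, in the Lehmer code $\theta$, the positions strictly between $N+1-j$ and $N+1-q$ have values outside the window $[r_j-2,r_q-1]$. I would then apply divided differences along a reduced word of $ww_0$ that factors through the segment of positions from $N+1-j$ to $N+1-q$. The transfer monomial $x_a^{\,m}x_b^{\,m'}$, with one unit moved, survives as $\pm1$: it is a single "transfer" term, with no cancellation, because no intermediate variable can absorb the exponent shift. Away from that window, the operators act identically on $x^{\theta(w)}$ and $x^{\theta(v)}$.

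I expect this nonvanishing of the divided difference to be the main obstacle. The natural first attempt is to choose $(q,j)$ violating the condition with $j-q$ minimal, which reduces the window to a two-variable computation. That case can be checked with $\partial_i(x_i^ax_{i+1}^b)$, which is a signed sum of $x_i^{a'}x_{i+1}^{b'}$ with $a'+b'=a+b-1$. If such a $v$ cannot be isolated, the fallback is to use part~(i). In that case I would show that the Möbius row together with the generic cover coefficients (Proposition~\ref{prop:generic-cover}) forces $\Sch_w$ to contain a code monomial whose root degree differs from $d(w)$, namely the one produced by Fulton's divided-difference formula \eqref{eq:Fulton-divided-difference} at the level between $q$ and $j$.
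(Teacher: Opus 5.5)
Your part \textup{(i)} is exactly the paper's argument. Part \textup{(ii)}, however, has a genuine gap. The witness $v$ you build at the violating pair itself (raise $r_q$, lower $r_j$) need not satisfy $Q_N(w,v)\neq0$, so the claimed ``single transfer term with no cancellation'' is false.

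Take $N=7$ and $w=(4,5,7,6,2,1,3)$, so $r(w)=(1,2,3,3,1,1,3)$. The only violation of \eqref{eq:rank-gap-condition} is $(q,j)=(4,7)$, because $r_5=r_6=1\notin[2,3]$. Your $v$ has $r(v)=(1,2,3,4,1,1,2)$ and $\theta(v)=(1,0,0,3,2,1,0)$. But $ww_0=(3,1,2,6,7,5,4)$ lies in the Young subgroup $S_3\times S_4$. Hence $\partial_{ww_0}$ is $\partial_{312}$, acting only on $x_1,x_2,x_3$, times an operator acting only on $x_4,\dots,x_7$. The first block of $x^{\theta(v)}$ is $x_1$, of degree $1<\ell(312)=2$, so Lemma~\ref{lem:complementary-code} gives $Q_7(w,v)=0$.

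The row is nevertheless not fiber-supported. Raising the letter at position $5$ instead gives $r=(1,2,3,3,2,1,2)$ and $\theta=(1,0,1,2,2,1,0)$. Then $Q_7=\partial_{312}(x_1x_3)\cdot 1=-1$; the second block contributes $1$ because $x_4^2x_5^2x_6$ is the shift of $\mathfrak S_{3421}=x_1^2x_2^2x_3$. Position $5=p_2$ is the external $R1$ record $x=h-1$ of Proposition~\ref{prop:record-obstructions}, not the violating position $q=4$.

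Your fallback steps do not close the gap either. Choosing $j-q$ minimal does not produce a two-variable window: here $j-q=3$ and there is no shorter violation. The appeal to Proposition~\ref{prop:generic-cover} concerns the semicanonical transition $D_N^{\rm gen}$, not $\Pi_N^{-1}$.

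The missing idea is to place the off-fiber entry at a record rather than at $(q,j)$. The paper proves \textup{(ii)} through Theorem~\ref{thm:homogeneous-record-outer-bound}, by induction on $N$.
\begin{enumerate}[(1)]
 \item The rank-reduction identity \eqref{eq:Q-rank-reduction} shows that homogeneity passes to $\tau_{\bar\alpha}$. This disposes of violations with $j<N$.
 \item For $j=N$, an external record would give $Q_N(\tau_\alpha,\tau_\gamma)=-1$ at a different root degree. This nonvanishing comes from the leading-term computation of Lemma~\ref{lem:leading-support}, fed into the insertion functional. Hence all records are internal.
 \item Lemma~\ref{lem:sealed-gap} then uses internality of the record at the rightmost occurrence among the values in $[h-1,m]$ of $u=\tau_{\bar\alpha}$. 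This rules out a sealed gap ending at $N$.
\end{enumerate}
That record position is always $\geq q$ and can be strictly larger, as in the example above. This is exactly why a witness pinned to the violating position $q$ cannot work in general.
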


\begin{proof}
Lemma~\ref{lem:torus-weight-criterion} gives the support hypothesis of
Proposition~\ref{prop:single-fiber-Mobius-row}, which proves~\textup{(i)}.
Part~\textup{(ii)} is proved in Appendix~\ref{app:record-obstructions}.
\end{proof}

\noindent The support hypothesis in
Proposition~\ref{prop:single-fiber-Mobius-row} is essential: an arbitrary
row of $Q_N$ need not restrict to the M\"obius row of its root-degree
fiber.

\subsection{The canonical transition on a Bruhat interval}

Fix an admissible root degree $d$ and its fiber $I_d$. With the notation of
Proposition~\ref{prop:bruhat-interval}, let $J=J_d$ and let
$y_r\in W^J$ be the stable standardization of $r\in I_d$. Define the
graded canonical transition by
\[ D_{N,d}^{\rm can}(r,v;q) =P^{J}_{y_r,y_v}(q) =P_{y_rw_J,y_vw_J}(q). \]
At $q=1$ this is the numerical PBW-to-canonical transition
$D_N^{\rm can}(r,v)$ of
Proposition~\ref{prop:classical-right-regular-transitions}; its equality
with type-$A$ local intersection cohomology is the
Zelevinsky--Henderson identification~\cite{Henderson}.

\begin{theorem}\label{thm:canonical-Mobius-row}
Let $w\in I_d$ and suppose that $r(w)$ satisfies
\eqref{eq:rank-gap-condition}. Then, for every $z\in I_d$,
\[ \sum_{v\in I_d} \mu_{I_d}(w,v)D_{N,d}^{\rm can}(v,z;q) =\delta_{w,z}. \]
Equivalently, the $w$th row of
$D_{N,d}^{\rm can}(q)^{-1}$ is the M\"obius row of $I_d$.
\end{theorem}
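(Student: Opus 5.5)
The claim is that the $w$th row of $D_{N,d}^{\rm can}(q)^{-1}$ is $\mu_{I_d}(w,\cdot)$. I would prove this by computing both sides explicitly and comparing them: the left side through parabolic Kazhdan--Lusztig inversion, the right side through Deodhar's description of parabolic Bruhat M\"obius functions.

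\textbf{Step 1: the inverse matrix.} Since $D_{N,d}^{\rm can}(r,v;q)=P^J_{y_r,y_v}(q)$, the inverse matrix is given by the parabolic inverse Kazhdan--Lusztig polynomials of the opposite type:
\[ \bigl(D_{N,d}^{\rm can}(q)^{-1}\bigr)(w,v)=(-1)^{\ell(y_v)-\ell(y_w)}\,Q^J_{y_w,y_v}(q). \]
The statement therefore reduces to the identity
\[ (-1)^{\ell(y_v)-\ell(y_w)}Q^J_{y_w,y_v}(q)=\mu_{I_d}(w,v)\qquad(v\in I_d). \]
In particular, the left side must not depend on $q$.

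\textbf{Step 2: the M\"obius side.} By Deodhar, for $x\le y$ in $W^J$ the M\"obius function of the parabolic Bruhat interval is
\[ \mu^J(x,y)=\begin{cases}(-1)^{\ell(y)-\ell(x)}, & [x,y]\subseteq W^J,\\ 0, & \text{otherwise.}\end{cases} \]
Since $I_d$ is identified with a parabolic interval by Proposition~\ref{prop:bruhat-interval}, this gives $\mu_{I_d}(w,v)$ in closed form. Comparing with Step 1, it suffices to show, for $v\ge w$:
\begin{enumerate}[(a)]
\item $Q^J_{y_w,y_v}=1$ when the full Bruhat interval $[y_w,y_v]$ lies in $W^J$;
\item $Q^J_{y_w,y_v}=0$ otherwise.
\end{enumerate}

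\textbf{Step 3: geometric reinterpretation.} The polynomials $Q^J_{y_w,\cdot}$ are local intersection cohomology polynomials of the opposite Schubert variety through the point $y_w$. Equivalently, via $w_0$-conjugation, they are the ordinary $P^{J'}$ for the dual pair $(w_0y_vw_J,\,w_0y_ww_J)$. So (a) and (b) together say that the opposite cell closure attached to $w$ is rationally smooth. It is then also "parabolically pure": every interval above $y_w$ either stays in $W^J$ or is killed by the parabolic sign cancellation, which is exactly the mechanism that produces the zeros in Deodhar's formula.

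\textbf{Step 4: the main obstacle.} The hardest step is showing that the rank-gap condition \eqref{eq:rank-gap-condition} on $r(w)$ implies this rational smoothness. My first attempt would translate the condition into pattern avoidance for the standardized permutation $w_0y_ww_J$. A violation of \eqref{eq:rank-gap-condition} is a triple $q<j$ with $1<r_j\le r_q<q$ and no intermediate position $t$ whose rank lies in $[r_j-1,r_q]$. I expect such a triple to produce exactly a $3412$ or $4231$ pattern after standardization; conversely, its absence should rule out both patterns. Lakshmibai--Sandhya then gives smoothness, and hence $P=1$ along the whole relevant interval.

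If the pattern translation proves awkward, the fallback is geometric. I would realize the opposite stratum closure inside $X_d$, using the kernel-flag description in Theorem~\ref{thm:right-regular-chamber-ansatz}(iii). The aim is to show directly that \eqref{eq:rank-gap-condition} makes its transversal slice at the stratum of $w$ an affine space: at each Grassmannian step $\Gr(d_i,1+d_{i+1})$, the imposed incidence conditions should be nested rather than crossing.

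\textbf{Step 5: conclusion.} Once (a) and (b) hold, multiplying the $w$th Möbius row against $D_{N,d}^{\rm can}(q)$ and applying the inversion identity of Step 1 gives $\delta_{w,z}$ for every $z\in I_d$, which is the assertion.
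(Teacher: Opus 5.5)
Your Steps~1--2 are a correct reformulation. By spherical--antispherical inversion, the $(w,v)$ entry of $D_{N,d}^{\rm can}(q)^{-1}$ is $\pm A_J(x,\tau;q)=\pm\sum_{t\in W_J,\,xt\le\tau}(-1)^{\ell(t)}P_{xt,\tau}(q)$ with $x=w_0y_vw_J$ and $\tau=w_0y_ww_J$, and (a)--(b) is exactly the claim that these sums are $1$ or $0$ according to Deodhar's criterion.

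The argument breaks at Steps~3--4. The identification of the inverse entries with IC polynomials of a dual pair is the $J=\emptyset$ formula $Q_{x,y}=P_{w_0y,w_0x}$. For $J_d\neq\emptyset$ the inverse entries are antispherical sums, not local IC polynomials of the opposite or dual Schubert variety in $G/P_J$. Indeed they vanish in case (b), which an IC stalk on a closure never does. Smoothness in $G/P_J$ does not control them either: with $W_J=\langle s_2\rangle\subset S_4$ one gets $A_J(2143,4231;q)=P_{2143,4231}-P_{2413,4231}=q$, although $4231\,W_J$ indexes all of $G/P_J$. So your fallback via slices in $X_d$ aims at the wrong object.

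What would suffice is rational smoothness of $X_\tau\subseteq G/B$ for the minimal representative $\tau$. Then every entry of the row is a constant, and since $D_{N,d}^{\rm can}(0)$ is the zeta matrix of $I_d$, a $q$-independent row must be the M\"obius row. But \eqref{eq:rank-gap-condition} does not imply this, so the pattern translation you expect in Step~4 is false. Take $w=1423$, so $r(w)=(1,2,2,3)$. The only pair to test is $q=3$, $j=4$, and $r_4>r_3$, so the gap condition holds. Here $y_w=e$, $W_{J_d}=\langle s_2\rangle$, and $w_0y_ww_J=w_0s_2=4231$, which is singular by Lakshmibai--Sandhya. The theorem holds for this row only because its two antispherical sums involve just $P_{4231,4231}$ and $P_{4132,4231}$, both off the singular locus $X_{2143}$.

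The missing idea is that the gap condition controls cancellation, not smoothness. The paper never shows the relevant $P_{xt,\tau}$ equal $1$; it proves directly that $A_J(x,\tau;q)\in\{0,1\}$ coefficientwise, in three moves.
\begin{enumerate}[(i)]
\item If some lower letter occurs twice in one maximal strict-ascent run of the upper word, then $t\mapsto s_pt$ is a fixed-point-free, sign-reversing involution of $\{t\in W_J:xt\le\tau\}$. Since $xs_px^{-1}\in W_{D_L(\tau)}$, left-descent invariance gives $P_{xs_pt,\tau}=P_{xt,\tau}$, so the sum is $0$ whatever the polynomials are.
\item Otherwise, a single element of $W_{D_L(\tau)}$ normalizes the lower word without changing the fiber, the signs, or the polynomials.
\item Henderson's cancellation then deletes a common cancellable position, lowering the rank.
\end{enumerate}
The gap condition enters only through Theorem~\ref{pc-thm:safe-pivot}, which guarantees a cancellable site whose deletion keeps the upper word in the gap class. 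The induction therefore ends either at $x=\tau$ (singleton fiber, value $1$) or in the cancellation case (value $0$). Deodhar's formula in its singleton-fiber form then identifies the row with $\mu_{I_d}(w,\cdot)$.
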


\begin{proof}
For the parabolic state attached to the row $w$, consider the
antispherical sum
\[
 \sum_{t\in W_J,\,xt\leq\tau}(-1)^{\ell(t)}P_{xt,\tau}(q).
\]
If a lower letter occurs twice in one upper ascent run, a
fixed-point-free involution pairs terms of opposite sign without changing
the Kazhdan--Lusztig polynomial. Otherwise one normalizes the lower word
and deletes a common cancellable
position; the gap condition guarantees that the same class of bounded
words is preserved. Induction gives value $1$ for a singleton fiber and
$0$ otherwise. Spherical--antispherical inversion and Deodhar's relative
M\"obius formula identify this singleton criterion with the M\"obius
function of $I_d$. The details are given in
Appendix~\ref{app:parabolic-KL}.
\end{proof}

Theorem~\ref{thm:canonical-Mobius-row} applies to every $w\in I_d$
whose rank word satisfies the gap condition. Homogeneity is used only to
supply, through Proposition~\ref{prop:hom-transition-inputs}, the rowwise
M\"obius formula for $\Pi_N^{-1}$ and the required gap condition.

\begin{proof}[Proof of Theorem~\ref{thm:homogeneous-canonical}]
The implication \textup{(ii)}$\Rightarrow$\textup{(i)} follows from the
torus homogeneity of the canonical basis.

Assume $w\in\mathcal H_N$ and put $d=d(w)$. By
Lemma~\ref{lem:torus-weight-criterion} and
Proposition~\ref{prop:hom-transition-inputs}\textup{(i)},
\[ \Sch_w(c)=\sum_{v\in I_d}\mu_{I_d}(w,v)\Code_v. \]
By Proposition~\ref{prop:classical-right-regular-transitions},
\[ \Code_v=\sum_{z\in I_d}D_N^{\rm can}(v,z)\Can_z. \]
Proposition~\ref{prop:hom-transition-inputs}\textup{(ii)} and
Theorem~\ref{thm:canonical-Mobius-row}, specialized at $q=1$, give
\[ \Sch_w(c) =\sum_{z\in I_d} \left(\sum_{v\in I_d}\mu_{I_d}(w,v)D_N^{\rm can}(v,z)\right)\Can_z =\Can_w. \]
\end{proof}

\section{Generic--Schubert coincidence and the cluster-monomial problem}
\label{sec:Schubert-cluster-problem}

We call an irreducible component $Z_{\g}$ \emph{reachable} if $\g$ is
reachable in the sense of Definition~\ref{def:reachable}. Equivalently,
its generic character $\rho_{Z_{\g}}$ is an extended cluster monomial.
For a rigid component, this agrees with the usual categorical terminology:
the module in its dense orbit belongs to the additive closure of a maximal
rigid module obtained from the initial one by mutation.

The classical transition in Section~\ref{sec:bases-standard-elementary} shows
that every $\Gen_w$ is homogeneous of root degree $d(w)$. Consequently,
\[ \Gen_w=\Sch_w(c) \quad\Longrightarrow\quad w\in\mathcal H_N \quad\Longrightarrow\quad \Sch_w(c)=\Can_w. \]
Define the generic--Schubert coincidence set by
\[ \mathcal E_N^{\rm gen} =\{w\in S_N:\Gen_w=\Sch_w(c)\} =\{w\in\mathcal H_N:\Gen_w=\Can_w\}. \]
This is a basis-coincidence condition; it does not assert that $\g_w$ is
reachable in the sense of Definition~\ref{def:reachable}. Put
\[ \mathcal K_N= \{w\in S_N:\Sch_w(c)\text{ is an extended cluster monomial}\}. \]
If $w\in\mathcal K_N$, the leading Code term of $\Sch_w(c)$ fixes its
$\g$-vector to be $\g_w$. Hence the generic-basis element represented by
this cluster monomial is $\Gen_w$, and
Corollary~\ref{cor:generic-cluster-monomial-canonical} gives
\[ \Gen_w=\Can_w=\Sch_w(c). \]
On the other hand, every $\Gen_w$ is homogeneous. Hence
\begin{equation}\label{eq:K-subset-H}
 \mathcal K_N\subseteq\mathcal E_N^{\rm gen}\subseteq\mathcal H_N.
\end{equation}
The universal Schubert-cluster problem is to determine $\mathcal K_N$
inside $\mathcal H_N$.

\subsection{Recursive records}

Put
\[ \op{Prof}_N=\prod_{j=1}^{N-1}\{0,1,\ldots,j\}. \]
For $\alpha=(\alpha_1,\ldots,\alpha_{N-1})\in\op{Prof}_N$,
let $\tau_\alpha$ be the permutation with inversion-end profile $\alpha$,
and define its start word by
\[ \rho_1(\alpha)=1,\qquad \rho_q(\alpha)=q-\alpha_{q-1}\quad(2\leq q\leq N). \]
Thus $\rho(\alpha)=r(\tau_\alpha)$. Write
$\alpha=(\bar\alpha,r)$, put
\[ h=N-r,\qquad u=\tau_{\bar\alpha}\in S_{N-1},\qquad p_z=u^{-1}(z), \]
and let $\mathbf e_i$ be the $i$th standard basis vector of $\ZZ^{N-1}$.

Assume first that $r>0$. An \emph{$R1$ record} is a value $x<h$ such
that
\[ p_x>\max\{p_z:x<z\leq h\}. \]
Writing $q=p_x$ and $t=h-x$, its endpoint is
\[ R1_x(\alpha)=\alpha-t\mathbf e_{q-1}+t\mathbf e_{N-1}. \]
It is \emph{internal} when $\rho_{p_x}(\alpha)=x$. Assume next that
$h\geq2$. An \emph{$R2$ record} is a value $h<y\leq N-1$ such that
\[ p_{y-1}>\max\{p_z:h-1\leq z\leq y,\ z\neq y-1\}. \]
Its endpoint is
\[ R2_y(\alpha)=\alpha-\mathbf e_{p_{y-1}-1}+\mathbf e_{N-1}, \]
and it is internal when $\rho_{p_{y-1}}(\alpha)=h-1$.

The \emph{recursive record class} is defined by $\mathcal C_1=\{1\}$ and
\begin{equation}\label{eq:recursive-record-class}
 \tau_\alpha\in\mathcal C_N
 \quad\Longleftrightarrow\quad
 \tau_{\bar\alpha}\in\mathcal C_{N-1}
 \ \text{and every $R1$ or $R2$ record of $\alpha$ is internal}.
\end{equation}

The next proposition explains the record conditions. They describe the
first entries of an inverse Pieri row for which the final profile coordinate changes, and
internality is exactly the condition that the corresponding entry remains
in the same root-degree fiber.

\begin{proposition}\label{prop:record-obstructions}
Let $Q_N=\Pi_N^{-1}$ and let $\alpha=(\bar\alpha,r)$.
\begin{enumerate}[(i)]
 \item For $\beta\in\op{Prof}_{N-1}$,
 \begin{equation}\label{eq:Q-rank-reduction}
  Q_N\bigl(\tau_\alpha,\tau_{(\beta,r)}\bigr)
  =Q_{N-1}\bigl(\tau_{\bar\alpha},\tau_\beta\bigr).
 \end{equation}
 \item If $\gamma$ is the endpoint of an $R1$ or $R2$ record of $\alpha$,
 then
 \begin{equation}\label{eq:record-obstruction-coefficient}
  Q_N(\tau_\alpha,\tau_\gamma)=-1.
 \end{equation}
 \item The endpoint $\tau_\gamma$ has the same root degree as
 $\tau_\alpha$ if and only if the record is internal. In the internal
 $R1$ case the start-word letters $x,h$ are exchanged; in the internal
 $R2$ case the letters $h-1,h$ are exchanged.
\end{enumerate}
\end{proposition}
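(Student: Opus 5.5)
\noindent The plan is to treat the three parts in order. Part~(i) comes from a block-triangular structure of $\Pi_N$ with respect to the last profile coordinate. Part~(ii) is a first-order computation in the inverse of that block matrix. Part~(iii) is a direct bookkeeping on start words, using Lemma~\ref{lem:d-content}.

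For (i), write $\alpha=(\bar\alpha,r)$. Then $\Code_{\tau_\alpha}=\Code_{\tau_{\bar\alpha}}\,c_r(N-1)$, where $\Code_{\tau_{\bar\alpha}}$ lies in the copy of $\cM_{N-1}$ inside $\cM_N$.
\begin{enumerate}[(a)]
 \item Expand $\Code_{\tau_{\bar\alpha}}=\sum_u\Pi_{N-1}(\tau_{\bar\alpha},u)\Sch_u(c)$ with $u\in S_{N-1}\subset S_N$.
 \item Multiply by $c_r(N-1)$ using Sottile's Pieri rule for $e_r(x_1,\ldots,x_{N-1})$, as in Proposition~\ref{prop:iterated-Pieri-transition}.
 \item Every $u\in S_{N-1}$ fixes $N$. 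A Pieri chain for level $N-1$ acting on $u$ therefore moves the value at position $N$ exactly once, via the transposition touching $N$. The resulting $w$ has $e_N(w)=N-w_N$, and the constraint $\ell(w)=\ell(u)+r$ forces $e_N(w)\ge r$.
 \item Equality $e_N(w)=r$ holds exactly for one distinguished chain per $u$. The first $N-1$ coordinates of its endpoint profile are those of $u$.
\end{enumerate}
Hence, ordering profiles by the last coordinate, $\Pi_N$ is block upper unitriangular, and each diagonal block equals $\Pi_{N-1}$. Inverting block by block gives \eqref{eq:Q-rank-reduction}. As a cross-check I would also read this off Lemma~\ref{lem:complementary-code}. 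There $\theta(\tau_\alpha)_1=h-1$, so $x_1^{h-1}$ splits off, and $\partial_{\tau_\alpha w_0}$ factors through $\partial_{\tau_{\bar\alpha}w_0'}$ on the remaining variables whenever the first exponent is preserved.

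For (ii), use the block inverse. For a profile $\gamma$ with last coordinate $r+t$, $t>0$:
\[
Q_N(\tau_\alpha,\tau_\gamma)=-\sum Q_N(\tau_\alpha,\cdot)\,\Pi_N(\cdot,\cdot)\,Q_N(\cdot,\tau_\gamma),
\]
where the sum runs over intermediate blocks. The record conditions are exactly the conditions under which only one term survives.
\begin{itemize}
 \item[$R1$:] The inequality $p_x>\max\{p_z:x<z\le h\}$ says that the Pieri chain moving value $h$ down to $x$ at position $N$ is available directly from $u$ itself, with no cancelling intermediate $\Sch_{u'}$. The chain is a single transposition path of length $t$, it has coefficient $1$, and it changes only $\alpha_{q-1}$ (by $-t$) and $\alpha_{N-1}$ (by $+t$).
 \item[$R2$:] The analogous single-step argument applies with $t=1$, moving the letter $h-1$.
\end{itemize}
The main obstacle is proving that no other path contributes. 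I would attempt this through Lemma~\ref{lem:complementary-code}: compute $\partial_{\tau_\alpha w_0}x^{\theta(\tau_\gamma)}$ by choosing a reduced word of $\tau_\alpha w_0$ that first treats the variable $x_1$. The record maximality should then force each intermediate divided difference to act on a single monomial with coefficient $\pm1$, and tracking signs should give $-1$. If this becomes messy, I would fall back on the block-inverse expansion and show by induction on $t$ that all other terms vanish because the needed chains are blocked by the maximality condition.

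For (iii), compute start words directly. In the $R1$ case, $\rho_q(\gamma)=\rho_q(\alpha)+t$ and $\rho_N(\gamma)=h-t=x$, and all other letters are unchanged. By Lemma~\ref{lem:d-content}, the root degree is preserved exactly when the multiset of letters is preserved. That happens exactly when $\rho_q(\alpha)=x$, in which case the letters $x$ and $h$ are exchanged. The $R2$ case is identical with $t=1$, $q=p_{y-1}$, and letters $h-1,h$.
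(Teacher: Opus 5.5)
Parts (i) and (iii) are correct, but part (ii), which carries the content of the proposition, is only planned and not proved.

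\textbf{Part (i).} You take a different and more elementary route than the paper. The paper uses $Q_N(w,v)=\partial_{ww_0}x^{\theta(v)}$, the peel $\partial_{\tau_\alpha w_0}=T_h\circ\partial^{+1}_{\bar y}$, and the constant-term functional in the case $s=h$. You instead show directly that $\Pi_N$ is block upper triangular in the last profile coordinate, with diagonal blocks $\Pi_{N-1}$. That argument works, but step (c) is misstated. Every transposition in a level-$(N-1)$ chain here has the form $t_{a,N}$, so the value in position $N$ changes at \emph{every} step. It strictly decreases, since a length-one increase requires $v(a)<v(N)$. This gives $w_N\le N-r$. Equality forces the value sequence $N,N-1,\ldots,N-r$, hence the single chain through the positions $u^{-1}(N-1),\ldots,u^{-1}(N-r)$, whose endpoint standardizes back to $u$.

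\textbf{Part (iii).} This is exactly the paper's argument.

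\textbf{The gap in (ii).} Nothing in your proposal shows that the entry equals $-1$.
\begin{itemize}
\item The block-inverse identity writes $Q_N(\tau_\alpha,\tau_\gamma)$ as a sum of products of $Q_{N-1}$-entries and off-diagonal $\Pi_N$-entries, over up to $t$ intermediate blocks. You assert that the record inequality kills all products but one and that the survivor is $-1$, but you give no argument. Your Pieri-chain heuristic describes $\Pi_N$, not its inverse.
\item Your nil-Hecke alternative rests on an expectation that fails. Put $u=\tau_{\bar\alpha}$. After peeling, the polynomial $P=\partial_{uw_0^{(N-1)}}x^{\theta(u)+t\eps_{N-q}}$ is generally not a single monomial. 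In the $R2$ case even its $(h-1)$-leading part is $x_{h-1}+x_h+\cdots+x_{y-1}$, which has $y-h+1\ge 2$ terms.
\end{itemize}

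\textbf{What is missing.} The paper supplies two lemmas.
\begin{itemize}
\item \emph{Constant-term functional.} Put $s=h-t$, so that $x^{\theta(\tau_\gamma)}=x_1^{s-1}\bigl(x^{\theta(u)+t\eps_{N-q}}\bigr)^{+1}$. Since $\partial^{+1}_{\bar y}$ commutes with multiplication by $x_1$, one gets
\[
 Q_N(\tau_\alpha,\tau_\gamma)=\op{CT}\,T_h\bigl(x_1^{s-1}P^{+1}\bigr)
 =\sum_{s=j_0<\cdots<j_m=h}(-1)^m\bigl[x_{j_0}^{j_1-j_0}\cdots x_{j_{m-1}}^{j_m-j_{m-1}}\bigr]P .
\]
This follows by running the two-variable box formula with target exponent zero. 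In particular, only the $s$-leading part of $P$ is read.
\item \emph{Leading terms at a record.} Under the record hypothesis, $\op{Lead}_xP=x_x^t$ for $R1$, and $\op{Lead}_{h-1}P=x_{h-1}+\cdots+x_{y-1}$ for $R2$. This is proved by induction on $N-1-q$, peeling off the last letter of $u$. The record inequality places that letter outside the comparison interval, so the truncation carries the same record.
\end{itemize}
With these two facts, only the one-part chain $s<h$ survives, and it contributes $-[x_s^{h-s}]P=-1$.

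You also need the routine check that $\gamma\in\op{Prof}_N$, namely $\alpha_{q-1}\ge t$ and $\alpha_{N-1}+t\le N-1$.
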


\begin{proof}
See Appendix~\ref{app:record-obstructions}.
\end{proof}

\begin{theorem}\label{thm:homogeneous-record-outer-bound}
For every $N$, one has
$\mathcal H_N\subseteq\mathcal C_N$.
Moreover, every $w\in\mathcal C_N$ satisfies
\eqref{eq:rank-gap-condition}.
\end{theorem}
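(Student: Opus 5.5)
The inclusion $\mathcal H_N\subseteq\mathcal C_N$ will be proved by induction on $N$, with Proposition~\ref{prop:record-obstructions} and Lemma~\ref{lem:torus-weight-criterion} doing most of the work. The case $N=1$ is trivial. Let $w=\tau_\alpha\in\mathcal H_N$ with $\alpha=(\bar\alpha,r)$.

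\emph{Homogeneity descends.} We first show that $\tau_{\bar\alpha}\in\mathcal H_{N-1}$. By Lemma~\ref{lem:torus-weight-criterion}(iii), homogeneity means that $Q_N(w,v)\neq0$ forces $d(v)=d(w)$. Take $\beta$ with $Q_{N-1}(\tau_{\bar\alpha},\tau_\beta)\ne0$. By \eqref{eq:Q-rank-reduction}, $Q_N(\tau_\alpha,\tau_{(\beta,r)})\ne0$, hence $d(\tau_{(\beta,r)})=d(\tau_\alpha)$. The root degree of a profile is computed from its intervals as in \eqref{eq:hom-profile-cut-weight}. Appending the same last coordinate $r$ adds the same interval $[N-r,N-1]$ to both profiles. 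Therefore the root degrees of $\beta$ and $\bar\alpha$ agree on $1,\dots,N-2$, and Lemma~\ref{lem:torus-weight-criterion} gives $\tau_{\bar\alpha}\in\mathcal H_{N-1}$. By induction, $\tau_{\bar\alpha}\in\mathcal C_{N-1}$.

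\emph{Records are internal.} Let $\gamma$ be the endpoint of an $R1$ or $R2$ record of $\alpha$. By \eqref{eq:record-obstruction-coefficient}, $Q_N(\tau_\alpha,\tau_\gamma)=-1\neq0$, so homogeneity forces $d(\tau_\gamma)=d(\tau_\alpha)$. Proposition~\ref{prop:record-obstructions}(iii) then says the record is internal. Together with the previous step, this gives $\tau_\alpha\in\mathcal C_N$ by \eqref{eq:recursive-record-class}.

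\emph{The gap condition.} The second assertion, that every $w\in\mathcal C_N$ satisfies \eqref{eq:rank-gap-condition}, is a purely combinatorial statement about the recursive record class. We also prove it by induction on $N$. Write $\rho=\rho(\alpha)=r(\tau_\alpha)$. The entries $\rho_1,\dots,\rho_{N-1}$ form the start word of $\bar\alpha$, and the new letter is $\rho_N=h=N-r$. Any violating pair $q<j$ with $j<N$ is excluded by the induction hypothesis, since the gap condition only involves positions up to $j$. It remains to handle pairs $(q,N)$ with $1<h\le\rho_q<q$. We must find $t$ with $q<t<N$ and $h-1\le\rho_t\le\rho_q$.

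Suppose no such $t$ exists. Then every position after $q$ carries a start letter outside $[h-1,\rho_q]$. The plan is to translate this into the inverse positions $p_z=u^{-1}(z)$ of $u=\tau_{\bar\alpha}$ and produce a record that is not internal, contradicting membership in $\mathcal C_N$. If $\rho_q>h$, consider the largest value $x<h$ whose position exceeds those of the values in $(x,h]$. This should be an $R1$ record, and the absence of start letters in the window should force $\rho_{p_x}\ne x$. If $\rho_q=h$, the analogous choice uses the value $h-1$ and an $R2$ record at some $y>h$, and the failure should be $\rho_{p_{y-1}}\ne h-1$.

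The main obstacle is this last translation between start words $\rho_j=j-e_j$ and the inverse-position record conditions. It requires a precise description of which values precede position $q$, in terms of the inversion-end counts. I would first work out the dictionary ``$\rho_t\le k$ iff $\#\{i<t:u_i<u_t\}\ge t-k$'', and then check the two cases carefully on small examples before writing the general argument.
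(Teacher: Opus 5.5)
Your proof of $\mathcal H_N\subseteq\mathcal C_N$ is correct and is essentially the paper's argument. Reading root degrees off the appended interval $[N-r,N-1]$, rather than off the multiset of rank-word letters, is equivalent bookkeeping. Your reduction of \eqref{eq:rank-gap-condition} to pairs $(q,N)$ is also what the paper does.

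The gap is the case $j=N$. You do not prove it, and the case split you sketch cannot work, because which kind of record obstructs is not decided by comparing $\rho_q$ with $h$. Two examples show this.

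\emph{First example.} Take $u=\tau_{\bar\alpha}=(1,2,4,3,5)$ and $h=2$, so $\tau_\alpha=(1,3,5,4,6,2)$ and $\rho=(1,2,3,3,5,2)$. The pair $(4,6)$ violates the gap condition with $\rho_4=3>h$. Yet $\alpha$ has no $R1$ record at all, since the only candidate $x=1$ has $p_1<p_2$. The non-internal record is the $R2$ record $y=4$, with $\rho_{p_3}=\rho_4=3\neq h-1$.

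\emph{Second example.} Take $u=(3,4,6,5,2,1,7)$ and $h=3$, so $\tau_\alpha=(4,5,7,6,2,1,8,3)$ and $\rho=(1,2,3,3,1,1,7,3)$. The pair $(4,8)$ is a violation with $\rho_4=h$, but $\alpha$ has no $R2$ record. The non-internal record is the $R1$ record $x=2$, with $\rho_{p_2}=\rho_5=1\neq 2$.

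Incidentally, your proposed dictionary has the wrong inequality. It should read $\rho_t\leq k$ iff $\#\{i<t:u_i>u_t\}\geq t-k$.

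The missing idea, which is the content of the paper's Lemma~\ref{lem:sealed-gap}, is to choose the record by the positions of the values in a window starting at $h-1$, not by $\rho_q$.
\begin{enumerate}[(i)]
 \item Call $z\in[h,N-1]$ \emph{dominant} if $p_z>p_{z'}$ for all $z'\in[h-1,z)$. Let $m$ be minimal such that every value in $(m,N-1]$ is dominant.
 \item For dominant $z>m$, every larger value is also dominant and so lies to the right of $p_z$. Hence $\rho_{p_z}=p_z$.
 \item A violating $q$ has $u_q\geq\rho_q\geq h$ and $\rho_q<q$, so $u_q\in[h,m]$. In particular $m\geq h$, and by minimality $m$ is not dominant.
 \item Let $z^*\in[h-1,m]$ be the value with the rightmost position. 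Then $z^*<m$ and $p_{z^*}\geq q$.
 \item If $z^*=h-1$, then $z^*$ is an $R1$ record. If $z^*\geq h$, then $y=z^*+1$ is an $R2$ record.
 \item In either case internality gives $\rho_{p_{z^*}}=h-1$. If $p_{z^*}=q$, this contradicts $\rho_q\geq h$. Otherwise $t=p_{z^*}$ is the forbidden witness in $(q,N)$.
\end{enumerate}
Without this step, or an equivalent one, the second assertion of the theorem is not established by your proposal.
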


\begin{proof}
Write $w=\tau_\alpha$, with $\alpha=(\bar\alpha,r)$, and argue by
induction on $N$. If $w\in\mathcal H_N$, then the row
$Q_N(w,-)$ is supported on the root-degree fiber of $w$. By
\eqref{eq:Q-rank-reduction}, a nonzero entry in the row $Q_{N-1}(\tau_{\bar\alpha},-)$
gives a nonzero entry of $Q_N(w,-)$ after adjoining the common last
letter $h=N-r$. Equality of root degrees is equality of the multisets of
rank-word letters; deleting this common letter shows that the rank-$(N-1)$ row
is again supported on one root-degree fiber. Hence
$\tau_{\bar\alpha}\in\mathcal H_{N-1}$. An external record would give,
by Proposition~\ref{prop:record-obstructions}\textup{(ii)--(iii)}, a
nonzero coefficient at a different root degree. Thus every record is
internal, and induction gives $w\in\mathcal C_N$.

For the gap condition, a violation contained in the first $N-1$
positions is excluded by induction. A violation with final position $N$
is excluded by the sealed-gap lemma in
Appendix~\ref{app:record-obstructions}. This proves the second assertion.
\end{proof}

\begin{proposition}\label{prop:record-class-rigidity}
Every component $Z_{\g_w}$ with $w\in\mathcal C_N$ is $\Lambda_Q$-rigid. More
precisely, in Lapid's greedy procedure the active extension coordinates
are exactly the cover pairs, and their graph is a forest.
\end{proposition}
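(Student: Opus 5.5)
The plan is to argue by induction on $N$ along the recursion \eqref{eq:recursive-record-class}, using a combinatorial rigidity criterion for preprojective components of linear type. For a general point $X\in Z_{\mathfrak m}$, Crawley-Boevey's formula
\[
\dim\Ext^1_{\Lambda_Q}(X,X)=2\dim\End_{\Lambda_Q}(X)-(\underline{\dim}X,\underline{\dim}X)
\]
reduces $\Lambda_Q$-rigidity of $Z_{\mathfrak m}$ to a statement about generic endomorphisms. In type $A$ this generic computation is organized by Lapid's greedy procedure on the multisegment $\mathfrak m$ (the combinatorial form of the Lapid--M\'inguez criterion), which I take as the criterion to use. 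We list the linked pairs of segments, each contributing an extension coordinate. The greedy algorithm then decides which coordinates are \emph{active}, meaning not killed by a matching, and the component is rigid exactly when every active coordinate can be simultaneously absorbed by a generic choice of the dual arrows. I will prove rigidity by showing two things: the active coordinates are exactly the cover pairs of the right-regular linked-move order, and these form a forest. A forest of rank-one conditions can then be solved independently edge by edge, starting at leaves, so no obstruction survives.

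For the induction, write $w=\tau_\alpha$ with $\alpha=(\bar\alpha,r)$ and $h=N-r$. By \eqref{eq:schubert-multisegment},
\[
\mathfrak m_w=\mathfrak m_{\tau_{\bar\alpha}}+[h,N-1]
\]
(empty if $r=0$), so passing from $N-1$ to $N$ adjoins the single segment ending at the new vertex $N-1$. Since $\tau_{\bar\alpha}\in\mathcal C_{N-1}$, the induction hypothesis gives that the greedy data on the old segments form a forest whose active edges are covers. It remains to analyse the new linked pairs $([r_q,q-1],[h,N-1])$ with $r_q<h\leq q$. Among these I would identify the pairs that the greedy matching leaves unmatched with the $R1$ records $x$ (exchanging $x,h$) and the $R2$ records $y$ (exchanging $h-1,h$) of Proposition~\ref{prop:record-obstructions}; the non-record linked pairs are dominated by a later segment in the max-position condition defining records, and so are matched away.

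Internality of every record, together with Proposition~\ref{prop:record-obstructions}(iii), says each surviving new pair is an exchange of two start-word letters inside the fiber $I_{d(w)}$. I would then use the gap condition \eqref{eq:rank-gap-condition} (available for $w$ by Theorem~\ref{thm:homogeneous-record-outer-bound}) to show there is no intermediate letter. That makes each surviving exchange a Bruhat cover in $I_d$ in the sense of Proposition~\ref{prop:bruhat-interval}. For the forest property, I would show that the record values are strictly ordered by position: in the $R1$ case, $p_x$ exceeds $p_z$ for every larger $z\leq h$. Consequently the new vertex attaches to the existing forest by edges landing in distinct connected components, and so creates no cycle.

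The main obstacle is the middle step: matching Lapid's greedy matching of the new segment exactly with the record conditions. In particular, one must check that a non-record linked pair is really cancelled and not merely shadowed by another active coordinate. I would first try to do this by a direct comparison of Lapid's ordering rule (segments sorted by endpoints, with ties broken by start) with the max-position condition defining $R1$/$R2$ records. If this fails, the fallback is to compute $\dim\End(X)$ for the generic module directly via the triangular recursion and compare it with the quadratic form.
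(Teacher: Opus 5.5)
Your skeleton matches the paper's: induction on $N$ by adjoining the segment $[h,N-1]$, with the new covers identified with the internal $R1$/$R2$ records. But the step that actually proves the proposition is the one you leave open as the ``main obstacle''. In Lapid's procedure a coordinate is not activated because it is ``unmatched''. It is activated exactly when its row of the commutator matrix becomes dependent on the already processed rows, for the parameters already active, and the run aborts exactly when an activated edge closes a cycle. So you must control ranks, not matchings.

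The paper does this as follows. It orders the old rows first, then the new cover rows, then the new noncover rows by containment. For a noncover $e=(q,N)$, it iterates the intermediate vertex supplied by the failed cover test until it reaches a new cover $f_e=(c_e,N)\sqsubset e$ with $q\vdash c_e$. By the commutator formula for $M(\lambda)$, the row of $e$ has entry $\lambda_{f_e}$ in column $(q,c_e)$, and this parameter occurs in that column in no other row. After eliminating the old rows, the submatrix on the diagonal columns and the columns $(q,c_e)$ is block lower triangular, with blocks $B_{F_1}$ and $D=\op{diag}(\pm\lambda_{f_e})$. Here $B_{F_1}$ is the weighted incidence matrix of the new covers modulo the components of the old forest $F_0$, and both blocks have full row rank. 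This is what shows that noncover rows never activate, that exactly the covers are activated, and that every rank test succeeds. Without it, an unaccounted activation could close a cycle and the procedure would abort.

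Your forest step is also not yet an argument. The inequality ``$p_x>p_z$ for every $x<z\leq h$'' is just the definition of an $R1$ record and says nothing about the components of $F_0$. Two things are needed.
\begin{enumerate}[(a)]
 \item There is at most one internal record of each type. For $R1$, internality puts all of $1,\ldots,x-1$ to the left of $p_x$, so together with the record inequality $p_x$ is the rightmost of $p_1,\ldots,p_h$.
 \item When both types occur, with $q_2=p_{y-1}<q_1=p_x$, the two old endpoints lie in different components of $F_0$. A path in $F_0$ from $q_2$ (rank-word value $h-1$) to $q_1$ (value $x\leq h-2$) must, along some cover edge, cross from value $\geq h-1$ to value $\leq h-2$. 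Such a crossing exhibits $z\in[h-1,y]\setminus\{y-1\}$ with $p_z>p_{y-1}$, contradicting the $R2$ record inequality.
\end{enumerate}
Only then does adjoining the vertex $N$ keep the active graph a forest.

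Your detour through the gap condition, by contrast, is unnecessary. Internality alone forces every position strictly between $p_x$ and $N$ to have rank-word value $>h$. In the $R2$ case it likewise forces those values to avoid $[h-1,h]$. So the record pairs are already covers.
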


\begin{proof}
The proof is by induction under insertion of the final interval. The new
extension rows split into covers and noncovers. Internal records are
exactly the new covers, and their old endpoints lie in distinct connected
components of the old cover forest. Every noncover row has a distinguished
off-diagonal pivot belonging to a previously activated cover, whereas the
cover rows restrict, after contracting the old components, to the weighted
incidence matrix of the new cover forest. The two corresponding submatrices have full row rank, and adjoining the new vertex
creates no cycle. Lapid's criterion therefore gives $\Lambda_Q$-rigidity.
The matrix calculation is given in
Appendix~\ref{app:record-rigidity}.
\end{proof}

\begin{proposition}\label{prop:homogeneous-rigidity}
If $w\in\mathcal H_N$, then a general point $X_w\in Z_{\g_w}$ satisfies
$\Ext^1_{\Lambda_Q}(X_w,X_w)=0$. 
In particular, the component $Z_{\g_w}=Z_{\mathfrak m_w}$ is $\Lambda_Q$-rigid.
\end{proposition}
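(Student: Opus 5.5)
The statement follows by chaining two results proved just above. The first is the outer bound $\mathcal H_N\subseteq\mathcal C_N$ of Theorem~\ref{thm:homogeneous-record-outer-bound}. The second is the rigidity of record-class components in Proposition~\ref{prop:record-class-rigidity}. The plan is as follows. Given $w\in\mathcal H_N$, Theorem~\ref{thm:homogeneous-record-outer-bound} places $w$ in the recursive record class $\mathcal C_N$. Proposition~\ref{prop:record-class-rigidity} then shows that $Z_{\g_w}$ is $\Lambda_Q$-rigid. By definition this means that a general point $X_w$ of $Z_{\g_w}$ satisfies $\Ext^1_{\Lambda_Q}(X_w,X_w)=0$. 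The identification $Z_{\g_w}=Z_{\mathfrak m_w}$ comes from Proposition~\ref{prop:Schubert-geometric-bases}, and it transports the conclusion to the conormal-variety description.

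I would make one point explicit, because the paper warns against conflating the two rigidity notions. The conclusion concerns the preprojective component, not the equioriented module $M_w$. Indeed $M_w$ need not be $Q$-rigid: by Proposition~\ref{prop:self-extension-codimension} it is $Q$-rigid only for $312$-avoiding $w$, while $\mathcal H_N$ contains $321$-avoiding permutations that are not $312$-avoiding. So the argument must go through Lapid's criterion for $\Lambda_Q$-rigidity of $Z_{\mathfrak m}$, as Proposition~\ref{prop:record-class-rigidity} does. It must not appeal to openness of the orbit $\cO_{\mathfrak m_w}$.

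If one wanted a proof independent of the record class, a natural alternative would use Theorem~\ref{thm:homogeneous-canonical}, which gives $\Sch_w(c)=\Can_w$. One would then need $\Can_w$ to be the generic element, that is, $\Gen_w=\Can_w$. That equality is equivalent to the problem of the conditional corollary, so this route is circular. The record-class route is therefore the intended one.

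The main obstacle is contained in the cited inputs rather than in this deduction. The two delicate steps are the sealed-gap and record-obstruction analysis giving $\mathcal H_N\subseteq\mathcal C_N$, and the rank computation in Lapid's greedy procedure showing that the cover graph is a forest, which is done in the appendix. For the present statement, the only check is that the hypotheses match: Theorem~\ref{thm:homogeneous-record-outer-bound} holds for every $N$ and every $w$, and Proposition~\ref{prop:record-class-rigidity} applies to every element of $\mathcal C_N$.
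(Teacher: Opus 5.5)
Your proposal is correct and is exactly the paper's argument: the paper chains Theorem~\ref{thm:homogeneous-record-outer-bound} ($\mathcal H_N\subseteq\mathcal C_N$) with Proposition~\ref{prop:record-class-rigidity} ($\Lambda_Q$-rigidity of $Z_{\g_w}$ for $w\in\mathcal C_N$), and notes that no reachability is used. Your aside on the alternative route is not needed, and ``circular'' is the wrong word: that route simply depends on the coincidence $\Gen_w=\Can_w$ for $w\in\mathcal H_N$, which the paper does not establish, and even that equality would not by itself yield rigidity.
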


\begin{proof}
The two proved implications are
$ w\in\mathcal H_N
 \Longrightarrow w\in\mathcal C_N
 \Longrightarrow Z_{\g_w}\text{ is $\Lambda_Q$-rigid}.
$
The first is Theorem~\ref{thm:homogeneous-record-outer-bound}; the second
is Proposition~\ref{prop:record-class-rigidity}. No reachability statement
is used.
\end{proof}

\begin{corollary}\label{cor:conditional-H-equals-K}
Assume that every $\Lambda_Q$-rigid irreducible component $Z_{\g_w}$ with
$w\in\mathcal H_N$ is reachable from $\Sigma_N^U$. Then
$\mathcal H_N=\mathcal K_N.$
\end{corollary}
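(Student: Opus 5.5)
The plan is to prove the two inclusions $\mathcal K_N\subseteq\mathcal H_N$ and $\mathcal H_N\subseteq\mathcal K_N$ separately. The first is unconditional and is already recorded in \eqref{eq:K-subset-H}. Indeed, if $\Sch_w(c)$ is an extended cluster monomial, its leading Code term forces its $\g$-vector to be $\g_w$, so it equals $\Gen_w$. Every $\Gen_w$ is torus homogeneous of root degree $d(w)$ by Proposition~\ref{prop:classical-right-regular-transitions}, since all terms $\Gen_z$ with $z\preceq_{\rm Z}w$ lie in the same root-degree fiber. Hence $w\in\mathcal H_N$.

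For the reverse inclusion, I fix $w\in\mathcal H_N$. By Proposition~\ref{prop:homogeneous-rigidity}, the component $Z_{\g_w}=Z_{\mathfrak m_w}$ is $\Lambda_Q$-rigid. The hypothesis of the corollary then makes it reachable: there is a seed mutation-equivalent to $\Sigma_N^U$ and an extended cluster monomial $x$ in it with $\g$-vector $\g_w$ relative to $\Sigma_N^U$. Since the generic basis contains every extended cluster monomial and is indexed bijectively by $\g$-vectors, we get $x=\Gen_w$. This is precisely the equivalence ``reachable $\Leftrightarrow$ $\rho_{Z_\g}$ is an extended cluster monomial'' stated at the start of Section~\ref{sec:Schubert-cluster-problem}. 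Corollary~\ref{cor:generic-cluster-monomial-canonical} then gives $\Gen_w=\Can_w$. Theorem~\ref{thm:homogeneous-canonical}, applied to $w\in\mathcal H_N$, gives $\Sch_w(c)=\Can_w$. Chaining these, $\Sch_w(c)=\Gen_w$ is an extended cluster monomial, so $w\in\mathcal K_N$. As a byproduct this also shows $\mathcal E_N^{\rm gen}=\mathcal H_N$ under the hypothesis.

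The only delicate point is the identification of a reachable $\g$-vector with $\Gen_w$. This requires that the generic character of a $\Lambda_Q$-rigid component realized by mutation coincides with the corresponding cluster monomial, and that frozen coordinates are handled with the polynomial-coefficient convention: the frozen exponents of $\g_w$ must be nonnegative where required, and the monomial must be extended rather than Laurent. I would check this using the fact that $\g_w$ is the $\g$-vector of the genuinely regular element $\Code_w$, and that generic elements pointed at a given $\g$ are unique. With that settled, the remaining steps are direct citations of earlier results.
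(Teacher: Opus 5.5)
Your proposal is correct and follows the paper's proof essentially step for step: $\Lambda_Q$-rigidity from Proposition~\ref{prop:homogeneous-rigidity}, reachability from the hypothesis, identification of the resulting extended cluster monomial with $\Gen_w$, then Corollary~\ref{cor:generic-cluster-monomial-canonical} and Theorem~\ref{thm:homogeneous-canonical}, with the reverse inclusion taken from \eqref{eq:K-subset-H}. The frozen-coordinate issue you flag needs no separate check, because Definition~\ref{def:reachable} already phrases reachability in terms of extended cluster monomials under the polynomial-coefficient convention.
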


\begin{proof}
Let $w\in\mathcal H_N$. By
Proposition~\ref{prop:homogeneous-rigidity}, the component $Z_{\g_w}$ is
$\Lambda_Q$-rigid, and hence reachable from $\Sigma_N^U$ by the
hypothesis. Its cluster
character is $\Gen_w$ and is an extended cluster monomial.
Corollary~\ref{cor:generic-cluster-monomial-canonical} gives
$\Gen_w=\Can_w$, and Theorem~\ref{thm:homogeneous-canonical} identifies
$\Can_w$ with
$\Sch_w(c)$. Thus $\mathcal H_N\subseteq\mathcal K_N$; the reverse
inclusion is \eqref{eq:K-subset-H}. Only the implication
``$\Lambda_Q$-rigid $\Rightarrow$ reachable from $\Sigma_N^U$'' is used.
\end{proof}

The homogeneous--canonical theorem is unconditional. The
cluster-monomial characterization additionally uses the stated implication
from $\Lambda_Q$-rigidity to reachability from $\Sigma_N^U$. For the Mirkovi\'c--Vilonen basis, equality
with a cluster monomial is known for generalized minors and would follow
in general from the expected equality of the MV and theta bases. In
particular, Theorem~\ref{thm:single-strict-minor} gives the unconditional
inclusion $\Av_N(321)\subseteq\mathcal K_N$.

\subsection{Four classes of permutations}
\label{sec:four-families}

Retain the three subsets $\mathcal H_N$, $\mathcal K_N$, and
$\mathcal C_N$ of $S_N$: the homogeneous permutations, those for which
$\Sch_w(c)$ is an extended cluster monomial, and the recursive record class. Let $\mathbf x_\mu$ and $\mathbf x_{\op{fr}}$ be the mutable and
frozen variables of the standard triangular seed, and put
\[
 \mathcal L_N=
 \left\{
 w\in S_N:
 \op{Laur}_{\Sigma_N^U}\!\bigl(\Sch_w(c)\bigr)
 \in\ZZ_{\geq0}[\mathbf x_\mu^{\pm1},\mathbf x_{\op{fr}}]
 \right\}.
\]
Thus $\mathcal L_N$ records positivity in the fixed standard triangular
chart, with nonnegative exponents on the non-inverted frozen variables.

The results above give the unconditional inclusions
\[
 \Av_N(321)\subseteq\mathcal K_N
 \subseteq\mathcal E_N^{\rm gen}
 \subseteq\mathcal H_N
 \subseteq\mathcal C_N.
\]
The equality $\mathcal H_N=\mathcal K_N$ follows from the additional
hypothesis in Corollary~\ref{cor:conditional-H-equals-K} that the $\Lambda_Q$-rigid
components indexed by $\mathcal H_N$ are reachable from $\Sigma_N^U$.

\begin{conjecture}\label{conj:four-families}
For every $N$, we have
\[
 \mathcal H_N=\mathcal K_N=\mathcal L_N=\mathcal C_N.
\]
\end{conjecture}

A computer verification found agreement of the four sets for $N\leq10$.
A direct permutation-theoretic description of $\mathcal C_N$ would in particular
give a test for torus homogeneity and fixed-chart positivity.

\section{Unit columns of the geometric-to-Schubert transitions}
\label{sec:geometric-unit-columns}

For $\bullet\in\{\mathrm{gen},\mathrm{can},\mathrm{MV}\}$, write
\[
 \mathsf B_v^\bullet
 =\sum_{u\in S_N}A_N^\bullet(v,u)\Sch_u(c),
 \qquad
 \Sch_v
 =\sum_{u\in S_N}\widehat A_N^\bullet(v,u)\mathsf B_u^\bullet,
\]
so that $\widehat A_N^\bullet=(A_N^\bullet)^{-1}$. The $w$th column of
$A_N^\bullet$ is a unit column if and only if the $w$th column of
$\widehat A_N^\bullet$ is a unit column:
\[
 (A_N^\bullet)^{-1}e_w=e_w
 \quad\Longleftrightarrow\quad
 A_N^\bullet e_w=e_w.
\]
This is a column condition on a geometric-to-Schubert transition; it does
not assert that $\mathsf B_w^\bullet$ is a cluster monomial. The generic
case is the condition formerly called cluster uniqueness.

\begin{theorem}\label{thm:geometric-unit-columns}
Let $\bullet\in\{\mathrm{gen},\mathrm{can},\mathrm{MV}\}$. For
$w\in S_N$, the following conditions are equivalent:
\begin{enumerate}[(i)]
 \item the $w$th column of $A_N^\bullet$ is a unit column;
 \item $w$ avoids $2341$, $3241$, and $3412$.
\end{enumerate}
\end{theorem}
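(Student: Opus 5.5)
The plan is to turn the unit-column condition for $A_N^\bullet$ into a condition on the column of $\Pi_N$ and on the fiber structure of $D_N^\bullet$. Since $\Pi_N=D_N^\bullet A_N^\bullet$ and $D_N^\bullet$ is unitriangular with respect to $\preceq_{\rm Z}$ inside root-degree fibers, the $w$th column of $A_N^\bullet$ is $(D_N^\bullet)^{-1}\Pi_N e_w$. Thus $A_N^\bullet e_w=e_w$ holds exactly when
\[
 \Pi_N(v,w)=D_N^\bullet(v,w)\qquad(v\in S_N).
\]
That is, the $w$th column of $\Pi_N$ must coincide with the $w$th column of $D_N^\bullet$. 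By Proposition~\ref{prop:classical-right-regular-transitions} and Proposition~\ref{prop:MV-full-support}, the latter is supported on $\{v: w\preceq_{\rm Z}v\}$, which lies in the fiber $I_{d(w)}$. Its entries there are $1$ on upper covers, by Proposition~\ref{prop:generic-cover}, and positive, by \eqref{eq:Dcan}. The proof therefore splits into two parts. First, a support statement: $\Pi_N(v,w)=0$ for $v$ outside the up-set of $w$ in $I_{d(w)}$. Second, an equality of values on that up-set.

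For the support part, I will use Lemma~\ref{prop:Cauchy-transpose-Pieri}. Writing $w=\sigma w_0$, the $w$th column of $\Pi_N$ records the monomials $x^{L(\pi)}$ of $\Sch_\sigma(x)$, each with its coefficient, at rows $v=\pi w_0$. The column of $D_N^\bullet$ is concentrated on one root-degree fiber, so a necessary condition is that every monomial of $\Sch_\sigma$ yields $\pi w_0$ of root degree $d(w)$ above $w$ in $\preceq_{\rm Z}$. I will translate root degree of $\pi w_0$ into the multiset of Lehmer-code-derived rank letters, using $\theta(v)=L(vw_0)$. Multiplicity-freeness of $\Sch_\sigma$ and the moves between monomials (pipe-dream ladder and chute moves) should then correspond to linked moves. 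This should force $\sigma$ to lie in the class where the monomials of $\Sch_\sigma$ are exactly the ladder-move closure of the bottom pipe dream, with each move preserving the rank-letter multiset. Reversing positions, the target condition on $w$ is avoidance of $2341,3241,3412$. I expect the translation to produce the following. For $w$ containing one of these patterns, one finds an explicit chute or ladder move changing the letter multiset, hence an off-fiber entry. Alternatively, one finds a repeated monomial giving a coefficient $\geq2$ while $D_N^{\rm gen}$ is $1$ there. Pattern containment would be localized by inserting the pattern via increasing injections, in the spirit of the remark on $\op{Ins}_\nu$.

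For the sufficiency direction, assume $w$ avoids the three patterns. I will show two things. First, $\Pi_N(\cdot,w)$ is $0/1$ and supported exactly on the up-set $\{v\succeq_{\rm Z}w\}$ in $I_{d(w)}$. Second, $D_N^\bullet(v,w)=1$ on that up-set for all three $\bullet$. For the latter, the up-set of $w$ in the parabolic Bruhat interval should be a Boolean-like or smooth region. There the parabolic Kazhdan--Lusztig polynomials $P^J_{y_v,y_w}$ equal $1$, since the orbit closure of $\mathfrak m_w$ is smooth along the relevant strata, and Lemma~\ref{prop:framed-right-regular} transfers this to Schubert varieties in $X_d$. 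This gives $D^{\rm can}=1$. Then $D^{\rm gen}$ is also $1$: the constructible function is the Euler characteristic of a fiber, which equals $1$ by smoothness, as in the microlocal multiplicity-one argument. For MV, the comparison of Proposition~\ref{prop:classical-right-regular-transitions} via polytope order, together with nonnegativity and the bound by the semicanonical value, should pin it down. The key technical step, and the main obstacle, is the exact identification of the monomial support of $\Sch_{ww_0}$ with the Zelevinsky up-set of $w$ under the avoidance hypothesis. I would attack it by induction on $N$, deleting the last position as in Proposition~\ref{prop:record-obstructions}(i). The pattern conditions are chosen so that inserting the final letter keeps both sides compatible, and the case analysis of where the new letter sits is deferred to the permutation-theoretic appendix.
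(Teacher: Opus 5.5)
There is a genuine gap, and it sits mainly in the sufficiency direction. Your reduction $A_N^\bullet e_w=e_w\iff\Pi_N(:,w)=D_N^\bullet(:,w)$ is correct. The off-fiber mechanism is also the right one for necessity, but you are missing the statement that does the work there. That statement is Proposition~\ref{prop:weight-orbit-Schubert}: every monomial exponent of $\Sch_u(x)$ is a rearrangement of $L(u)$ if and only if $u\in\Av_N(1432,1423,2143)$. The paper proves it from three inputs:
\begin{itemize}
\item the Schubitope Minkowski description of Fink--M\'esz\'aros--St.~Dizier;
\item the Dou--Fan--Liu lattice-free criterion;
\item the nested-column characterization of vexillary permutations.
\end{itemize}
Combined with Lemma~\ref{lem:Pieri-column-fiber}, it turns fiber support of $\Pi_N(:,w)$ into pattern avoidance. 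Your alternative, detecting a coefficient $\geq2$ against $D_N^{\rm gen}=1$, cannot work uniformly. $D_N^{\rm gen}$ is only known on covers, and the argument says nothing for $\bullet=\mathrm{can},\mathrm{MV}$, where $D_N^{\rm can}$ entries exceed one (for example, $15$ in the rank-ten table). Localizing by $\op{Ins}_\nu$ is also unavailable, since it is only shown to transport strict minors, i.e.\ $321$-avoiding elements.

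For sufficiency you propose to compute both columns exactly. You would show that $\Pi_N(:,w)$ is $0/1$ on the Zelevinsky up-set, and that $D_N^\bullet(v,w)=1$ there for all three bases. These are true a posteriori, but nothing available proves them:
\begin{itemize}
\item $D_N^{\rm gen}$ is known to equal $1$ only on covers, and its full support is Conjecture~\ref{conj:generic-full-support}.
\item The microlocal argument of Appendix~\ref{app:classical-transitions} uses codimension one essentially, to leave only $B=M,N$ in the expansion. There is no ``Euler characteristic equals one by smoothness'' principle beyond that.
\item No bound of MV coefficients by semicanonical ones is available.
\item Even the canonical step needs an unproved smoothness criterion for the Schubert variety of $w$ in $X_{d(w)}$.
\end{itemize}

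The missing idea is that the values of $D_N^\bullet$ are never needed. Once $\Pi_N(:,w)$ is fiber-supported, block-diagonality of $(D_N^\bullet)^{-1}$ puts $A_N^\bullet(:,w)$ in the same fiber. Lemma~\ref{lem:geometric-column-support} then gives, for every $v$ with $A_N^\bullet(v,w)\neq0$:
\begin{itemize}
\item the initial-sum bounds $I_m(w)\le I_m(v)$, from the Zelevinsky and Pieri support bounds;
\item descent containment $\Des(w)\subseteq\Des(v)$, because right-root perfectness makes $P_v^\bullet$ invariant under $s_i$ for $i\notin\Des(v)$.
\end{itemize}
Inside the fiber, $a=L(vw_0)$ is a rearrangement of $c=L(ww_0)$. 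The two conditions become partial-sum dominance and ascent preservation. Lemma~\ref{lem:Lehmer-code-uniqueness}, which uses the movable-interval structure of $c$, then forces $a=c$, so $v=w$. This argument is uniform in $\bullet$ and bypasses the column identification you flagged as the main obstacle.
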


\subsection{From a geometric unit column to a weight-orbit condition}

Put
\[
 I_m(w)=\sum_{j=1}^m e_j(w).
\]

\begin{lemma}\label{lem:geometric-column-support}
Let $\bullet\in\{\mathrm{gen},\mathrm{can},\mathrm{MV}\}$. If
$A_N^\bullet(v,w)\ne0$, then
\[
 I_m(w)\leq I_m(v)\quad(1\leq m\leq N),
 \qquad
 \Des(w)\subseteq\Des(v).
\]
Moreover, $A_N^\bullet(w,w)=1$.
\end{lemma}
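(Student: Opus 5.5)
The plan is to read off all three assertions from the factorization $\Pi_N=D_N^\bullet A_N^\bullet$, i.e.
\[
 A_N^\bullet=(D_N^\bullet)^{-1}\Pi_N ,
\]
and to handle the descent condition separately through the right-root action.

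\textbf{Partial-sum condition.} Write
\[
 A_N^\bullet(v,w)=\sum_z (D_N^\bullet)^{-1}(v,z)\,\Pi_N(z,w).
\]
I will prove two monotonicity statements for the preorder $I_\bullet(w)\leq I_\bullet(v)$ (partial sums compared coordinatewise).

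First, for $(D_N^\bullet)^{-1}$. By Proposition~\ref{prop:classical-right-regular-transitions}, $D_N^\bullet$ is unitriangular and supported on $z\preceq_{\rm Z}v$. Hence its inverse has the same support. It therefore suffices to treat one linked move. For $p<q$, the move replaces $r_p=a<b=r_q$ by $r_p=b$, $r_q=a$. In code coordinates $e_j=j-r_j$, this lowers $e_p$ by $b-a$ and raises $e_q$ by the same amount. So $I_m(z)\leq I_m(v)$ for all $m$, with equality for all $m$ only if $z=v$.

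Second, for $\Pi_N$. I claim that $\Pi_N(z,w)\neq 0$ implies $I_m(w)\leq I_m(z)$. Via Lemma~\ref{prop:Cauchy-transpose-Pieri}, write $z=\pi w_0$ and $w=\sigma w_0$. Then $\Pi_N(z,w)$ is the coefficient of $x^{L(\pi)}$ in $\Sch_\sigma(x)$. I will use the standard dominance property of Schubert polynomials: every exponent $a$ of $\Sch_\sigma$ satisfies
\[
 a_1+\cdots+a_k\leq L(\sigma)_1+\cdots+L(\sigma)_k \quad\text{for all }k.
\]
This follows from pipe dreams, since every pipe dream is obtained from the bottom (Lehmer) pipe dream by chute/ladder moves pushing crosses down.

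The remaining step is to translate this through $w_0$. The relevant relation is $e_j(\pi w_0)=N-j-L(\pi)_{N+1-j}$, which follows from $\theta(v)=L(vw_0)$. It converts Lehmer-code dominance into the reversed inequality on the $I_m$. Composing the two monotonicities gives $I_m(w)\leq I_m(v)$.

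\textbf{Diagonal entry.} If $v=w$, then every intermediate $z$ in the sum has partial sums equal to those of $w$. Partial sums determine the code, so $z=w$. Hence $A_N^\bullet(w,w)=(D_N^\bullet)^{-1}(w,w)\,\Pi_N(w,w)=1$.

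\textbf{Descent condition.} Suppose $i\notin\Des(v)$. By the proof of Proposition~\ref{prop:factor-descent}, $\op{rdeg}_i(\mathsf B_v^\bullet)=0$, so $\mathcal R_i\mathsf B_v^\bullet=0$.

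The classical specialization $c_r(k)\mapsto e_r(x_1,\dots,x_k)$ is a ring homomorphism on all of $\CC[U_N]$. Both $\mathcal R_i$ and Fulton's $\partial_i$ are linear. By Proposition~\ref{prop:divided-difference-straightened-root-action}, they have the same specialization on every standard-elementary monomial, hence on all of $\cM_N$. So $\partial_i\mathsf B_v^\bullet$ specializes to $0$, and Fulton's injectivity gives $\partial_i\mathsf B_v^\bullet=0$. Expanding,
\[
 0=\partial_i\mathsf B_v^\bullet=\sum_{u:\,i\in\Des(u)}A_N^\bullet(v,u)\,\Sch_{us_i}(c).
\]
The permutations $us_i$ in this sum are distinct, so $A_N^\bullet(v,u)=0$ whenever $i\in\Des(u)$. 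This gives $\Des(w)\subseteq\Des(v)$.

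\textbf{Main obstacle.} I expect the hardest step to be the correct index bookkeeping in the $w_0$-reindexing: getting the direction of the dominance inequality right under reversal, and citing the pipe-dream dominance property precisely.
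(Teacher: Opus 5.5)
Your skeleton matches the paper. You use $A_N^\bullet=(D_N^\bullet)^{-1}\Pi_N$, the weak decrease of $I_m$ along linked moves, forced equality of partial sums for the diagonal entry, and descent containment from right-root invariance. Your diagonal argument is the careful one, as in Lemma~\ref{lem:dominance-support}. Your descent step is a valid infinitesimal variant: you use $\mathcal R_i\mathsf B_v^\bullet=0$, Proposition~\ref{prop:divided-difference-straightened-root-action} and Fulton's injectivity, where the paper uses $E(s_ix)=E(x)u_i(x_{i+1}-x_i)$ to make $P_v^\bullet$ $s_i$-invariant.

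The genuine problem is the $\Pi_N$ monotonicity. Both of your intermediate claims there are false, and only their composition is true.

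First, your dominance inequality points the wrong way. For $\sigma=132$ one has $\Sch_\sigma=x_1+x_2$ and $L(\sigma)=(0,1,0)$, but the exponent $(1,0,0)$ has $a_1=1>0$. The correct statement is $\sum_{i\le k}a_i\ge\sum_{i\le k}L(\sigma)_i$. Reduced pipe dreams are generated from the bottom one by ladder moves, and a ladder move shifts a cross \emph{up} a row; compare the simple ladder move in the proof of Proposition~\ref{prop:Pi-transfer-components}.

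Second, the $w_0$-translation does not reverse inequalities. The correct identity is $e_j(\pi w_0)=j-1-L(\pi)_{N+1-j}$; yours has the wrong constant, which happens to cancel in differences. It gives
\[
 I_m(\pi w_0)=\binom{m}{2}-\ell(\pi)+\sum_{k=1}^{N-m}L(\pi)_k .
\]
Since $x^{L(\pi)}$ occurring in $\Sch_\sigma$ forces $\ell(\pi)=\ell(\sigma)$, you get
\[
 I_m(z)-I_m(w)=\sum_{k\le N-m}\bigl(L(\pi)_k-L(\sigma)_k\bigr).
\]
The position reversal and the sign cancel, so the direction is preserved. Combined with your stated dominance, this would give $I_m(z)\le I_m(w)$, the opposite of what you need.

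As a check, $\Code_{312}$ specializes to $x_1^2+x_1x_2=\Sch_{312}+\Sch_{231}$, so $\Pi_3(312,231)=1$. Indeed $\bigl(I_m(231)\bigr)_m=(0,0,2)\le\bigl(I_m(312)\bigr)_m=(0,1,2)$, as the corrected chain predicts.

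With both corrections the step goes through, and so does your diagonal argument, which depends on it. The Cauchy-transpose and pipe-dream route is then a legitimate alternative to the paper's appeal to the transpositions in Sottile's Pieri chains. It is exactly the translation the paper itself uses in the proof of Theorem~\ref{thm:geometric-unit-columns}.
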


\begin{proof}
The factorization $\Pi_N=D_N^\bullet A_N^\bullet$
gives $A_N^\bullet=(D_N^\bullet)^{-1}\Pi_N$. The matrix
$D_N^\bullet$ is unitriangular for the Zelevinsky order and is block
diagonal for the root-degree fibers. Its inverse has the same two
properties. Thus a nonzero summand contributing to
$A_N^\bullet(v,w)$ has an intermediate index $z\preceq_{\rm Z}v$ with
$\Pi_N(z,w)\ne0$. A linked move from $v$ toward $z$ weakly decreases
every initial sum $I_m$, while a Pieri chain from $w$ to the row index $z$
gives $I_m(w)\leq I_m(z)$. Hence $I_m(w)\leq I_m(v)$.

If $i\notin\Des(v)$, perfectness for the right-root action gives
\[
 \mathsf B_v^\bullet(Zu_i(a))=\mathsf B_v^\bullet(Z).
\]
Put
\[
 E(x)_{p,q}=e_{q-p}(x_1,\ldots,x_{q-1}),
 \qquad
 P_v^\bullet(x)=\mathsf B_v^\bullet(E(x)).
\]
The identity $E(s_ix)=E(x)u_i(x_{i+1}-x_i)$ makes
$P_v^\bullet$ $s_i$-invariant. Applying the Schubert divided difference
shows that no $\Sch_w$ with $i\in\Des(w)$ can occur. Finally, the two
unitriangular matrices in the factorization have diagonal one, so
$A_N^\bullet(w,w)=1$.
\end{proof}

\begin{lemma}\label{lem:Pieri-column-fiber}
Let $w\in S_N$, put $u=ww_0$ and $c=L(u)$.
Then the following conditions are equivalent:
\begin{enumerate}[(i)]
 \item the column $\Pi_N(:,w)$ is supported on the root-degree fiber
 $d^{-1}(d(w))$;
 \item every monomial exponent of $\Sch_u(x)$ is a coordinate permutation
 of $c$.
\end{enumerate}
\end{lemma}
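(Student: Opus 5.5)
The plan is to translate the column support through the Cauchy transpose lemma and then compare root degrees via multisets of rank-word letters. By Lemma~\ref{prop:Cauchy-transpose-Pieri}, taking $\sigma=u=ww_0$, we have for every $v\in S_N$
\[
 \Pi_N(v,w)=\Pi_N\bigl((vw_0)w_0,\,uw_0\bigr)=[x^{L(vw_0)}]\Sch_u(x),
\]
and every monomial exponent of $\Sch_u(x)$ is $L(\pi)$ for a unique $\pi$. Writing $\pi=vw_0$, the column $\Pi_N(:,w)$ is therefore supported exactly on those $v$ for which $L(vw_0)$ is an exponent of $\Sch_u(x)$. Since $\Pi_N(w,w)=1$, the exponent $c=L(u)=L(ww_0)$ occurs.

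The key identity is $L(vw_0)=\theta(v)=(r_N(v)-1,\ldots,r_1(v)-1)$, stated in the paragraph defining $\theta$. I would verify it directly: $L(vw_0)_i$ counts $j>i$ with $v_{N+1-i}>v_{N+1-j}$, which is the number of earlier positions $k<N+1-i$ with $v_k<v_{N+1-i}$. That number equals $(N+1-i)-1-e_{N+1-i}(v)=r_{N+1-i}(v)-1$. Hence the exponent attached to $v$ is the reversed, shifted rank word of $v$, and its multiset of coordinates is the multiset of letters of $r(v)$ shifted by $-1$.

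By Lemma~\ref{lem:d-content}, $d(v)=d(w)$ holds if and only if $r(v)$ and $r(w)$ have the same multiset of entries. That is equivalent to $\theta(v)$ being a coordinate permutation of $\theta(w)=c$. Combining this with the first step:
\begin{itemize}
 \item condition (i) says every $v$ with $\Pi_N(v,w)\neq0$ has $\theta(v)$ a permutation of $c$;
 \item every exponent of $\Sch_u(x)$ has the form $\theta(v)$ with $\Pi_N(v,w)\neq0$.
\end{itemize}
So (i) and (ii) say the same thing, and both implications follow at once.

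The only delicate point is the index bookkeeping in $\theta(v)=L(vw_0)$, that is, the convention that right multiplication by $w_0$ reverses positions. I would also check that the Cauchy transpose lemma is applied with $\sigma w_0=w$ in the second slot, so that the identity concerns columns. Beyond this, no geometric input is needed.
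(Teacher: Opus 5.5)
Your proposal is correct and follows essentially the paper's own argument: you read the column through Lemma~\ref{prop:Cauchy-transpose-Pieri} as $\Pi_N(v,w)=[x^{L(vw_0)}]\Sch_u(x)$, identify $L(vw_0)$ with the reversed shifted rank word $\theta(v)$, and reduce equality of root degrees to equality of multisets. The only difference is cosmetic: where you cite Lemma~\ref{lem:d-content}, the paper writes the equivalent explicit formula $d_i(v)=N-i-\#\{k:a_k\geq i\}$ with $a=L(vw_0)$.
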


\begin{proof}
For $v\in S_N$, put $a=L(vw_0)$. Lemma~\ref{prop:Cauchy-transpose-Pieri}
gives
\begin{equation}\label{eq:Pi-column-Lehmer}
 \Pi_N(v,w)=[x^a]\Sch_u(x).
\end{equation}
Moreover,
\begin{equation}\label{eq:dimension-vector-Lehmer-multiset}
 d(v)=d(w)\quad\Longleftrightarrow\quad
 a\text{ is a rearrangement of }c.
\end{equation}
Indeed,
$r_j(v)=1+a_{N+1-j}$ and therefore
$ d_i(v)=N-i-\#\{k:a_k\geq i\}$.
The two assertions are now equivalent by \eqref{eq:Pi-column-Lehmer}.
\end{proof}

\subsection{Two permutation-theoretic criteria}

The two statements below are proved in
Appendix~\ref{app:unit-column-proofs}. The first identifies the Schubert
polynomials whose monomial exponents form one coordinate-permutation
orbit; the second shows that the support inequalities in a root-degree
fiber single out one Lehmer code.

\begin{proposition}\label{prop:weight-orbit-Schubert}
For $u\in S_N$, the following are equivalent:
\begin{enumerate}[(i)]
 \item every monomial exponent of $\Sch_u(x)$ is a coordinate
 permutation of $L(u)$;
 \item $u\in\Av_N(1432,1423,2143)$.
\end{enumerate}
\end{proposition}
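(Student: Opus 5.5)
I would prove Proposition~\ref{prop:weight-orbit-Schubert} by recasting condition~(i) as a torus-weight condition and checking it through stability under pattern containment. The monomials of $\Sch_u(x)$ are indexed by reduced pipe dreams (\cite{BJS}), and the exponent of a pipe dream records how many crosses lie in each row. The exponent $L(u)$ is attained exactly once, by the bottom pipe dream. Condition~(i) says that every pipe dream has the same multiset of row counts as this bottom one. Equivalently, after symmetrizing, $\Sch_u(x)$ has its support inside the $S_N$-orbit of the single dominant weight $\lambda=\op{sort}(L(u))$. Because $\Sch_u$ lies in the Demazure character $\kappa_{L(u)}$ expansion, and its support is a union of key supports, this is the same as saying that the Newton polytope of $\Sch_u$ has no lattice points in its interior relative to the permutohedron of $\lambda$. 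The plan has two directions.

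\emph{(i)$\Rightarrow$(ii), by contrapositive.} I would first show that condition~(i) descends to patterns. If $u$ contains a pattern $p$, then the standard pattern-embedding argument for pipe dreams applies: delete the rows and columns not used by the pattern, as in the proofs of Billey--Pawlowski type containment results. This sends $\Sch_p(x)$, up to a monomial shift, into the support of $\Sch_u(x)$. Two exponents in different orbits for $p$ then remain in different orbits for $u$, since the added coordinates are common to both. Having that, it suffices to check the three patterns directly:
\begin{itemize}
\item $\Sch_{1432}=x_1^2x_2+x_1^2x_3+x_1x_2^2+x_1x_2x_3+x_2^2x_3$ contains $x_1x_2x_3$, which is not a permutation of $(0,2,1)$;
\item $\Sch_{1423}=x_1^2+x_1x_2+x_2^2$ contains $x_1x_2$, not a permutation of $(0,2,0)$;
\item $\Sch_{2143}=x_1^2+x_1x_2+x_1x_3$ contains $x_1x_2$, not a permutation of $(1,0,1)$.
\end{itemize}
The step I expect to be most delicate is making the monomial shift in the embedding precise, so that orbit-distinctness is really preserved. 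To do this I would realize the embedding concretely: fill the rows and columns of $u$ outside the pattern with the crosses of the bottom pipe dream of the complementary part, which adds the same vector to every exponent.

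\emph{(ii)$\Rightarrow$(i).} Here I would use the known structure of $\Av(1432,1423,2143)$. These avoidance conditions say that, after removing the fixed initial segment, the code $L(u)$ has a rigid shape. Concretely, the nonzero entries of $L(u)$ can only move by ladder moves that transport a full block of crosses from one row to another. The plan is to argue by induction on $\ell(u)$ using transition or ladder moves on pipe dreams. Every reduced pipe dream is obtained from the bottom one by ladder moves (Bergeron--Billey). I would show that under the avoidance hypothesis each ladder move carries an entire row's crosses together. A partial move, which changes the multiset of row counts, would create an occurrence of $1432$ or $1423$ when the partially moved row has a longer row below it, or of $2143$ when two separated descents interact. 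This is the main obstacle. I would first attempt it by analyzing a chute or ladder move that splits a row and extracting the pattern from the four pipes involved. If that proves unwieldy, I would fall back on Fink--M\'esz\'aros--St.~Dizier's description of the support as the integer points of a generalized permutohedron: show that under avoidance the Schubert matroid polytope degenerates to a single permutohedron face containing no lattice points besides orbit points.
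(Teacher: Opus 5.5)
There are genuine gaps in both directions.

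\textbf{(i)$\Rightarrow$(ii).} The pattern-descent step fails as you state it. An inequality $\Sch_u\geq x^{m}\,\Sch_p(x_{i_1},\ldots,x_{i_k})$ does hold, but it needs a real argument: Fink--M\'esz\'aros--St.~Dizier prove it in their work on zero-one Schubert polynomials. The shift $m$ records the boxes of the Rothe diagram $D(u)$ outside the embedded $D(p)$, and these boxes can lie in pattern rows. A translation that is not constant on the pattern coordinates need not preserve orbit-distinctness, so ``the added coordinates are common to both'' proves nothing.

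Take $u=21543$ with the occurrence of $1432$ at positions $1,3,4,5$. The only extra box is $(1,1)$, and $\Sch_{21543}=x_1\Sch_{12543}\geq x_1\Sch_{1432}(x_1,x_3,x_4,x_5)$. Your bad monomial $x_1x_2x_3$ is sent to $x_1^2x_3x_4$. Its exponent $(2,0,1,1,0)$ is a permutation of $L(u)=(1,0,2,1,0)$, which is the image of $x^{L(1432)}$.

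For $2143$ it is worse. The whole support $\{(2,0,0),(1,1,0),(1,0,1)\}$ falls into a single orbit under a shift $(a,a+1,a+1)$ on the first three pattern rows. A correct descent must therefore show that the \emph{entire} translated support is not contained in one orbit. That requires pattern-by-pattern constraints on which shifts occur; for $2143$, any column of $D(u)$ meeting the row of the letter $1$ also meets the row of the letter $2$. None of this is in your plan.

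Separately, your $2143$ check is wrong. The monomial $x_1x_2$ has exponent $(1,1,0,0)$, which is a permutation of $L(2143)=(1,0,1,0)$; the offending monomial is $x_1^2$.

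\textbf{(ii)$\Rightarrow$(i).} Here there is no argument yet, and the proposed invariant is not the right one. Moves do not transport whole rows of crosses. The permutation $u=24531$ avoids all three patterns, has $L(u)=(1,2,2,1,0)$, and satisfies $\Sch_u=x_1x_2x_3x_4(x_1x_2+x_1x_3+x_2x_3)$. So $(2,1,2,1,0)$ arises by moving a single cross out of a two-cross row. What is preserved is a two-level structure of $L(u)$ on disjoint intervals. The claimed pattern extraction from a ``partial move'' is only asserted.

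Your Fink--M\'esz\'aros--St.~Dizier fallback is the paper's route, but the content lies in ingredients you do not name:
\begin{enumerate}
\item $2143$-avoidance makes the Rothe columns nested.
\item Dou--Fan--Liu: the Schubitope is lattice-free $\Leftrightarrow$ the nonempty movable intervals are pairwise disjoint $\Leftrightarrow$ $u\in\Av_N(1423,1432,13254)$.
\item $1423$- and $2143$-avoidance force every active column to be $[1,a-1]\sqcup[t,b]$. Each column polytope is then a translated hypersimplex on its movable interval $I$, and $L(u)|_I$ is two-level, so lattice points only permute coordinates within each $I$.
\end{enumerate}

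The paper's (i)$\Rightarrow$(ii) avoids pattern descent entirely. The sum of the initial-interval bases is a weakly decreasing exponent, hence equals $L(u)^\downarrow$. Partial sums then force nested columns, giving $2143$-avoidance. The Newton polytope lies in the permutohedron of $L(u)$, so every lattice point is a vertex. Dou--Fan--Liu then yields $1423$- and $1432$-avoidance.
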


\begin{proof}
See Appendix~\ref{app:weight-orbit-proof}.
\end{proof}

\begin{lemma}\label{lem:Lehmer-code-uniqueness}
Let $u\in\Av_N(1432,1423,2143)$ and $c=L(u)$.
Suppose that $a$ is a rearrangement of $c$ such that
\begin{equation}\label{eq:Lehmer-partial-sum-dominance}
 \sum_{i=1}^q a_i\geq\sum_{i=1}^q c_i
 \qquad(1\leq q\leq N)
\end{equation}
and
\begin{equation}\label{eq:Lehmer-ascent-preservation}
 c_i\leq c_{i+1}
 \quad\Longrightarrow\quad
 a_i\leq a_{i+1}.
\end{equation}
Then $a=c$.
\end{lemma}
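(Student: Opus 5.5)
The plan is to argue by a first-difference analysis on the Lehmer code, powered by one structural property of codes of permutations in $\Av_N(1432,1423,2143)$.

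Suppose $a\neq c$ and let $q$ be the first index with $a_q\neq c_q$. Then $a_i=c_i$ for $i<q$. The partial-sum inequality \eqref{eq:Lehmer-partial-sum-dominance} at $q$ forces $a_q>c_q$. Since $a$ is a rearrangement of $c$ and the prefixes agree, the multisets $\{a_q,\ldots,a_N\}$ and $\{c_q,\ldots,c_N\}$ coincide. Hence the value $a_q$ occurs as some $c_k$ with $k>q$ and $c_k>c_q$. Moreover, some $a_{k'}$ with $k'>q$ must equal a value $\leq c_q$, since the suffix multiset contains $c_q<a_q$.

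The first step is the structural lemma, proved by translating code inequalities back to $u$.

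\begin{enumerate}[(A)]
 \item For $i<j$ with $c_i<c_j$ one has $u_i<u_j$, and there is a witness $m>j$ with $u_m<u_j$ but $u_m>u_i$.
 \item If, in addition, some index $i<t<j$ has $c_t<c_i$, then from $u_i,u_t,u_j$ and suitable witnesses after $j$ one should extract a $1432$, $1423$ or $2143$ pattern. The case split is by the relative position of $u_t$: if $u_t<u_i$ the witnesses should produce $2143$; if $u_t>u_i$ they should produce $1432$ or $1423$.
\end{enumerate}

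The intended conclusion of (A) and (B) is: whenever $i<j$ and $c_i<c_j$, every $t$ with $i\leq t\leq j$ satisfies $c_t\geq c_i$. A stronger form would be that $c$ is weakly increasing from $i$ up to the first position attaining a value $\geq c_j$.

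The second step applies this at $i=q$. Take $j=k$, the position with $c_k=a_q>c_q$, chosen minimal. The stronger lemma would make $c$ weakly increasing on $[q,k]$. By the ascent-preservation hypothesis \eqref{eq:Lehmer-ascent-preservation}, $a$ is then weakly increasing on $[q,k]$, so $a_t\geq a_q>c_q$ for all $t\in[q,k]$. I then compare the values $\leq c_q$ in the suffix. They must be placed in $a$ at positions beyond $k$, whereas in $c$ at least one sits at $q$. Iterating along the successive weakly increasing runs of $c$, I would show that this pushes mass of $a$ too far left relative to $c$, forcing a strict failure of equality of total sums. Equivalently, the multiset constraint at the end of the increasing stretch cannot be met. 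This contradicts $a$ being a rearrangement, so $a=c$. An alternative finish is induction on $N$: delete the first position once $a_1=c_1$ is established by the same argument at $q=1$, and note that the truncated code is again the code of a pattern-avoiding permutation (standardize $u_2\cdots u_N$).

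The main obstacle is the structural lemma, (B) in particular. One must check exactly which of the three forbidden patterns is produced in each configuration, and whether the weak version (no dip below $c_i$) suffices or the stronger monotonicity is needed for the counting in the second step. I would first try to prove the strong monotone form by induction on $j-i$, since it makes the ascent-preservation hypothesis directly usable. If that fails, I would fall back on the weak form combined with the induction-on-$N$ reduction.
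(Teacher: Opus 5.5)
There is a genuine gap, in two places.

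First, the closing step does not work as described. Having $a_t>c_q$ on $[q,k]$, with the small values of $a$ pushed past $k$, is not a contradiction. Moving mass to the left is exactly what \eqref{eq:Lehmer-partial-sum-dominance} allows, and equality of total sums is automatic for a rearrangement, so nothing ``fails strictly.'' What you need is that $c$'s own partial sums are already maximal at the right places.

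Concretely, decompose $c$ into maximal weakly increasing runs. The missing property is that every entry of a run is $\geq$ every entry occurring after that run. Granting this, let $e$ be the end of the run containing $q$. Then $c_q,\dots,c_e$ are the $e-q+1$ largest values of the common suffix multiset. Dominance at $e$ therefore forces $\{a_q,\dots,a_e\}=\{c_q,\dots,c_e\}$, and \eqref{eq:Lehmer-ascent-preservation} makes both segments weakly increasing, so $a_q=c_q$. This run-dominance property is the heart of the paper's proof, which then simply peels off runs by induction. Your plan never formulates it.

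Second, run-dominance needs the strong statement: if $i<j$ and $c_i<c_j$, then $c$ is weakly increasing on \emph{all} of $[i,j]$. A violation of run-dominance would then make $c$ weakly increasing across a run boundary. Neither of your versions of the structural lemma gives this.

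Your (B) already follows from $2143$-avoidance alone. In the case $u_t>u_i$, the inequality $c_t<c_i$ forces some $s\in(i,t)$ with $u_s<u_i$, and then $i,s,j,m$ is a $2143$. But that cannot suffice. Take $u=14253$: it avoids $2143$ and has $c=(0,2,0,1,0)$. This $c$ satisfies both your no-dip property and your ``monotone up to the first position reaching $c_j$'' property. Yet $a=(1,2,0,0,0)$ satisfies both hypotheses of the lemma with $a\neq c$.

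The missing whole-interval monotonicity is exactly where $1423$ and $1432$ enter. The paper obtains it as \eqref{eq:ascent-localization}. If $p<p'$ and $c_p<c_{p'}$, some Rothe column contains row $p'$ but not row $p$. Then $[p,p']$ lies in one nonempty movable interval. On that interval $c$ is a two-level weakly increasing step, by the structure established in the proof of Proposition~\ref{prop:weight-orbit-Schubert}: nested columns, pairwise disjoint movable intervals, and Lemma~\ref{lem:movable-column-shape}.

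A direct pattern-extraction proof of this strong form would be an acceptable substitute for the Rothe-diagram route. Your plan, however, leaves it unproved and aims at a target that is too weak.
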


\begin{proof}
See Appendix~\ref{app:Lehmer-uniqueness-proof}.
\end{proof}

\begin{proof}[Proof of Theorem~\ref{thm:geometric-unit-columns}]
Fix $\bullet\in\{\mathrm{gen},\mathrm{can},\mathrm{MV}\}$. The three
transitions satisfy $\Pi_N=D_N^\bullet A_N^\bullet$.
The matrix $D_N^\bullet$, and hence its inverse, is block diagonal for the
root-degree fibers.

Suppose first that the $w$th column of $A_N^\bullet$ is a unit column.
Then $\Pi_N(:,w)=D_N^\bullet(:,w)$,
so the $w$th column of $\Pi_N$ is supported in the fiber of $d(w)$.
Lemma~\ref{lem:Pieri-column-fiber} and
Proposition~\ref{prop:weight-orbit-Schubert}, applied to $u=ww_0$, give
$u\in\Av_N(1432,1423,2143)$.
Right multiplication by $w_0$ reverses positions, and hence
$w\in\Av_N(2341,3241,3412)$.

Conversely, suppose that $w$ avoids these three patterns. Then
$u=ww_0$ avoids $1432$, $1423$, and $2143$. Proposition
\ref{prop:weight-orbit-Schubert} and Lemma~\ref{lem:Pieri-column-fiber}
show that $\Pi_N(:,w)$ is supported in the fiber of $d(w)$. Equation
$\Pi_N=D_N^\bullet A_N^\bullet$ therefore shows that
$A_N^\bullet(:,w)$ is supported in the same fiber.

Suppose that $A_N^\bullet(v,w)\ne0$, and put
\[
 a=L(vw_0),\qquad c=L(ww_0).
\]
The equality $d(v)=d(w)$ makes $a$ a rearrangement of $c$ by
\eqref{eq:dimension-vector-Lehmer-multiset}. Lemma
\ref{lem:geometric-column-support} gives
\[
 I_m(w)\leq I_m(v),\qquad \Des(w)\subseteq\Des(v).
\]
Since $a$ and $c$ have the same total sum,
\[
 I_m(v)-I_m(w)=\sum_{i=1}^{N-m}(a_i-c_i).
\]
Thus the first set of inequalities is equivalent to
\[
 \sum_{i=1}^q a_i\geq\sum_{i=1}^q c_i\qquad(1\leq q\leq N).
\]
Furthermore,
\[
 j\in\Des(w)\quad\Longleftrightarrow\quad
 c_{N-j}\leq c_{N-j+1},
\]
and the same formula holds for $v$. Descent containment is therefore
exactly \eqref{eq:Lehmer-ascent-preservation}. Lemma
\ref{lem:Lehmer-code-uniqueness} gives $a=c$, and the Lehmer-code
bijection gives $v=w$. Finally, Lemma~\ref{lem:geometric-column-support}
gives $A_N^\bullet(w,w)=1$. Thus the $w$th column is the unit column.
\end{proof}

\part{Supports and Components of Schubert Transitions}

For $\bullet\in\{\mathrm{gen},\mathrm{can},\mathrm{MV}\}$, the matrices
$D_N^\bullet$ are the PBW-to-geometric transitions on the right-regular
Schubert-indexed family, while $A_N^\bullet$ are the
geometric-to-Schubert transitions. This part studies the supports of these
matrices and of their product $\Pi_N=D_N^\bullet A_N^\bullet$.

\section{Coefficient formulas for universal Schubert transitions}
\label{sec:transition}

This section gives coefficient formulas for the geometric transition matrices
introduced in the transition triangle of Section~\ref{sec:bases-standard-elementary}.

\subsection{Classical specialization of the generic basis}

Let $B$ be the extended exchange matrix of the standard triangular seed, with
mutable rows indexed by
\[ \mathcal I_N^\circ=\{(a,r):a+r\leq N-1\}. \]
Write the generic $F$-polynomial of $\Gen_w$ as
\[ F_w(\mathbf y)=\sum_{\mathbf d}f_w(\mathbf d)\mathbf y^{\mathbf d}. \]
The separation formula is
\begin{equation}\label{eq:separation}
 \Gen_w(\mathbf x)=\mathbf x^{\g_w}
 F_w\left(\left(\mathbf x^{B_{i,*}}\right)_{i\in\mathcal I_N^\circ}\right).
\end{equation}

Define the classical specialization of the initial variables by
\[ x_{a,r}\longmapsto(x_1\cdots x_r)^a. \]
Let $M:\ZZ^{\mathcal I_N}\to\ZZ^N$ be the induced exponent map,
\[ M(e_{a,r})=a(\eps_1+\cdots+\eps_r). \]
Put
\[ \lambda(w)=\op{sort}^{\downarrow} (e_1(w),\ldots,e_N(w)), \qquad \mu(w)=\lambda(w)', \qquad
 I_m(w)=\sum_{j=1}^{m}e_j(w). \]

\begin{lemma}\label{lem:seed-identities}
For every $w$ and every mutable seed vertex $(a,r)$,
\[ M(\g_w)=\mu(w), \qquad M(B_{(a,r),*})=-\eps_r+\eps_{r+1}. \]
\end{lemma}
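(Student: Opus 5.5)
The plan is to verify both identities by direct computation, using only the definition \eqref{eq:Schubert-g-vector} of $\g_w$, the arrows of $\Sigma_N^U$, and linearity of $M$. Write $S_r=\eps_1+\cdots+\eps_r$, with $S_0=0$. Then $M(e_{a,r})=aS_r$, and this formula also gives zero at the out-of-range index $a=0$. That uniformity is what lets me ignore boundary cases throughout.

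\emph{First identity.} Expand
\[ M(\g_w)=\sum_{(a,r)}\bigl(\one_{\{e_{a+r}(w)=r\}}-\one_{\{e_{a+r+1}(w)=r\}}\bigr)\,aS_r \]
and regroup the sum by the position $j$ carrying the indicator. Fix $j$ with $r=e_j(w)>0$.
\begin{itemize}
 \item The positive indicator occurs at $(a,r)=(j-r,r)$ and contributes $(j-r)S_r$. This index lies in $\mathcal I_N$ because $j\leq N$ and $e_j(w)\leq j-1$.
 \item The negative indicator occurs at $(j-r-1,r)$ and contributes $-(j-r-1)S_r$. When $j-r-1=0$ this term is zero, consistent with the convention $M(e_{0,r})=0$.
\end{itemize}
The net contribution of position $j$ is therefore exactly $S_{e_j(w)}$, so
\[ M(\g_w)=\sum_{j}S_{e_j(w)}. \]
The $i$th coordinate of this sum is $\#\{j:e_j(w)\geq i\}$. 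This is the $i$th part of the conjugate of $\lambda(w)=\op{sort}^{\downarrow}(e_1(w),\ldots,e_N(w))$, namely $\mu(w)_i$.

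\emph{Second identity.} Use the convention $b_{ij}=+1$ for an arrow $i\to j$ and $b_{ij}=-1$ for an arrow $j\to i$, as in the definition of $\widehat y_i$.
\begin{itemize}
 \item At a mutable vertex $(a,r)$, the outgoing arrows go to $(a+1,r-1)$, $(a-1,r)$ and $(a,r+1)$.
 \item The incoming arrows come from $(a-1,r+1)$, $(a+1,r)$ and $(a,r-1)$.
 \item All listed vertices with positive coordinates lie in $\mathcal I_N$, since $a+r\leq N-1$. The omitted ones have $a=0$ or $r=0$, and for these $M$ vanishes because $S_0=0$ and the factor $a$ is zero.
\end{itemize}
Hence
\[ M(B_{(a,r),*})=(a+1)S_{r-1}+(a-1)S_r+aS_{r+1}-(a-1)S_{r+1}-(a+1)S_r-aS_{r-1}. \]
This simplifies to
\[ S_{r-1}-2S_r+S_{r+1}=-\eps_r+\eps_{r+1}. \]

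The only real obstacle is bookkeeping. First, I need to confirm that the sign convention for $B$ matches the one implicit in \eqref{eq:separation}; the opposite convention would flip the sign of the second identity. Second, I need to check that arrows between a mutable vertex and a frozen vertex are correctly included in the mutable row. Both points are settled by the uniform formula $M(e_{a,r})=aS_r$ and by comparison with Figure~\ref{fig:unipotent-triangular-seed}.
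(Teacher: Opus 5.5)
Your proof is correct and is essentially the paper's. The first identity is the same telescoping over $a$ for fixed $r$; you only regroup by the position $j$. For the second identity, the paper merely asserts that the exchange ratio $\widehat y_{a,r}$ specializes to $x_{r+1}/x_r$, and your six-neighbour sum $S_{r-1}-2S_r+S_{r+1}=-\eps_r+\eps_{r+1}$ is exactly the computation behind that assertion.

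One correction to your final paragraph: Figure~\ref{fig:unipotent-triangular-seed} fixes only the arrows, not the sign convention for $B$. The convention is forced by compatibility with the Schubert vertices. For example, $\Code_{s_2}=c_1(2)=z_{2,3}=x_{2,1}x_{1,1}^{-1}\bigl(1+x_{1,2}x_{2,1}^{-1}\bigr)$ must be pointed at $\g_{s_2}=\varepsilon_{2,1}-\varepsilon_{1,1}$. This requires $\widehat y_{1,1}=x_{1,2}/x_{2,1}$, which is your convention $b_{ij}=+1$ for an arrow $i\to j$.
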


\begin{proof}
Fix $r$ and put $z_k=\one_{\{e_{k+1}(w)=r\}}$. Formula
\eqref{eq:Schubert-g-vector} gives
\[ \sum_a a\,\g_w(a,r) =\sum_a a(z_{a+r-1}-z_{a+r}) =\#\{j:e_j(w)=r\}. \]
The $k$th coordinate of $M(\g_w)$ is therefore
$\#\{j:e_j(w)\geq k\}=\mu_k(w)$.

For the second identity, use the exchange ratio rather than the six
individual neighbours. Under the classical specialization, the rectangular
$Q$-system exchange relation sends the hatted coefficient at $(a,r)$ to
\[ \widehat y_{a,r}\longmapsto\frac{x_{r+1}}{x_r}. \]
With the row convention for $B$, this says exactly
$M(B_{(a,r),*})=-\eps_r+\eps_{r+1}$; the same argument includes the
boundary vertices.
\end{proof}

For a dimension vector $\mathbf d$, put
\[ q_r(\mathbf d)=\sum_a d_{a,r}, \]
and define the row collapse
\[ \widehat F_w(t_1,\ldots,t_{N-1})=F_w(y_{a,r}=t_r). \]
The last argument is inessential, since the last row has no mutable
vertices.

\begin{proposition}\label{prop:collapsed-character}
The classical specialization of $\Gen_w$ is
\[
P_w^{\rm gen}(x)=x^{\mu(w)}\widehat F_w
 \left(\frac{x_2}{x_1},\frac{x_3}{x_2},\ldots,
 \frac{x_N}{x_{N-1}}\right).
\]
Equivalently,
\[ P_w^{\rm gen}(x)=\sum_{\mathbf d}f_w(\mathbf d) x^{\mu(w)+\sum_rq_r(\mathbf d)(-\eps_r+\eps_{r+1})}. \]
In particular, this Laurent expression is a polynomial.
\end{proposition}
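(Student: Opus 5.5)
The argument is a direct substitution of the classical specialization into the separation formula \eqref{eq:separation}, using Lemma~\ref{lem:seed-identities}.

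\emph{Step 1: the specialization map.} The assignment $x_{a,r}\mapsto(x_1\cdots x_r)^a$ defines a ring homomorphism on the initial cluster variables, extended to Laurent monomials. It sends a Laurent monomial $\mathbf x^{\mathbf v}$ to $x^{M(\mathbf v)}$, where $M$ is the exponent map above. By Proposition~\ref{thm:unipotent-seed} and Fulton's Grassmannian determinant, $x_{a,r}=\Sch_{w_{a,r}}(c)$ specializes under \eqref{eq:Fulton-classical-specialization} to $s_{(a^r)}(x_1,\ldots,x_r)$. This is \emph{not} the monomial $(x_1\cdots x_r)^a$ in general, unless $r=a$ or we are in a restricted range.

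So the first point to settle is the meaning of "classical specialization of $\Gen_w$". I take it to mean the image of the Laurent expression \eqref{eq:separation} under the monomial specialization of initial variables just defined. This is exactly how the section introduces it ("define the classical specialization of the initial variables by\ldots"). With this reading, $P_w^{\rm gen}$ is defined as that image, and the content of the proposition is the formula together with polynomiality.

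\emph{Step 2: apply the homomorphism termwise.} Apply it to \eqref{eq:separation}. The prefactor $\mathbf x^{\g_w}$ goes to $x^{M(\g_w)}=x^{\mu(w)}$ by the first identity of Lemma~\ref{lem:seed-identities}. Each $\widehat y_{a,r}=\mathbf x^{B_{(a,r),*}}$ goes to $x^{-\eps_r+\eps_{r+1}}=x_{r+1}/x_r$ by the second identity. This value depends only on $r$, so $F_w$ evaluated at these values equals the row collapse $\widehat F_w(x_2/x_1,\ldots,x_N/x_{N-1})$. Expanding $F_w=\sum f_w(\mathbf d)\mathbf y^{\mathbf d}$ and grouping the exponents by rows gives the displayed monomial sum, with exponent $\mu(w)+\sum_r q_r(\mathbf d)(-\eps_r+\eps_{r+1})$. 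The last row contains no mutable vertices, so $t_{N-1}$ is harmless.

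\emph{Step 3: polynomiality.} This is the main obstacle, because the substitution introduces denominators $x_r$. I would argue as follows. $\Gen_w$ lies in $\CC[U_N]$; in fact it lies in $\cM_N$ by Proposition~\ref{prop:classical-right-regular-transitions}. It is therefore a polynomial in the $z_{p,q}$, i.e.\ in the entries $c_r(k)$. Consider the specialization of $U_N$ at which every initial minor takes the monomial value $(x_1\cdots x_r)^a$. Concretely, pick a unitriangular matrix, or a torus-twisted family, realizing these minor values, so that the Laurent phenomenon identity holds as an identity of functions on a dense set.

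A cleaner route avoids constructing such a matrix. Each exponent $\mu(w)+\sum_r q_r(\mathbf d)(-\eps_r+\eps_{r+1})$ has nonnegative coordinates. Positivity of $f_w$ rules out cancellation, and the $\mathbf d$ that occur are dimension vectors of subrepresentations of $X_w$. The partial sums of the exponent are $\sum_{k\le m}\mu_k(w)-q_m(\mathbf d)$. Nonnegativity of the $m$th coordinate reduces to $q_{m-1}(\mathbf d)-q_m(\mathbf d)\le \mu_m(w)$, which is a bound on row sums of submodule dimension vectors. I would verify this bound from the $\g$-vector/$F$-polynomial degree bounds: the maximal degree of $F_w$ in row $r$ is controlled by $\g_w$. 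If this bound proves delicate, I fall back to a different argument. The specialized Laurent expression is a specialization of an element of $\CC[U_N]$, namely the evaluation at a point whose minors are the given monomials. It is therefore a polynomial in the coordinates of that point, and those coordinates are themselves polynomial in $x$.
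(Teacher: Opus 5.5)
Your Step~1 rests on a false claim, and that is where the gap is. Under Fulton's specialization \eqref{eq:Fulton-classical-specialization}, i.e.\ $Z(c)\mapsto E(x)$, the initial variable $x_{a,r}=\Sch_{w_{a,r}}(c)$ goes to $s_{(a^r)}(x_1,\ldots,x_r)$. A rectangular Schur polynomial with $r$ rows in exactly $r$ variables \emph{is} the monomial $(x_1\cdots x_r)^a$, because the only semistandard tableau of shape $(a^r)$ with entries in $\{1,\ldots,r\}$ has row $i$ filled with $i$. For instance, $\Delta_{\{1,2\},\{2,3\}}(E(x))=x_1(x_1+x_2)-x_1x_2=x_1^2$. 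So the substitution $x_{a,r}\mapsto(x_1\cdots x_r)^a$ is not an independent convention; it is the restriction of Fulton's specialization. The proposition is therefore a statement about $\Gen_w(E(x))$, and this is how it is used afterwards: $P_w^\bullet=\mathsf B_w^\bullet(E(x))=\sum_uA_N^\bullet(w,u)\Sch_u(x)$ feeds the nil-Hecke extraction. When you redefine $P_w^{\rm gen}$ as the formal image of the Laurent expression, Steps~1--2 become a tautology about an object that, by your own claim, is not connected to the classical specialization of $\Gen_w$.

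The same missing fact is why Step~3 does not close. The degree-bound route is only announced. It relies on nonnegativity of the $f_w(\mathbf d)$, which you do not justify. Its coordinate inequality also has the sign reversed: the $m$th exponent is $\mu_m(w)-q_m(\mathbf d)+q_{m-1}(\mathbf d)$, so you need $q_m-q_{m-1}\leq\mu_m$. The fallback needs a point of $U_N$ with \emph{polynomial} entries at which the initial minors take the values $(x_1\cdots x_r)^a$. The monomial substitution restricted to $\CC[U_N]$ a priori gives only a point with Laurent-polynomial entries (the images of the $z_{p,q}$). That point is polynomial precisely because it is $E(x)$, which your Step~1 excluded. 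With the correct fact, the argument is the paper's short one. The map $f\mapsto f(E(x))$ is a ring homomorphism $\CC[U_N]\to\CC[x_1,\ldots,x_N]$ sending every initial variable to a nonzero monomial, so it extends to the localization at the initial variables, where \eqref{eq:separation} holds. Your Step~2 computation with Lemma~\ref{lem:seed-identities} then evaluates $\Gen_w(E(x))$. Polynomiality is automatic because $\Gen_w$ is a regular function; the paper cites $\Gen_w\in\cM_N$.
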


\begin{proof}
Apply $M$ to each exponent in \eqref{eq:separation} and use
Lemma~\ref{lem:seed-identities}. Polynomiality follows from
Proposition~\ref{prop:classical-right-regular-transitions}, since $\Gen_w\in\cM_N$.
\end{proof}

\subsection{Nil-Hecke extraction}

For $\bullet\in\{\mathrm{gen},\mathrm{can},\mathrm{MV}\}$, set
\[ P_w^\bullet(x)=\mathsf B_w^\bullet(E(x)), \]
where $E(x)_{p,q}=e_{q-p}(x_1,\ldots,x_{q-1})$.
Thus $P_w^{\rm gen}$ is the polynomial in
Proposition~\ref{prop:collapsed-character}. Membership in $\cM_N$ shows
that all three $P_w^\bullet$ are polynomials.

We use the $x$-divided differences of
Section~\ref{subsec:divided-differences}. Choose a right-descent chain
\[ u\xrightarrow{s_{i_1}}us_{i_1}\xrightarrow{s_{i_2}}\cdots \xrightarrow{s_{i_\ell}}e \]
and set
\[ \cD_u=\partial_{i_\ell}\cdots\partial_{i_2}\partial_{i_1}, \]
so that the rightmost operator acts first. The nil-Coxeter relations make
$\cD_u$ independent of the chosen chain, and
\[ \cD_u\Sch_v=\delta_{u,v} \qquad\text{if }\ell(u)=\ell(v). \]

\paragraph{Coefficient extraction.}
Classical specialization of \eqref{eq:A-bullet-definition} gives
\[ P_w^\bullet=\sum_uA_N^\bullet(w,u)\Sch_u(x). \]
Hence $A_N^\bullet(w,u)=0$ unless $\ell(u)=\ell(w)$, and for equal
lengths we have $A_N^\bullet(w,u)=\cD_uP_w^\bullet$.
For the generic basis this becomes
\[
 A_N^{\rm gen}(w,u)
 =\sum_{\mathbf d}f_w(\mathbf d)\,
 \cD_u\left(
 x^{\mu(w)+\sum_rq_r(\mathbf d)(-\eps_r+\eps_{r+1})}
 \right).
\]

\section{Support of the code-to-geometric transitions}
\label{sec:classical-transition-support}

The matrices $D_N^\bullet$ compare the code basis with the three geometric
bases on the right-regular Schubert-indexed family. They are classical
PBW-to-semicanonical, PBW-to-canonical, and PBW-to-MV transition matrices,
and their support is the first stage of the factorization studied in this
part. All three are confined to root-degree fibers, and their weak support
components are determined below. For the MV transition,
Proposition~\ref{prop:MV-full-support} gives the stronger full-support and
coefficientwise-monotonicity statements.

\subsection{Root-degree support components}

For a square matrix $B$ indexed by $S_N$, let $G(B)$ be its weak support
graph: distinct permutations are joined when either corresponding matrix entry
is nonzero.

\begin{theorem}\label{thm:classical-transition-root-degree-components}
The weak support components of
\[
 D_N^{\rm gen},\quad(D_N^{\rm gen})^{-1},\quad
 D_N^{\rm can},\quad(D_N^{\rm can})^{-1},\quad
 D_N^{\rm MV},\quad(D_N^{\rm MV})^{-1}
\]
are the root-degree fibers of $w\mapsto d(w)$. Hence each matrix has $C_N$
components. More precisely, $G(D_N^{\rm gen})$ contains the Hasse graph of
every root-degree fiber, with coefficient one on each cover, while
$G(D_N^{\rm can})$ and $G(D_N^{\rm MV})$ join every two distinct comparable
permutations in a root-degree fiber.
\end{theorem}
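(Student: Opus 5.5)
The argument splits into two halves. First, every nonzero entry of each of the six matrices joins permutations of the same root degree. Second, each root-degree fiber is connected inside the support graph of each matrix. The count $C_N$ then follows from Theorem~\ref{thm:right-regular-chamber-ansatz}\textup{(i)}.

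\textbf{Confinement to fibers.} By Proposition~\ref{prop:classical-right-regular-transitions}, the support of $D_N^\bullet$ lies in $z\preceq_{\rm Z}w$. Linked moves preserve the multiset of rank-word letters, so by Lemma~\ref{lem:d-content} this relation only holds between permutations of equal root degree. Equivalently, torus homogeneity of the geometric bases, together with \eqref{eq:hom-profile-cut-weight}, forces each $D_N^\bullet$ to be block diagonal with respect to root-degree fibers. The inverse of a block-diagonal unitriangular matrix is again block diagonal, which settles the inverses.

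\textbf{Connectivity of each fiber.} For $D_N^{\rm gen}$, Proposition~\ref{prop:generic-cover} puts a coefficient one on every cover $z\lessdot_{\rm Z}w$. By Proposition~\ref{prop:bruhat-interval}, the Hasse graph of the Bruhat interval $I_d$ is connected, since it runs from $r_d^{\min}$ to $r_d^{\max}$. For $D_N^{\rm can}$, \eqref{eq:Dcan} and the positivity of $IC$ at $q=1$ on the whole closure order show that every comparable pair is joined. For $D_N^{\rm MV}$, Proposition~\ref{prop:MV-full-support} gives the same conclusion.

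\textbf{Inverse matrices.} For the inverses I use one general observation. If $B$ is unitriangular, then $B$ and $B^{-1}=\sum_k(I-B)^k$ have the same weak support components. Indeed, the support of $B^{-1}$ lies in the transitive closure of the support of $B$, so its components refine those of $B$. Symmetrically, $B=(B^{-1})^{-1}$, so the components of $B$ refine those of $B^{-1}$. This argument uses only unitriangularity, so no cancellation issue arises for the inverse coefficients themselves.

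The main obstacle is already contained in the cited generic-cover statement, Proposition~\ref{prop:generic-cover}. Without it, the generic case would need a separate argument that $f_{\mathfrak m_z}(X_{\mathfrak m_w})\ne0$ on covers. The remaining steps are formal.
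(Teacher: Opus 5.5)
Your proposal is correct and follows essentially the paper's own argument: confinement to root-degree fibers via the $\preceq_{\rm Z}$-support in \eqref{eq:Dgen}--\eqref{eq:DMV} together with preservation of rank-word content, connectivity of each fiber from Proposition~\ref{prop:generic-cover}, \eqref{eq:Dcan}, and Proposition~\ref{prop:MV-full-support} on the connected Hasse graph of $I_d$, and the count $C_N$ from Theorem~\ref{thm:right-regular-chamber-ansatz}. The only difference is in the inverses: you use a Neumann-series (transitive-closure) argument that relies on unitriangularity, whereas the paper notes that any invertible matrix and its inverse are block diagonal with respect to the same components, so it needs no triangularity.
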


\begin{proof}
The Zelevinsky order preserves the multiset of entries of the rank word and
hence the root degree. Equations \eqref{eq:Dgen}, \eqref{eq:Dcan}, and
\eqref{eq:DMV} therefore have no support between distinct root-degree
fibers. Within one root-degree fiber,
Proposition~\ref{prop:bruhat-interval} identifies the linked covers with the
connected Hasse graph of a parabolic Bruhat interval.
Proposition~\ref{prop:generic-cover} gives coefficient one on every cover.
For the canonical matrix, \eqref{eq:Dcan} is positive on every comparable
pair, and Proposition~\ref{prop:MV-full-support} gives the same support for
the MV matrix. Finally, an invertible matrix and its inverse have the same
weak support-component partition, since ordering by components makes both
matrices block diagonal. The number of root-degree fibers is $C_N$ by
Theorem~\ref{thm:right-regular-chamber-ansatz}.
\end{proof}

\subsection{Covers and full support for the code-to-generic transition}

Recall from Proposition~\ref{prop:generic-cover} that
\[ z\lessdot_{\rm Z}w \quad\Longrightarrow\quad D_N^{\rm gen}(w,z)=1. \]
Its microlocal proof is given in
Appendix~\ref{app:classical-transitions}. The cover theorem determines the
first off-diagonal coefficients; the expected full support statement is
the following.

\begin{conjecture}\label{conj:generic-full-support}
For permutations $w,z\in S_N$, we have
\[ D_N^{\rm gen}(w,z)\neq0 \quad\Longleftrightarrow\quad z\preceq_{\rm Z}w. \]
Equivalently, every comparable pair in a root-degree Bruhat interval has a
nonzero semicanonical coefficient.
\end{conjecture}

A stronger form asserts strict positivity:
\[ z\preceq_{\rm Z}w \quad\Longrightarrow\quad D_N^{\rm gen}(w,z)>0. \]
A prior computation found this positive strengthening for all
right-regular pairs through $S_{10}$. 
Proposition~\ref{prop:generic-cover} proves the assertion
on every cover, with coefficient one. If the full-support conjecture holds in every
rank, then $D_N^{\rm gen}$, $D_N^{\rm can}$, and $D_N^{\rm MV}$ have the
same support on each root-degree fiber, although their coefficients need
not agree.

\subsection{Perverse-sheaf realization on a root-degree fiber}

The next theorem identifies the Schubert-constructible perverse-sheaf
category of $X_d$ with a Serre subcategory of a parabolic block of
category $\mathcal O$. The smooth framed-quiver correspondence then
identifies its decomposition matrix with the canonical transition.

Let $\mathscr S_d$ denote the Schubert stratification of $X_d$, and let
$\Delta_\eta$ and $L_\eta$ be respectively the standard and IC perverse
sheaves attached to the stratum indexed by $\eta$.

\begin{theorem}\label{thm:canonical-IC-root-degree-fiber}
Fix an admissible root degree $d$. Put
\[ m_i=1+d_i-d_{i-1},\qquad W_{J_d}=S_{m_1}\times\cdots\times S_{m_N}, \]
with zero factors omitted, and let $I_d=[r_d^{\min},r_d^{\max}]$. For
$\eta\in I_d$, let $y_\eta\in W^{J_d}$ be the minimal coset representative
obtained by listing, in increasing order of the letter, the positions
occupied by that letter. Then
\[ X_d\simeq\overline{C_{y_{r_d^{\max}}}} \subseteq\op{GL}_N/P_{J_d}, \]
and its Schubert cells are exactly $C_{y_\eta}$ for $\eta\in I_d$.

The abelian category
$\mathscr A_d^{\rm IC}=\op{Perv}_{\mathscr S_d}(X_d;\CC)$
is a finite indecomposable highest-weight category, equivalent to the
module category of a finite-dimensional basic quasi-hereditary algebra.
Its standard and simple objects are indexed by $I_d$, and
\[ [\Delta_\zeta:L_\eta] =D_N^{\rm can}(\eta,\zeta) =P^{J_d}_{y_\eta,y_\zeta}(1). \]
In the Koszul graded lift of the corresponding parabolic block of
category $\mathcal O$, the graded
decomposition polynomial is $P^{J_d}_{y_\eta,y_\zeta}(q)$, with the
normalization above. As $d$ varies, these give $C_N$ indecomposable
categories.
\end{theorem}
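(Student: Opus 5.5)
The plan has three stages: realize $X_d$ as a Schubert variety in a partial flag variety, transport the perverse-sheaf category to parabolic category $\mathcal O$, and identify decomposition numbers with the canonical transition through the framed correspondence.

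\emph{Step 1: realizing $X_d$ as a Schubert variety.} I would use the kernel-flag description from the proof of Theorem~\ref{thm:right-regular-chamber-ansatz}(iii). Put $V=V_1=\CC^{N-1}$, the ambient space of the flag $V_{N-1}\subseteq\cdots\subseteq V_1$. A quotient of $P_\triangle$ of dimension vector $d$ is determined by the flag $K_{N-1}\subseteq\cdots\subseteq K_1$ with $\dim K_i=N-i-d_i$. Adjoin the standard flag $V_\bullet$ and regroup the data as a single partial flag of $\CC^N$ whose jumps have sizes $m_1,\ldots,m_N$. The concrete choice I would make is to interleave the two flags, for instance through the spaces $K_i+V_{i+1}$, or through a graph construction in $V\oplus\CC$. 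The condition $K_i\subseteq V_i$ is then a Schubert condition relative to the fixed flag $V_\bullet$. This exhibits $X_d$ as the closure of a single $B$-orbit in $\op{GL}_N/P_{J_d}$.

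The rank conditions $\dim(K_i\cap V_j)$ are exactly the Schubert rank data, as already noted in Theorem~\ref{thm:right-regular-chamber-ansatz}(iii). So the cells are the $C_y$ with $y\le y_{r_d^{\max}}$ in $W^{J_d}$, and the closure order is Bruhat order. Proposition~\ref{prop:bruhat-interval} identifies these cells with $I_d$ via $\eta\mapsto y_\eta$: in the standardization, the letter $i$ occupies the positions listed in increasing order. The dimension count $\inv(r)=\ell(y_\eta)$ from Theorem~\ref{thm:right-regular-chamber-ansatz}(iv) serves as a consistency check.

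\emph{Step 2: highest-weight structure.} I would use the Beilinson--Ginzburg--Soergel (BGS) and Bernstein--Gelfand--Gelfand (BGG) formalism. Constructible perverse sheaves on $\op{GL}_N/P_J$ with respect to the Schubert stratification form a category equivalent to a regular block of parabolic category $\mathcal O$, after Koszul duality or via Soergel's theorem. The category of sheaves supported on the closed union $\overline{C_{y_{\max}}}$ of strata is a Serre subcategory, cut out by an ideal of the poset. Truncating a highest-weight category to an ideal of weights gives a highest-weight category with standard objects $\Delta_\eta$, $\eta\in I_d$. Finiteness, equivalence with modules over a basic quasi-hereditary algebra, and the Koszul graded lift are all inherited from the ambient block. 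Indecomposability follows because $I_d$ has a unique maximum, which lies above every element, so every $\Delta_\eta$ is linked to the top simple.

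\emph{Step 3: multiplicities.} By Verdier and BGG reciprocity, $[\Delta_\zeta:L_\eta]$ equals the total stalk dimension of $IC(\overline{C_{y_\zeta}})$ along $C_{y_\eta}$. This is the parabolic Kazhdan--Lusztig polynomial $P^{J}_{y_\eta,y_\zeta}(1)=P_{y_\eta w_J,y_\zeta w_J}(1)$ (Kazhdan--Lusztig, Deodhar). Its $q$-version is the graded decomposition polynomial in the Koszul lift.

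To identify this with $D_N^{\rm can}(\eta,\zeta)$, I would use Lemma~\ref{prop:framed-right-regular}: the maps $p$ and $q$ are smooth, so IC stalks on orbit closures in $E_d^{\rm rr}$ match IC stalks on Schubert closures in $X_d$ up to shift. Then Proposition~\ref{prop:classical-right-regular-transitions} gives $D_N^{\rm can}=IC_{\mathfrak m_\eta,\mathfrak m_\zeta}(1)$. Lemma~\ref{lem:right-regular-generization} guarantees that the open right-regular locus contains every relevant generization, so restricting to it loses nothing. Finally, the count of $C_N$ categories is Theorem~\ref{thm:right-regular-chamber-ansatz}(i).

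The main obstacle is Step 1: pinning down the exact embedding so that the closure is a $B$-orbit closure with the stated coset representative, with conventions matching the row and column order in $P^{J}_{y_\eta,y_\zeta}$. I would first check $N=3,4$ against the Euler characteristics in Corollary~\ref{prop:root-degree-projection} before fixing the conventions.
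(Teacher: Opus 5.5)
Your three-stage route matches the paper's: turn the kernel flags into a Schubert variety in $\op{GL}_N/P_{J_d}$, treat the Schubert-constructible category as a Serre subcategory of a parabolic block, and reach $D_N^{\rm can}$ through the smooth framed correspondence. However, Step~3 contains a step that fails. It is the same identification the paper's proof asserts without argument (``the ungraded decomposition numbers are the parabolic Kazhdan--Lusztig polynomials evaluated at one'').

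Verdier duality and BGG reciprocity give $[\Delta_\zeta:L_\eta]=[\nabla_\zeta:L_\eta]=(P_\eta:\Delta_\zeta)$, where $P_\eta$ is the projective cover of $L_\eta$. Neither relates decomposition numbers to stalks. The stalks instead give
\[
 [L_\zeta]=\sum_{\eta}(-1)^{\ell(y_\zeta)-\ell(y_\eta)}P^{J_d}_{y_\eta,y_\zeta}(1)\,[\Delta_\eta]
\]
in $K_0$. So the decomposition matrix is the \emph{inverse} of this signed stalk matrix, i.e.\ antispherical rather than spherical values (compare the spherical--antispherical inversion in Appendix~\ref{app:parabolic-KL}).

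Here is a concrete failure. Take $N=4$ and $d=(1,2,1)$. Then $I_d=\{4132,3412,2431\}$, with rank words $(1,1,2,2)$, $(1,2,1,2)$, $(1,2,2,1)$, and $X_d\cong\Gr(2,3)\cong\mathbb P^2$ with its three Schubert cells. Applying $j_!j^*\to\mathrm{id}\to i_*i^*$ to $\CC_{\mathbb P^2}[2]$ gives a short exact sequence $0\to L_{3412}\to\Delta_{2431}\to L_{2431}\to 0$, so $[\Delta_{2431}:L_{4132}]=0$. On the other hand, $D_4^{\rm can}(4132,2431)=P^{J_d}_{y_{4132},y_{2431}}(1)=1$, because $\overline{\cO_{\mathfrak m_{2431}}}=E_d$ is an affine space.

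Hence the first equality in the displayed formula, and with it the graded claim for the parabolic block, cannot be proved along this route; as stated it is false. What your argument does establish is that $D_N^{\rm can}(\eta,\zeta)=P^{J_d}_{y_\eta,y_\zeta}(1)$ is the total stalk dimension of $L_\zeta$ along $C_{y_\eta}$. The spherical polynomials become decomposition numbers only after Koszul duality, on the singular-block side.

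There are two smaller problems. First, indecomposability does not follow from $I_d$ having a unique maximum, since a semisimple category is highest weight for any order. The natural strengthening $[\Delta_{\max}:L_\eta]\neq0$ also fails by the example above, and the paper's appeal to positivity on every comparable pair inherits the same defect. A correct argument uses covers. If $\eta\lessdot\zeta$ in $I_d$, the signed stalk matrix has entry $-1$ there, so its inverse gives $[\Delta_\zeta:L_\eta]=1$. The Hasse diagram of the interval $I_d$ is connected by Proposition~\ref{prop:bruhat-interval}, so all simples are linked.

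Second, in Step~1 the spaces $K_i+V_{i+1}$ do not have constant dimension, since each equals $V_{i+1}$ or $V_i$. The workable form of your graph construction is the paper's: regard $K_\bullet$ inside $\CC e_0\oplus V_1$ and pass to $F_i=K_i^\perp$. This gives $\dim F_i=i+d_i$ and the Schubert conditions $\langle e_0,\ldots,e_{i-1}\rangle\subseteq F_i$. Irreducibility of the iterated Grassmannian bundle then makes the locus a single Schubert variety.
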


\begin{proof}
The kernel-flag description in
Theorem~\ref{thm:right-regular-chamber-ansatz} has
$\dim K_i=N-i-d_i$. Taking orthogonal complements gives a partial flag
with
\[ \dim F_i=i+d_i=\sum_{j\leq i}m_j,\qquad \langle e_0,\ldots,e_{i-1}\rangle\subseteq F_i. \]
These are the Schubert rank conditions for
$\overline{C_{y_{r_d^{\max}}}}\subseteq\op{GL}_N/P_{J_d}$,
and the quotient strata are the cells $C_{y_\eta}$.

Schubert-constructible perverse sheaves on this closed Schubert variety
form the corresponding highest-weight Serre subcategory of the regular
$J_d$-parabolic block of category $\mathcal O$; see
\cite{BGS,StroppelParabolic}. The ungraded decomposition
numbers are the parabolic Kazhdan--Lusztig polynomials evaluated at one,
while the Koszul graded lift records the full polynomials. The smooth
equivalence of Lemma~\ref{prop:framed-right-regular} identifies these
numbers with the canonical transition. Since this matrix is positive on
every comparable pair of the connected interval $I_d$, the category is
indecomposable. The number of admissible root degrees is $C_N$.
\end{proof}

\begin{remark}Let $w_{J_d}$ and $w_0$ be the longest elements of $W_{J_d}$ and $S_N$,
put $\eta^\sharp=y_\eta w_{J_d}$ and
$x_\eta=(\eta^\sharp)^{-1}w_0$, and let $\mu_{\rm KL}(x,y)$ be the top
Kazhdan--Lusztig coefficient. In the corresponding regular parabolic
block, and hence in the Serre truncation indexed by $I_d$,
\[
 \dim\Ext^1_{\mathscr A_d^{\rm IC}}(L_\eta,L_\zeta)=
 \begin{cases}
  \mu_{\rm KL}(x_\eta,x_\zeta),&x_\eta<x_\zeta,\\
  \mu_{\rm KL}(x_\zeta,x_\eta),&x_\zeta<x_\eta,\\
  0,&\text{otherwise}.
 \end{cases}
\]
Indeed, a short exact sequence whose endpoints lie in a Serre subcategory
has its middle term in the same subcategory. Thus the Gabriel graph is the
restriction of the Kazhdan--Lusztig $\mu_{\rm KL}$-graph to $I_d$.
\end{remark}

\section{Support of the geometric-to-Schubert transitions}
\label{sec:geometric-transition-support}

\subsection{The Zelevinsky and Pieri bounds}

The order $\preceq_{\rm Z}$ is generated by the following elementary
operation on rank words. For $p<q$ and
\[ a=r_p(w)<r_q(w)=b\leq p, \]
interchange $a$ and $b$; denote the resulting permutation by $z$. Then
$z\preceq_{\rm Z}w$.

\begin{lemma}\label{lem:Zelevinsky-support-bounds}
If $z\preceq_{\rm Z}w$, then
\[ \ell(z)=\ell(w),\qquad I_m(z)\leq I_m(w)\quad(1\leq m\leq N), \qquad \lambda(z)\unrhd\lambda(w). \]
\end{lemma}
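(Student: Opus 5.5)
Since $\preceq_{\rm Z}$ is the transitive closure of elementary linked moves, and all three conclusions are transitive ($=$, $\leq$, $\unrhd$ are preorders), it suffices to treat a single move. So fix $p<q$ with $a=r_p(w)<r_q(w)=b\leq p$, and let $z$ be obtained by interchanging $a$ and $b$. In terms of the inversion-end code $e_j=j-r_j$, the code of $z$ agrees with that of $w$ except at $p$ and $q$:
\[
 e_p(z)=p-b=e_p(w)-(b-a),\qquad e_q(z)=q-a=e_q(w)+(b-a).
\]
Both new values lie in range: $e_p(z)=p-b\geq0$ because $b\leq p$, and $e_q(z)\leq q-1$ because $a\geq1$. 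So $z$ is a genuine permutation.

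For the length, $\ell(w)=\sum_j e_j(w)$. This sum is unchanged, since we subtract $\delta=b-a>0$ at position $p$ and add it at position $q$.

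For the initial sums, $I_m(z)-I_m(w)$ is $0$ for $m<p$, equals $-\delta$ for $p\leq m<q$, and is $0$ for $m\geq q$. Hence $I_m(z)\leq I_m(w)$ for every $m$.

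For the dominance statement, write $A=e_p(w)$ and $B=e_q(w)$. Then $A=p-a$ and $B=q-b$, so the multiset $\{A,B\}$ is replaced by $\{A-\delta,\,B+\delta\}$, with all other entries of the code fixed. The key inequality is $B\geq A-\delta$, which reads $q-b\geq p-b$, i.e.\ $q\geq p$. Therefore the move transfers $\delta$ from the entry $A$ to an entry $B$ that is at least $A-\delta$. We claim this is a dominance increase of the sorted vector, $\lambda(z)\unrhd\lambda(w)$.

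To prove the claim, split into two cases.

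\textbf{Case $B\geq A$.} Moving $\delta$ from a smaller part to a weakly larger part makes the partition more spread out. This is the inverse of a Robin Hood transfer, so it raises dominance.

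\textbf{Case $A-\delta\leq B<A$.} The new pair is $\{A-\delta,\,B+\delta\}$, and the two pairs have the same sum. We have $\max(A-\delta,B+\delta)\geq A=\max(A,B)$ exactly when $B+\delta\geq A$ or $A-\delta\geq A$. The second is impossible, so this needs $B\geq A-\delta$, which is what we have. Whenever two multisets $\{x,y\}$ and $\{x',y'\}$ have equal sum and $\max\{x',y'\}\geq\max\{x,y\}$, the pair $\{x',y'\}$ dominates $\{x,y\}$. Adjoining the same remaining parts to both pairs preserves dominance; this is standard, since partial sums of sorted merges are maxima over sub-multisets.

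Equivalently, one can compare partial sums of largest-$k$ entries directly and check that they do not decrease.

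The only step requiring care is this final claim that a two-part majorization persists after merging with common parts. I would handle it through the characterization of the partial sum $\sum_{i\leq k}\lambda_i$ as the maximum of the sum of $k$ entries. Using this characterization, one checks that replacing the pair by a majorizing pair cannot lower any such maximum.
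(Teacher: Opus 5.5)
Your proof is correct and follows the paper's own argument. You reduce to a single linked move, observe that only $(e_p,e_q)$ changes, from $(p-a,q-b)$ to $(p-b,q-a)$, with the sum preserved, read off the initial sums, and use that the new entry $q-a$ exceeds both old entries; your case split and the merging argument via maxima of $k$-element sums just spell out the majorization step that the paper states in one line.
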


\begin{proof}
It is enough to consider one elementary operation. The only inversion
digits that change are
\[ (e_p(w),e_q(w))=(p-a,q-b), \qquad  (e_p(z),e_q(z))=(p-b,q-a). \]
Their sum is unchanged. For $p\leq m<q$ one has
$I_m(z)=I_m(w)-(b-a)$,
and the other initial sums are unchanged. Finally, the two entries after
the operation have the same sum as before, while $q-a$ is larger than both
$p-a$ and $q-b$. Thus the decreasing rearrangement after the operation
majorizes the one before it. Iteration proves the assertion.
\end{proof}

\begin{lemma}\label{lem:top-pipe-dream-dominance}
Let
$\Code_v(x)=\prod_{j=2}^{N} e_{e_j(v)}(x_1,\ldots,x_{j-1})$. 
If $x^\alpha$ occurs in $\Code_v(x)$, then
\[ \alpha^\downarrow\unlhd\mu(v). \]
Moreover, the top pipe dream of $u$ has row weight $\mu(u)$, and hence
\[ [x^{\mu(u)}]\Sch_u(x)=1. \]
\end{lemma}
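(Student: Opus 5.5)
The first assertion is a majorization statement about monomials of a product of elementary symmetric polynomials; the second concerns the top pipe dream of $u$.

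\textbf{Dominance for monomials of $\Code_v(x)$.} Expand
\[
\Code_v(x)=\prod_{j=2}^{N} e_{e_j(v)}(x_1,\ldots,x_{j-1}).
\]
A monomial $x^\alpha$ arises by choosing, for each $j$, a subset $S_j\subseteq[1,j-1]$ with $|S_j|=e_j(v)$. Then $\alpha_k=\#\{j:k\in S_j\}$. Encode this choice as a $0/1$ matrix whose rows are indexed by $j$ and columns by $k$. Its row sums are the integers $e_j(v)$, and its column sums are $\alpha$.

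The Gale--Ryser theorem says that a $0/1$ matrix with row-sum vector $\lambda$ and column-sum vector $\alpha$ exists only if $\alpha^\downarrow\unlhd\lambda'$. Here $\lambda=\lambda(v)$, the decreasing rearrangement of the $e_j(v)$. Since $\mu(v)=\lambda(v)'$, this gives $\alpha^\downarrow\unlhd\mu(v)$.

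The easy half of Gale--Ryser suffices, so I would prove it directly. Each row contributes at most $\min(e_j(v),m)$ to any $m$ columns. Hence the sum of the $m$ largest $\alpha_k$ is at most
\[
\sum_j\min(e_j(v),m)=\mu_1(v)+\cdots+\mu_m(v).
\]
The support constraint $S_j\subseteq[1,j-1]$ is only an additional restriction, so it does not affect this bound.

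\textbf{The top pipe dream.} I would use the Bergeron--Billey bottom/top reduced pipe dream description from \cite{BJS}. The top pipe dream of $u$ is the one obtained by ladder moves pushing all crosses as far up (to the smallest rows) as possible. Its crosses in column $c$ occupy rows $1,\ldots,m_c$, where $m_c$ counts the entries of the inverse-type code in that column. This is the transpose of the bottom pipe dream, which places $L(u)_i$ crosses left-justified in row $i$.

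I would check that the column occupancies are exactly the values $e_j(u)$. The cleanest route is that the top pipe dream of $u$ is the transpose of the bottom pipe dream of $u^{-1}$, and $L(u^{-1})$ is a rearrangement of the inversion-end code of $u$: indeed $e_j(u)=L(u^{-1})_{u_j}$. Given this, row $k$ of the top pipe dream contains $\#\{j:e_j(u)\geq k\}=\mu_k(u)$ crosses, so its row weight is $\mu(u)$.

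\textbf{Coefficient one.} The monomial $x^{\mu(u)}$ occurs in $\Sch_u(x)$ through the top pipe dream. Its coefficient equals the number of reduced pipe dreams of row weight $\mu(u)$. It remains to show uniqueness.

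Every monomial of $\Sch_u(x)$ is also a monomial of $\Code_u(x)$. This follows by expanding $\Sch_u=\sum_v\Pi_N^{-1}(u,v)\Code_v$, but that expansion is cleaner via the Cauchy/Pieri triangularity. So I would argue directly with pipe dreams instead. In any reduced pipe dream for $u$, the crosses in column $c$ number at most the column count of the top pipe dream. Pushing crosses upward with ladder moves only increases the initial row sums. Hence every pipe dream weight $\beta$ satisfies $\beta\unlhd\mu(u)$, with equality only for the top pipe dream, since ladder moves strictly change the weight.

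This uniqueness is the main obstacle: identifying the top pipe dream as the unique dominance-maximal weight. I would handle it via the known fact that the weight of a pipe dream is $\mu(u)$ only if all its crosses lie in the staircase positions of the top diagram. That fact in turn follows by comparing initial row sums with the column bound from the first part.
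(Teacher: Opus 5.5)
Your first two steps match the paper. The dominance bound is the easy half of Gale--Ryser for the $0/1$ matrix recording the subsets $S_j$. The row weight of the top pipe dream follows because it is the transpose of the bottom pipe dream of $u^{-1}$ and $e_j(u)=L(u^{-1})_{u_j}$, so its columns are top-justified of heights $e_j(u)$. Drop the intermediate sentence about transposing the bottom pipe dream of $u$ itself; only the $u^{-1}$ version is correct.

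For the coefficient one the paper simply cites \cite{BJS}. Your attempt to supply this step has a genuine gap, for four reasons.
\begin{enumerate}[(i)]
 \item The claim that every reduced pipe dream of $u$ has, in each column, at most as many crosses as the top one is false. For $u=132$ one has $\Sch_{132}=x_1+x_2$. The top pipe dream has its cross at $(1,2)$, while the other reduced pipe dream has its cross at $(2,1)$, in a column where the top one has none.
 \item The containment of the support of $\Sch_u(x)$ in that of $\Code_u(x)$ is true, but it does not follow from the signed expansion through $\Pi_N^{-1}$.
 \item Your ladder-move argument runs in the wrong direction. Bergeron--Billey obtain every reduced pipe dream from the \emph{bottom} one by ladder moves. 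That bounds weights from below, and gives no path from an arbitrary pipe dream up to the top one.
 \item The ``known fact'' that weight $\mu(u)$ forces the crosses into the top positions is exactly what has to be proved. Gale--Ryser for $\Code_u(x)$ concerns fillings with column sums $e_j(u)$, not pipe dreams.
\end{enumerate}

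Either of two short arguments closes the gap.

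First, use chute moves. By \cite[Corollary~3.3 and Theorem~3.7]{BergeronBilley}, as used in the proof of Proposition~\ref{prop:Pi-transfer-components}, every reduced pipe dream of $u$ is reached from the top one by chute moves. Each chute move changes the row weight by $-\mathbf e_i+\mathbf e_{i+1}$. Thus $\sum_r r\beta_r$ rises by exactly one per move, and only the top pipe dream has weight $\mu(u)$.

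Second, and closer to your own first step, use the forward Pieri expansion. Specializing \eqref{eq:Pi-definition} gives $\Code_u(x)=\Sch_u(x)+\sum_{v\ne u}\Pi_N(u,v)\Sch_v(x)$, with every term monomial-positive. Hence $[x^{\mu(u)}]\Sch_u\le[x^{\mu(u)}]\Code_u(x)$. When $\alpha=\mu(u)$, every inequality $|S_j\cap[1,m]|\le\min(e_j(u),m)$ in your count must be an equality. Taking $m=e_j(u)$ then forces $S_j=[1,e_j(u)]$, which is admissible since $e_j(u)\le j-1$. Therefore $[x^{\mu(u)}]\Code_u(x)=1$, and the existence of the top pipe dream gives $[x^{\mu(u)}]\Sch_u=1$.
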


\begin{proof}
A monomial of $\Code_v(x)$ is the row-sum vector of a zero--one matrix
whose $j$th column has sum $e_j(v)$. The Gale--Ryser dominance inequality
for its row and column sums gives $\alpha^\downarrow\unlhd\mu(v)$. 
For the second assertion, the column heights of the top pipe dream are
the inversion-end digits $e_j(u)$. Its row lengths are therefore the
conjugate partition $\mu(u)$. The top pipe dream is unique, so the
coefficient of its row-weight monomial is one; see \cite{BJS}.
\end{proof}

The Pieri transition \eqref{eq:Pi-definition} satisfies the same incidence
bounds: if $\Pi_N(v,u)\neq0$, then
\begin{equation}\label{eq:Pieri-support-bounds}
 I_m(u)\leq I_m(v),
 \qquad
 \lambda(u)\unrhd\lambda(v).
\end{equation}
The first inequality follows from the transpositions in Sottile's Pieri
chains. For the second, positivity of the Pieri expansion implies that
$x^{\mu(u)}$ occurs in $\Code_v(x)$. Lemma
\ref{lem:top-pipe-dream-dominance} gives
$\mu(u)\unlhd\mu(v)$; conjugation reverses dominance, so
$\lambda(u)\unrhd\lambda(v)$.

\begin{lemma}\label{lem:dominance-support}
For $\bullet\in\{\mathrm{gen},\mathrm{can},\mathrm{MV}\}$, if
$A_N^\bullet(w,u)\neq0$, then
\[ I_m(u)\leq I_m(w)\quad(1\leq m\leq N), \qquad \lambda(u)\unrhd\lambda(w). \]
Moreover, $A_N^\bullet(w,w)=1$.
\end{lemma}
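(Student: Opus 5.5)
The plan is to use the factorization $A_N^\bullet=(D_N^\bullet)^{-1}\Pi_N$ from the transition triangle, as in the proof of Lemma~\ref{lem:geometric-column-support}. A nonzero entry $A_N^\bullet(w,u)$ forces a nonzero summand $(D_N^\bullet)^{-1}(w,z)\,\Pi_N(z,u)$ for some intermediate index $z$. The goal is to show that each of the two factors moves the statistics $I_m$ and $\lambda$ in the correct direction, and then to chain the two inequalities.

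For the first factor we need the support of $(D_N^\bullet)^{-1}$. By Proposition~\ref{prop:classical-right-regular-transitions}, $D_N^\bullet$ is unitriangular with respect to $\preceq_{\rm Z}$: entries $D_N^\bullet(w,z)$ vanish unless $z\preceq_{\rm Z}w$, and the diagonal is one. The inverse of a unitriangular matrix with respect to a partial order is again unitriangular with respect to that order, so $(D_N^\bullet)^{-1}(w,z)\ne0$ implies $z\preceq_{\rm Z}w$. Lemma~\ref{lem:Zelevinsky-support-bounds} then gives
\[
I_m(z)\leq I_m(w)\ (1\leq m\leq N),\qquad \lambda(z)\unrhd\lambda(w).
\]

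For the second factor, $\Pi_N(z,u)\ne0$ gives, by \eqref{eq:Pieri-support-bounds},
\[
I_m(u)\leq I_m(z),\qquad \lambda(u)\unrhd\lambda(z).
\]
Since $\le$ and dominance are both transitive, combining the two chains gives $I_m(u)\leq I_m(w)$ and $\lambda(u)\unrhd\lambda(w)$, which is the asserted bound.

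For the diagonal entry we compute
\[
A_N^\bullet(w,w)=\sum_z (D_N^\bullet)^{-1}(w,z)\,\Pi_N(z,w).
\]
A nonzero summand needs $z\preceq_{\rm Z}w$ and $\Pi_N(z,w)\ne0$. The latter gives $I_m(w)\le I_m(z)$, while the former gives $I_m(z)\le I_m(w)$, so $I_m(z)=I_m(w)$ for all $m$. The initial sums determine the inversion-end code, and the code determines the permutation, so $z=w$. The diagonal entry is therefore $(D_N^\bullet)^{-1}(w,w)\,\Pi_N(w,w)=1\cdot 1=1$, using Proposition~\ref{prop:iterated-Pieri-transition} for $\Pi_N(w,w)=1$.

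The step I expect to need the most care is the Pieri bound \eqref{eq:Pieri-support-bounds}, in particular the inequality $I_m(u)\le I_m(v)$. The paper justifies it only briefly, by saying that each Sottile--Pieri transposition lowers the initial sums of the code. If this needs to be made precise, I would check it one factor $c_{e_j}(j-1)$ at a time. Each Pieri step multiplies by an elementary class $e_k(x_1,\dots,x_{j-1})$ and passes through transpositions $t_{ab}$ with $a\le j-1<b$. Each such transposition moves inversion-ends toward earlier positions relative to the code of the target, which makes the initial sums monotone along the chain. All the other ingredients are already stated earlier in the paper.
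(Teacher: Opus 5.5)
Your proposal is correct and is essentially the paper's own proof. Like the paper, you expand $\mathsf B_w^\bullet=\sum_{z\preceq_{\rm Z}w}c_N^\bullet(w,z)\Code_z$ using the inverse unitriangular classical transition, chain Lemma~\ref{lem:Zelevinsky-support-bounds} with \eqref{eq:Pieri-support-bounds} through an intermediate $z$, and get the diagonal entry by showing that $I_m(z)=I_m(w)$ for all $m$, which forces $z=w$.

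The paper also takes \eqref{eq:Pieri-support-bounds} as already established, so your one-factor-at-a-time check is not needed. If you want a justification, it is cleaner to read the bound off Lemma~\ref{prop:Cauchy-transpose-Pieri}: ladder moves from the bottom pipe dream only raise initial partial sums of the row weight.
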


\begin{proof}
The inverse classical transition on the right-regular indexing set is
unitriangular and supported on $\preceq_{\rm Z}$; see
Proposition~\ref{prop:classical-right-regular-transitions}. Thus, for
suitable integers $c_N^\bullet(w,z)$,
\[ \mathsf B_w^\bullet =\sum_{z\preceq_{\rm Z}w}c_N^\bullet(w,z)\Code_z, \qquad c_N^\bullet(w,w)=1. \]
If $A_N^\bullet(w,u)\neq0$, then some $z\preceq_{\rm Z}w$ satisfies
$\Pi_N(z,u)\neq0$. The inequalities follow from
Lemma~\ref{lem:Zelevinsky-support-bounds} and
\eqref{eq:Pieri-support-bounds}.

For $u=w$, the two systems of initial-sum inequalities force
$I_m(z)=I_m(w)$ for every $m$, and hence $z=w$. Since both diagonal
coefficients are one, $A_N^\bullet(w,w)=1$.
\end{proof}

\subsection{Descent containment}

By Lemma~\ref{lem:descent-statistic} and perfectness for the right-root
action, for each $\bullet$,
\[ i\notin\Des(w)\quad\Longrightarrow\quad \mathsf B_w^\bullet(Zu_i(a))=\mathsf B_w^\bullet(Z). \]
A direct calculation gives
\begin{equation}\label{eq:Newton-root}
 E(s_ix)=E(x)u_i(x_{i+1}-x_i).
\end{equation}

\begin{lemma}\label{lem:descent-support}
For $\bullet\in\{\mathrm{gen},\mathrm{can},\mathrm{MV}\}$, if
$i\notin\Des(w)$, then $s_iP_w^\bullet=P_w^\bullet$. Consequently,
\[ A_N^\bullet(w,u)\neq0 \quad\Longrightarrow\quad \Des(u)\subseteq\Des(w). \]
\end{lemma}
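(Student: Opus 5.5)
The plan is to transport the right-root invariance of $\mathsf B_w^\bullet$ to an $s_i$-invariance of its classical specialization via \eqref{eq:Newton-root}. Then we read off the support condition on the Schubert expansion with the divided difference $\partial_i$.

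First step: invariance. Since $i\notin\Des(w)$, Lemma~\ref{lem:descent-statistic} gives $\eps_i(\mathfrak m_w)=0$. As in the proof of Proposition~\ref{prop:factor-descent}, perfectness of the three biperfect bases for the right action gives $\op{rdeg}_i(\mathsf B_w^\bullet)=0$. Hence $\mathsf B_w^\bullet(Zu_i(a))=\mathsf B_w^\bullet(Z)$ identically in $a$, as a polynomial identity on $U_N$. Specialize $Z=E(x)$ and $a=x_{i+1}-x_i$. By \eqref{eq:Newton-root},
\[
 (s_iP_w^\bullet)(x)=\mathsf B_w^\bullet(E(s_ix))=\mathsf B_w^\bullet\bigl(E(x)u_i(x_{i+1}-x_i)\bigr)=\mathsf B_w^\bullet(E(x))=P_w^\bullet(x).
\]
This substitution is legitimate because the invariance is a polynomial identity in the coordinates of $Z$ and in $a$. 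Checking \eqref{eq:Newton-root} itself is a direct column computation: right multiplication by $u_i(s)$ adds $s$ times column $i$ to column $i+1$, and $e_r(x_1,\dots,x_i)=e_r(x_1,\dots,x_{i-1},x_{i+1})+(x_i-x_{i+1})e_{r-1}(x_1,\dots,x_{i-1})$. All other columns of $E(x)$ are $s_i$-symmetric.

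Second step: support. The invariance gives $\partial_iP_w^\bullet=0$. Classical specialization of \eqref{eq:A-bullet-definition}, together with Fulton's injectivity on $\cM_N$, gives $P_w^\bullet=\sum_uA_N^\bullet(w,u)\Sch_u(x)$. Applying $\partial_i$ yields
\[
 0=\sum_{u:\,i\in\Des(u)}A_N^\bullet(w,u)\Sch_{us_i}(x).
\]
The map $u\mapsto us_i$ is injective on $\{u:i\in\Des(u)\}$, and the classical Schubert polynomials are linearly independent. Therefore every coefficient $A_N^\bullet(w,u)$ with $i\in\Des(u)$ vanishes. Ranging over all $i\notin\Des(w)$ shows that $A_N^\bullet(w,u)\ne0$ forces $\Des(u)\subseteq\Des(w)$.

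The main obstacle is the first step, namely justifying $\op{rdeg}_i(\mathsf B_w^\bullet)=\eps_i(\mathfrak m_w)$ for all three bases with matching conventions for the right versus left crystal. I would rely on the same citation to perfectness used in Proposition~\ref{prop:factor-descent} rather than reprove it. The rest is formal.
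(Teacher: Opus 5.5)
Your proposal is correct and follows essentially the paper's argument: perfectness gives right-root invariance of $\mathsf B_w^\bullet$, \eqref{eq:Newton-root} turns this into $s_i$-invariance of $P_w^\bullet$, and $\partial_i$ then kills every Schubert term with $i\in\Des(u)$. The only difference is bookkeeping. The paper extracts $A_N^\bullet(w,u)=\cD_uP_w^\bullet$ using a descent chain that starts with $u\to us_i$, so that $\cD_u=\cD_{us_i}\partial_i$. You instead apply $\partial_i$ to the whole expansion and use linear independence of the $\Sch_{us_i}$. Fulton's injectivity is not actually needed in your second step, since you only push the identity forward under the specialization.
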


\begin{proof}
Biperfectness and \eqref{eq:Newton-root} give
\[ P_w^\bullet(s_ix) =\mathsf B_w^\bullet(E(x)u_i(x_{i+1}-x_i)) =\mathsf B_w^\bullet(E(x))=P_w^\bullet(x). \]
If $i\in\Des(u)$, choose the right-descent chain defining $\cD_u$ to
begin with $u\to us_i$. Then
$\cD_u=\cD_{us_i}\partial_i$, and $\partial_iP_w^\bullet=0$.
\end{proof}

\begin{theorem}\label{thm:transition}
Let $\bullet$ be one of $\mathrm{gen}$, $\mathrm{can}$, and
$\mathrm{MV}$. If $A_N^\bullet(w,u)\neq0$, then $\ell(u)=\ell(w)$ and
\[ \Des(u)\subseteq\Des(w),\qquad I_m(u)\leq I_m(w)\quad(1\leq m\leq N), \qquad \lambda(u)\unrhd\lambda(w). \]
Moreover, $A_N^\bullet(w,w)=1$.
\end{theorem}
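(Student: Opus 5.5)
The theorem collects the three support constraints just established, so the proof only needs to assemble them. Fix $\bullet\in\{\mathrm{gen},\mathrm{can},\mathrm{MV}\}$ and suppose $A_N^\bullet(w,u)\neq0$.

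The first step is the length statement. I would use the coefficient extraction of Section~\ref{sec:transition}. Specialize \eqref{eq:A-bullet-definition} through $c_r(k)\mapsto e_r(x_1,\ldots,x_k)$. This gives $P_w^\bullet=\sum_uA_N^\bullet(w,u)\Sch_u(x)$, and this expansion is unique by Fulton's injectivity on $\cM_N$.

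The left side $P_w^\bullet$ is homogeneous of degree $\ell(w)$. Indeed, by Proposition~\ref{prop:classical-right-regular-transitions} the element $\mathsf B_w^\bullet$ is an integral combination of code monomials $\Code_z$ with $z\preceq_{\rm Z}w$. Each such $\Code_z$ specializes to a polynomial of degree $\sum_je_j(z)=\ell(z)$. Lemma~\ref{lem:Zelevinsky-support-bounds} gives $\ell(z)=\ell(w)$. Since the $\Sch_u(x)$ are homogeneous of degree $\ell(u)$ and linearly independent, comparing homogeneous components forces $\ell(u)=\ell(w)$.

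The second step is the descent containment $\Des(u)\subseteq\Des(w)$, which is exactly Lemma~\ref{lem:descent-support}. Its input is right-root perfectness, via Lemma~\ref{lem:descent-statistic}, combined with \eqref{eq:Newton-root}. This makes $P_w^\bullet$ invariant under $s_i$ for $i\notin\Des(w)$. Extracting coefficients with $\cD_u=\cD_{us_i}\partial_i$ then kills every $u$ with $i\in\Des(u)$.

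The third step is the initial-sum inequalities $I_m(u)\leq I_m(w)$, the dominance $\lambda(u)\unrhd\lambda(w)$, and the diagonal value $A_N^\bullet(w,w)=1$. All of these are Lemma~\ref{lem:dominance-support}. That lemma combines the Zelevinsky bounds of Lemma~\ref{lem:Zelevinsky-support-bounds} for the intermediate index $z$ with the Pieri bounds \eqref{eq:Pieri-support-bounds} for $\Pi_N(z,u)\neq0$.

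There is no real obstacle here, since every ingredient is already proved. The only point needing care is the length statement. One must check that unitriangular inversion of $D_N^\bullet$ stays inside a single root-degree fiber, so that every $\Code_z$ involved has the same degree as $\Code_w$. Lemma~\ref{lem:Zelevinsky-support-bounds} supplies exactly this.
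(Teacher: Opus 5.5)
Your proposal is correct and matches the paper's proof. The paper likewise derives the length equality from total degree and takes the remaining assertions from Lemmas~\ref{lem:dominance-support} and~\ref{lem:descent-support}. Your justification of the degree claim, via the unitriangular Code expansion supported on $\preceq_{\rm Z}$ together with the length preservation in Lemma~\ref{lem:Zelevinsky-support-bounds}, spells out a step the paper leaves implicit.
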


\begin{proof}
The length equality follows from total degree. The remaining assertions
are Lemmas~\ref{lem:dominance-support} and
\ref{lem:descent-support}.
\end{proof}

\section{Support of the code-to-Schubert transition}
\label{sec:transition-graphs}

\subsection{Connected components of the Pieri support}

The support of $\Pi_N$ has the following description. Let
$w_0$ be the longest permutation and let
$L(\pi)_i=\#\{j>i:\pi_i>\pi_j\}$ be the Lehmer code.

By Lemma~\ref{prop:Cauchy-transpose-Pieri}, for
$\pi,\sigma\in S_N$ one has
\[ \Pi_N(\pi w_0,\sigma w_0) =[x^{L(\pi)}]\Sch_\sigma(x). \]
Thus, after the $w_0$-reindexing, the support of $\Pi_N$ is the incidence
relation between Schubert polynomials and their staircase monomials.

Suppress the diagonal loops and orient the reindexed support from $\pi$
to $\sigma$ when $[x^{L(\pi)}]\Sch_\sigma>0$. Every staircase
monomial is $x^{L(\pi)}$ for a unique $\pi\in S_N$.

\begin{corollary}\label{cor:Pi-sources-sinks}
In the $w_0$-reindexed support graph of $\Pi_N$, the sources are
$\Av_N(132)$, the sinks are $\Av_N(213,2341)$, and the isolated
vertices are $\Av_N(132,213,2341)$.
\end{corollary}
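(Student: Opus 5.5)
The statement concerns only the $w_0$-reindexed graph. By Lemma~\ref{prop:Cauchy-transpose-Pieri}, an edge $\pi\to\sigma$ with $\pi\ne\sigma$ exists exactly when $x^{L(\pi)}$ occurs in $\Sch_\sigma(x)$. The proposal splits into the source characterization, the sink characterization, and the isolated vertices, which are their intersection.

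\emph{Sources.} A vertex $\sigma$ has an incoming edge iff $\Sch_\sigma(x)$ contains a monomial $x^{L(\pi)}$ with $\pi\ne\sigma$. Every monomial exponent of $\Sch_\sigma$ is a staircase vector, hence a Lehmer code, and $x^{L(\sigma)}$ occurs with coefficient one (the bottom pipe dream is unique). Hence $\sigma$ is a source iff $\Sch_\sigma(x)=x^{L(\sigma)}$. This holds iff $\sigma$ is dominant, i.e.\ $132$-avoiding; this is the criterion already quoted from \cite{BJS} in the proof of Corollary~\ref{cor:code-transition-extremals}. This step is routine.

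\emph{Sinks.} A vertex $\pi$ is a sink iff the staircase monomial $x^{L(\pi)}$ occurs in no $\Sch_\sigma$ with $\sigma\ne\pi$. The main obstacle is showing that this holds exactly for $\pi\in\Av_N(213,2341)$. I would argue in both directions through reduced pipe dreams.

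For the direction ``pattern $\Rightarrow$ not a sink'', start from the bottom (left-justified) pipe dream of $\pi$, whose row weights are $L(\pi)$. Given an occurrence of $213$ or $2341$, modify it by a row-preserving rearrangement of crosses confined to the rows and columns of the pattern. The resulting diagram should still be reduced, have the same row weights, and realize some $\sigma\ne\pi$. For $213$ at positions $i<j<k$, I would slide the last cross of row $i$ one step to the right past the region controlled by the pattern. For $2341$, the analogous move sits in the rows of the entries $2,3$.

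I would first verify both moves on the minimal cases $N=3$ and $N=4$. In $S_3$, $x^{L(213)}=x_1$ also occurs in $\Sch_{132}=x_1+x_2$. In $S_4$, $x^{L(2341)}=x_1x_2x_3$ also occurs in $\Sch_{1432}$. I would then embed these local moves using the insertion compatibility of pipe dreams, i.e.\ padding with fixed columns.

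For the converse, suppose $x^{L(\pi)}$ occurs in $\Sch_\sigma$ with $\sigma\ne\pi$. Use the column analogue of Lemma~\ref{lem:geometric-column-support}: in the original indexing this gives initial-sum inequalities $I_m$ and, via Lemma~\ref{lem:Pieri-column-fiber}-type bookkeeping, a dominance relation between $L(\pi)$ and $L(\sigma)$. The dominance comes from \eqref{eq:Pieri-support-bounds}, transported through $w_0$. For $\pi$ avoiding $213$ and $2341$, the code $L(\pi)$ has a rigid shape: a $213$-avoiding code is built from weakly decreasing blocks, and $2341$-avoidance forbids the configurations that admit a second pipe-dream filling. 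From this I would derive $L(\sigma)=L(\pi)$ by an argument in the spirit of Lemma~\ref{lem:Lehmer-code-uniqueness}. If that comparison proves too weak, the fallback is a direct induction on $N$: delete the row containing $N$ and use the standard transition recursion for Schubert polynomials.

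\emph{Isolated vertices.} A vertex is isolated iff it is both a source and a sink. By the two characterizations, these are exactly the permutations in $\Av_N(132)\cap\Av_N(213,2341)=\Av_N(132,213,2341)$.
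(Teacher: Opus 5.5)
Your treatment of sources and of isolated vertices is fine and matches the paper. The gap is in the sink characterization, which you try to rebuild from pipe dreams. As sketched, neither direction of that argument goes through.

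\textbf{Pattern implies not a sink.} The move you describe already fails in $S_4$. The permutation $\pi=3142$ contains $213$ only at positions $1,2,3$. Its bottom pipe dream is $\{(1,1),(1,2),(3,1)\}$. Sliding $(1,2)$ to $(1,3)$ gives the word $s_3s_1s_3$, which is not reduced. The monomial $x_1^2x_3$ does occur in $\Sch_{1432}$, but only through a different rearrangement of row $1$. In addition, a general occurrence of a pattern is not produced by padding with fixed points. So checking $N=3,4$ and then embedding does not cover the general case.

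\textbf{Pattern avoidance implies sink.} The support inequalities you plan to use cannot force $\sigma=\pi$. Take the sink (indeed isolated) vertex $\pi=3412$, with $L(\pi)=(2,2,0,0)$. Both $\sigma=3241$ and $\sigma=2431$ have length four. Both satisfy $\sum_{i\le q}L(\sigma)_i\le\sum_{i\le q}L(\pi)_i$ for all $q$, and both satisfy the $\lambda$-dominance of \eqref{eq:Pieri-support-bounds} after reindexing. Yet $x_1^2x_2^2$ occurs in neither $\Sch_{3241}=x_1^2x_2x_3$ nor $\Sch_{2431}=x_1^2x_2x_3+x_1x_2^2x_3$.

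An argument in the spirit of Lemma~\ref{lem:Lehmer-code-uniqueness} would also need an ascent-preservation hypothesis. There, that hypothesis came from descent containment for the biperfect bases, and $\Pi_N$ has no such containment. For example, $\Code_{312}=\Sch_{312}(c)+\Sch_{231}(c)$ although $\Des(231)\not\subseteq\Des(312)$.

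\textbf{The missing step.} Undo the reindexing for sinks exactly as you did for sources. An edge $\pi\to\sigma$ means $\Pi_N(\pi w_0,\sigma w_0)\neq0$. So $\pi$ is a sink if and only if the row of $\Pi_N$ indexed by $\pi w_0$ is a unit row, that is, if and only if $\Code_{\pi w_0}=\Sch_{\pi w_0}(c)$. By Woodruff's criterion, as recorded in Corollary~\ref{cor:code-transition-extremals}, this coincidence set is $\Av_N(312,1432)$. Reversing positions turns it into $\Av_N(213,2341)$. This is the whole of the paper's proof, and no new pipe-dream analysis is needed.
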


\begin{proof}
This is the $w_0$-reindexed form of
Corollary~\ref{cor:code-transition-extremals} for the code-to-Schubert transition.
Before reindexing, the sources are the unit-column permutations
$\Av_N(231)$ and the sinks are the coincidence permutations
$\Av_N(312,1432)$. Reversing positions sends these classes to
$\Av_N(132)$ and $\Av_N(213,2341)$, respectively. The isolated permutations are
their intersection.
\end{proof}

\begin{proposition}\label{prop:Pi-rank-stability}
The permutations whose $w_0$-reindexed one-line notation begins with
$N$ form a union of support components, and the induced support graph is
canonically the rank-$(N-1)$ graph.
\end{proposition}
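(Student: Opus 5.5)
We work entirely in the $w_0$-reindexed description given by Lemma~\ref{prop:Cauchy-transpose-Pieri}: an edge joins $\pi$ and $\sigma$ exactly when $[x^{L(\pi)}]\Sch_\sigma(x)\neq0$. The plan has two parts. First, show that the set $T_N=\{\pi\in S_N:\pi(1)=N\}$ is closed under this incidence in both directions. Second, identify the induced graph on $T_N$ with the rank-$(N-1)$ graph via the map $\pi\mapsto\pi'$, where $\pi'\in S_{N-1}$ is the standardization of $\pi(2)\cdots\pi(N)$.

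\textbf{Lehmer-code translation.} We have $\pi(1)=N$ if and only if $L(\pi)_1=N-1$. In that case $L(\pi)=(N-1,L(\pi'))$, where $L(\pi')$ is a staircase vector for $S_{N-1}$. So the condition defining $T_N$ is simply that the first exponent is maximal, equal to $N-1$.

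\textbf{Closure, sigma to pi.} Suppose $\sigma\in T_N$ and $[x^{a}]\Sch_\sigma\neq0$ with $a=L(\pi)$. We need $a_1=N-1$. Since $\sigma(1)=N$, we have $\sigma=s_{N-1}\cdots s_1\sigma''$ in the dominant form $\sigma=(N\text{ prefix})$. By the standard factorization
\[ \Sch_\sigma(x)=x_1^{N-1}\,\Sch_{\sigma'}(x_2,\ldots,x_N), \]
every monomial has $a_1=N-1$, so $\pi\in T_N$. This factorization comes from the pipe-dream formula: row one of every reduced pipe dream for $\sigma$ is full, because $\sigma(1)=N$ forces the first row to contain $N-1$ crosses. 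Alternatively, it follows from $\partial_1\cdots$ recursion. The same formula shows that the coefficient equals $[x^{L(\pi')}]\Sch_{\sigma'}$ after shifting variables. Monomials of $\Sch_{\sigma'}(x_2,\dots)$ lie in the shifted staircase, which is compatible.

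\textbf{Closure, pi to sigma.} This is the harder direction. Suppose $\pi\in T_N$, so $a_1=N-1$, and $[x^a]\Sch_\sigma\neq0$. We must show $\sigma(1)=N$. Pipe dreams for $S_N$ inside the staircase have at most $N-1$ crosses in row one. If a reduced pipe dream has all $N-1$ crosses in row one, the word read from row one is $s_{N-1}\cdots s_1$, up to the reading convention. Then $\sigma$ has the form $s_{N-1}\cdots s_1\cdot v$ with the remaining rows giving a reduced word for $v$ in the generators $s_2,\ldots$. Hence $\sigma(1)=N$. Here one must check that the reading order makes the first-row product send $1\mapsto N$. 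I expect this bookkeeping, together with the reindexing convention, to be the main obstacle, and I would verify it on $N=3$. An alternative route is degree counting: $\deg_{x_1}\Sch_\sigma=\#\{j:\sigma(1)>\sigma(j)\}=\sigma(1)-1$, since the $x_1$-degree of a Schubert polynomial is $L(\sigma)_1$. I would use this cleaner fact as the main argument.

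\textbf{Conclusion.} Both directions give closure, so $T_N$ is a union of weak support components. The factorization identity gives
\[ [x^{L(\pi)}]\Sch_\sigma=[x^{L(\pi')}]\Sch_{\sigma'}\qquad(\pi,\sigma\in T_N). \]
Since $\pi\mapsto\pi'$ is a bijection $T_N\to S_{N-1}$, the induced support graph, coefficients included, is canonically the rank-$(N-1)$ graph.
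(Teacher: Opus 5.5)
Your direction $\sigma\in T_N\Rightarrow\pi\in T_N$ is fine. For $\sigma(1)=N$ the factorization $\Sch_\sigma(x)=x_1^{N-1}\Sch_{\sigma'}(x_2,\ldots,x_N)$ does hold: every reduced pipe dream has a full first row, or alternatively $\sigma$ is reached from $w_0$ using only $\partial_i$ with $i\geq 2$, and these commute with $x_1^{N-1}$. The factorization also gives the coefficient identification on $T_N$.

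The gap is in the direction $\pi\in T_N\Rightarrow\sigma\in T_N$, where you explicitly adopt the degree count as your main argument. The fact you invoke, $\deg_{x_1}\Sch_\sigma=L(\sigma)_1=\sigma(1)-1$, is false. For example, $\Sch_{132}=x_1+x_2$ has $x_1$-degree $1$ although $L(132)_1=0$. Likewise $\Sch_{2143}=x_1^2+x_1x_2+x_1x_3$ has $x_1$-degree $2$ although $L(2143)_1=1$. Ladder moves push crosses up into row one, so the $x_1$-degree can exceed $\sigma(1)-1$. What is true is the opposite bound, $x_1^{\sigma(1)-1}\mid\Sch_\sigma$, and that gives nothing toward $a_1=N-1\Rightarrow\sigma(1)=N$.

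The pipe-dream argument you set aside is the correct one, and the convention worry goes away. Use the same convention that underlies your factorization: pipe $i$ enters row $i$ from the left and exits the top in column $\sigma(i)$. For instance, $\Sch_{312}=x_1^2$ comes from the pipe dream with full first row. If $a_1=N-1$, a contributing reduced pipe dream has crosses in all $N-1$ boxes of row one. Pipe $1$ then runs straight through them to the boundary elbow at $(1,N)$ and exits in column $N$, so $\sigma(1)=N$.

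Alternatively, argue as the paper does in the original indexing. There $\pi(1)=N$ means the row index is $w'\oplus1$, and stability gives $\Code_{w'\oplus1}=\Code_{w'}=\sum_{u'}\Pi_{N-1}(w',u')\Sch_{u'\oplus1}(c)$. Hence no edge leaves the embedded copy of $S_{N-1}$. For edges entering it, the paper uses the Pieri bound $I_{N-1}(u)\leq I_{N-1}(s)$ together with $\ell(s)=\ell(u)$ to force $e_N(s)=0$; this is the counterpart of your factorization step. With either repair, the rest of your proof goes through.
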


\begin{proof}
In the original indexing these permutations have the form $w'\oplus1$.
Stability of the code and Schubert bases gives
\[ \Code_{w'\oplus1} =\sum_{u'\in S_{N-1}}\Pi_{N-1}(w',u')\Sch_{u'\oplus1}, \]
so no support edge leaves the embedded copy of $S_{N-1}$. Conversely,
if $\Pi_N(s,w'\oplus1)\neq0$, the inequality for $I_{N-1}$, together
with equality of lengths, forces $e_N(s)=0$. Hence $s=s'\oplus1$,
so no support edge enters from outside. This expansion also
identifies the induced graph with the rank-$(N-1)$ graph.
\end{proof}

Let $\mathcal G_N$ be the \emph{Pieri support graph}, namely the underlying undirected graph of the reindexed
support. Thus distinct $\pi,\sigma$ are adjacent when
\[ [x^{L(\pi)}]\Sch_\sigma>0 \quad\text{or}\quad [x^{L(\sigma)}]\Sch_\pi>0. \]
For a staircase code $c=(c_1,\ldots,c_N)$, put $C_i=N-i$. Let
$\mathcal G_N^{\rm tr}$ be the \emph{adjacent-transfer graph} on staircase codes in which
\begin{equation}\label{eq:Pi-transfer-edge}
 c\longleftrightarrow c-\mathbf e_i+\mathbf e_{i+1}
 \quad\Longleftrightarrow\quad
 0<c_i<C_i\ \text{ and }\ c_{i+1}<C_{i+1}.
\end{equation}
Via the Lehmer-code bijection, we also regard
$\mathcal G_N^{\rm tr}$ as a graph on $S_N$.

\begin{proposition}\label{prop:Pi-transfer-components}
The graphs $\mathcal G_N$ and $\mathcal G_N^{\rm tr}$ have the same
connected components.
\end{proposition}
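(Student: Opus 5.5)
The proof shows two inclusions of edge relations modulo connectivity. First, every edge of $\mathcal G_N$ joins two codes in the same component of $\mathcal G_N^{\rm tr}$. Second, every transfer edge \eqref{eq:Pi-transfer-edge} joins two codes in the same component of $\mathcal G_N$. Throughout I use Lemma~\ref{prop:Cauchy-transpose-Pieri}: the adjacency $[x^{L(\pi)}]\Sch_\sigma>0$ means that $L(\pi)$ is the row-weight of some reduced pipe dream of $\sigma$, and all such weights are staircase codes.

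For the first inclusion I use the Bergeron--Billey theorem: every reduced pipe dream of $\sigma$ is reached from the bottom pipe dream, whose row weight is $L(\sigma)$, by a sequence of ladder moves. It therefore suffices to show that one ladder move changes the row weight by a move inside one transfer component. A ladder move takes a cross in row $r$ to row $r-k$, with crosses filling the intermediate rows of the ladder. Its weight change is $\mathbf e_{r-k}-\mathbf e_r$, which I factor as $k$ adjacent transfers $\mathbf e_{s-1}-\mathbf e_s$ for $s=r,r-1,\ldots,r-k+1$.

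To check that each intermediate vector is a staircase code satisfying the positivity and bound conditions of \eqref{eq:Pi-transfer-edge}, I read the edge backwards: moving a unit from $s$ to $s-1$ is the reverse of the transfer at $i=s-1$. This needs three facts.
\begin{itemize}
\item Row $s$ still has a unit to give. This holds because the intermediate rows of the ladder carry crosses.
\item Row $s-1$ is below its cap $C_{s-1}$ before receiving. This holds because the ladder cell in row $s-1$ is empty at that stage and lies inside the staircase.
\item Row $s$ stays below $C_s$. This is automatic.
\end{itemize}
Both endpoints of an edge of $\mathcal G_N$ are weights of pipe dreams of one $\sigma$, connected to $L(\sigma)$. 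Hence they lie in one transfer component.

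For the second inclusion, fix $c$ and $c'=c-\mathbf e_i+\mathbf e_{i+1}$ with $0<c_i<C_i$ and $c_{i+1}<C_{i+1}$. I look for a single $\sigma$ having reduced pipe dreams of both weights. Then $\pi_c$ and $\pi_{c'}$ are both adjacent to, or equal to, $\sigma$ in $\mathcal G_N$. I split into two cases.
\begin{itemize}
\item If $c_{i+1}\geq c_i-1$, I take $\sigma$ with $L(\sigma)=c'$. I realize weight $c$ by a single ladder move in the bottom pipe dream. This moves the last cross of row $i+1$ (in column $c_{i+1}+1$) up to row $i$. The condition $c_i<C_i$ provides the empty cell in row $i$. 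The inequality on $c_{i+1}$ ensures the two rows have the correct ladder configuration (the cell above being the first empty one).
\item If $c_{i+1}<c_i-1$, the bottom pipe dream of the permutation with code $c'$ does not present a ladder. Instead I take $\sigma$ with $L(\sigma)=c$, or the permutation $s_i$-twisted appropriately. I produce $c'$ from a pipe dream by a chute move downward, using $c_i>0$ and $c_{i+1}<C_{i+1}$.
\end{itemize}
If a single $\sigma$ fails in an edge case, the fallback is a short chain through an auxiliary code obtained by a further transfer. This is legitimate because component membership is transitive.

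The main obstacle is this case analysis in the second inclusion. I must produce, for every admissible transfer, an explicit reduced pipe dream whose move is a genuine ladder or chute move; in particular the pipes must not cross twice. I would first try to reduce to $N$ small by restricting to the rows $i,i+1$ and the columns they occupy, using the rank stability of Proposition~\ref{prop:Pi-rank-stability} and the insertion compatibility of the Schubert-to-monomial incidence. After that reduction I would verify the two-row configuration directly.
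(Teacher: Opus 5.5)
Your first inclusion fails at the step that factors a ladder move into adjacent transfers. For a ladder of height $k\geq2$, the intermediate rows carry crosses in both ladder cells and may be full, so the justifications in your second and third bullets are wrong there. Moreover, read upward, \eqref{eq:Pi-transfer-edge} requires the receiving row to be strictly below its cap \emph{after} receiving, and the giving row to be strictly below its cap after giving. Here is a concrete failure. For $\sigma=1432$ the bottom pipe dream $\{(2,1),(2,2),(3,1)\}$ has weight $(0,2,1,0)$, with row $2$ full. The ladder move $(3,1)\mapsto(1,2)$ produces weight $(1,2,0,0)$, and your first intermediate vector is $(0,3,0,0)$, which is not a staircase code. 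In $\mathcal G_4^{\rm tr}$ these two weights are joined only through $(1,1,1,0)$, $(2,0,1,0)$, $(2,1,0,0)$. You also cannot avoid this move by using only simple ladder moves, since no simple ladder move produces $\{(1,2),(2,1),(2,2)\}$.

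The missing idea is to use chute moves from the top pipe dream instead. A chute move always carries a cross from row $i$ to row $i+1$. Before the move, its northeast box is a cross, while its northwest and southwest boxes are empty staircase boxes of rows $i$ and $i+1$; this is exactly $0<c_i<C_i$ and $c_{i+1}<C_{i+1}$. The given pipe dream and the bottom pipe dream are both reached from the top pipe dream by chute moves (Bergeron--Billey), so $L(\pi)$ and $L(\sigma)$ lie in one transfer component.

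In the second inclusion, write $(c_i,c_{i+1})=(p,q)$. Your case $p\leq q+1$ is the right construction, though the target box $(i,q+2)$ lies in the staircase because $q<C_{i+1}=C_i-1$, not because $p<C_i$. The case $p\geq q+2$ is left open, and your suggested route does not work. For $N=4$ and $c=(2,0,0,0)$, the permutation with code $c$ is $3124$, and $\Sch_{3124}(x)=x_1^2$ does not contain $x_1x_2$. Also, a bottom pipe dream admits no chute move at all, because its rows are left-justified. Proposition~\ref{prop:Pi-rank-stability} gives no reduction to two rows either, since it only concerns the block of permutations beginning with $N$.

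What works is to take the permutation whose code has $(q,p)$ in positions $i,i+1$. This is a staircase code, since $q\leq C_i-2$ and $p\leq C_{i+1}$. Then perform the $p-q$ simple ladder moves $(i+1,j)\mapsto(i,j+1)$ for $j=p,p-1,\ldots,q+1$. Just before the move with index $j$, row $i+1$ occupies columns $1,\ldots,j$ and row $i$ occupies $1,\ldots,q$ and $j+2,\ldots,p+1$, so every move is legal. The last two row-weight pairs are $(p-1,q+1)$ and $(p,q)$. Thus $x^c$ and $x^{c'}$ occur in one Schubert polynomial, and Lemma~\ref{prop:Cauchy-transpose-Pieri} puts the two endpoints at distance at most two in $\mathcal G_N$.
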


\begin{proof}
The pipe-dream formula says that
$[x^{L(\pi)}]\Sch_\sigma(x)>0$
exactly when $\sigma$ has a reduced pipe dream of row weight $L(\pi)$
\cite{BJS}. Its bottom pipe dream has row weight $L(\sigma)$. Both the
given pipe dream and the bottom pipe dream are connected to the top pipe
dream by chute moves
\cite[Corollary~3.3 and Theorem~3.7(c),(d)]{BergeronBilley}. Before a
chute move, the northwest and southwest boxes of the chutable rectangle are
empty, while the northeast box is a cross. Hence the upper row is nonempty
and has an empty box in the staircase, and the lower row has an empty target
box. The row weight changes by $-\mathbf e_i+\mathbf e_{i+1}$ under the
conditions in \eqref{eq:Pi-transfer-edge}. Thus every weak support edge stays in
one $\mathcal G_N^{\rm tr}$-component.

Conversely, consider an edge in \eqref{eq:Pi-transfer-edge}. Write its two
adjacent pairs as
\[ (c_i,c_{i+1})=(p,q), \qquad (d_i,d_{i+1})=(p-1,q+1), \]
so $1\leq p\leq C_i-1$ and $0\leq q\leq C_i-2$. We construct a Schubert
polynomial containing both $x^c$ and $x^d$. A simple ladder move replaces
the rightmost cross in row $i+1$, column $j$, by a cross in row $i$,
column $j+1$, provided the two intervening boxes in row $i$ are empty; it
preserves the represented permutation
\cite[Lemma~3.5]{BergeronBilley}.

If $p\leq q+1$, let $\tau$ have Lehmer code $d$. In its bottom pipe dream,
rows $i,i+1$ have left-justified blocks of lengths $p-1,q+1$. The move
$(i+1,q+1)\longmapsto(i,q+2)$
is legal because $q+2\leq C_i$, and it changes the row weight from $d$ to
$c$.

If $p\geq q+2$, let $\tau$ have the code obtained from $c$ by replacing
$(p,q)$ with $(q,p)$. This is a staircase code because
$q\leq C_i-2$ and $p\leq C_{i+1}=C_i-1$. Starting with its bottom pipe
dream, make the moves
$(i+1,j)\longmapsto(i,j+1)$, for $j=p,p-1,\ldots,q+1$.
Just before the move with index $j$, the lower row contains $1,\ldots,j$,
while the upper row contains
$1,\ldots,q  \text{ and } j+2,\ldots,p+1$.
Every move is legal. After $p-q-1$ moves the row-weight pair is
$(p-1,q+1)$, and after the next move it is $(p,q)$. In both cases $x^c$
and $x^d$ occur in one Schubert polynomial. By
Lemma~\ref{prop:Cauchy-transpose-Pieri}, the endpoints of every adjacent-transfer edge
have a common support neighbor. The component partitions therefore agree.
\end{proof}

\begin{proposition}\label{prop:Pi-transfer-normal-forms}
Orient the reverse adjacent transfers by
\[ R_i(c)=c+\mathbf e_i-\mathbf e_{i+1} \quad\text{if}\quad c_{i+1}>0\ \text{ and }\ c_i\leq C_i-2. \]
This rewriting is terminating and confluent. Its irreducible codes are
characterized by
\[ c_{i+1}>0 \quad\Longrightarrow\quad c_i\in\{C_i-1,C_i\}, \]
and their number is $2^N-N$.
\end{proposition}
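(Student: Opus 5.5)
We will treat the reverse transfers as an abstract rewriting system on the finite set of staircase codes $0\leq c_i\leq C_i=N-i$. We prove termination with a potential function and confluence with Newman's lemma. We then read off the irreducible codes directly from the definition of $R_i$ and count them by a short induction.

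\textbf{Termination.} We use the potential $\Phi(c)=\sum_{i=1}^N i\,c_i$. Each step $R_i$ moves one unit from coordinate $i+1$ to coordinate $i$, so $\Phi$ drops by exactly one. The conditions $c_{i+1}>0$ and $c_i\leq C_i-2$ guarantee that $R_i(c)$ is again a staircase code, since its new entries are $c_i+1\leq C_i-1$ and $c_{i+1}-1\geq0$. Since $\Phi\geq0$ on this set, every rewriting sequence is finite.

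\textbf{Confluence.} By Newman's lemma it suffices to prove local confluence, so suppose $R_i$ and $R_j$ both apply to $c$ with $i<j$.
\begin{itemize}
\item If $j\geq i+2$, the two moves involve disjoint coordinates. Each remains applicable after the other, and they commute.
\item If $j=i+1$, first apply $R_i$. It lowers $c_{i+1}$ and leaves $c_{i+2}$ unchanged, so $R_{i+1}$ still applies: we need $c_{i+2}>0$, which is unchanged, and $c_{i+1}-1\leq C_{i+1}-2$, which holds since $c_{i+1}\leq C_{i+1}-2$. Now apply $R_{i+1}$ first instead. It raises $c_{i+1}$, which keeps it positive, and leaves $c_i$ unchanged, so $R_i$ still applies. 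In both orders the result is $c+\mathbf e_i-\mathbf e_{i+2}$.
\end{itemize}
Every critical pair therefore closes in one step on each side, and the system is confluent.

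We also check that each step $c\mapsto R_i(c)$ is an edge of $\mathcal G_N^{\rm tr}$ in \eqref{eq:Pi-transfer-edge}, read from $d=R_i(c)$. Indeed $0<d_i=c_i+1<C_i$ and $d_{i+1}=c_{i+1}-1<C_{i+1}$. Together with Proposition~\ref{prop:Pi-transfer-components}, this shows that each component contains exactly one irreducible code, which is its canonical normal form.

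\textbf{Irreducible codes.} A code is irreducible exactly when no $R_i$ applies. This means that $c_{i+1}>0$ forces $c_i\geq C_i-1$, which is the stated condition $c_i\in\{C_i-1,C_i\}$.

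\textbf{Counting.} This is the only step needing care, and we proceed as follows.
\begin{itemize}
\item Note that $c_N=0$ always, since $C_N=0$.
\item For $i\leq N-2$ we have $C_i-1\geq1$. So if $c_{i+1}>0$, then $c_i>0$, and positivity propagates to the left.
\item Hence the support of an irreducible code is an initial segment $[1,k]$ with $0\leq k\leq N-1$.
\item For $k\geq1$, each $c_i$ with $i<k$ has exactly two admissible values, $C_i-1$ and $C_i$, both positive because $i\leq N-2$. The last entry $c_k$ may be any value in $[1,C_k]$, giving $N-k$ choices.
\end{itemize}
The number of irreducible codes is therefore
\[
 1+\sum_{k=1}^{N-1}2^{k-1}(N-k).
\]
Write $S_N$ for this sum. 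We prove $S_N=2^N-N$ by induction, using
\[
 S_{N+1}-2S_N=(N+1)-2+1-N+\ldots
\]
More precisely, set $T_N=\sum_{k=1}^{N-1}2^{k-1}(N-k)$. Then
\[
 T_{N+1}=\sum_{k=1}^{N}2^{k-1}(N-k)+\sum_{k=1}^{N}2^{k-1}=T_N+(2^N-1),
\]
since the $k=N$ term of the first sum is zero. Starting from $T_1=0$, this gives $T_N=2^N-N-1$ by induction. Hence $S_N=1+T_N=2^N-N$.

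The sanity checks agree: $N=2$ gives $2$, and $N=3$ gives $5$. The main point to get right is the leftward propagation of positivity together with the boundary behaviour at $i=N-1$ and $i=N$. Everything else is mechanical.
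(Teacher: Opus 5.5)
Your proof is correct and follows the paper's argument essentially step for step: the potential $\sum_j jc_j$ for termination, local confluence of overlapping $R_i,R_{i+1}$ through the common composite $c+\mathbf e_i-\mathbf e_{i+2}$, and the count by the last positive coordinate, giving $1+\sum_{r=1}^{N-1}(N-r)2^{r-1}=2^N-N$. Two small cleanups are needed. First, delete the garbled false-start line for $S_{N+1}-2S_N$. Second, your extra remark \emph{exactly one irreducible code per component} needs the converse inclusion, namely that every edge of \eqref{eq:Pi-transfer-edge} is a rewrite step. This holds because $d=c-\mathbf e_i+\mathbf e_{i+1}$ satisfies $d_{i+1}>0$ and $d_i=c_i-1\le C_i-2$, so $R_i(d)=c$. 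It does not follow from Proposition~\ref{prop:Pi-transfer-components}.
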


\begin{proof}
The statistic $\sum_jjc_j$ drops by one, so the rewriting terminates. Rules
with disjoint supports commute. If $R_i$ and $R_{i+1}$ are both legal, then
\[ c_i\leq C_i-2, \qquad 0<c_{i+1}\leq C_{i+1}-2, \qquad c_{i+2}>0. \]
Either order remains legal and both composites are
$c+\mathbf e_i-\mathbf e_{i+2}$; the same argument applies to
$R_{i-1}$ and $R_i$. Hence the rewriting is locally confluent and therefore
confluent.

The implication is exactly the absence of an outgoing rewrite.
Apart from the zero code, an irreducible code is obtained by choosing its
last positive coordinate $r$, choosing
$c_r\in\{1,\ldots,C_r\}$, choosing each earlier $c_i$ independently from
$\{C_i-1,C_i\}$, and setting the later coordinates to zero. Hence the
number of irreducibles is
\[ 1+\sum_{r=1}^{N-1}(N-r)2^{r-1}=2^N-N. \]
Each adjacent-transfer component has one irreducible code.
\end{proof}

Put
\[ \mathcal A_N^+=\Av_N(132,2341,3124), \qquad \mathcal A_N^-=\Av_N(213,2341,1423). \]

\begin{proposition}\label{prop:Pi-pattern-transversals}
Every adjacent-transfer component contains exactly one element of $\mathcal A_N^+$ and
exactly one element of $\mathcal A_N^-$. In the directed reindexed Pieri
support, these distinguished elements are respectively a source and a sink.
Consequently
\[ |\mathcal A_N^+|=|\mathcal A_N^-|=2^N-N. \]
\end{proposition}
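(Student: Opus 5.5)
We work throughout with the adjacent-transfer graph $\mathcal G_N^{\rm tr}$ on staircase codes, and then translate to permutations through the Lehmer-code bijection $\pi\mapsto L(\pi)$. By Proposition~\ref{prop:Pi-transfer-normal-forms}, every component contains exactly one code irreducible under the reverse transfers $R_i$. By the mirror argument, it also contains exactly one code irreducible under the forward transfers $T_i(c)=c-\mathbf e_i+\mathbf e_{i+1}$, applicable when $0<c_i<C_i$ and $c_{i+1}<C_{i+1}$. That rewriting terminates because $\sum_j jc_j$ increases. Local confluence follows by the same overlap check for $T_i,T_{i+1}$, where both composites give $c-\mathbf e_i+\mathbf e_{i+2}$. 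Its irreducibles are the codes with $0<c_i<C_i\Rightarrow c_{i+1}=C_{i+1}$, and they are again counted by $2^N-N$.

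The plan is to identify the two normal-form classes with the pattern classes. We want to show that the $R$-irreducible codes are exactly $L(\pi)$ for $\pi\in\mathcal A_N^+$, and the $T$-irreducible codes are exactly $L(\pi)$ for $\pi\in\mathcal A_N^-$. Uniqueness in each component, and the count $2^N-N$, then follow immediately.

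For $\mathcal A_N^+$ the condition reads $c_{i+1}>0\Rightarrow c_i\ge C_i-1$. Since $C_i=N-i$ is the number of later positions, $c_i=C_i$ means $\pi_i$ exceeds everything to its right, and $c_i=C_i-1$ means exactly one later entry exceeds $\pi_i$. Suppose the condition holds and $\pi$ contains $132$ at positions $a<b<e$. Choose the pattern with $b$ and $e$ adjacent where possible, so that $c_{j+1}>0$ at some $j$ with $c_j\le C_j-2$; this gives a contradiction. The patterns $2341$ and $3124$ are handled the same way, since each forces some position with a positive code entry immediately after a position having two later larger entries. Conversely, a violation $c_{i+1}>0$, $c_i\le C_i-2$ gives two entries after $i$ larger than $\pi_i$ and an entry after $i+1$ smaller than $\pi_{i+1}$. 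A short case analysis on the relative order of these five values then produces one of $132$, $2341$, or $3124$. The $\mathcal A_N^-$ side is the analogous translation of the $T$-irreducibility condition, with $213$, $2341$, and $1423$ arising from the cases of $0<c_i<C_i$ and $c_{i+1}<C_{i+1}$.

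For the source/sink assertion, the directed reindexed Pieri support has its sources in $\Av_N(132)$ and its sinks in $\Av_N(213,2341)$ by Corollary~\ref{cor:Pi-sources-sinks}. Since $\mathcal A_N^+\subseteq\Av_N(132)$ and $\mathcal A_N^-\subseteq\Av_N(213,2341)$, the distinguished elements are respectively a source and a sink. By Proposition~\ref{prop:Pi-transfer-components}, transfer components coincide with Pieri-support components, so the statement holds in both graphs.

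The main obstacle is the pattern case analysis in the converse directions: showing that every code violating the normal-form condition forces one of the three patterns, and in particular that $3124$ (respectively $1423$) is exactly what is needed to capture the residual case. If the direct five-value analysis becomes unwieldy, the fallback is to count. We would check the forward inclusion of $\mathcal A_N^\pm$ into the irreducibles, verify the cardinality $2^N-N$ of the pattern classes by a first-letter decomposition recursion, and conclude equality.
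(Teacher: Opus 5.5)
\textbf{Gap: the identification of the pattern classes with the normal forms is false.} Your identification of $\mathcal A_N^+$ with the $R$-irreducible codes fails in both directions, and the mirror rewriting fails too. Take $N=4$, so $C=(3,2,1,0)$.

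\emph{The $\mathcal A_N^+$ side.} The permutation $2314$ lies in $\mathcal A_4^+$: its triples have patterns $231,123,213,213$, and it is neither $2341$ nor $3124$. But $L(2314)=(1,1,0,0)$ is not $R$-irreducible, since $R_1$ sends it to $(2,0,0,0)$. Conversely, $(2,0,0,0)=L(3124)$ is $R$-irreducible although $3124\notin\mathcal A_4^+$. So the normal-form condition does not exclude $3124$, and a violation of it need not produce any of the three patterns. Your five-value case analysis therefore cannot succeed. The counting fallback also fails, because its forward inclusion of $\mathcal A_N^+$ into the irreducible codes is false.

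\emph{The mirror rewriting is not confluent.} The edge condition \eqref{eq:Pi-transfer-edge} is asymmetric. After $T_i$, the coordinate $c_{i+1}$ can reach $C_{i+1}$ and block $T_{i+1}$, so your overlap check breaks. Concretely, $T_1(1,1,0,0)=(0,2,0,0)$ and $T_1T_2(1,1,0,0)=(0,1,1,0)$ are two distinct $T$-irreducible codes in one component. There are $14\neq 12$ $T$-irreducible codes for $N=4$. Moreover, $(0,2,0,0)=L(1423)$ is $T$-irreducible although $1423\notin\mathcal A_4^-$.

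\textbf{What the paper proves instead.} The normal-form map, not the identity, restricts to a bijection from $\mathcal A_N^+$ onto the irreducible codes. Write a $132$-avoider as $\pi=(\alpha+b)\,N\,\beta$ and put $k=N-b$.
\begin{enumerate}[(i)]
\item If $\beta\neq\varnothing$, avoiding $2341$ forces $\alpha$ to be decreasing, and $\beta\in\mathcal A_b^+$. Then $L(\pi)=(C_1-1,\ldots,C_{k-1}-1,C_k,L(\beta))$. No reverse transfer touches the first $k$ coordinates. Induction on the tail matches these $\pi$ with the irreducibles having a positive full coordinate.
\item If $\beta=\varnothing$, then $\pi=\alpha N$ with $\alpha\in\Av_{N-1}(132,312,2341)$. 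These permutations have exactly one element in each degree $0,\ldots,\binom{N-1}{2}$. Their codes have no full coordinate, and reverse transfers never create one. The irreducibles without a full coordinate are also one per degree. Since rewriting preserves degree, this forces the bijection; for example, $2314\mapsto(2,0,0,0)$.
\end{enumerate}

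\textbf{The $\mathcal A_N^-$ side.} You do not need a second rewriting system. The involution $\phi(w)=w_0w^{-1}w_0$ exchanges $\mathcal A_N^+$ and $\mathcal A_N^-$ and satisfies $L(\phi(w))_{N+1-w_j}=L(w)_j$, so $L(w)$ and $L(\phi(w))$ have the same multiset of entries. Every staircase code is transfer-connected to its weakly decreasing rearrangement. Hence $\phi$ carries each component's $\mathcal A_N^+$-element to an $\mathcal A_N^-$-element of the same component.

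Your source/sink paragraph is correct as written.
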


\begin{proof}
Every $132$-avoiding permutation has a unique decomposition $\pi=(\alpha+b)\,N\,\beta$,
where $\beta\in S_b$, $\alpha\in S_{N-b-1}$, and every entry of the left
block is larger than every entry of the right block. If
$\beta\ne\varnothing$, avoidance of $2341$ forces $\alpha$ to be
decreasing, and the remaining condition is $\beta\in\mathcal A_b^+$.
Conversely, a decreasing $\alpha$ and a tail in $\mathcal A_b^+$ create no
cross-block $2341$- or $3124$-occurrence.

If $\beta=\varnothing$, then $\pi=\alpha N$ and
$\alpha\in\Av_{N-1}(132,312,2341)$.
This class consists of the identity and
$(t-1,t-2,\ldots,a+1,t,a,a-1,\ldots,1,t+1,\ldots,N-1)$ with $1\leq a<t\leq N-1$.
Indeed, if the largest letter is not last, $132$-avoidance separates the
values on its two sides, $312$-avoidance makes the right block decreasing,
and $2341$-avoidance makes the left block decreasing.

Apply the normal-form map of
Proposition~\ref{prop:Pi-transfer-normal-forms}. If $\beta\ne\varnothing$ and
$k=N-b$, then
\[ L(\pi)=(C_1-1,\ldots,C_{k-1}-1,C_k,L(\beta)). \]
No rewrite crosses the block boundary. Induction identifies these
permutations bijectively with the irreducibles having a positive full
coordinate: $k$ is the first index with $c_k=C_k$, all earlier coordinates
are $C_i-1$, and the tail is an arbitrary rank-$b$ irreducible.

If $\beta=\varnothing$, the element indexed by $1\leq a<t\leq N-1$
has length $\binom{t-1}{2}+a$.
Together with the identity, these give exactly one permutation in each
degree $0,1,\ldots,\binom{N-1}{2}$. Their codes have no positive full
coordinate. The irreducible codes with this property are the zero code and
\[ (C_1-1,\ldots,C_{r-1}-1,u,0,\ldots,0), \qquad 1\leq r\leq N-2, \quad 1\leq u\leq C_r-1. \]
For fixed $r$, their degrees run from
$\sum_{i<r}(C_i-1)+1$ to $\sum_{i\leq r}(C_i-1)$.
These intervals are consecutive and end at $\binom{N-1}{2}$, so there is
again exactly one object in each degree. Since rewriting preserves degree,
the normal-form map is a bijection from $\mathcal A_N^+$ to the
irreducible codes. Hence every component has one
$\mathcal A_N^+$-element.

Finally put $\phi(w)=w_0w^{-1}w_0$. Pattern symmetry exchanges
$\mathcal A_N^+$ and $\mathcal A_N^-$, and
\[ L(\phi(w))_{N+1-w_j}=L(w)_j. \]
Thus the two codes have the same multiset of entries. Every staircase code
is connected by adjacent transfers to its weakly decreasing rearrangement: if adjacent
coordinates satisfy $a<b$, then $(b,a),(b-1,a+1),\ldots,(a,b)$
is a path of adjacent transfers. Hence $w$ and $\phi(w)$ lie in the same component, and
$\phi$ carries its unique $\mathcal A_N^+$-element to its unique
$\mathcal A_N^-$-element. The source and sink assertions follow from
Corollary~\ref{cor:Pi-sources-sinks}. The cardinalities follow from
Proposition~\ref{prop:Pi-transfer-normal-forms}.
\end{proof}

\begin{theorem}\label{thm:Pieri-components}
The reindexed Pieri support has $2^N-N$ weak components. Every component
contains exactly one element of $\mathcal A_N^+$ and exactly one element of
$\mathcal A_N^-$; these are respectively a distinguished source and a
distinguished sink.
\end{theorem}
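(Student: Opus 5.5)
The theorem assembles the three preceding propositions. I would work throughout in the $w_0$-reindexed support, where vertices are staircase codes (equivalently permutations via the Lehmer code). Weak components of the reindexed Pieri support are the connected components of $\mathcal G_N$, since the weak support graph ignores orientation and diagonal loops.

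The first step is to invoke Proposition~\ref{prop:Pi-transfer-components}. It identifies the connected components of $\mathcal G_N$ with those of the adjacent-transfer graph $\mathcal G_N^{\rm tr}$. The count then reduces to counting components of $\mathcal G_N^{\rm tr}$.

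The second step counts these components with Proposition~\ref{prop:Pi-transfer-normal-forms}. The reverse transfers $R_i$ are edges of $\mathcal G_N^{\rm tr}$ read backwards: $R_i(c)=c+\mathbf e_i-\mathbf e_{i+1}$ with $c_{i+1}>0$ and $c_i\le C_i-2$. Such a $c$ is the image $c'-\mathbf e_i+\mathbf e_{i+1}$ of $c'=R_i(c)$, and $c'$ satisfies $0<c'_i<C_i$ and $c'_{i+1}<C_{i+1}$. Hence every rewrite stays inside one component. Conversely, each transfer edge is a single reverse rewrite step in one direction.

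Since the rewriting is terminating and confluent, every code has a unique normal form. Two codes joined by an edge share their normal form, because one rewrites to the other. Distinct irreducible codes are never joined, since a path between them would force equal normal forms by confluence (Church--Rosser). The components are therefore in bijection with the irreducible codes, and there are $2^N-N$ of them.

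The third step is the transversal statement. Proposition~\ref{prop:Pi-pattern-transversals} gives directly that each component contains exactly one element of $\mathcal A_N^+$ and exactly one element of $\mathcal A_N^-$. It also shows that these are a source and a sink of the directed reindexed support; this rests on Corollary~\ref{cor:Pi-sources-sinks} and the inclusions $\mathcal A_N^+\subseteq\Av_N(132)$ and $\mathcal A_N^-\subseteq\Av_N(213,2341)$.

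The only point needing care is the identification of weak components of the directed support with components of $\mathcal G_N$. This is immediate from the definition of $\mathcal G_N$ as the underlying undirected graph, so no step should be a real obstacle. The substantive work already sits in the cited propositions, in particular the pipe-dream chute and ladder arguments.
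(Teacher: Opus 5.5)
Your proposal is correct and follows the paper's proof, which combines Propositions~\ref{prop:Pi-transfer-components}, \ref{prop:Pi-transfer-normal-forms}, and \ref{prop:Pi-pattern-transversals} in the same order. Beyond the paper, you check explicitly that legal reverse rewrites are exactly the transfer edges, and you give the Church--Rosser argument that components biject with irreducible codes; the paper only asserts this at the end of Proposition~\ref{prop:Pi-transfer-normal-forms}.
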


\begin{proof}
Combine Propositions~\ref{prop:Pi-transfer-components},
\ref{prop:Pi-transfer-normal-forms}, and
\ref{prop:Pi-pattern-transversals}.
\end{proof}

Theorem~\ref{thm:Pieri-components} determines the connected components of the weak support.
A support component is a minimal subset on which the code basis and the
universal Schubert basis interact. After classical specialization, it is
also a connected family of code modules and KP modules linked by
Watanabe's Pieri filtrations.

\section{Comparison of the three geometric bases and Schubert positivity}
\label{sec:positivity}

\subsection{Separation of the three geometric bases in rank ten}
\label{sec:separation}

\begin{proposition}\label{prop:S10-deformation}
Let
\[
\begin{split}
 X={}&[1,2]+[1,3]+[3,4]+[3,5]+[2,6]+[2,7]+[4,8]+[4,9],\\
 Y={}&[1,3]+[4,4]+[2,5]+[3,6]+[1,7]+[4,8]+[2,9].
\end{split}
\]
Then $X=\mathfrak m_w$ and $Y=\mathfrak m_v$ for
\[ w=(9,10,6,1,8,7,3,2,5,4), \qquad v=(7,8,10,3,9,4,6,1,5,2). \]
The comparison coefficients at this pair are
\begin{equation}\label{eq:S10-basis-coefficients}
 [\Can_v]\Gen_w=1,\qquad
 [\MV_v]\Can_w=1,\qquad
 [\MV_v]\Gen_w=2.
\end{equation}
Their universal Schubert coefficients are
\begin{equation}\label{eq:S10-Schubert-coefficients}
 A_{10}^{\rm MV}(w,v)=2,
 \qquad
 A_{10}^{\rm can}(w,v)=3,
 \qquad
 A_{10}^{\rm gen}(w,v)=4,
\end{equation}
and
\[ \Pi_{10}(w,v)=26. \]
\end{proposition}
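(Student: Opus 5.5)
The statement is a finite computation, so the plan is to organize it so that every number comes from a transition already identified in Part~I. Each number should be computed inside the single root-degree fiber $I_d$ with $d=d(w)$, and the comparison coefficients should then be tied together through the factorization $\Pi_N=D_N^\bullet A_N^\bullet$.

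\emph{Step 1: identify the multisegments.} Read off the inversion-end codes of $w$ and $v$ and form the rank words $r_j=j-e_j$. For $w=(9,10,6,1,8,7,3,2,5,4)$ one finds $e=(0,0,2,3,2,3,5,6,5,6)$. Formula \eqref{eq:schubert-multisegment} then gives exactly the segments $[1,2],[1,3],[3,4],[3,5],[2,6],[2,7],[4,8],[4,9]$ of $X$. The same routine check for $v$ gives $Y$. We then verify that $d(v)=d(w)$ by comparing the multisets of rank-word letters (Lemma~\ref{lem:d-content}), and that $v\preceq_{\rm Z}w$ in the Bruhat interval $I_d$ of Proposition~\ref{prop:bruhat-interval}. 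We also enumerate the subinterval $[v,w]\subseteq I_d$, since by unitriangularity only its elements can contribute to any coefficient at the pair $(w,v)$.

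\emph{Step 2: the three $D$-transitions on $[v,w]$.} We compute the three transitions separately.
\begin{enumerate}[(a)]
\item For the canonical transition we use Theorem~\ref{thm:canonical-IC-root-degree-fiber}: $D_N^{\rm can}(\eta,\zeta)=P^{J_d}_{y_\eta,y_\zeta}(1)$. This is a parabolic Kazhdan--Lusztig computation in $S_{10}/W_{J_d}$.
\item For the generic transition we compute $D_N^{\rm gen}(\eta,\zeta)=f_{\mathfrak m_\zeta}(X_{\mathfrak m_\eta})$ from \eqref{eq:Dgen}. These are Euler characteristics of the relevant flag-of-submodule varieties at a general point of $Z_{\mathfrak m_\eta}$.
\item For the MV transition we compute the fusion multiplicities $m_N^{\rm fus}$ of \eqref{eq:DMV}. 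Equivalently, we compare $\MV$ with the semicanonical basis through the Baumann--Kamnitzer--Knutson measures, using the polytope-unitriangularity already invoked in Proposition~\ref{prop:classical-right-regular-transitions}.
\end{enumerate}
Inverting these unitriangular blocks gives \eqref{eq:S10-basis-coefficients}. The expected structure is that $[v,w]$ is the only place where the three bases disagree to first order, namely $\Gen_w=\Can_w+\Can_v+\cdots$ and $\Can_w=\MV_w+\MV_v+\cdots$, where the omitted terms have no $\Sch_v$ component.

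\emph{Step 3: Schubert coefficients.} We compute $\Pi_{10}(z,v)$ for $z\in[v,w]$ using Lemma~\ref{prop:Cauchy-transpose-Pieri}. This is the coefficient of $x^{L(zw_0)}$ in $\Sch_{vw_0}(x)$, obtained by counting reduced pipe dreams, and yields $\Pi_{10}(w,v)=26$. We then set $A_{10}^\bullet=(D_{10}^\bullet)^{-1}\Pi_{10}$. Theorem~\ref{thm:transition} supplies the support constraints that prune the sum, and Lemma~\ref{lem:dominance-support} gives $A^\bullet(v,v)=1$. As a consistency check, the differences $A^{\rm gen}-A^{\rm can}=1$ and $A^{\rm can}-A^{\rm MV}=1$ must equal $[\Can_v]\Gen_w\cdot A^{\rm can}(v,v)$ and $[\MV_v]\Can_w\cdot A^{\rm MV}(v,v)$. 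Their sum must match $[\MV_v]\Gen_w=2$. The computation is then recorded in Appendix~C.

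\emph{Main obstacle.} The MV coefficient in Step~2(c) is the hardest part, since neither the Kazhdan--Lusztig nor the constructible-function machinery computes it. I would first try to identify the difference $\Can_w-\MV_w$ through the BKK equivariant-multiplicity test on the single stratum $Z_{\mathfrak m_v}$. If that is not decisive, I would fall back on an explicit fusion computation of the flag-minor product $\Code_w$ restricted to $[v,w]$.
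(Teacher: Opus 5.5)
You get the multisegments right in Step~1. Computing $\Pi_{10}(w,v)$ by pipe dreams and the canonical transition by parabolic Kazhdan--Lusztig polynomials is also sound. The gap is that the proposal gives no working method for the basis-comparison coefficients \eqref{eq:S10-basis-coefficients}, which are the heart of the statement.

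Step~2(b) needs values of primal semicanonical functions, or dually filtration Euler characteristics of a \emph{general} point of components $Z_{\mathfrak m_\eta}$ in dimension vector $(2,4,5,6,5,4,3,2,1)$. You give no way to produce such points. For Step~2(c) you concede there is no tool, and the Baumann--Kamnitzer--Knutson map $D$ is not injective, so an equivariant-multiplicity test cannot by itself fix a coefficient.

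The missing idea is that no rank-ten geometry is needed. After embedding $A_5\subset A_9$, the pair $(X,Y)$ is obtained from Leclerc's pair
\[ X_0=2([1,2]+[2,4]+[3,3]+[4,5]),\qquad Y_0=[1,2]+[1,4]+[2,3]+[2,5]+[3,4]+[4,5] \]
by applying the crystal word $(6,5,6,7,8,7,6,5,4,3,4,9,8,7,6,5)$ to both simultaneously. The relevant string statistics vanish before every move, so Baumann's coefficient-invariance theorem carries all three comparison coefficients over unchanged. In type $A_5$ they are known:
\begin{itemize}
\item Geiss--Leclerc--Schr\"oer give $[\Can_{Y_0}]\Gen_{X_0}=1$.
\item Baumann--Kamnitzer--Knutson give $D(\Gen_{X_0})=D(\MV_{X_0})+2D(\MV_{Y_0})$.
\item An enumeration of the $1138$ Kostant partitions shows that only $X_0,Y_0$ have MV polytopes inside $\op{Pol}(X_0)$.
\end{itemize}
The last fact is what lets one strip off the noninjective $D$ and conclude $\Gen_{X_0}=\MV_{X_0}+2\MV_{Y_0}$ and $\Can_{X_0}=\MV_{X_0}+\MV_{Y_0}$.

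Step~3 also contains an actual error. Zelevinsky unitriangularity confines the \emph{basis-comparison} coefficients to $[v,w]$, but not the Schubert coefficients, because $\Pi_N$ is not supported on $\preceq_{\rm Z}$. In $A_{10}^\bullet(w,v)=\sum_{z\preceq_{\rm Z}w}(D_{10}^\bullet)^{-1}(w,z)\Pi_{10}(z,v)$, the index $z$ runs over the whole lower ideal of $w$.

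Concretely, the triangular inversion $A^{\rm can}_{10}(w,v)=\Pi_{10}(w,v)-\sum_{z\neq w}D^{\rm can}_{10}(w,z)A^{\rm can}_{10}(z,v)=26-23$ receives a contribution of $1$ from each of $z=(3,10,9,6,8,7,2,1,5,4)$ and $z=(7,10,6,1,9,8,5,2,4,3)$. Their rank words are $(1,2,2,2,3,3,1,1,4,4)$ and $(1,2,1,1,4,4,2,2,3,3)$. Compare these with $r(v)=(1,2,3,1,4,2,3,1,4,2)$: the first has fewer letters $\leq1$ among the first four positions, and the second has fewer letters $\leq3$ among the first six. So neither satisfies $v\preceq_{\rm Z}z$, and a computation restricted to $[v,w]$ misses both. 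The correct search space is the set of sources allowed by the support conditions of Theorem~\ref{thm:transition}, which has eighteen elements here.

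Finally, your ``expected structure'' that the omitted terms carry no $\Sch_v$ component is exactly the isolation step that must be proved. The paper proves it by checking that, among those eighteen permutations, only $w$ and $v$ have MV polytopes comparable with $\op{Pol}(w)$. It then computes only $A_{10}^{\rm can}(w,v)=3$ and deduces the other two values from $A^{\rm can}=A^{\rm MV}+1$ and $A^{\rm gen}=A^{\rm MV}+2$.
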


\begin{proof}
Leclerc's type-$A_5$ example~\cite{Leclerc} is indexed by
\[ X_0=2([1,2]+[2,4]+[3,3]+[4,5]) \]
and
\[ Y_0=[1,2]+[1,4]+[2,3]+[2,5]+[3,4]+[4,5]. \]
Geiss--Leclerc--Schr\"oer prove
\[ [\Can_{Y_0}]\Gen_{X_0}=1. \]
Baumann--Kamnitzer--Knutson prove, after applying their map $D$, that
\[ D(\Gen_{X_0})=D(\MV_{X_0})+2D(\MV_{Y_0}). \]
There are $1138$ Kostant partitions of the common weight. The exact
MV-polytope calculation in Appendix~\ref{app:S10-computation} shows that
only $X_0$ and $Y_0$ have polytopes contained in
$\op{Pol}(X_0)$. The coefficient of $\MV_{X_0}$ is one by
unitriangularity, while $D(\MV_{Y_0})\neq0$. Since no third lower
polytope is available, the equality after applying the noninjective map
$D$ forces the coefficient of $\MV_{Y_0}$ to be two. Thus
\[ \Gen_{X_0}=\MV_{X_0}+2\MV_{Y_0}, \qquad \Can_{X_0}=\MV_{X_0}+\MV_{Y_0}. \]

Embed the full subdiagram $A_5$ in $A_9$ and apply simultaneously to
$(X_0,Y_0)$ the crystal word
\[ (6,5,6,7,8,7,6,5,4,3,4,9,8,7,6,5). \]
Immediately before every move, the two relevant string statistics are
zero. Baumann's coefficient-invariance theorem preserves all three
coefficients, and the endpoint is $(X,Y)$. This proves
\eqref{eq:S10-basis-coefficients}. The inversion-end codes recover the
stated permutations.

Nil-Hecke extraction gives $\Pi_{10}(w,v)=26$. The universal Schubert
support conditions leave eighteen possible source permutations for the target
$v$. On this set, parabolic Kazhdan--Lusztig inversion gives
\[ P^{\rm par}_{w,v}(q)=1+4q+6q^2+4q^3, \qquad A_{10}^{\rm can}(w,v)=3. \]
Among these eighteen permutations, only $w$ and $v$ have MV polytopes
comparable with $\op{Pol}(w)$. Hence no other term in either
basis transition can contribute to the coefficient of
$\Sch_v(c)$. Equations~\eqref{eq:S10-basis-coefficients} therefore give
\[ A_{10}^{\rm can}(w,v)=A_{10}^{\rm MV}(w,v)+1, \qquad A_{10}^{\rm gen}(w,v)=A_{10}^{\rm MV}(w,v)+2, \]
and \eqref{eq:S10-Schubert-coefficients} follows. The finite
calculations are recorded in Appendix~\ref{app:S10-computation}.
\end{proof}

For this right-regular permutation, the canonical, semicanonical, and MV basis
elements are different. Hence their Schubert transition matrices are not
identical.

\subsection{Schubert positivity}

\begin{conjecture}\label{conj:Schubert-positivity}
For $\bullet\in\{\mathrm{gen},\mathrm{can},\mathrm{MV}\}$ and
$w,u\in S_N$, we have $A_N^\bullet(w,u)\geq0$.
\end{conjecture}

This conjecture consists of three distinct assertions:
$A_N^{\rm gen}\geq0,\ A_N^{\rm can}\geq0$, and $A_N^{\rm MV}\geq0.$
The coefficient and support theorem applies to all three bases,
but the three bases have different geometric constructions. The canonical transition is governed by intersection cohomology, the MV basis by
geometric Satake and fusion, and the generic basis by generic
representations and quiver Grassmannians.

\begin{corollary}\label{cor:strict-minor-factorable}
Suppose that
\[ \Gen_w=\prod_{a=1}^{t}\Sch_{\pi_a}(c), \qquad \pi_a\in\Av_N(321), \qquad \ell(w)=\sum_{a=1}^{t}\ell(\pi_a), \]
and that the strict minors on the right belong to one cluster. Define the
generalized Schubert structure constants by
\[ \prod_{a=1}^{t}\Sch_{\pi_a}(x) =\sum_{u\in S_N}c^{\,u}_{\pi_1,\ldots,\pi_t}\Sch_u(x). \]
Then
\[ \Gen_w=\Can_w =\prod_{a=1}^{t}\Sch_{\pi_a}(c) \]
and
\begin{equation}\label{eq:strict-minor-factorable-transition}
 A_N^{\rm gen}(w,u)=A_N^{\rm can}(w,u)
 =c^{\,u}_{\pi_1,\ldots,\pi_t}
 =\left[
  \bigotimes_{a=1}^{t}\cS_{\pi_a}^{\rm KP}:
  \cS_u^{\rm KP}
  \right]\geq0.
\end{equation}
If the product is the Mirkovi\'c--Vilonen basis element pointed at the same
$\g$-vector, then the same formula holds for $A_N^{\rm MV}(w,u)$.
\end{corollary}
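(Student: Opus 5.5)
The plan is to show first that the product is an extended cluster monomial. By Proposition~\ref{prop:strict-minor-cluster}, each $\Sch_{\pi_a}(c)$ is an extended cluster monomial; this uses Theorem~\ref{thm:single-strict-minor} to identify it with the strict minor $\Delta_{I(v_a),J(v_a)}(Z(c))$, where $v_a=\pi_a^{-1}$. Its nonunit factors are cluster variables or frozen flag minors. By hypothesis all these strict minors lie in one cluster, so the product $\prod_a\Sch_{\pi_a}(c)$ is an extended cluster monomial of that cluster. The hypothesis says it equals $\Gen_w$. So $\Gen_w$ is an extended cluster monomial, and Corollary~\ref{cor:generic-cluster-monomial-canonical} gives $\Gen_w=\Can_w$. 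This proves the first display. The degree condition $\ell(w)=\sum_a\ell(\pi_a)$ is only a consistency check: both sides are homogeneous of the same cohomological degree, as nil-Hecke extraction requires.

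Next we compute the transition coefficients. By \eqref{eq:A-bullet-definition}, $A_N^{\rm gen}(w,u)$ is the coefficient of $\Sch_u(c)$ in $\Gen_w$. Since $\Gen_w\in\cM_N$ and Fulton's specialization \eqref{eq:Fulton-classical-specialization} is injective on $\cM_N$, we may compute after specializing. The specialization is multiplicative on $\CC[U_N]$, because it is evaluation at $E(x)$, so $P_w^{\rm gen}(x)=\prod_a\Sch_{\pi_a}(x)$. Here we use that the classical specialization of $\Sch_{\pi_a}(c)$ is $\Sch_{\pi_a}(x)$. Expanding the right side in classical Schubert polynomials gives the constants $c^{\,u}_{\pi_1,\ldots,\pi_t}$. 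Injectivity on $\cM_N$, together with the fact that both $\{\Sch_u(c)\}$ and $\{\Sch_u(x)\}$ are bases matched by the specialization, identifies $A_N^{\rm gen}(w,u)=c^{\,u}_{\pi_1,\ldots,\pi_t}$. Equivalently, one can apply $\cD_u$ from the nil-Hecke extraction. Because $\Gen_w=\Can_w$, the canonical row agrees with the generic row.

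Then we prove nonnegativity and the KP-module interpretation. Nonnegativity of generalized Littlewood--Richardson coefficients for Schubert polynomials follows from geometry: products of Schubert classes in $H^*(\mathrm{Fl}_N)$ have nonnegative coefficients, and for polynomial products with total degree matching length one reduces to this by stability, embedding in $S_M$ for large $M$. The module-theoretic equality with the multiplicity $[\bigotimes_a\cS^{\rm KP}_{\pi_a}:\cS^{\rm KP}_u]$ uses that $321$-avoiding permutations are vexillary. Their Kraśkiewicz--Pragacz modules have characters $\Sch_{\pi_a}$. Watanabe's theorem then states that a tensor product of KP modules of this type has a KP filtration, whose multiplicities are read off from characters as the coefficients $c^{\,u}_{\pi_1,\ldots,\pi_t}$. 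I expect this step to be the main obstacle: one must check that the cited filtration result covers products of KP modules indexed by $321$-avoiding (flagged skew, vexillary) permutations, not only Pieri-type factors. If a direct citation is not available, I would fall back on stating the equality only at the level of characters, which suffices for the numerical identity.

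Finally, the MV clause follows verbatim. If the product equals the MV element pointed at $\g_w$, it is $\MV_w$ by Proposition~\ref{prop:Schubert-geometric-bases}. The same specialization argument then gives $A_N^{\rm MV}(w,u)=c^{\,u}_{\pi_1,\ldots,\pi_t}$. Note that Corollary~\ref{cor:321-transition} already identifies each factor as an MV element. The hypothesis is needed only for the product itself, since MV multiplication is merely positive and not monomial in general.
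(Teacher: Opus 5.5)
Your proposal is correct and is essentially the paper's proof. The one-cluster hypothesis makes $\Gen_w$ an extended cluster monomial, Corollary~\ref{cor:generic-cluster-monomial-canonical} gives $\Gen_w=\Can_w$, classical specialization together with linear independence of the $\Sch_u(x)$ identifies $A_N^{\rm gen}(w,u)=A_N^{\rm can}(w,u)$ with $c^{\,u}_{\pi_1,\ldots,\pi_t}$, and Watanabe's tensor-product theorem supplies the KP multiplicity; your flag-variety argument for nonnegativity is a harmless extra.

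One correction: $321$-avoiding permutations are not vexillary in general, since $2143$ avoids $321$ but is not vexillary. Nothing in the argument depends on vexillarity, because Kra\'skiewicz--Pragacz modules have Schubert-polynomial characters for every permutation and the paper applies Watanabe's theorem directly. Your hedge about its coverage and the character-level fallback are therefore unnecessary.
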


\begin{proof}
By hypothesis the product is $\Gen_w$, and it is an extended cluster
monomial because the strict minors belong to one cluster.
Corollary~\ref{cor:generic-cluster-monomial-canonical} therefore gives
$\Gen_w=\Can_w$. Classical specialization and the injectivity of the
classical specialization on $\cM_N$ identify its universal
Schubert coefficients with the generalized Schubert structure constants.
Watanabe's tensor-product theorem~\cite{WatanabeTensor} gives the KP-filtration interpretation in
\eqref{eq:strict-minor-factorable-transition}. The last assertion is
immediate under the stated Mirkovi\'c--Vilonen hypothesis.
\end{proof}

\begin{remark}
The MV basis is expected to agree with the theta basis of $\CC[U_N]$
\cite[Conjecture~6.9]{KamnitzerPerfect}. On the finite Schubert-indexed subspace this
predicts
$\vartheta_{\g_w}=\MV_w$ for $w\in S_N$.
Under this expectation, the MV part of
Conjecture~\ref{conj:Schubert-positivity} is the theta-to-universal-Schubert
positivity conjecture on these degrees. No result of the present paper
depends on this comparison.
\end{remark}

\subsection{KP filtrations}

For $w\in S_N$, put
\[ \cT_w=\bigotimes_{k=1}^{N-1} \bigwedge^{e_{k+1}(w)}\CC^k, \]
where $\CC^k=\Span\{v_1,\ldots,v_k\}$ is the natural
$\mathfrak b_N$-module. Let $\mathscr W_N$ be Watanabe's highest-weight
category of weight modules, and let $\nabla_u$ be the costandard object
corresponding to the KP module $\cS_u^{\rm KP}$. Watanabe's Pieri theorem
gives
\[ \Pi_N(w,u)=[\cT_w:\cS_u^{\rm KP}]=\dim\Hom_{\mathscr W_N}(\cT_w,\nabla_u). \]
After choosing the successive Pieri filtrations, the last Hom space inherits
a filtration whose one-dimensional associated-graded quotients are indexed
by the iterated Pieri chains:
\[
 \op{gr}\Hom_{\mathscr W_N}(\cT_w,\nabla_u)
 \simeq
 \bigoplus_{\gamma\in\op{Pier}_w(u)}\CC\gamma.
\]
Indeed, $\op{ch}\cT_w$ is the classical specialization of
$\Code_w$, and $\Ext^1(\cS_v^{\rm KP},\nabla_u)=0$; see
\cite{WatanabePieri,WatanabeRingel}. The filtration depends on the
successive Pieri choices; no distinguished basis of the unfiltered Hom space is
asserted.

The following \emph{KP realization} conjecture is stated uniformly for the three geometric
bases. In the generic case the desired objects should be constructed from
a general point of $Z_{\mathfrak m_w}$, or from the quiver-Grassmannian
data entering the generic cluster character. In the canonical and MV
cases the expected sources are respectively intersection cohomology and
geometric Satake with fusion.

\begin{conjecture}\label{conj:KP-filtration}
For each $\bullet\in\{\mathrm{gen},\mathrm{can},\mathrm{MV}\}$ and
$w\in S_N$, there is a canonically defined $\mathfrak b_N$-module
$\cQ_w^\bullet$ with the following properties:
\begin{enumerate}[(i)]
 \item $\op{ch}\cQ_w^\bullet=P_w^\bullet$, and
 $\cQ_w^\bullet$ admits a KP filtration satisfying
 \[ [\cQ_w^\bullet:\cS_u^{\rm KP}]=A_N^\bullet(w,u). \]
 \item The module $\cQ_w^\bullet$ may be chosen as a KP-filtered subquotient
 of $\cT_w$, compatibly with rank extension and concatenation of Pieri
 steps. Consequently,
 \[ \mathsf H_{w,u}^\bullet :=\Hom_{\mathscr W_N}(\cQ_w^\bullet,\nabla_u) \]
 is a filtered subquotient of
 $\Hom_{\mathscr W_N}(\cT_w,\nabla_u)$, its associated graded is a
 subquotient of the Pieri-chain space, and
 \[ \dim\mathsf H_{w,u}^\bullet=A_N^\bullet(w,u). \]
\end{enumerate}
\end{conjecture}

Part~\textup{(i)} implies Schubert positivity for the chosen geometric
basis, while part~\textup{(ii)} gives
\[ 0\leq A_N^\bullet(w,u)\leq\Pi_N(w,u). \]
A literal subset
$\op{Lift}_N^\bullet(w,u)\subseteq\op{Pier}_w(u)$
of this cardinality would be a stronger combinatorial refinement. The
subquotient formulation is more natural for intersection cohomology and
fusion, where no distinguished Pieri-chain basis is presently known. For
$312$-avoiding $w$, one takes $\cQ_w^\bullet=\cT_w$; for
$321$-avoiding $w$, one takes $\cQ_w^\bullet=\cS_w^{\rm KP}$.\clearpage
\appendix
\part*{Appendices}

\section{Inverse Pieri rows, parabolic cancellation, and rigidity}
\label{app:homogeneous-technical}

This appendix proves the inverse-Pieri statements used in
Section~\ref{sec:homogeneous-canonical}, the recursive record results and
rigidity statements used in Section~\ref{sec:Schubert-cluster-problem}, and
the parabolic Kazhdan--Lusztig cancellation theorem.

\paragraph{Logical dependencies.}
The nil-Hecke and Coxeter calculus, Deodhar's relative M\"obius formula,
Henderson cancellation, spherical--antispherical inversion, and Lapid's
greedy open-orbit criterion are quoted from
\cite{FominStanley,BjornerBrenti,Deodhar,StembridgeMobius,Henderson,Couillens,LapidGreedy}.
The complementary-code orientation, the record obstructions, the rowwise
M\"obius reduction, bounded-word deletion, the signed parabolic fiber
calculation, and the record recursion are proved below. The first two
subsections concern inverse Pieri rows, the third proves the parabolic
cancellation theorem, and the last proves rigidity for the recursive record
class. No finite computation is used in these arguments.

\subsection{Complementary codes and deletion records}
\label{app:record-obstructions}

\begin{proof}[Proof of Lemma~\ref{lem:complementary-code}]
Let
\[
 K(a,u)=[x^a]\Sch_u(x),
\]
where $a$ ranges over staircase exponents. Lemma~\ref{prop:Cauchy-transpose-Pieri}
says
\[
 \Pi_N(v,w)=K(\theta(v),ww_0).
\]
For homogeneous degree $\ell(u)$, the nil-Hecke pairing gives
$K^{-1}(u,a)=\partial_u x^a$
\cite{FominStanley,BjornerBrenti}. Inverting the preceding reindexed
transpose yields
\[
 \Pi_N^{-1}(w,v)=\partial_{ww_0}x^{\theta(v)}.
\]
Since $y_{\theta(w)}=ww_0$, this proves the formula.
\end{proof}

For a Lehmer code $a=(a_1,\ldots,a_N)$, let $\mathbf i(a)$ be the
concatenation, for $i=1,\ldots,N-1$, of the strings $(i+a_i-1,i+a_i-2,\ldots,i)$,
with the empty string when $a_i=0$. We use the usual left action of divided
differences. Thus, if $\mathbf i=(i_1,\ldots,i_\ell)$ is a reduced word,
then the operators in $\partial_{\mathbf i}$ act in the order
$i_\ell,\ldots,i_1$. The monomial expansion is governed by
\begin{equation}\label{app:eq:two-variable-box}
 \partial_i x^\nu=
 \begin{cases}
  \displaystyle\sum_{a'=b}^{a-1}
  x^{\nu+(a'-a)\eps_i+(a-a'-1)\eps_{i+1}},&a>b,\\[2mm]
  \displaystyle-\sum_{a'=a}^{b-1}
  x^{\nu+(a'-a)\eps_i+(a-a'-1)\eps_{i+1}},&a<b,\\[2mm]
  0,&a=b,
 \end{cases}
\end{equation}
where $a=\nu_i$ and $b=\nu_{i+1}$. The extreme terms are called endpoint
choices; the remaining terms are interior choices.

For $e\ge1$ put $T_e=\partial_{e-1}\cdots\partial_1$, with
$\partial_1$ acting first, and let $P^{+1}$ denote the shift of a polynomial
from $x_1,x_2,\ldots$ to $x_2,x_3,\ldots$. Write $\op{CT}$ for
constant term.

\begin{lemma}\label{lem:insertion-functional}
Let $1\le s\le h$, and let $P$ be homogeneous of degree $h-s$. Then
\begin{equation}\label{eq:insertion-functional}
 \op{CT}\,T_h\bigl(x_1^{s-1}P^{+1}\bigr)
 =\sum_{s=j_0<j_1<\cdots<j_m=h}
 (-1)^m
 \left[
 x_{j_0}^{j_1-j_0}\cdots
 x_{j_{m-1}}^{j_m-j_{m-1}}
 \right]P.
\end{equation}
For $s=h$ the sum consists of the empty chain and equals the constant
term of $P$.
\end{lemma}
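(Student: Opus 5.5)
The plan is to argue by induction on $h$, peeling off the first divided difference: $T_h=T_{h-1}^{+1}\,\partial_1$, where $T_{h-1}^{+1}=\partial_{h-1}\cdots\partial_2$ is the shifted operator. First a degree count: $x_1^{s-1}P^{+1}$ is homogeneous of degree $h-1$, and $T_h$ lowers degree by $h-1$, so $T_h(x_1^{s-1}P^{+1})$ is already a scalar. The constant-term functional is therefore harmless. It is convenient because, at every stage of the induction, it lets us discard the powers of $x_1$ that $T_{h-1}^{+1}$ treats as scalars. The base case $s=h$ is immediate: $P$ has degree $0$, so it is a constant $P_0$, and $T_h(x_1^{h-1})=1$ by iterating \eqref{app:eq:two-variable-box}. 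This gives the empty-chain term. (Strictly, the base of the induction on $h$ is $h=1$, which forces $s=h$.)

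For the inductive step, expand $P=\sum_b x_1^{b}\,R_b(x_2,x_3,\ldots)$. Then $P^{+1}=\sum_b x_2^{b}R_b^{+2}$, and $R_b^{+2}$ is $\partial_1$-invariant, so $\partial_1(x_1^{s-1}P^{+1})=\sum_b R_b^{+2}\,\partial_1(x_1^{s-1}x_2^{b})$. Since $T_{h-1}^{+1}$ does not involve $x_1$, we have $\op{CT}\,T_{h-1}^{+1}(x_1^{k}G)=0$ for $k>0$ and any $G$ free of $x_1$. So only the $x_1^0$-part of $\partial_1(x_1^{s-1}x_2^b)$ survives. Reading it off from \eqref{app:eq:two-variable-box} with $a=s-1$ gives two cases.
\begin{itemize}
\item If $s>1$, the $x_1^0$-term occurs only for $b=0$, with coefficient $+1$ and monomial $x_2^{s-2}$.
\item If $s=1$, the term $b=0$ gives $0$. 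For $b\ge1$ the $x_1^0$-term is $-x_2^{b-1}$.
\end{itemize}
In both cases the surviving expression is a shift $\bigl(x_1^{s'-1}P'^{+1}\bigr)^{+1}$, to which the inductive hypothesis applies with $h'=h-1$. For $s>1$ we take $s'=s-1$ and $P'=R_0$ shifted down. For $s=1$ we take $s'=b$ and $P'=R_b$ shifted down, with an extra sign $-1$.

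It then remains to match the two recursions with the chain sum. When $s>1$, the chains starting at $j_0=s$ for $P$ involve only the variables $x_s,\ldots,x_{h-1}$. So they see only the $x_1$-free part $R_0$, and after shifting indices down by one they are exactly the chains from $s-1$ to $h-1$ for $P'$. When $s=1$, a chain begins with a step $j_0=1<j_1=1+b$, which contributes a factor $(-1)$ and extracts $x_1^{b}$ from $P$. This leaves $R_b$, whose remaining chain runs from $1+b$ to $h$, that is, from $b=s'$ to $h-1$ after the shift. This is precisely the recursion obtained above.

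I expect the main obstacle to be this index bookkeeping: making sure that the shift $P\mapsto P^{+1}$, the shifted operator $T_{h-1}^{+1}$, and the variable labels $x_{j}$ in \eqref{insertion-functional} line up. In particular one must check that ``coefficient of $x_{j_0}^{j_1-j_0}$ in $P$'' refers to the unshifted variables, and that the homogeneity hypotheses are preserved: $P'$ has degree $h'-s'$ in each case. I would verify this first on the cases $h=2,3$ before writing the general induction.
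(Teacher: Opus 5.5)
Your proof is correct and is essentially the paper's argument. Both rest on the two-variable box formula \eqref{app:eq:two-variable-box} with target exponent zero on the left variable: this forces a countdown while the running exponent is positive and, once it reaches zero, a sign $-1$ and a fresh countdown, so the surviving terms tile $[s,h)$. The paper tracks this forced path for a single monomial $P=x^\kappa$ across all boxes at once, while you organize the same computation as an induction on $h$ by peeling off $\partial_1$. The only slip is notational: with $R_b$ written in $x_2,x_3,\ldots$, the factor in $P^{+1}$ is $R_b^{+1}$, not $R_b^{+2}$.
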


\begin{proof}
It is enough to take $P=x^\kappa$. Apply
\eqref{app:eq:two-variable-box} successively with target exponent zero.
If the running exponent before the $i$th box is positive, the box forces
$\kappa_i=0$ and decreases the running exponent by one. When the running
exponent is zero, the box survives precisely when $\kappa_i>0$; it then
contributes a minus sign and starts a new countdown of length $\kappa_i$.
Variables beyond $x_h$ are untouched and must therefore be absent.
Thus the nonzero entries of $\kappa$ begin at $s=j_0$ and tile
$[s,h)$ by intervals $[j_u,j_{u+1})$. Conversely every such tiling gives
one path. The sign is $(-1)^m$, one minus sign for each interval.
\end{proof}

For $\alpha=(\bar\alpha,r)$ put $h=N-r$ and
$\bar y=\tau_{\bar\alpha}w_0^{(N-1)}$. The Lehmer reduced word gives
\begin{equation}\label{eq:peel-operators}
 \partial_{\tau_\alpha w_0^{(N)}}
 =T_h\circ\partial_{\bar y}^{+1}.
\end{equation}

Call a monomial $c$-leading if it contains no variable $x_i$ with $i<c$,
and write $\op{Lead}_c(P)$ for the sum of the $c$-leading terms
of $P$.

\begin{lemma}[Leading terms at a record]\label{lem:leading-support}
Let $u\in S_M$ and write $p_z=u^{-1}(z)$.
\begin{enumerate}[(i)]
 \item Suppose $x<h$ and $p_x>p_z$ for every $x<z\le h$. Put
 $q=p_x$, $t=h-x$, and
 \[
  \mu=\theta(u)+t\eps_{M+1-q}.
 \]
 Then
 \[
  \op{Lead}_x
  \bigl(\partial_{uw_0^{(M)}}x^\mu\bigr)=x_x^t.
 \]
 \item Suppose $h<y\le M$ and
 $p_{y-1}>p_z$ for every
 $z\in[h-1,y]\setminus\{y-1\}$. Put $q=p_{y-1}$ and
 \[
  \mu=\theta(u)+\eps_{M+1-q}.
 \]
 Then
 \[
  \op{Lead}_{h-1}
  \bigl(\partial_{uw_0^{(M)}}x^\mu\bigr)
  =x_{h-1}+x_h+\cdots+x_{y-1}.
 \]
\end{enumerate}
\end{lemma}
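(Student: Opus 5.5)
The proof should run by induction on $M$. The engine is the peeling factorization \eqref{eq:peel-operators}, together with Lemma~\ref{lem:insertion-functional} read backwards, i.e.\ as a statement about which monomials survive the first row of divided differences. Write $y=uw_0^{(M)}$. The Lehmer reduced word of $y$ splits $\partial_y$ as a string $T_{h'}$, where $h'$ is determined by the last rank letter of $u$, followed by a shifted operator $\partial_{\bar y}^{+1}$ for the truncated permutation $\bar u\in S_{M-1}$. The exponent $\mu=\theta(u)+t\eps_{M+1-q}$ differs from $\theta(u)$ in a single coordinate. Since $\theta(u)$ lists $r_j(u)-1$ in reverse order, this coordinate is the slot of the rank letter at position $q=p_x$ (resp.\ $p_{y-1}$). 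So the extra mass $t$ (resp.\ $1$) sits in exactly the coordinate that the rightmost Lehmer strings will process.

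The key computation is a two-variable bookkeeping with \eqref{app:eq:two-variable-box}. I would not peel the last position. Instead I would isolate the string of divided differences that moves the variable in slot $M+1-q$ leftward. For $\theta(u)$ itself, $\partial_y x^{\theta(u)}=1$, since $x^{\theta(u)}=x^{L(y)}$ is the leading monomial of $\Sch_y$ and the pairing is diagonal. With the extra exponent, each box of that string is an instance of \eqref{app:eq:two-variable-box}. At every box where the running exponent exceeds its neighbour, the endpoint choice reproduces the $\theta(u)$ computation and carries the surplus along. Interior choices, and the negative case $a<b$, deposit mass in some variable $x_i$.

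The record hypothesis $p_x>p_z$ for $x<z\le h$ is exactly the statement that, along this string, the boxes indexed by values in $(x,h]$ are crossed with no competing letter. The surplus is therefore transported without splitting until it reaches variable $x_x$. Any branch that deposits mass in a variable of index $<x$ is discarded by $\op{Lead}_x$. In case (i) the surviving branch is then the single monomial $x_x^t$, with coefficient $+1$ because only endpoint choices of positive type occur.

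Case (ii) is handled by the same transport with surplus one. The weaker record hypothesis allows exactly one further step of freedom: once the string reaches the block of values $[h-1,y-1]$, the unit of surplus may stop at any of the variables $x_{h-1},\dots,x_{y-1}$. Each stop is one term of a box $\partial_i x^\nu$ with $a=b+1$, which has a single term and coefficient $+1$. The condition at $z=y$ prevents the surplus from passing beyond $x_{y-1}$, and the condition at $z=h-1$ fixes the left end. This yields $x_{h-1}+\cdots+x_{y-1}$.

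The main obstacle is the sign and cancellation control in the transport step. I must show that no branch through a negative box ($a<b$) survives into the leading part, and that none of the remaining divided differences (the other Lehmer strings, which act first on $x^\mu$) disturbs the surplus coordinate. I would handle this by induction on $M$ using \eqref{eq:peel-operators}. If $q<M$, the last string $T_{h'}$ does not touch the surplus slot until the end, so Lemma~\ref{lem:insertion-functional}, applied with $P$ the shifted inductive output, reduces the claim to $\bar u$, whose record data are the same with positions unchanged. The base case $q=M$ is the direct computation of $T_h$ on $x_1^{s-1+t}P^{+1}$. Here Lemma~\ref{lem:insertion-functional}, generalized from constant term to $\op{Lead}$, shows that the only chains avoiding variables below $x$ are those that stop at $x$ (resp.\ in $[h-1,y-1]$).
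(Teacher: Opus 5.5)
Your last paragraph has the same skeleton as the paper's proof. You peel the last position with \eqref{eq:peel-operators} and treat $q=M$ as the base case. There the child factor is $\partial_{\bar u w_0^{(M-1)}}x^{\theta(\bar u)}=1$, and one only computes $T_{u_M}$ on $x_1^{h-1}$, resp.\ $x_1^{y-1}$; the operator is $T_{u_M}$, not $T_h$. The gap is in the inductive step, which you describe incorrectly.

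Write $\bar h=u_M$, and put $c=x$ in (i) and $c=h-1$ in (ii). The missing observation is that for $q<M$ the record hypothesis forces $\bar h$ out of the comparison interval $[x,h]$, resp.\ $[h-1,y]$. Indeed, the value at position $M>q$ cannot be $x$, resp.\ $y-1$, and every other value $z$ of the interval has $p_z<q$. This splits the step into two different cases.
\begin{itemize}
\item If $\bar h<c$, standardization lowers every relevant value by one. The parameters $(x,h)$ become $(x-1,h-1)$, resp.\ $(h,y)$ become $(h-1,y-1)$, and the induction hypothesis must be invoked with leading index $c-1$. Your claim that the record data of $\bar u$ are ``the same'' is false here. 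What must be shown is that $T_{\bar h}$ acts on $(c-1)$-leading monomials of the child as the pure shift $x_i\mapsto x_{i+1}$.
\item If $\bar h$ lies above the interval, the record is unchanged, but contrary to your claim $T_{\bar h}$ does act on the surplus. It must carry the shifted child term $x_{x+1}^t$ back to $x_x^t$: degree $t$ forces exponent exactly $t$ at box $x$, followed by a positive countdown. In (ii) it must act diagonally, $T_{\bar h}(x_1^{\bar h-1}x_{j+1})=x_j$ for $h-1\le j\le y-1$. This is how the whole sum $x_{h-1}+\cdots+x_{y-1}$ survives each peeling.
\end{itemize}

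Second, the induction hypothesis controls only $\op{Lead}$ of the child output $P$, so you need a locality statement. Lemma~\ref{lem:insertion-functional}, a constant-term identity, does not provide it. For a $c$-leading target, at every box $i<\min(c,\bar h)$ the running exponent is $\bar h-i>0$. A vanishing final exponent of $x_i$ therefore forces the child exponent in that slot to vanish and the choice to be the positive endpoint. Hence only the $\min(c,\bar h)$-leading part of $P$ contributes. This is exactly what disposes of the sign and cancellation issues you name as the main obstacle. Without this locality step and the case split, ``the insertion functional generalized to $\op{Lead}$'' is not yet an argument.

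Your transport heuristic for (ii) is also inaccurate. Already $T_{y-1}x_1^{y-1}=x_1+\cdots+x_{y-1}$, and the term $x_k$ with $k<y-2$ arises from an interior choice at box $k$, where $a=y-k$ and $b=0$. It does not come from a one-term box with $a=b+1$.
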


\begin{proof}
We argue by induction on $M-q$. If $q=M$, the truncation
$\bar u=\op{std}(u_1,\ldots,u_{M-1})\in S_{M-1}$ satisfies
$\partial_{\bar u w_0^{(M-1)}}x^{\theta(\bar u)}=1$, and
\eqref{eq:peel-operators} reduces the assertion to one application of
$T_{u_M}$. Formula~\eqref{app:eq:two-variable-box} gives respectively
$x_x^t$ and $x_{h-1}+\cdots+x_{y-1}$.

Assume $q<M$, put $\bar h=u_M$, and let $\bar u=\op{std}(u_1,\ldots,u_{M-1})$ be the truncation of $u$.
The record condition excludes $\bar h$ from the comparison interval.
Hence either $\bar h<c$, where $c=x$ in~\textup{(i)} and $c=h-1$ in
~\textup{(ii)}, or $\bar h$ lies above that interval. The truncation $\bar u$ then
carries the same record after shifting all relevant values down by one in
the first case, and carries the unchanged record in the second.

Apply \eqref{eq:peel-operators}. For a $c$-leading target, the box formula
forces the exponent in the rank-$(M-1)$ calculation to vanish below
$\min(c,\bar h)$. If $\bar h<c$, the surviving terms are obtained from the
$(c-1)$-leading terms of the child simply by shifting the variables up by
one. The assertion follows from induction.

Suppose now that $\bar h$ lies above the comparison interval. Only the
$c$-leading terms supplied by induction can contribute. In case~\textup{(i)}, the
induction hypothesis leaves $x_x^t$, and the box at $x$ forces the target
exponent at $x$ to be $t$; all remaining boxes form a positive countdown.
Thus the only $c$-leading term is $x_x^t$.

In case~\textup{(ii)}, the terms supplied by induction are $x_j$ for
$h-1\le j\le y-1$. A direct use of
\eqref{app:eq:two-variable-box} gives
\[
 [x_k]T_{\bar h}\bigl(x_1^{\bar h-1}x_{j+1}\bigr)
 =\begin{cases}
 0,&k<j,\\
 1,&k=j,\\
 0,&k>j,
 \end{cases}
 \qquad h-1\le j,k\le y-1.
\]
Indeed, for $k<j$ the first obstruction occurs at the $j$th box, for
$k>j$ the same box requires a positive exponent where the target is zero,
and for $k=j$ all boxes are legal and have positive sign. The stated
leading part follows.
\end{proof}

\begin{proof}[Proof of Proposition~\ref{prop:record-obstructions}]
If the lengths in~\textup{(i)} differ, both sides vanish by grading.
Otherwise \eqref{eq:peel-operators} and
Lemma~\ref{lem:insertion-functional}, in the case $s=h$, give
\eqref{eq:Q-rank-reduction}.

For a record endpoint $\gamma$, the endpoint formula is again a
profile. In the $R1$ case the record inequality gives
$\alpha_{q-1}\ge t$ and
$\alpha_{N-1}+t=N-x\le N-1$; in the $R2$ case it gives
$\alpha_{q-1}\ge1$ and
$\alpha_{N-1}+1=N-h+1\le N-1$. Put $u=\tau_{\bar\alpha}$ and
$M=N-1$. If the record is of type $R1$, let $q=p_x$ and $t=h-x$; if it is
of type $R2$, let $q=p_{y-1}$ and $t=1$. In either case the complementary
code obtained by deleting the last entry of $\tau_\gamma$ and standardizing is $\theta(u)+t\eps_{M+1-q}$.
Formula~\eqref{eq:insertion-functional} now reads only the leading part
computed in Lemma~\ref{lem:leading-support}. For $R1$ the sole contributing
monomial is $x_x^t$, and for $R2$ it is $x_{h-1}$. Each is the one-part
chain, and therefore contributes with sign $-1$. This proves~\textup{(ii)}.

For~\textup{(iii)}, the endpoint changes only the letters in positions
$q$ and $N$. They become
$\rho_q(\alpha)+t$ and $h-t$.
The root degree is unchanged precisely when the multiset of these two
letters is unchanged, that is, precisely when
$\rho_q(\alpha)=h-t$. This is the internality condition, and in that case
the two letters are exchanged.
\end{proof}

\begin{lemma}\label{lem:sealed-gap}
Let $\alpha=(\bar\alpha,r)\in\op{Prof}_N$, put $h=N-r$, and
suppose that every record of $\alpha$ is internal. There is no $q<N$ with
\[
 1<h\le\rho_q(\alpha)<q
\quad \text{ and } \quad
 \rho_t(\alpha)\notin[h-1,\rho_q(\alpha)]
 \qquad(q<t<N).
\]
\end{lemma}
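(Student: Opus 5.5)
We argue by contradiction. Suppose $q<N$ satisfies
\[
1<h\le\rho_q(\alpha)<q
\quad\text{and}\quad
\rho_t(\alpha)\notin[h-1,\rho_q(\alpha)]\qquad(q<t<N).
\]
From this configuration we will produce a record of $\alpha$ that is external. The plan is to read the configuration in terms of $u=\tau_{\bar\alpha}$ and the positions $p_z=u^{-1}(z)$, and then exhibit either an $R1$ record or an $R2$ record. Proposition~\ref{prop:record-obstructions}(iii) says a record is internal exactly when its endpoint exchanges two start-word letters. We will show that the sealed gap forbids this exchange.

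\textbf{Translating the configuration into values.} Put $b=\rho_q(\alpha)$, so $e_q=q-b$ and exactly $b-1$ earlier entries of $u$ lie below $u_q$. The condition $\rho_t\notin[h-1,b]$ for $q<t<N$ says that each later position $t$ has either $\rho_t<h-1$ or $\rho_t>b$. We compare the value $u_q$ with the window of values around $h$ in $u$. Recall that the last letter satisfies $u_N=h$ in the standardized sense, since $\rho_N(\alpha)=h$. The key case split is on the position among $q,\dots,N-1$ carrying the value $h-1$ or a value just above it.

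\begin{itemize}
\item \emph{Case $b=h$, or more generally when a suitable value $x<h$ sits at the last relevant position.} Choose $x$ to be the value whose position is maximal among the values in $[\cdot,h]$ appearing after $q$. The sealed gap guarantees that no later position carries a start letter in $[h-1,b]$, so the values $z$ with $x<z\le h$ all occur before $p_x$. This gives an $R1$ record at $x$. Its internality would require $\rho_{p_x}(\alpha)=x$. The sealed gap instead forces $\rho_{p_x}$ either to exceed $b\ge h>x$ or to equal the letter at $q$ itself.
\item \emph{Case $h<b$.} Take $y$ with $y-1$ the value at the latest position among the values in $[h-1,y]$. Here $y$ is chosen using $u_q$ and the values above $h$. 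The record inequality $p_{y-1}>p_z$ follows from the sealed-gap hypothesis. Internality would require $\rho_{p_{y-1}}=h-1$, and this letter is excluded by the gap condition whenever $p_{y-1}>q$. When instead $p_{y-1}=q$, internality fails because $\rho_q=b\ge h$.
\end{itemize}

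\textbf{Ordering the steps.} First, establish the dictionary between start-word letters and the relative order of values, using $\rho_j=j-\alpha_{j-1}$ together with Lemma~\ref{lem:d-content}-style counting. Second, locate the extremal position that defines the record. Third, verify the record inequality. Fourth, verify non-internality. Either outcome contradicts the hypothesis that all records are internal.

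\textbf{Main obstacle.} The hard step is the second: choosing which record ($R1$ versus $R2$, and which $x$ or $y$) the configuration produces. The start-word letters $\rho_t$ and the values $u_t$ are related only through counting inequalities, not monotonically. I would first try taking the position $t^\ast\ge q$ that maximizes $t$ subject to $u_{t^\ast}$ lying in the value window cut out by $[h-1,b]$. I would then prove by the counting identity $\#\{i<t:u_i<u_t\}=\rho_t-1$ that this position is a record of the appropriate type. If this fails, I would fall back on an induction on $N-q$ that mirrors the truncation argument in Lemma~\ref{lem:leading-support}.
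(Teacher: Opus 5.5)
\textbf{There is a genuine gap: the record is never actually constructed, and the mechanism in your first case is false.}

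Your endgame in the $R2$ case is correct. If the record sits at a position after $q$, internality would place the letter $h-1\in[h-1,\rho_q]$ there; if it sits at $q$, internality would force $\rho_q=h-1<h$. But the step you flag as the ``main obstacle'' --- choosing the record --- is the whole content of the lemma, and what you propose for it does not work.

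\emph{The first case fails.} For $j<N$ one has $\rho_j(\alpha)=1+\#\{i<j:u_i<u_j\}\le u_j$. So an $R1$ record at $x$ always has $\rho_{p_x}\le x<h$, and $\rho_{p_x}$ can never ``exceed $b$''. If $x\le h-2$, internality $\rho_{p_x}=x<h-1$ is compatible with the sealed gap, so nothing is contradicted. Only the $R1$ record at $x=h-1$ is useful.

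\emph{The record inequalities are not justified.} They cannot be deduced from the sealed-gap hypothesis, which constrains start letters, not positions of values. They must come from an extremal choice of a value window, and that window must satisfy two conditions at once.
\begin{enumerate}[(a)]
 \item It must contain $u_q$, so that the maximal position is $\ge q$. Since $u_q\ge\rho_q=b$, possibly strictly, windows like ``$[\cdot,h]$'' or $[h-1,b]$ can miss $u_q$.
 \item Its top value must \emph{not} be the position-maximizer; otherwise $p_{y-1}>p_y$ fails for the $R2$ record.
\end{enumerate}

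\emph{The hypothesis $\rho_q(\alpha)<q$ is never used, and the lemma is false without it.} Take $N=3$ and $\tau_\alpha=132$. Then $h=2$, there are no records, $\rho(\alpha)=(1,2,2)$, and $q=2$ satisfies every other condition, the later-letter one vacuously. Relatedly, your case split on $b=h$ versus $b>h$ is not the relevant distinction.

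\textbf{The missing idea is how $\rho_q<q$ closes the window from above.}
\begin{enumerate}[(1)]
 \item Call $z\in[h,N-1]$ \emph{dominant} if $p_z>p_{z'}$ for all $z'\in[h-1,z)$. Let $m\ge h-1$ be minimal such that every value in $(m,N-1]$ is dominant.
 \item For $z>m$, every larger value lies to the right of $p_z$, so $\rho_{p_z}=p_z$. Combined with $\rho_q<q$ and $u_q\ge\rho_q\ge h$, this gives $h\le u_q\le m$. By minimality, $m$ itself is then not dominant.
 \item Let $z^*\in[h-1,m]$ maximize $p_{z^*}$. Then $p_{z^*}\ge p_{u_q}=q$, and $z^*<m$ because $m$ is not dominant.
 \item If $z^*=h-1$, this is an $R1$ record with $x=h-1$. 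If $z^*\ge h$, it is an $R2$ record with $y=z^*+1\le m$.
 \item In both cases internality gives $\rho_{p_{z^*}}=h-1$. This contradicts $\rho_q\ge h$ if $p_{z^*}=q$, and contradicts the sealed gap if $p_{z^*}>q$.
\end{enumerate}
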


\begin{proof}
Write $u=\tau_{\bar\alpha}$ and $p_z=u^{-1}(z)$. Call
$z\in[h,N-1]$ dominant if
$p_z>p_{z'}$ for every $z'\in[h-1,z)$, and choose $m$ minimal such that
every value in $(m,N-1]$ is dominant. If $z>m$, all smaller values in
$[h-1,z)$ lie to the left of $p_z$, all larger values lie to its right,
and values below $h-1$ are smaller than $z$. Hence
$\rho_{p_z}=p_z$.

Suppose that $q$ violates the assertion. Since
$u_q\ge\rho_q\ge h$, the preceding observation gives $u_q\le m$.
Choose $z^*\in[h-1,m]$ with $p_{z^*}$ maximal. Then $z^*<m$ and
$p_{z^*}\ge q$. If $z^*=h-1$, this is an $R1$ record; if $z^*\ge h$, it
is the $R2$ record with $y=z^*+1$. In either case internality gives
$\rho_{p_{z^*}}=h-1$. If $p_{z^*}=q$, this contradicts
$\rho_q\ge h$; if $p_{z^*}>q$, it supplies the forbidden witness between
$q$ and $N$. This contradiction proves the lemma.
\end{proof}

\subsection{Inverse Pieri rows supported on one root-degree fiber}
\label{app:single-fiber-rows}

For fixed $w$ and $v$, let $\mathcal P_w(v)$ be the finite signed set of
$\mathbf i(\theta(w))$-paths from $x^{\theta(v)}$ to the constant
monomial. Let $\mathcal P_w^{\rm end}(v)$ be the subset using endpoint
choices only, and put
$\mathcal P_w^{\rm int}(v)=\mathcal P_w(v)\setminus
\mathcal P_w^{\rm end}(v)$.

\begin{lemma}\label{lem:first-interior-factorization}
Fix $w\in S_N$, put $d=d(w)$, and let
$\mathbf j=(j_1,\ldots,j_\ell)$ be the operator-action word for
$\partial_{y_{\theta(w)}}$. For the $k$th box, let $T_k$ be its signed
monomial transition matrix, and write $T_k=E_k+I_k$,
where $E_k$ retains the endpoint terms of
\eqref{app:eq:two-variable-box} and $I_k$ retains the interior terms.
Put
\[
 T_{\leq k}=T_1\cdots T_k,
 \qquad
 T_{>k}=T_{k+1}\cdots T_\ell.
\]
After restricting the source rows to the staircase exponents
$\theta(v)$, $v\in I_d$, there is an integral matrix $H_k$, whose columns
are indexed by permutations $u$ with $d(u)\ne d$, such that
\begin{equation}\label{eq:first-interior-factorization}
 \bigl(E_1\cdots E_{k-1}I_k\bigr)\big|_{I_d}
 =H_k\,T_{\leq k}\big|_{d\ne d}.
\end{equation}
Consequently, for every $v\in I_d$,
\begin{equation}\label{eq:interior-path-factorization}
 \sum_{P\in\mathcal P_w^{\rm int}(v)}\op{sgn}(P)
 =\sum_{\substack{u\in S_N\\d(u)\ne d}}
 h_w(u,v)Q_N(w,u)
\end{equation}
for suitable integers $h_w(u,v)$.
\end{lemma}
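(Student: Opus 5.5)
The plan is to read the product $T_1\cdots T_\ell$ as a signed path expansion. By Lemma~\ref{lem:complementary-code}, $Q_N(w,v)=\partial_{y_{\theta(w)}}x^{\theta(v)}$, and this equals the $(\theta(v),\mathbf 0)$ entry of $T_1\cdots T_\ell$. Every path in $\mathcal P_w^{\rm int}(v)$ has a unique first interior box $k$. This gives the decomposition
\[
 \sum_{P\in\mathcal P_w^{\rm int}(v)}\op{sgn}(P)=\sum_{k=1}^{\ell}\bigl(E_1\cdots E_{k-1}I_kT_{>k}\bigr)(\theta(v),\mathbf 0).
\]
Thus \eqref{eq:interior-path-factorization} follows from \eqref{eq:first-interior-factorization} by right multiplication with $T_{>k}$. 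Indeed, $T_{\le k}T_{>k}$ restricted to the row $\theta(u)$ and the column $\mathbf 0$ is $Q_N(w,u)$. Summing the matrices $H_k$ over $k$ then gives the integers $h_w(u,v)$.

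The real content is \eqref{eq:first-interior-factorization}. The approach is to understand what an interior choice does to the root degree. An endpoint choice at box $j$ on a staircase exponent $\nu$ either swaps $\nu_j,\nu_{j+1}$ up to the usual shift, or keeps the multiset structure. This should be compared with \eqref{eq:dimension-vector-Lehmer-multiset}, which says $d(u)=d$ exactly when $\theta(u)$ is a rearrangement of the common multiset. An interior choice produces an intermediate exponent whose pair $(\nu_j,\nu_{j+1})$ is replaced by a pair with a different multiset. So after $E_1\cdots E_{k-1}I_k$ we reach an exponent $\nu$ that is \emph{off the fiber}. I would then lift this intermediate exponent back to a staircase source. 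Concretely, I would undo the first $k$ boxes: I would write the monomial $x^\nu$ reached after $k$ steps as a $\ZZ$-combination $\sum_u H_k(u,\nu)\,(T_{\le k})(\theta(u),\nu)$ over staircase codes $\theta(u)$ with $d(u)\neq d$. This uses the fact that the partial nil-Hecke operator $\partial_{j_k}\cdots\partial_{j_1}$ is a prefix of a reduced word. By the nil-Hecke duality quoted in Lemma~\ref{lem:complementary-code}, the matrix $(T_{\le k})$, restricted to staircase rows of the appropriate degree, is then surjective (unitriangular in a suitable order) onto the monomials of degree $\ell-k$ in its image.

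The order of work is: (1) verify the path decomposition by first interior box, which is routine; (2) establish the root-degree behaviour of endpoint versus interior choices via \eqref{app:eq:two-variable-box} and \eqref{eq:dimension-vector-Lehmer-multiset}; (3) construct $H_k$ by a triangular inversion of $T_{\le k}$ on staircase rows, checking that the needed rows all have $d\neq d(w)$.

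The main obstacle is step (3): proving that the monomials reached off the fiber can be expressed using only off-fiber staircase sources, rather than mixing in on-fiber ones. My first attempt would be to use the multiset of exponents as a grading preserved by endpoint choices. I would then order the staircase rows so that this inversion respects the grading, and verify on the two-variable box that interior terms strictly change the multiset.
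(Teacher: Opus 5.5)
Your steps (1) and (2) match the paper's outer frame. That frame consists of the telescoping identity $T_1\cdots T_\ell-E_1\cdots E_\ell=\sum_k E_1\cdots E_{k-1}I_kT_{>k}$, right multiplication of \eqref{eq:first-interior-factorization} by $T_{>k}$ using $T_{\le k}T_{>k}=T_1\cdots T_\ell$, and the observation that endpoint choices preserve the (transported) multiset while the marked interior choice changes it. Sources joined to the interior terms by endpoint choices are therefore off the fiber by \eqref{eq:dimension-vector-Lehmer-multiset}.

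The gap is step (3). It is the entire content of \eqref{eq:first-interior-factorization}, and the mechanisms you suggest do not supply it.

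First, you cannot lift the reached monomials one at a time. Every row of $T_{\le k}$ is the expansion of $\partial_{j_k}\cdots\partial_{j_1}x^{\theta(u)}$, hence $s_{j_k}$-invariant. So no $\ZZ$-combination of rows equals a single monomial $x^\nu$ with $\nu_{j_k}\ne\nu_{j_k+1}$. Only the whole $s_{j_k}$-symmetric row $(E_1\cdots E_{k-1}I_k)(\theta(v),\cdot)$ can be lifted.

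Second, nil-Hecke duality $\partial_u\Sch_{u'}(x)=\delta_{u,u'}$ (for $\ell(u)=\ell(u')$) concerns complete extraction to constants. At best it describes the row space of $T_{\le k}$ over \emph{all} staircase sources. The point of the lemma is that on-fiber sources can be avoided, and duality gives no triangular structure for doing so. Note also that pivots are $\pm1$, not $1$, because of the $a<b$ branch of \eqref{app:eq:two-variable-box}.

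Third, the multiset is preserved by the $E_j$ but not by the $T_j$, so the rows of $T_{\le k}$ are not homogeneous for it. Ordering by it gives at most a filtration. Inside each layer you would still need an integral inversion of endpoint-only rows, and that is exactly what is missing.

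The paper fills this step with a reverse-history triangularity. For a terminal exponent $\eta$ reached through a marked interior choice at box $k$, it reverses the first $k$ boxes. It replaces the marked interior term by its adjacent endpoint and, at each earlier box, takes the maximal staircase-legal predecessor. Reverse histories are ordered by the earliest box where they differ, then by the left exponent there. In that order this distinguished history is maximal and has coefficient $\pm1$. The square system on distinguished histories is therefore triangular with unit pivots and invertible over $\ZZ$. The partial paths with an earlier first interior box created by this elimination are absorbed by induction on $k$. Off-fiber support follows because the sources used are joined to $\eta$ by endpoint choices.

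A concrete entry point you could use: for $a\ge b+2$, the interior part of $\partial_i(x_i^ax_{i+1}^b)$ is exactly $\partial_i(x_i^{a-1}x_{i+1}^{b+1})$, and symmetrically for $b\ge a+2$. Thus $I_k=S_kT_k$ for a $0/1$ inward-shift matrix $S_k$, and $E_j=(1-S_j)T_j$. This settles $k=1$ at once, since $S_1\theta(v)$ is a staircase code with a different multiset. It also reduces the general case to moving each $S_j$ leftward past $T_1,\ldots,T_{j-1}$. That move is immediate when the boxes act on disjoint pairs of positions; the overlapping case is where an argument like the paper's is needed.
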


\begin{proof}
The matrix on the left of
\eqref{eq:first-interior-factorization} records partial paths whose first
interior choice occurs at the $k$th box. Fix a terminal exponent $\eta$ and
reverse the first $k$ boxes. Order reverse histories first by the earliest
box at which they differ and then by the exponent of the left variable at
that box. The one-box formula
\eqref{app:eq:two-variable-box} gives a distinguished reverse history:
replace the marked interior term at the $k$th box by its adjacent endpoint
and, at every earlier box, take the maximal staircase-legal predecessor.
Its coefficient is $\pm1$. Every other reverse history is earlier in the
chosen order.

Let $R_{k,\eta}$ be the signed incidence matrix whose rows are the marked
partial paths ending at $\eta$ and whose columns are endpoint-only $k$-step paths
from staircase sources to $\eta$. The preceding reverse construction
orders its rows and columns so that the square submatrix on the
distinguished histories is triangular with diagonal entries $\pm1$.
Eliminating these columns in that order also removes partial paths whose first
interior choice occurs before $k$; those terms have already occurred in an
earlier first-interior stage. Induction on $k$ therefore gives an integral row
factorization
\[
 R_{k,\eta}=H_{k,\eta}\,T_{\leq k,\eta}.
\]
Taking the direct sum over all $\eta$ gives
\eqref{eq:first-interior-factorization}.

Endpoint choices only interchange the two transported start-word letters,
whereas the marked interior choice changes their multiset. Hence every
staircase source occurring on the right has root degree different from
$d$ by Lemma~\ref{lem:d-content}; this proves the asserted column support
of $H_k$.

Finally, the elementary telescoping identity
\[
 T_1\cdots T_\ell-E_1\cdots E_\ell
 =\sum_{k=1}^{\ell}
  E_1\cdots E_{k-1}I_kT_{>k}
\]
partitions all nonendpoint paths by their first interior box. Multiply
\eqref{eq:first-interior-factorization} on the right by $T_{>k}$ and take
the column of the constant monomial. Since
$T_{\leq k}T_{>k}=T_1\cdots T_\ell$, the resulting entries are
$Q_N(w,u)$. Summing over $k$ gives
\eqref{eq:interior-path-factorization}.
\end{proof}

\begin{proof}[Proof of Proposition~\ref{prop:single-fiber-Mobius-row}]
Use the path expansion
$Q_N(w,v)=\partial_{y_{\theta(w)}}x^{\theta(v)}$. Under reversal and
addition of one, an endpoint choice preserves the multiset of the start
word. At a legal step it either leaves the current word unchanged or
interchanges a linked adjacent pair, which is a Bruhat cover in the
fixed-content interval $I_d$. Thus the endpoint-only paths remain in
$I_d$.

Let $m(w,v)$ be their signed sum. Under the stable-standardization
embedding of $I_d$ into the parabolic quotient $W^J$, an endpoint choice
is the choice of using or omitting the corresponding simple reflection
in a distinguished subexpression, and legality is Deodhar's parabolic
distinguished-subexpression condition. The path sign is the sign of that
subexpression. Deodhar's relative M\"obius formula therefore gives
\[
 m(w,v)=\mu_{I_d}(w,v).
\]
Equivalently, the endpoint-only contribution is the incidence-algebra
inverse of the zeta function of the legal fixed-content interval; see
\cite{Deodhar,StembridgeMobius}.

Lemma~\ref{lem:first-interior-factorization} now gives
\[
 Q_N(w,v)=\mu_{I_d}(w,v)
 +\sum_{\substack{u\in S_N\\d(u)\ne d}}Q_N(w,u)h_w(u,v).
\]
The row-support hypothesis kills the second sum.
\end{proof}

\begin{remark}[The row hypothesis is essential]
The lemma is a statement about one row of $Q_N=\Pi_N^{-1}$; it does not
identify the entire root-degree submatrix of $Q_N$ with a M\"obius matrix. For
example, in $S_5$ the permutation $w=53142$ and $v=34521$
have the same root degree, but
\[
 Q_5(w,v)=-1,\qquad \mu_{I_{d(w)}}(w,v)=0.
\]
Thus the support hypothesis on the row indexed by $w$ cannot be omitted.
\end{remark}

\subsection{Cancellation of parabolic Kazhdan--Lusztig sums}
\label{app:parabolic-KL}

\subsubsection{Words and parabolic quotients}

We use the results on Bruhat order, parabolic quotients, and
Kazhdan--Lusztig polynomials collected in
\cite[Chapters~2 and~5]{BjornerBrenti}.
Henderson's cancellation theorem supplies deletion at a common
cancellable position \cite[Proposition~2.2]{Henderson}.

Let $c_1<\cdots<c_n$ be endpoint capacities, and let
$r=(r_1,\ldots,r_n)$ and $b=(b_1,\ldots,b_n)$
be words with the same content. Throughout,
$c_i,r_i,b_i\in\mathbb Z_{>0}$. A site $i$ is \emph{active} if
$r_i<c_i$ and \emph{saturated} if $r_i=c_i$. Deletion always means
deleting the same position from $(c,r,b)$ and retaining the original indices
of the surviving capacities. Assume:
\begin{enumerate}
\item $r_i,b_i\leq c_i$;
\item for all $p,a$,
\begin{equation}\label{pc-eq:delta}
 \Delta_p(a):=
 \#\{i\leq p:r_i\leq a\}-\#\{i\leq p:b_i\leq a\}\geq0;
\end{equation}
\item the upper word has the gap condition
\begin{equation}\label{pc-eq:sgc}
 q<j,\quad 1<r_j\leq r_q<c_q
 \quad\Longrightarrow\quad
 \exists t\ (q<t<j),\quad r_j-1\leq r_t\leq r_q;
\end{equation}
\item $b$ is strictly increasing on each maximal strict-ascent run of $r$.
\end{enumerate}

A common site $p$, of value $d=r_p=b_p$, is called \emph{cancellable} if
\begin{equation}\label{pc-eq:bw-word}
 \Delta_{p-1}(d-1)=\Delta_{p-1}(d)=0.
\end{equation}
This is the word form of a common Billey--Warrington-cancellable dot. The
second equality, together with the first, identifies the same occurrence of
$d$ in the two minimal parabolic representatives; the first is the left
inversion equality. Equality of total contents gives the right inversion
equality.

\subsubsection{A deletion lemma for bounded words}

\begin{theorem}[Deletion lemma for bounded words]
\label{pc-thm:safe-pivot}
If $b\neq r$, there is a cancellable site whose deletion preserves
\eqref{pc-eq:sgc}. More precisely:
\begin{enumerate}
\item if $r_p=c_p$ for some $p$, every such saturated site is cancellable
and its deletion preserves \eqref{pc-eq:sgc};
\item if no site is saturated, deletion of the rightmost cancellable site
preserves \eqref{pc-eq:sgc}.
\end{enumerate}
\end{theorem}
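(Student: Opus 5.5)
The plan is to treat the two cases separately. In both, the work is to compare the prefix counts $\Delta_p(a)$ with the capacity bounds $r_i,b_i\leq c_i$, using that the capacities are strictly increasing.

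\emph{Case (1): a saturated site $p$, so $r_p=c_p$.} The argument should be short.
\begin{itemize}
\item For every $i\leq p-1$ we have $r_i,b_i\leq c_i\leq c_{p-1}\leq c_p-1$. Hence both counts in $\Delta_{p-1}(a)$ equal $p-1$ for every $a\geq c_p-1$. So $\Delta_{p-1}(c_p-1)=\Delta_{p-1}(c_p)=0$.
\item Applying \eqref{pc-eq:delta} at $(p,c_p-1)$ gives $\#\{i\leq p:b_i\leq c_p-1\}\leq p-1$. This forces $b_p\geq c_p$, hence $b_p=c_p=r_p$. So $p$ is a common site of value $d=c_p$ and satisfies \eqref{pc-eq:bw-word}.
\item For preservation of \eqref{pc-eq:sgc}: deletion keeps the original capacity indices, so active and saturated sites stay what they were. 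Any pair $(q,j)$ surviving the deletion still has its hypotheses. Only a witness $t=p$ could be lost. But a witness must satisfy $r_t\leq r_q<c_q<c_p=r_p$ with $q<t$, so a saturated site is never a witness. Hence \eqref{pc-eq:sgc} survives.
\end{itemize}
I would also record that deleting a cancellable site preserves \eqref{pc-eq:delta} and hypothesis (4). This is because the vanishing of $\Delta_{p-1}$ at $d-1,d$ means removing the common letter $d$ shifts every later $\Delta_{p'}(a)$ by $0$. These facts are needed when the lemma is iterated in Henderson's cancellation.

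\emph{Case (2): no saturated site.} First I would establish existence of a cancellable site. Two easy observations are the starting point.
\begin{itemize}
\item Every site in the maximal common prefix is cancellable, because all $\Delta_{p-1}$ vanish there.
\item Equality of contents gives $\Delta_n\equiv0$. So if $r_n=b_n$, then $\Delta_{n-1}\equiv0$ and $n$ is cancellable.
\end{itemize}
In general I would look at the set $Z=\{p:\Delta_p\equiv0\}\ni 0,n$ and a block $(p_0,p_1]$ between consecutive elements of $Z$ on which $r\neq b$. Inside the block, let $a$ be the smallest value where some $\Delta$ becomes positive. I would then use hypothesis (4), that $b$ is strictly increasing on the ascent runs of $r$, to locate a position where $r$ and $b$ carry a common letter $d$ with $\Delta_{p-1}(d-1)=\Delta_{p-1}(d)=0$.

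I expect this existence step to be the main obstacle. The difficulty is that $r$ and $b$ may disagree at both ends of the block. The first thing I would try is to show the following: if no common site of the block is cancellable, then the letters of $r$ inside the block form an ascent run on which $b$ is a nontrivial rearrangement. That contradicts (4) together with $b\neq r$. The gap condition \eqref{pc-eq:sgc} is what should rule out the descents of $r$ that would otherwise break the run.

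Finally, to show that deleting the rightmost cancellable site $p$ preserves \eqref{pc-eq:sgc}, suppose $p$ is the only witness for some pair $q<p<j$. Then $r_j-1\leq d\leq r_q$, and every $t\in(q,j)$ other than $p$ has $r_t\notin[r_j-1,r_q]$. From this I would derive that $j$ (or the first position after $p$ carrying a letter in $[r_j-1,d]$) is itself a common site with $\Delta_{j-1}$ vanishing at the relevant values. The vanishing should propagate from $p$ because no intermediate letter lies in the critical window. That would produce a cancellable site to the right of $p$, contradicting maximality.
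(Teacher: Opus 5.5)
Your case (1) is correct and matches the paper's argument. The prefix counts at thresholds $c_p-1$ and $c_p$ force $b_p=c_p$ and give \eqref{pc-eq:bw-word}, and a saturated site is never a witness, since $r_q<c_q<c_p=r_p$ for every active $q<p$.

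\textbf{Existence in case (2).} The genuine gap is case (2), where neither existence nor preservation is actually argued. Your guiding heuristic is also wrong: \eqref{pc-eq:sgc} does not rule out descents of $r$; for instance, $r=(1,2,1,3)$ with $c=(2,3,4,5)$ satisfies it. What \eqref{pc-eq:sgc} does is force descents to land on $1$, and this is the missing idea.
\begin{itemize}
\item With no saturated site, every site is active. At a run boundary $q=j-1$ of $r$ one has $r_j\le r_q<c_q$ and no room for a witness, so $r_j=1$. Thus every maximal strict-ascent run $R_k$ with $k\ge2$ starts with $1$.
\item Let $d_k=\Delta_{e_k}(1)$, where $e_k$ is the end of $R_k$. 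The run $R_k$ contains exactly one upper $1$ and, by hypothesis (4), at most one lower $1$, so $d_k-d_{k-1}\ge0$ for $k\ge2$. Together with $d_k\ge0$ and $d_m=0$, this gives $d_k\equiv0$.
\item Hence $b$ also starts every later run with $1$, and each such run-start is cancellable. If $m=1$ then $r=b$, so $b\ne r$ gives a cancellable site, and the rightmost one lies in the last run $R_m$.
\end{itemize}
Your observations about the common prefix and the last site are correct but do not reach this. Your block decomposition by zeros of $\Delta$, and the claim that a block without cancellable common sites is a single ascent run, are never substantiated; they follow only from the run-start structure above.

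\textbf{Preservation in case (2).} This step inherits the gap. ``The vanishing should propagate because no intermediate letter lies in the critical window'' does not work: letters of $r$ strictly between $p$ and $j$ may lie below $r_j-1$ and change $\Delta(d)$, and nothing in your sketch yields $b_j=r_j$. The paper proceeds as follows.
\begin{itemize}
\item If $d=r_p\ge r_j$, the pair $(q,p)$ would already violate \eqref{pc-eq:sgc} in the parent, so $d=r_j-1$.
\item Since $p$ and $j$ both lie in the strictly increasing last run, $j=p+1$ and $r_j=d+1$.
\item Equal content and $\Delta_e\ge0$ at the end $e$ of $R_{m-1}$ show that on $R_m$ the increasing lower subword is componentwise at most the upper one. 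Hence $d=b_p<b_j\le r_j=d+1$, so $b_j=r_j$.
\item Finally $\Delta_{j-1}(d)=\Delta_{p-1}(d)=0$, and $\Delta_{j-1}(d+1)=0$ because every later letter of either word exceeds $d+1$.
\end{itemize}
Thus $j$ is cancellable, contradicting the choice of $p$. Note that all of this uses the fact that the rightmost cancellable site lies in the last run.

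\textbf{Side remark.} Your aside that deleting a cancellable site preserves hypothesis (4) is false. Take $c=(2,3,4,5)$, $r=(1,2,1,3)$, $b=(1,3,1,2)$. All hypotheses hold and the rightmost cancellable site is $3$. After deleting it, the upper word $(1,2,3)$ is a single run carrying the lower word $(1,3,2)$. This is why the paper follows each deletion by parabolic normalization (Proposition~\ref{pc-prop:normalization-deletion-closure}). Also, \eqref{pc-eq:delta} is preserved simply because the deleted letters coincide, not because $\Delta_{p-1}$ vanishes.
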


\begin{proof}
First suppose $r_p=c_p=C$. Strict increase of the capacities and capacity
legality imply that every upper and lower entry before $p$ is less than
$C$. If $b_p<C$, then among the first $p$ positions and at threshold $C-1$ the upper count in
\eqref{pc-eq:delta} is $p-1$, whereas the lower count is $p$, a contradiction.
Thus $b_p=C$, and the two equalities in \eqref{pc-eq:bw-word} are immediate.

This site cannot seal a gap. Indeed, for an active source $q<p$ one has
$r_q<c_q<c_p=r_p$,
so $r_p$ lies above the upper end $r_q$ of every possible sealing interval.
Deleting $p$ therefore preserves \eqref{pc-eq:sgc}.

Assume henceforth that every site is active. Decompose $r$ into maximal
strict-ascent runs $R_1,\ldots,R_m$. At a run boundary the first value of
the new run is weakly below the last value of the preceding run. Since the
two sites are adjacent, \eqref{pc-eq:sgc} forces every $R_k$, $k\geq2$, to
start with $1$.

Let $d_k$ be the upper-minus-lower number of $1$'s through the end of
$R_k$. Equation \eqref{pc-eq:delta} gives $d_k\geq0$, while equality of total
contents gives $d_m=0$. The lower word contains at most one $1$ in each
run, so for $k\geq2$,
\[ d_k-d_{k-1}=1-\one_{\{1\in b|_{R_k}\}}\geq0. \]
Consequently, $0\leq d_1\leq\cdots\leq d_m=0$.
Every later lower run therefore also starts with $1$, and its run-start is
a cancellable site. If $m=1$, then $r$ and $b$ are increasing words with the same
content and hence are equal. Thus a nonequality state has a cancellable site.

Let $p$ be the rightmost cancellable site. The start of the last run is cancellable, so $p$
belongs to the last run. Suppose deleting $p$ creates a gap with endpoints
$q<p<j$. Write
\[ A=r_q,\qquad B=r_j,\qquad d=r_p. \]
The deleted site was the unique parent sealer, so $B-1\leq d\leq A$. If
$d\geq B$, the pair $(q,p)$ would satisfy $1<d\leq A<c_q$, 
and no intervening value would lie in
$[d-1,A]\subseteq[B-1,A]$, contradicting the parent gap condition.
Hence, $d=B-1$.

Both $p$ and $j$ lie in the last strict-ascent run. There is no integer
strictly between their values in $d=B-1$, so $j=p+1$ and $r_j=d+1$.

On the last run the lower word is componentwise at most the upper word. If
$e$ is the end of the preceding run, then total content equality and
\eqref{pc-eq:delta} give, for every threshold $a$,
\[ \#\{r_i\leq a:i\in R_m\}-\#\{b_i\leq a:i\in R_m\} =-\Delta_e(a)\leq0. \]
Since the two run subwords are increasing, their order statistics satisfy
$b_i\leq r_i$ on $R_m$. Therefore, $d<b_j\leq r_j=d+1$,
so $b_j=r_j=d+1$.

The cancellation equality at $p$ gives $\Delta_{j-1}(d)=0$. Every upper and lower
entry after $j$ is larger than $d+1$, and total content equality gives
$\Delta_{j-1}(d+1)=0$. Hence $j$ is a cancellable site strictly to the right of
$p$, a contradiction. Deletion of the rightmost cancellable site preserves the gap condition.
\end{proof}

\subsubsection{Standardization and parabolic fibers}

We next describe the Coxeter-theoretic reduction used in the recursion. Fix a
content vector $\mathbf m=(m_a)$, let $W=S_n$, and let $W_J$ be the Young
parabolic whose consecutive position blocks have the nonzero sizes $m_a$,
ordered by increasing $a$. Write $w_0$ and $w_J$ for the longest elements
of $W$ and $W_J$, respectively. For a word $a$ of content $\mathbf m$,
let
\[
 y(a)=(\text{positions carrying the smallest letter, in increasing order};
 \ \text{then the next letter};\ldots).
\]
Thus $y(a)\in W^J$. Put
\[ \phi(u)=w_0uw_J,\qquad x(b)=\phi(y(b)),\qquad \tau(r)=\phi(y(r)). \]
The map $\phi:W^J\to W^J$ is order reversing.

\begin{lemma}[Stable standardization]\label{pc-lem:dictionary}
For words $r,b$ of the same content, the following hold.
\begin{enumerate}
\item The initial-segment inequalities \eqref{pc-eq:delta} are equivalent to
  $y(r)\leq y(b)$, and hence to $x(b)\leq\tau(r)$.
\item The connected components of $D_L(\tau(r))$ are canonically indexed,
  in reverse order, by the maximal strict-ascent runs of $r$.
  The component-by-$J$-block incidence entry counts occurrences of one
  lower-word letter in one upper ascent run.
\item That incidence matrix is zero-one if and only if no lower letter is
  repeated inside an upper ascent run. Left-maximizing the
  $W_{D_L(\tau)}$-orbit then puts the lower letters in increasing order
  on each such run.
\item A word site satisfying \eqref{pc-eq:bw-word} determines one common
  cancellable position of the permutation interval
  $[x(b),\tau(r)]$. Deleting the word site is the word form of
  deleting that permutation position and its common value.
\end{enumerate}
\end{lemma}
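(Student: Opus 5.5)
The plan is to prove the four items of Lemma~\ref{pc-lem:dictionary} separately. Each rests on one observation: $y(a)$ records, for every threshold $a'$, the set of positions carrying letters $\le a'$. Because $y(a)\in W^J$ and the $J$-blocks are consecutive with sizes $m_{a'}$, the first $\sum_{a''\le a'}m_{a''}$ entries of $y(a)$ are exactly these positions in increasing order.

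For item (1), the tableau criterion for Bruhat order on $W^J$ says $y(r)\le y(b)$ if and only if, for every block boundary, the sorted initial segment of $y(r)$ is entrywise at most that of $y(b)$. For a $0/1$ indicator of positions, entrywise domination of sorted position sets is equivalent to the counting inequalities. These state that, for every prefix $[1,p]$, the number of positions carrying letters $\le a'$ is at least as large for $r$ as for $b$, which is precisely $\Delta_p(a')\ge0$ in \eqref{pc-eq:delta}. Since $\phi(u)=w_0uw_J$ is an order-reversing involution of $W^J$, as in Bj\"orner--Brenti Chapter~2, we get $x(b)\le\tau(r)$.

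For item (2), I would compute left descents. A left descent $s_k$ of $y(r)$ means $k+1$ appears before $k$ in $y(r)$. Since each block is increasing, this happens exactly when the letter at position $k+1$ is strictly smaller than the letter at position $k$, that is, at a weak descent of $r$. Left multiplication by $w_0$ conjugates $s_k$ to $s_{n-k}$ and exchanges descents with ascents. Hence $D_L(\tau(r))$ is, up to the reflection $k\mapsto n-k$, the set of strict ascents of $r$. Its connected components are then the maximal strict-ascent runs, listed in reverse order. The incidence entry for a component $R$ and a block $a'$ counts the positions of $R$ at which $b$ carries the letter $a'$. This is the stated occurrence count, since in the interval comparison the block of $x(b)$ records the lower word.

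For item (3), the matrix is zero-one exactly when no run contains a repeated lower letter. The left $W_{D_L(\tau)}$-orbit of $x(b)$ permutes positions within each run, and the maximal element of the orbit sorts the entries of each run so that it is Bruhat-maximal. Translated back through $\phi$, this makes the lower letters increasing on the run. I would verify that orientation directly on a two-letter run.

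For item (4), the two equalities in \eqref{pc-eq:bw-word} mean that before position $p$, the counts of letters $<d$ and of letters $\le d$ agree in the two words. So the occurrence of $d$ at $p$ is the same-ranked occurrence of $d$ in both words. Hence $y(r)$ and $y(b)$ send the same index, namely the position of $p$ within block $d$, to the value $p$, and so do $\tau$ and $x$ after applying $\phi$. The first equality gives equality of left inversions at that entry. Together with equal contents it also gives the right inversion equality, which is the Billey--Warrington common cancellable condition cited via Henderson. Deleting the common value and position then standardizes to $y$ of the words with site $p$ removed.

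I expect the main obstacle to be item (4), specifically checking that \eqref{pc-eq:bw-word} matches exactly the Billey--Warrington/Henderson hypothesis. The conventions (positions versus values, and left versus right) need care under $\phi$. I would fix these conventions using the case $n=2$.
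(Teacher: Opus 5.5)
Your plan follows the paper's proof: the rank (tableau) criterion on $W^J$ for (1), the identification of $D_L(\tau(r))$ with the strict ascents of $r$ for (2)--(3), and Henderson's left, ordinal and right cancellation conditions for (4). Your version mainly supplies details that the paper leaves to citation.

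There is one computational slip in (2) to fix. The left descents of $y(r)$ occur exactly at strict descents $r_k>r_{k+1}$, not weak ones: for $r=(1,1)$ one has $y(r)=e$. Consequently $D_L(w_0y(r))$ is the set of weak ascents of $r$. The right factor $w_J$ in $\tau(r)$ changes left descents, and your chain of reasoning ignores it.

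The clean repair is to note that $\tau(r)=w_0y(r)w_J=y(r^{\mathrm{rev}})$ with $r^{\mathrm{rev}}_i=r_{n+1-i}$. Reversing each block removes exactly the equal-letter adjacencies, so $s_k\in D_L(\tau(r))$ if and only if $r_{n-k}<r_{n-k+1}$. The same identity $x(b)=y(b^{\mathrm{rev}})$ also makes the orientation check in (3) immediate.
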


\begin{proof}
Part~(1) is the rank-matrix criterion for Bruhat order on parabolic
quotients \cite[Chapter~2]{BjornerBrenti}. The left descents of
$w_0y(r)w_J$ are the dual values attached to the strict ascents of $r$;
this gives the component/run correspondence and the incidence statement,
and hence~(2)--(3). For~(4), the two equalities in
\eqref{pc-eq:bw-word}, together with equality of total contents, are
exactly Henderson's left, ordinal, and right cancellation conditions
\cite[Definition preceding Proposition~2.2]{Henderson}.
\end{proof}

\subsubsection{Parabolic cancellation and normalization}

Let
\begin{equation}\label{pc-eq:fiber-definition}
 x,\tau\in W^J,\qquad x\leq\tau,\qquad
 F(x,\tau;J)=\{t\in W_J:xt\leq\tau\}.
\end{equation}
Put $D=D_L(\tau)$. Its connected components are consecutive value
intervals $C_1<\cdots<C_m$. If $B_1<\cdots<B_s$ are the $J$-blocks, set
\begin{equation}\label{pc-eq:incidence-matrix}
 M_{\alpha\beta}=\#\{i\in B_\beta:x(i)\in C_\alpha\}.
\end{equation}

By the lifting property and Kazhdan--Lusztig left-descent invariance
\cite[Chapter~5]{BjornerBrenti}, the ideal below $\tau$ is $W_D$-stable and,
when multiplication by $s\in D$ raises length,
\begin{equation}\label{pc-eq:left-descent-invariance}
 P_{u,\tau}(q)=P_{su,\tau}(q).
\end{equation}
Thus $P_{u,\tau}$ is constant on every left $W_D$-orbit in that ideal.

\begin{proposition}[Parabolic cancellation and normalization]
\label{pc-prop:normalization-dichotomy}
For the state \eqref{pc-eq:fiber-definition}--\eqref{pc-eq:incidence-matrix}:
\begin{enumerate}
\item If some $M_{\alpha\beta}\geq2$, the fiber has a fixed-point-free
  involution which reverses $(-1)^{\ell(t)}$ and preserves
  $P_{xt,\tau}(q)$ term by term.
\item If every $M_{\alpha\beta}\leq1$, there is one $h\in W_D$, independent
  of $t$, such that
  \begin{equation}\label{pc-eq:uniform-maximum}
  (xt)^+=hxt\qquad(t\in W_J).
  \end{equation}
  where $(xt)^+$ is the unique maximum of $W_Dxt$. Furthermore,
  \begin{equation}\label{pc-eq:uniform-properties}
  hx\in W^J,\qquad
  F(hx,\tau;J)=F(x,\tau;J),\qquad
  P_{hxt,\tau}=P_{xt,\tau}\quad(t\in F).
  \end{equation}
\end{enumerate}
The normalization leaves the factor $t$, and hence its sign, unchanged; there
is no power of $q$ and no global sign.
\end{proposition}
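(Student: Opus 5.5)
We first fix the orbit structure. Since $D=D_L(\tau)$ is a set of simple reflections whose connected components are the consecutive value intervals $C_1<\cdots<C_m$, the parabolic subgroup $W_D=\prod_\alpha S_{C_\alpha}$ acts on $xt$ by permuting values within each $C_\alpha$. Because $\tau$ is maximal in its left $W_D$-coset, the lifting property makes the Bruhat ideal below $\tau$ stable under $W_D$. Together with \eqref{pc-eq:left-descent-invariance}, this shows that $P_{u,\tau}$ depends only on the orbit $W_Du$. So the whole problem is to understand how left $W_D$-orbits meet the right cosets $xW_J$.

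For part (1), choose $\alpha,\beta$ with $M_{\alpha\beta}\ge2$, and let $i<i'$ be the two smallest positions in $B_\beta$ with $x(i),x(i')\in C_\alpha$. Let $s\in W_J$ be the transposition of the positions $i,i'$. The involution is $t\mapsto s't$, where $s'$ is the transposition in $W_J$ that swaps the positions currently carrying the values $x(i),x(i')$ in $xt$; in other words $xt\mapsto xts''$ for the appropriate position transposition. Equivalently, $xt\mapsto (v\,v')\,xt$ with $v=x(i)$ and $v'=x(i')$. This left multiplication lies in $W_{C_\alpha}\subseteq W_D$, so it preserves $\le\tau$ and preserves $P_{\cdot,\tau}$ by the orbit invariance above. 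Since $(v\,v')xt=xt\cdot t^{-1}(i\,i')t$ is still in $xW_J$, it maps $F$ to itself. It is fixed-point free because $v\ne v'$. It changes $\ell(t)$ by an odd amount, since it multiplies $t$ on the right by a transposition. It is an involution because the same pair of values is swapped both times.

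For part (2), assume every $M_{\alpha\beta}\le1$. Then within each block $B_\beta$ the positions carrying values of $C_\alpha$ are at most one for each $\alpha$. The maximum $(xt)^+$ of $W_Dxt$ rearranges the values of each $C_\alpha$ into decreasing order by position. The positions of $C_\alpha$-values in $xt$ form the set $t^{-1}$ applied to the corresponding positions in $x$. These positions lie in distinct $J$-blocks, and $t$ preserves blocks. Hence their relative order across blocks is independent of $t$, because blocks are consecutive.

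It follows that the sorting permutation $h\in W_D$ is determined by $x$ alone, which gives \eqref{pc-eq:uniform-maximum}. Moreover $hx$ still increases within each block: within a block, values from different $C_\alpha$ keep their order, since $h$ preserves each interval $C_\alpha$, and at most one value per $C_\alpha$ occurs in the block. So $hx\in W^J$.

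Finally, $hxt\le\tau$ if and only if $xt\le\tau$, by $W_D$-stability of the ideal, which gives $F(hx,\tau;J)=F(x,\tau;J)$. The equality $P_{hxt,\tau}=P_{xt,\tau}$ is orbit invariance again. The factor $t$ is untouched, so no sign or power of $q$ appears.

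The main obstacle is the bookkeeping in (1): we must check that the value swap preserves membership in the coset $xW_J$ and flips the parity of $\ell(t)$. We handle this by writing the swap as right multiplication by the conjugate transposition $t^{-1}(i\,i')t$, which lies in $W_J$ because both positions lie in the same block $B_\beta$.
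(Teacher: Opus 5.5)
Your proposal is correct and follows essentially the same route as the paper. In (1), your value swap $(v\,v')=x(i\,i')x^{-1}\in W_{C_\alpha}$ is the paper's $\rho=xs_px^{-1}$; your two smallest positions are in fact adjacent because $x$ increases on $B_\beta$, though you only need that a transposition is odd. In (2), you sort each $C_\alpha$ decreasingly and use that the $J$-blocks are consecutive, exactly as the paper does.

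The only slip is your first description ``$t\mapsto s't$'', which should be $t\mapsto ts'=(i\,i')t$; your later formulations $xt\mapsto(v\,v')xt=x(i\,i')t$ are the right ones.
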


\begin{proof}
Suppose first that $M_{\alpha\beta}\geq2$. Since $x$ is increasing on
$B_\beta$, two adjacent positions $p,p+1$ in that block have values in the
same interval $C_\alpha$. Let $a=s_p\in W_J$. Then
\[ \rho=xax^{-1}\in W_D,\qquad x(at)=\rho(xt). \]
The $W_D$-stability of the ideal below $\tau$ gives
$t\in F\Longleftrightarrow at\in F$, and
\eqref{pc-eq:left-descent-invariance} gives
$P_{xt,\tau}=P_{xat,\tau}$. Hence $t\mapsto at$ is the required
fixed-point-free parity-reversing involution.

Assume now that $M$ is zero-one. The maximum of $W_Du$ is obtained by
placing the values of each $C_\alpha$ decreasingly in the positions occupied
by that component. A component occurs at most once in each $J$-block.
Right multiplication by $t\in W_J$ moves that occurrence only inside its
block, so it cannot change the order of the occurrences across the ordered
blocks. Consequently the same permutation of the values of every
$C_\alpha$ maximizes all the cosets $W_Dxt$. Their product is one
$h\in W_D$ satisfying \eqref{pc-eq:uniform-maximum}.

Inside one $J$-block, the values of $x$ lie in distinct ordered
$D$-components. The element $h$ preserves each component and therefore
preserves the order between them. Thus $hx$ is still increasing on every
$J$-block, so $hx\in W^J$. Finally, $xt$ and $hxt$ lie in one left
$W_D$-orbit. Ideal stability gives equality of the two fibers, and repeated
application of \eqref{pc-eq:left-descent-invariance} gives the Kazhdan--Lusztig equality in
\eqref{pc-eq:uniform-properties}.
\end{proof}

\begin{remark}
The upper index $\tau$ stays fixed. One does not multiply $\tau$ by $h$;
membership of $h$ in $W_{D_L(\tau)}$ permits lower-index
normalization while preserving the upper index and the Kazhdan--Lusztig polynomial.
\end{remark}

\subsubsection{Deletion and induction}

For $u\in S_n$ and a position $k$, write $\widehat u$ for the permutation
obtained by deleting position $k$ and value $u(k)$ and then standardizing.

\begin{proposition}[Deletion at a cancellable position]\label{pc-prop:common-slot-deletion}
Suppose $k$ is cancellable for $[x,\tau]$, with
$x(k)=\tau(k)=v$. Reduce the $J$-block containing $k$ by one and call the
resulting parabolic $J'$. Then every $t\in F(x,\tau;J)$ fixes $k$.
Writing $t'$ for its deletion, the map $t\mapsto t'$ is a bijection
\begin{equation}\label{pc-eq:child-fiber-bijection}
 F(x,\tau;J)\xrightarrow{\sim}
 F(\widehat x,\widehat\tau;J').
\end{equation}
For every fiber term,
\[ P_{xt,\tau}(q)=P_{\widehat x\,t',\widehat\tau}(q), \qquad (-1)^{\ell(t)}=(-1)^{\ell(t')}. \]
Moreover, $\widehat x,\widehat\tau\in W^{J'}$.
\end{proposition}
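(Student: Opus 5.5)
The plan is to reduce everything to Henderson's cancellation result \cite[Proposition~2.2]{Henderson}, applied to the whole fiber at once. We use the definition of a cancellable position for the interval $[x,\tau]$, the word-form version of which is fixed by Lemma~\ref{pc-lem:dictionary}(4). Write $x(k)=\tau(k)=v$. Cancellability consists of three conditions: the left inversion equality (no inversion of $x$ or $\tau$ straddles position $k$ from the left differently), the ordinal equality, and the right inversion equality. Together these say that the rank matrices of $x$ and $\tau$ agree on the row and column through $(k,v)$. Concretely, $\#\{i<k: x(i)<v\}=\#\{i<k:\tau(i)<v\}$, and similarly on the other three quadrants.

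\textbf{Step 1: every $t\in F(x,\tau;J)$ fixes $k$.} Let $t\in W_J$ with $xt\le\tau$. By the Bruhat rank criterion, the rank counts of $xt$ in the quadrant northwest of $(k,v)$ are squeezed between those of $x$ and $\tau$. Since $x\le xt\le\tau$ and the counts of $x$ and $\tau$ coincide there, those of $xt$ coincide too. Right multiplication by $t$ only permutes positions inside the $J$-block $B$ containing $k$, and $x$ is increasing on $B$. Equal counts at both thresholds $v-1$ and $v$ over initial segments ending just before and at $k$ then force $(xt)(k)=v$. Hence $t(k)=k$. This is the step I expect to need the most care: one must use both equalities in \eqref{pc-eq:bw-word} to pin the value rather than merely its rank class, and must also check $x\le xt$. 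The latter holds since $x\in W^J$ is minimal in its coset.

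\textbf{Step 2: bijection and $J'$-minimality.} Since $t$ fixes $k$ and lies in $W_J$, deleting $k$ gives $t'\in W_{J'}$, with $\ell(t')=\ell(t)$ because no inversion of $t$ involves $k$. So the signs agree. Deletion of position $k$ and value $v$ commutes with the product: $\widehat{xt}=\widehat x\,t'$. Increasing-on-blocks is preserved, so $\widehat x,\widehat\tau\in W^{J'}$. Bruhat order is compared through rank matrices. Because the rank matrices of $xt$ and $\tau$ agree along the cross through $(k,v)$, deleting that cross preserves the inequality in both directions. Thus $xt\le\tau$ if and only if $\widehat x t'\le\widehat\tau$. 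Conversely, every $t'\in F(\widehat x,\widehat\tau;J')$ reinserts uniquely as a $t$ fixing $k$, which gives \eqref{pc-eq:child-fiber-bijection}.

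\textbf{Step 3: equality of Kazhdan--Lusztig polynomials.} Apply Henderson's cancellation to the pair $(xt,\tau)$. Position $k$ remains a common cancellable position for $[xt,\tau]$, because the quadrant counts of $xt$ equal those of $x$ by Step 1. Henderson's proposition then gives $P_{xt,\tau}(q)=P_{\widehat{xt},\widehat\tau}(q)=P_{\widehat x\,t',\widehat\tau}(q)$, completing the proof.
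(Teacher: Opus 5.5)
Your argument is correct in substance, but it proves by hand what the paper takes directly from Henderson.

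\textbf{How the two routes differ.} The paper cites Henderson's Propositions~2.2 and~2.5(4) in their interval form: deletion at a cancellable dot is an isomorphism $[x,\tau]\cong[\widehat x,\widehat\tau]$ that preserves all Kazhdan--Lusztig polynomials. Since $x\le xt\le\tau$, each $xt$ lies in this interval and so carries the dot, which gives $t(k)=k$. The fiber bijection is then the restriction of the interval isomorphism, and parity comes from $\ell(t)-\ell(t')=2\#\{i<k:t(i)>k\}$. You instead derive the interval-level facts from the rank criterion, namely the squeeze in Step~1 and the Bruhat comparison in Step~2. You invoke Henderson only pairwise, for $P_{xt,\tau}=P_{\widehat x t',\widehat\tau}$. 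Your route is more self-contained, since it needs only the pairwise cancellation statement. However, two of your justifications are wrong as stated, even though the claims they support are true.

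\textbf{Repair 1: the rank matrices do not agree along the cross.} Cancellability makes only the four values at $(k-1,v-1)$, $(k-1,v)$, $(k,v-1)$, $(k,v)$ agree, equivalently the quadrant counts. It does not make them agree along all of row $k$ and column $v$. For example, take $x=132$ and $\tau=312$, so $x\le\tau$ with cancellable dot $(3,2)$. Then $\#\{i\le 1:x(i)\le 2\}=1$ but $\#\{i\le 1:\tau(i)\le 2\}=0$.

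The converse direction $\widehat x t'\le\widehat\tau\Rightarrow xt\le\tau$ still holds, for a different reason. Write $\mathrm{rk}_u(a,b)=\#\{i\le a:u(i)\le b\}$. Every $u$ with $u(k)=v$ satisfies $\mathrm{rk}_u(k,b)=\mathrm{rk}_u(k-1,b)+\one_{\{b\geq v\}}$ and $\mathrm{rk}_u(a,v)=\mathrm{rk}_u(a,v-1)+\one_{\{a\geq k\}}$. So the inequalities on the cross follow from those off it. The off-cross inequalities are exactly those for the deleted pair, with both sides shifted by $-1$ in the southeast block.

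\textbf{Repair 2: fixing $k$ does not rule out inversions through $k$.} For instance, $t=321$ on a block with $k=2$ fixes $k$ but has $\ell(t)-\ell(t')=2$. On the fiber, your claim $\ell(t)=\ell(t')$ is nevertheless true, but it needs Step~1. Because $x$ increases on the block $B$ and $t$ permutes each block, $\mathrm{rk}_{xt}(k-1,v-1)=\mathrm{rk}_x(k-1,v-1)-\#\{i\in B:\ i<k,\ t(i)>k\}$. The squeeze then forces this count to be zero. Alternatively, settle for parity via the identity $\ell(t)-\ell(t')=2\#\{i<k:t(i)>k\}$, which is all the statement requires.
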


\begin{proof}
Henderson's cancellation theorem makes deletion at $(k,v)$ an interval
isomorphism preserving all Kazhdan--Lusztig polynomials
\cite[Propositions~2.2 and~2.5(4)]{Henderson}. Since every $xt$ in the
fiber has this common dot, $t(k)=k$, and deletion and insertion give the
bijection \eqref{pc-eq:child-fiber-bijection}. Parity is preserved because
\[ \ell(t)-\ell(t')=2\#\{i<k:t(i)>k\}. \]
Deleting a common entry also preserves increase on each parabolic block,
so $\widehat x,\widehat\tau\in W^{J'}$.
\end{proof}

\begin{proposition}[Stability under deletion]
\label{pc-prop:normalization-deletion-closure}
Start with capacity data satisfying the four hypotheses of this subsection and
delete a cancellable site supplied by Theorem~\ref{pc-thm:safe-pivot}. At the child, either the cancellation case of
Proposition~\ref{pc-prop:normalization-dichotomy} applies, or parabolic
normalization produces new capacity data satisfying all four hypotheses of
this subsection. In the second case the full fiber, every sign, and every Kazhdan--Lusztig
polynomial are preserved.
\end{proposition}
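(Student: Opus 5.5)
We start from the child state produced by Theorem~\ref{pc-thm:safe-pivot} and Proposition~\ref{pc-prop:common-slot-deletion}, namely the deleted triple $(\widehat c,\widehat r,\widehat b)$ with the parabolic $J'$. Proposition~\ref{pc-prop:common-slot-deletion} already gives a bijection of fibers preserving signs and Kazhdan--Lusztig polynomials. The task is therefore to check the four hypotheses at the child and, if needed, repair them by normalization.

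\textbf{Step 1: the hypotheses that survive deletion directly.} Capacity legality~(1) is clear, since surviving sites keep their capacities and the capacities stay strictly increasing. The gap condition~\eqref{pc-eq:sgc} at the child is exactly what Theorem~\ref{pc-thm:safe-pivot} guarantees. For the dominance inequalities~\eqref{pc-eq:delta}, take a position $p$ in the child. If $p$ precedes the deleted site, $\Delta_p$ is unchanged. If $p$ follows it, the deleted letter $d$ sits in both words, so it contributes equally to both counts and cancels. Hence $\Delta$ is unchanged at every surviving position. This is also the word form of the statement $x(b)\le\tau(r)$ in Lemma~\ref{pc-lem:dictionary}(1).

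\textbf{Step 2: the run condition~(4), which is the main obstacle.} Deleting a site can merge two strict-ascent runs of $\widehat r$. This happens when the deleted entry was the end of one run and its neighbours form a strict ascent, or when it was a run start. After such a merge, $\widehat b$ need not be strictly increasing on the merged run.

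We translate the problem by Lemma~\ref{pc-lem:dictionary}(2)--(3). Merged runs correspond to merged components of $D_L(\widehat\tau)$, and the incidence matrix $M$ at the child then has either
\begin{itemize}
\item an entry $\ge2$, meaning a lower letter is repeated in a merged run, or
\item only entries in $\{0,1\}$.
\end{itemize}
In the first case Proposition~\ref{pc-prop:normalization-dichotomy}(1) applies: the cancellation case, where the fiber sum vanishes.

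In the second case we apply Proposition~\ref{pc-prop:normalization-dichotomy}(2) and replace $\widehat x$ by $h\widehat x$. By Lemma~\ref{pc-lem:dictionary}(3), this corresponds to a new lower word $b'$ that is increasing on each run of $\widehat r$, so~(4) holds. The same proposition gives $F(h\widehat x,\widehat\tau;J')=F(\widehat x,\widehat\tau;J')$ and equality of all $P$'s, and $t$ is unchanged, so signs are unchanged.

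\textbf{Step 3: the other hypotheses survive normalization.} Since $h\in W_{D_L(\widehat\tau)}$ acts on the left and $\widehat\tau$ is fixed, the upper word $\widehat r$ and the capacities are untouched, so~(1) for $r$ and~\eqref{pc-eq:sgc} persist.

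For $b'$, I would argue two points.
\begin{itemize}
\item \emph{Dominance.} Left multiplication by $h\in W_D$ stays inside the lower ideal of $\widehat\tau$, which is $W_D$-stable. It therefore preserves $h\widehat x\le\widehat\tau$, which is~\eqref{pc-eq:delta} by Lemma~\ref{pc-lem:dictionary}(1).
\item \emph{Capacity bounds $b'_i\le c_i$.} Within a run $R$ of $\widehat r$, the letters of $\widehat b$ are sorted increasingly. The run of $\widehat r$ is itself strictly increasing with $\widehat r_i\le c_i$. Dominance restricted to the run, argued as in the last-run order-statistics step of Theorem~\ref{pc-thm:safe-pivot}, then gives $b'_i\le\widehat r_i\le c_i$.
\end{itemize}
This last order-statistics comparison across a run not preceded by equality of contents is the delicate point. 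If it fails in general, I would use a weaker fact: sorting within a run can only move smaller letters to earlier (lower-capacity) sites. Since capacities increase, sorting preserves legality, because the sorted word is pointwise at most the maximum of the original letters seen so far in the run.
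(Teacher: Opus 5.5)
Your Steps 1--3 follow the paper's proof essentially line by line:
\begin{enumerate}
\item deletion of a common letter preserves content, capacity legality and the defects $\Delta$;
\item the safe pivot of Theorem~\ref{pc-thm:safe-pivot} preserves \eqref{pc-eq:sgc};
\item Lemma~\ref{pc-lem:dictionary} with Proposition~\ref{pc-prop:normalization-dichotomy} gives the dichotomy between cancellation and normalization;
\item $W_D$-stability of the ideal below $\widehat\tau$ (equivalently \eqref{pc-eq:uniform-properties}) restores \eqref{pc-eq:delta}.
\end{enumerate}

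The one step you should discard is your first argument for capacity legality after sorting, because the inequality $b'_i\le\widehat r_i$ is false in general. On a run occupying positions $(e',e]$, the count difference $\#\{i:\widehat r_i\le a\}-\#\{i:\widehat b_i\le a\}$ taken over the run equals $\Delta_e(a)-\Delta_{e'}(a)$, which has no sign in general. Only for the last run, which is a suffix, does content equality turn it into $-\Delta_{e'}(a)\le0$; that is the case used in the proof of Theorem~\ref{pc-thm:safe-pivot}. For the first run the difference is $\Delta_e(a)\ge0$, which gives the reverse inequality $b'_i\ge\widehat r_i$.

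For a concrete instance, take $\widehat r=(1,2,1,3)$ and $\widehat b=(3,1,2,1)$ with capacities $(3,4,5,6)$. These satisfy capacity legality, \eqref{pc-eq:delta} and \eqref{pc-eq:sgc}, and no lower letter repeats inside an upper run. Sorting gives $b'=(1,3,1,2)$, and $b'_2=3>\widehat r_2=2$.

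Your fallback argument is correct, and it is exactly the paper's argument. Along a run, $b'_i\le\max_{j\le i}\widehat b_j\le\max_{j\le i}c_j=c_i$, where $j$ ranges over sites of the run. Equivalently, a feasible assignment has at least $i$ values at most $c_i$, so its $i$th smallest value is at most $c_i$. With that replacement, and noting that zero-one incidence makes the sorted runs strictly increasing, your proof is complete.
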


\begin{proof}
Delete the same word site and capacity from $r$ and $b$. Equal content and
capacity legality are preserved. A child initial-segment defect is the corresponding
parent initial-segment defect, before or after the deleted common letter, so
\eqref{pc-eq:delta} is preserved. The choice of the site preserves \eqref{pc-eq:sgc}.

Recompute the ascent runs of the child upper word. By
Lemma~\ref{pc-lem:dictionary}, an incidence entry at least two is the
cancellation case. Otherwise the incidence is zero-one, and the common left-parabolic element
of Proposition~\ref{pc-prop:normalization-dichotomy} increasingly sorts the
lower letters inside every new upper ascent run. This restores strict increase on each ascent run.

Sorting also preserves capacity legality. Indeed, let
$c_1<\cdots<c_k$ be the capacities in one run. A feasible assignment to
these sites contains at least $i$ values at most $c_i$; hence its $i$th
smallest value is at most $c_i$. Thus the increasing rearrangement is
feasible. Finally, \eqref{pc-eq:uniform-properties} gives $hx\leq\tau$, and
the Bruhat criterion for stable standardizations in Lemma~\ref{pc-lem:dictionary} restores all
initial-segment inequalities. The upper word, hence \eqref{pc-eq:sgc}, is unchanged by
normalization. Fiber, sign, and Kazhdan--Lusztig preservation are given by
\eqref{pc-eq:uniform-properties} and
Proposition~\ref{pc-prop:common-slot-deletion}.
\end{proof}

\subsubsection{The signed fiber sum}

Define
\begin{equation}\label{pc-eq:antispherical-sum}
 A_J(x,\tau;q)=
 \sum_{t\in F(x,\tau;J)}(-1)^{\ell(t)}P_{xt,\tau}(q).
\end{equation}

\begin{lemma}[Initial reduction]\label{pc-lem:initial-reduction}
Suppose that $r$ and $b$ have the same content, are capacity-legal, satisfy
the initial-segment inequalities \eqref{pc-eq:delta}, and that $r$ satisfies the
gap condition \eqref{pc-eq:sgc}. No assumption is made on the order of
$b$ inside the ascent runs of $r$. Then either the terms in
\eqref{pc-eq:antispherical-sum} cancel in pairs, or parabolic normalization
preserves the fiber, all signs, and all Kazhdan--Lusztig polynomials and
produces a lower word which is strictly increasing on each ascent run of
$r$.
\end{lemma}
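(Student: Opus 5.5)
This lemma is essentially a direct application of Proposition~\ref{pc-prop:normalization-dichotomy} to the initial state, with the upper word $r$ fixed. The plan is to translate the word data into the permutation state \eqref{pc-eq:fiber-definition}. Put $x=x(b)$ and $\tau=\tau(r)$. By Lemma~\ref{pc-lem:dictionary}(1), the initial-segment inequalities \eqref{pc-eq:delta} give $x\leq\tau$ with $x,\tau\in W^J$. Hence the fiber $F(x,\tau;J)$ and the signed sum \eqref{pc-eq:antispherical-sum} are defined.

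Next I would form the incidence matrix $M$ of \eqref{pc-eq:incidence-matrix}. By Lemma~\ref{pc-lem:dictionary}(2), its rows correspond, in reverse order, to the maximal strict-ascent runs of $r$. Each entry counts the occurrences of one lower letter inside one upper ascent run. The argument then splits into two cases.

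\emph{Case 1: some entry is at least two}, i.e.\ a lower letter repeats inside an upper run. Then Proposition~\ref{pc-prop:normalization-dichotomy}(1) supplies a fixed-point-free, sign-reversing involution $t\mapsto at$ on $F(x,\tau;J)$ that preserves $P_{xt,\tau}$. So the terms of \eqref{pc-eq:antispherical-sum} cancel in pairs.

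\emph{Case 2: $M$ is zero-one.} Proposition~\ref{pc-prop:normalization-dichotomy}(2) gives a single $h\in W_{D_L(\tau)}$ with $hx\in W^J$, $F(hx,\tau;J)=F(x,\tau;J)$, and $P_{hxt,\tau}=P_{xt,\tau}$. Since $t$ is unchanged, so are all signs. By Lemma~\ref{pc-lem:dictionary}(3), the left maximization by $h$ places the lower letters in increasing order on each ascent run of $r$. Because $M$ is zero-one, these letters are distinct, so the increase is strict.

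It remains to check that $hx$ is again of the form $x(b')$ for a word $b'$ satisfying the remaining hypotheses. First, $b'$ has the same content as $b$, since $h$ acts by permuting values within $D$-components, equivalently by sorting lower letters within upper runs. Second, $b'$ is capacity-legal: within one run the capacities $c_{i_1}<\cdots<c_{i_k}$ increase, and a feasible assignment has at least $j$ values at most $c_{i_j}$, so its increasing rearrangement is still feasible. This is exactly the argument in Proposition~\ref{pc-prop:normalization-deletion-closure}. Third, \eqref{pc-eq:delta} holds for $b'$: from $hx\leq\tau$ and Lemma~\ref{pc-lem:dictionary}(1), read in reverse, we get $y(r)\leq y(b')$. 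Finally, $r$ is untouched, so the gap condition \eqref{pc-eq:sgc} is preserved trivially.

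The main obstacle is the identification in Case 2 of the normalized permutation $hx$ with the stable standardization of the run-sorted word. This requires checking that the order-reversing map $\phi$ and the left $W_D$-action commute correctly with sorting positions inside ascent runs. I would verify this directly from the definitions of $y$ and $\phi$, using the descent/run dictionary in Lemma~\ref{pc-lem:dictionary}(2)--(3).
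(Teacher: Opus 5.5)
Your proposal is correct and follows the paper's proof. You apply Proposition~\ref{pc-prop:normalization-dichotomy}, use the involution when an incidence entry is at least two, and otherwise combine the common element $h$, Lemma~\ref{pc-lem:dictionary}(3), the order-statistic capacity argument, and \eqref{pc-eq:uniform-properties}; your extra checks (content, \eqref{pc-eq:delta} via $hx\leq\tau$, the untouched gap condition) repeat what the paper does inside Proposition~\ref{pc-prop:normalization-deletion-closure}, and the identification you flag as the main obstacle is exactly what the paper takes as given from Lemma~\ref{pc-lem:dictionary}(3).
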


\begin{proof}
Apply Proposition~\ref{pc-prop:normalization-dichotomy}. If an incidence
entry is at least two, its fixed-point-free involution gives the first
alternative. Otherwise the incidence matrix is zero-one, and the common
left-parabolic element increasingly orders the lower letters on each upper
ascent run. The order-statistic argument in the proof of
Proposition~\ref{pc-prop:normalization-deletion-closure} shows that this
rearrangement remains capacity-legal. The fiber, signs, and
Kazhdan--Lusztig polynomials are preserved by
\eqref{pc-eq:uniform-properties}.
\end{proof}

\begin{theorem}\label{pc-thm:signed-fiber}
Let $r$ and $b$ have the same content, be capacity-legal, satisfy the
initial-segment inequalities \eqref{pc-eq:delta}, and suppose that $r$ satisfies
\eqref{pc-eq:sgc}. For the associated parabolic state,
\begin{equation}\label{pc-eq:fiber-value}
 A_J(x,\tau;q)=
 \begin{cases}
  1,&F(x,\tau;J)=\{e\},\\
  0,&|F(x,\tau;J)|>1.
 \end{cases}
\end{equation}
The equality is coefficientwise in $q$.
\end{theorem}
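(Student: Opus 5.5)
We argue by induction on the length $n$ of the words, using the machinery already set up in this subsection. First we apply Lemma~\ref{pc-lem:initial-reduction}. If the terms of \eqref{pc-eq:antispherical-sum} cancel in pairs, then $A_J(x,\tau;q)=0$ coefficientwise. In that case the fixed-point-free involution of Proposition~\ref{pc-prop:normalization-dichotomy}(1) forces $|F(x,\tau;J)|\geq2$, since $F$ is nonempty (it contains $e$ because $x\leq\tau$) and has even cardinality. So \eqref{pc-eq:fiber-value} holds in this case. Otherwise we replace $x$ by the normalized element $hx$. This changes neither the fiber $F$, nor the signs, nor the Kazhdan--Lusztig polynomials. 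Afterwards all four hypotheses of the subsection hold, including the run-monotonicity of $b$. Both sides of \eqref{pc-eq:fiber-value} therefore remain unchanged, and we may assume the four hypotheses.

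Next we treat the base situation $b=r$, that is, $x=\tau$. Then $F(\tau,\tau;J)=\{t\in W_J:\tau t\leq\tau\}$. Since $\tau\in W^J$, every $t\neq e$ satisfies $\ell(\tau t)=\ell(\tau)+\ell(t)>\ell(\tau)$, so $\tau t\not\leq\tau$. Hence $F=\{e\}$ and $A_J=P_{\tau,\tau}=1$, as required. This also covers $n=1$, where equal content forces $b=r$.

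For the induction step with $b\neq r$, Theorem~\ref{pc-thm:safe-pivot} supplies a cancellable site whose deletion preserves \eqref{pc-eq:sgc}. By Lemma~\ref{pc-lem:dictionary}(4) this site is a common cancellable position of $[x,\tau]$. Proposition~\ref{pc-prop:common-slot-deletion} then gives a sign- and KL-preserving bijection $F(x,\tau;J)\to F(\widehat x,\widehat\tau;J')$. Consequently $A_J(x,\tau;q)=A_{J'}(\widehat x,\widehat\tau;q)$ and $|F|=|F'|$. By Proposition~\ref{pc-prop:normalization-deletion-closure}, the child either falls into the cancellation case, where the value is $0$ and $|F'|>1$ as above, or normalizes to capacity data satisfying the four hypotheses with one fewer site. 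The induction hypothesis then gives \eqref{pc-eq:fiber-value} for the child. Transporting back along the bijection gives it for the parent, with the dichotomy $F=\{e\}$ versus $|F|>1$ preserved because the cardinalities agree.

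The main obstacle is the bookkeeping in the induction. The child capacities are no longer $1,\dots,n-1$ but the surviving original indices, and their strict increase must be retained. We also need the child's stable standardization to be exactly $(\widehat x,\widehat\tau)$ with parabolic $J'$; this is the content of Lemma~\ref{pc-lem:dictionary}(4) together with Proposition~\ref{pc-prop:common-slot-deletion}. Finally, the induction must be on the pair (capacity data, upper word), and \eqref{pc-eq:sgc} is needed only for the upper word. That word is left unchanged by normalization and is preserved by the chosen deletion, so the hypotheses close up. I would check carefully that, after deletion, the equality of contents still identifies the child parabolic quotient with the one built from the remaining letters.
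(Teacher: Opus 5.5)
Your proposal is correct and follows essentially the same argument as the paper. It applies Lemma~\ref{pc-lem:initial-reduction}, settles the base case $x=\tau$ using $\tau\in W^J$, and inducts on rank via the gap-preserving cancellable site of Theorem~\ref{pc-thm:safe-pivot}, Proposition~\ref{pc-prop:common-slot-deletion}, and Proposition~\ref{pc-prop:normalization-deletion-closure}. Your explicit remark that a fixed-point-free involution on the nonempty fiber forces $|F(x,\tau;J)|\geq 2$ spells out a step the paper leaves implicit.
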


\begin{proof}
Apply Lemma~\ref{pc-lem:initial-reduction}. In the cancellation case the
sum is zero. Otherwise the lower word satisfies all four hypotheses above.
We argue by induction on the rank.

If the normalized lower word equals the upper word, then $x=\tau$. Since
$\tau\in W^J$,
\[ \ell(\tau t)=\ell(\tau)+\ell(t)\qquad(t\in W_J), \]
so $\tau t\leq\tau$ forces $t=e$, and the sum is
$P_{\tau,\tau}=1$.

If the two words differ, Theorem~\ref{pc-thm:safe-pivot} gives a
cancellable site whose deletion preserves the gap condition.
Proposition~\ref{pc-prop:common-slot-deletion} identifies the sum with a
child sum of rank one less, and
Proposition~\ref{pc-prop:normalization-deletion-closure} returns the child
to the same class unless the cancellation case already applies. Fiber
cardinality is preserved. Consequently a singleton fiber eventually
reaches equality, while a nonsingleton fiber eventually enters the
cancellation case. This proves \eqref{pc-eq:fiber-value}.
\end{proof}

\subsubsection{Spherical--antispherical inversion}

For $u,v\in W^J$, put
\[
 S^J_{u,v}(q)=P_{uw_J,vw_J}(q),
 \qquad
 A^J_{a,b}(q)=\sum_{\substack{t\in W_J\\at\leq b}}
 (-1)^{\ell(t)}P_{at,b}(q).
\]
With $\phi(u)=w_0uw_J$, finite spherical--antispherical inversion gives,
in the present normalization,
\[ \bigl(S^J(q)^{-1}\bigr)_{u,v}
 =(-1)^{\ell(v)-\ell(u)}A^J_{\phi(v),\phi(u)}(q). \]
This is \cite[Proposition~6.1]{Couillens} in the usual
Kazhdan--Lusztig normalization.

For a root-degree fiber $I_d$, stable standardization identifies the
graded canonical transition with the spherical matrix:
\[ D^{\rm can}(r,v;q)=S^J_{y(r),y(v)}(q). \]
Deodhar's relative M\"obius formula \cite{Deodhar,StembridgeMobius} is
\[
 \mu_{I_d}(r,v)=
 \begin{cases}
 (-1)^{\ell(y(v))-\ell(y(r))},
  &F(\phi(y(v)),\phi(y(r));J)=\{e\},\\
 0,&|F(\phi(y(v)),\phi(y(r));J)|>1.
 \end{cases}
\]
For a row satisfying the gap condition, Theorem~\ref{pc-thm:signed-fiber}
identifies the antispherical sum with the same singleton criterion.
Therefore that row of $D^{\rm can}(q)^{-1}$ is the M\"obius row, or
\[ \sum_{v\in I_d}\mu_{I_d}(w,v)D^{\rm can}(v,z;q)=\delta_{w,z}. \]

\subsection{\texorpdfstring{$\Lambda_Q$}{Lambda-Q}-rigidity for the recursive record class}
\label{app:record-rigidity}

For a legal rank word $r$, let
\[
 \Sigma(r)=\{i:r_i<i\},\qquad S_i=[r_i,i-1]\quad(i\in\Sigma(r)).
\]
For $i,j\in\Sigma(r)$ write
\begin{align}
 i\vdash j
 &\quad\Longleftrightarrow\quad
 i=j\ \text{ or }\ i<j\ \text{ and }\ r_i\leq r_j\leq i-1,
 \label{eq:record-Hom-relation}\\
 i\prec j
 &\quad\Longleftrightarrow\quad
 i<j\ \text{ and }\ r_i<r_j\leq i.
 \label{eq:record-Ext-relation}
\end{align}
These are respectively the conditions
$\Hom_Q(S_j,S_i)\neq0$ and $\Ext_Q^1(S_i,S_j)\neq0$. Put
\[
 \mathcal V(r)=\{(i,j):i\vdash j\},
 \qquad
 \mathcal U(r)=\{(i,j):i\prec j\}.
\]
An extension pair $(i,j)\in\mathcal U(r)$ is called a \emph{cover pair}
if there is no $z\notin\{i,j\}$ such that
\[
 i\vdash z\prec j
 \qquad\text{or}\qquad
 i\prec z\vdash j.
\]

For $\lambda\in\CC^{\mathcal U(r)}$, the commutator matrix of
$\End_Q(\bigoplus_iS_i)$ acting on
$\Ext_Q^1(\bigoplus_iS_i,\bigoplus_iS_i)^*$ has rows indexed by
$\mathcal U(r)$ and columns by $\mathcal V(r)$, with entries
\begin{equation}\label{eq:record-commutator-matrix}
 M(\lambda)_{(i,j),(i,j')}=\lambda_{(j',j)},\qquad
 M(\lambda)_{(i,j),(i',j)}=-\lambda_{(i,i')},
\end{equation}
and all other entries zero. Order $\mathcal U(r)$ by a linear extension of
containment:
\[
 (i',j')\sqsubseteq(i,j)
 \quad\Longleftrightarrow\quad
 i\vdash i'\ \text{ and }\ j'\vdash j.
\]
The distinct right endpoints make $\bigoplus_iS_i$ multiplicity-free.
Proposition~4.3 of \cite{LapidGreedy} identifies the matrix above and
shows that such an order is adapted. Theorem~4.4 of that paper activates
an extension coordinate exactly when the processed rows are dependent for
the parameters already activated, and aborts exactly when the new active
edge closes a cycle. The procedure succeeds precisely when the conormal
component is $\Lambda_Q$-rigid; on success the full commutator matrix has
full row rank.

\begin{theorem}[Insertion theorem for recursive records]
\label{thm:record-insertion}
Let $\alpha=(\bar\alpha,r)$, put $h=\rho_N(\alpha)$, and assume
$\tau_{\bar\alpha}\in\mathcal C_{N-1}$ and every record of $\alpha$ is
internal. Suppose the greedy run for $\tau_{\bar\alpha}$ has active graph equal to
its cover forest $F_0$. Then the greedy run after inserting the last
letter has the following form.
\begin{enumerate}[(i)]
 \item The new cover pairs with second endpoint $N$ are exactly
 \[
  (p_x,N)\quad\text{for the internal $R1$ record }x,
  \qquad
  (p_{y-1},N)\quad\text{for the internal $R2$ record }y.
 \]
 There is at most one record of each type, and the old endpoints of the
 new covers lie in distinct connected components of $F_0$.
 \item For every new noncover pair $e=(q,N)$ there is a new cover
 $f_e=(c_e,N)\sqsubset e$ with $q\vdash c_e$. In the row of $e$, the
 column $(q,c_e)$ contains the parameter $\lambda_{f_e}$, and this
 parameter occurs in that column in no other row.
 \item Order the old rows first, then the new cover rows, and finally the
 new noncover rows by containment. After eliminating the old rows and
 restricting to the diagonal columns together with the columns
 $(q,c_e)$, the remaining submatrix has the form
 \begin{equation}\label{eq:record-insertion-matrix}
  \begin{pmatrix}
   B_{F_1}&0\\
   *&D
  \end{pmatrix},
 \end{equation}
 where $F_1$ is the set of new covers, $B_{F_1}$ is their weighted
 incidence matrix modulo the old forest components, and
 \[
  D=\op{diag}(\pm\lambda_{f_e}:e\text{ a new noncover}).
 \]
 Both diagonal blocks have full row rank. Consequently the greedy
 procedure activates exactly the new cover pairs, every successive rank
 test succeeds, and the new active graph $F_0\cup F_1$ is again a forest.
\end{enumerate}
\end{theorem}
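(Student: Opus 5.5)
The plan is to prove the three parts in order, keeping track of how the new interval $S_N=[h,N-1]$, with $h=\rho_N(\alpha)$, meets the old intervals $S_i=[\rho_i,i-1]$. Since $N$ is the largest right endpoint, the relations $i\vdash N$ and $i\prec N$ with $i<N$ read $\rho_i\le h\le i-1$ and $\rho_i<h\le i$ respectively. So the new extension pairs are the sites $q$ with $\rho_q<h\le q$.

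\textbf{Part (i).} A pair $(q,N)$ fails to be a cover exactly when some $z$ satisfies $q\vdash z\prec N$ or $q\prec z\vdash N$. I will translate both conditions through $u=\tau_{\bar\alpha}$, writing $p_z=u^{-1}(z)$ and using $\rho_{p}=p-\alpha_{p-1}$.
\begin{itemize}
\item The covers should be the positions $q$ that are last among the values in a window just below or straddling $h$. This gives the $R1$ condition $p_x>\max\{p_z:x<z\le h\}$ and the $R2$ condition $p_{y-1}>\max\{p_z:z\in[h-1,y]\setminus\{y-1\}\}$.
\item Internality supplies the value of the new vertex: $\rho_{p_x}=x$, resp.\ $\rho_{p_{y-1}}=h-1$. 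This is what makes the pair an extension pair rather than a $\vdash$-pair, as in Proposition~\ref{prop:record-obstructions}(iii).
\item To show there is at most one record of each type, take two candidate records. The later-positioned one dominates the window of the other, so internality of both contradicts the exchange of letters in Proposition~\ref{prop:record-obstructions}(iii).
\item For the claim that the old endpoints lie in distinct components of $F_0$, the $R1$ endpoint has rank letter below $h-1$ and the $R2$ endpoint has rank letter $h-1$. I would argue that a path of old covers joining them would have to pass through an interval sealing the gap. Lemma~\ref{lem:sealed-gap} and the gap condition from Theorem~\ref{thm:homogeneous-record-outer-bound} exclude such an interval.
\end{itemize}

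\textbf{Part (ii).} Let $e=(q,N)$ be a new noncover. Then there is a witness $z$ with $q\vdash z\prec N$, or with $q\prec z\vdash N$; the latter is impossible since $z\vdash N$ forces $z=N$. Choose the witness that is maximal in the containment order $\sqsubseteq$. Its pair $(z,N)$ should then be a cover, hence one of the record covers from (i); set $c_e=z$. By \eqref{eq:record-commutator-matrix}, the row of $e$ has entry $\lambda_{(j',j)}$ in column $(i,j')$, so row $(q,N)$ has entry $-\lambda_{(c_e,N)}$ in column $(q,c_e)$. To see that this parameter occurs in that column in no other row, note that column $(q,c_e)$ only meets rows $(q,j)$ with $c_e\prec j$. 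Among the new rows only $j=N$ carries the parameter $\lambda_{f_e}$, and the old rows do not involve new parameters at all.

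\textbf{Part (iii) and the main obstacle.} I process the rows in Lapid's adapted order (Proposition 4.3 and Theorem 4.4 of \cite{LapidGreedy}): old rows first, then new covers, then new noncovers. By the induction hypothesis the old rows have full rank, with active graph $F_0$, so I eliminate them.
\begin{itemize}
\item On the diagonal columns $(i,i)$, a cover row $(c,N)$ has entries $\lambda_{(c,N)}$ at $(c,c)$ and $-\lambda_{(c,N)}$ at $(N,N)$. After quotienting by the old forest components, this is the weighted incidence matrix $B_{F_1}$.
\item The noncover rows restricted to the columns $(q,c_e)$ give the diagonal block $D$ from (ii). The upper-right zero block holds because the cover rows $(c,N)$ have no entry in the columns $(q,c_e)$ with $q\ne c$.
\item $B_{F_1}$ is a star from the new vertex $N$ to at most two distinct old components, so it is acyclic and has full row rank for generic $\lambda$. $D$ has full row rank for generic $\lambda$ as well.
\end{itemize}
Lapid's criterion then gives the conclusion: the greedy procedure activates exactly $F_1$, every successive rank test succeeds, and $F_0\cup F_1$ is again a forest.

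The hardest step is the distinct-components claim in (i), together with checking that eliminating the old rows leaves no contributions in the block columns that spoil the form \eqref{eq:record-insertion-matrix}. For the first, I would try to show that each connected component of $F_0$ occupies a consecutive range of rank letters. This should follow by induction from the same insertion, since each new cover joins letters that are adjacent across $h$. The two record endpoints then lie in different ranges, separated by $h-1$.
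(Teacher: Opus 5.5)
Your plan follows the paper's architecture. The new extension pairs are the $q$ with $\rho_q<h\le q$, the covers are exactly the internal records, each noncover descends to a cover that supplies a pivot, and the matrix is block triangular with a forest-incidence block. The genuine gap is the step you flag as hardest: the claim that the old endpoints of the two new covers lie in distinct components of $F_0$. Neither route you suggest proves it, and everything in (iii) about $B_{F_1}$ and the forest depends on it.

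Rank-letter ranges cannot separate the two endpoints. An internal $R1$ record creates a cover joining the letters $x$ and $h$, and these need not be adjacent, so ``each new cover joins letters adjacent across $h$'' is false. Concretely, $w=2413\in\mathcal C_4$ has rank word $(1,2,1,3)$, and its cover forest is the single edge $\{3,4\}$ joining the letters $1$ and $3$. So with $\tau_{\bar\alpha}=2413$, the forest $F_0$ already has a component whose letters are not consecutive. Even consecutive ranges would not cut the letter line between $x\le h-2$ and $h-1$. Lemma~\ref{lem:sealed-gap} also says nothing about paths in $F_0$.

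The missing idea is positional and comes from the $R2$ record inequality. When both records occur, one first shows $x\le h-2$ and $q_2=p_{y-1}<q_1=p_x$. If $q_2>q_1$, internality of the $R1$ record puts all of $1,\dots,h$ to the left of $q_2$, contradicting $\rho_{q_2}=h-1$. If $x=h-1$, the $R2$ inequality gives $p_{y-1}>p_{h-1}=q_1$. A path in $F_0$ from $q_2$ to $q_1$ must then use a cover edge passing from rank letter at least $h-1$ to rank letter at most $h-2$. Substituting into \eqref{eq:record-Hom-relation}--\eqref{eq:record-Ext-relation}, the paper shows that such an edge, on a path leaving $q_2$ to the right, exhibits some $z\in[h-1,y]\setminus\{y-1\}$ with $p_z>p_{y-1}$. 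This contradicts the $R2$ record.

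There is also a false step in (ii). You discard witnesses $q\prec z\vdash N$ because ``$z\vdash N$ forces $z=N$''. By \eqref{eq:record-Hom-relation}, for $z<N$ the relation $z\vdash N$ only means $\rho_z\le h\le z-1$. The repair is as follows:
\begin{itemize}
\item If $\rho_z<h$, then $z$ itself satisfies $q\vdash z\prec N$.
\item In the remaining case $\rho_z=h$, apply Lemma~\ref{lem:sealed-gap} to the last position beyond $q$ carrying the letter $h$. This gives $t$ with $\rho_t=h-1$, and then $q\vdash t\prec N$.
\end{itemize}
The witness you want is minimal, not maximal, for $\sqsubseteq$, i.e.\ the one with largest first endpoint. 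The descent also uses that $q\vdash z\vdash z'$ with $z,z'\prec N$ gives $q\vdash z'$. This holds here because $\rho_{z'}\le h-1\le q-1$, although $\vdash$ is not transitive in general. Finally, the entry of row $(q,N)$ in column $(q,c_e)$ is $+\lambda_{f_e}$, not $-\lambda_{f_e}$. That column also meets the rows $(i,c_e)$ with $i\prec q$, but these carry only old parameters, so your uniqueness conclusion survives.
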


\begin{proof}
By \eqref{eq:record-Ext-relation}, the new extension pairs are precisely
$(q,N)$ with $\rho_q<h\leq q$. Write $x=u_q$, where
$u=\tau_{\bar\alpha}$. If $x<h$, a value $a\in(x,h]$ with $p_a>q$
produces an intermediate vertex $p_a$ satisfying
$q\vdash p_a\prec N$. Thus the pair is a cover exactly when
$q=p_x$ is an $R1$ record and internality gives $\rho_q=x$. If
$x\geq h$, the same substitution on the interval $[h-1,x+1]$ shows that
the cover condition is exactly the internal $R2$ record condition for
$y=x+1$. This proves the cover classification.

An internal $R1$ record is the unique rightmost position among
$p_1,\ldots,p_h$, so there is at most one. If $y<y'$ were two internal
$R2$ records, the record condition for $y'$ places the additional smaller
value $y-1$ to the left of $p_{y'-1}$, contradicting
$\rho_{p_{y'-1}}=h-1$; hence there is at most one $R2$ record. If both
occur, with vertices $q_1=p_x$ and $q_2=p_{y-1}$, then
$x\leq h-2$ and $q_2<q_1$.
Indeed, $q_2>q_1$ would place all values $1,\ldots,h$ to the left of
$q_2$, contradicting $\rho_{q_2}=h-1$, while $x=h-1$ contradicts the
$R2$ right-to-left maximum condition.

We next show that these two old endpoints cannot lie in one component of
$F_0$. Along a cover edge, direct substitution in
\eqref{eq:record-Hom-relation}--\eqref{eq:record-Ext-relation} shows that
crossing from a vertex of rank-word value at least $h-1$ to one of value
at most $h-2$ crosses a start-word value in
$[h-1,y]\setminus\{y-1\}$. If a path starts at the $R2$ vertex $q_2$ and
ends to its right, the crossed value has position strictly larger than
$q_2$. A path in $F_0$ from $q_2$ to $q_1$ would therefore give
\[
 z\in[h-1,y]\setminus\{y-1\},\qquad p_z>p_{y-1},
\]
contradicting the $R2$ record inequality. The endpoints of the new covers
are thus in distinct old components, proving~(i).

Let $e=(q,N)$ be a new noncover. The preceding cover test gives an
intermediate $z$ with $q\vdash z\prec N$. Replacing $(q,N)$ by $(z,N)$
strictly increases the first endpoint and decreases the pair in the
containment order. Iteration terminates at a new cover
$f_e=(c_e,N)$ with $q\vdash c_e$. Formula
\eqref{eq:record-commutator-matrix} places $\lambda_{f_e}$ in column
$(q,c_e)$ of the row $e$. A parameter with second endpoint $N$ occurs in
that column only in the row whose first endpoint is $q$; old rows contain
no such parameter, and another new row has a different first endpoint.
This proves~(ii), and the selected noncover columns give the diagonal
block $D$ in \eqref{eq:record-insertion-matrix}.

A new cover is minimal in the containment order, so before its own
parameter is activated its row contains no previously active new
parameter. After activation, its diagonal part is
\[
 \lambda_e(\mathbf e_q-\mathbf e_N),\qquad e=(q,N).
\]
After contracting each component of the old forest, these rows form the
weighted oriented incidence matrix $B_{F_1}$ of the new edges. The old
endpoints lie in distinct old components, and all new edges meet the new
vertex $N$; hence adjoining them creates a forest. Scaling rows by their
nonzero parameters reduces $B_{F_1}$ to the unweighted incidence matrix of
a forest, which has rank $|F_1|$. Thus both diagonal blocks in
\eqref{eq:record-insertion-matrix} have full row rank.

This matrix form also describes the greedy run. Each cover row is activated,
whereas every noncover row has the already active pivot
$\lambda_{f_e}$ and is therefore independent without activating
$\lambda_e$. The combined incidence graph remains a forest, so the
procedure never aborts. This proves~(iii).
\end{proof}

\begin{proof}[Proof of Proposition~\ref{prop:record-class-rigidity}]
Induct on $N$. The assertion is immediate for $N=1$. Write
$w=\tau_\alpha$ with $\alpha=(\bar\alpha,r)$. By
\eqref{eq:recursive-record-class}, $\tau_{\bar\alpha}$ belongs to
$\mathcal C_{N-1}$ and every record is internal. The induction hypothesis
and Theorem~\ref{thm:record-insertion} show that the adapted greedy
run succeeds, that every processed row has the required rank, and that
the activated graph is the cover forest. Lapid's criterion then gives a
dense orbit in the conormal component, hence $\Lambda_Q$-rigidity.
\end{proof}

\section{Proofs for the geometric unit-column theorem}
\label{app:unit-column-proofs}

This appendix proves the two permutation-theoretic statements used in
Theorem~\ref{thm:geometric-unit-columns}. The Schubitope and vexillary
results are quoted; the deductions from them are given here.

\subsection{Rothe diagrams and the weight-orbit property}
\label{app:weight-orbit-proof}

Fix $u\in S_N$. Write its {\em Rothe diagram} by columns as $D(u)=(D_1,\ldots,D_N)$.
Thus
\[
 D_j=\{i<u^{-1}(j):u_i>j\},
\]
and the row-sum vector of $D(u)$ is $L(u)$. If $D_j=\{d_1<\cdots<d_k\}$,
let $P_j$ be the base polytope of the Schubert matroid whose bases are
\[
 B=\{b_1<\cdots<b_k\},
 \qquad b_r\leq d_r.
\]
Fink--M\'esz\'aros--St.~Dizier proved that
\cite{FinkMeszarosStDizier}
\begin{equation}\label{eq:Schubitope-Minkowski}
 \op{Newt}(\Sch_u)=\sum_{j=1}^N P_j,
 \qquad
 \op{supp}(\Sch_u)
 =\op{Newt}(\Sch_u)\cap\ZZ^N.
\end{equation}
For a column $D_j$, its \emph{movable interval} is the interval from
its first missing row to its last occupied row; it is empty when
$D_j$ is an initial interval. Dou--Fan--Liu proved that, for a Rothe
diagram, the following are equivalent
\cite[Corollary~1.2]{DouFanLiu}:
\begin{equation}\label{eq:lattice-free-Schubitope}
 \begin{split}
 &\op{Newt}(\Sch_u)\text{ has no lattice points other than
 its vertices};\\
 &\text{the nonempty movable intervals of the columns are pairwise
 disjoint};\\
 &u\in\Av_N(1423,1432,13254).
 \end{split}
\end{equation}

By the Rothe-diagram characterization of vexillary
permutations, the columns of $D(u)$ are linearly ordered by inclusion if
and only if $u$ avoids $2143$; see
\cite[Proposition~9.6]{FultonFlags}.

\begin{lemma}\label{lem:movable-column-shape}
Suppose that $u$ avoids $1423$ and $2143$. If a Rothe column has
nonempty movable interval $[a,b]$, then
$D_j=[1,a-1]\sqcup[t,b]$ for some $a<t\leq b$.
\end{lemma}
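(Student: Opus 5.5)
We argue directly from the column description $D_j=\{i<u^{-1}(j):u_i>j\}$ and turn any failure of the claimed shape into one of the two forbidden patterns. Put $m=u^{-1}(j)$. By the definition of the movable interval $[a,b]$:
\begin{itemize}
 \item rows $1,\ldots,a-1$ belong to $D_j$;
 \item row $a$ is the first missing row, so $a\notin D_j$;
 \item $b\in D_j$ is the last occupied row, and $a<b<m$.
\end{itemize}
The key remark is what a missing row means. If $k<m$ and $k\notin D_j$, then $u_k\neq j$ and $u_k\not>j$, so $u_k<j$. In particular $u_a<j$, while $u_i>j$ for every occupied row $i$.

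The assertion $D_j=[1,a-1]\sqcup[t,b]$ with $a<t\leq b$ is equivalent to the following: the occupied rows inside $[a,b]$ form an interval ending at $b$. This interval is nonempty because it contains $b$, and it lies strictly above $a$ because $a$ is missing. So it suffices to exclude the configuration
\[
 a<i<k<b,\qquad i\in D_j,\qquad k\notin D_j .
\]
Suppose such $i,k$ exist. Then all five positions $a<i<k<b<m$ lie in the range where the remark applies, and
\[
 u_a<j,\qquad u_i>j,\qquad u_k<j,\qquad u_b>j,\qquad u_m=j .
\]
We split according to the relative order of $u_a$ and $u_k$.
\begin{itemize}
 \item If $u_a<u_k$, the positions $a<i<k<m$ carry values in the relative order $u_a<u_k<u_m<u_i$. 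This is the pattern $1423$.
 \item If $u_k<u_a$, the positions $a<k<b<m$ carry values in the relative order $u_k<u_a<u_m<u_b$. This is the pattern $2143$.
\end{itemize}
Either case contradicts the hypothesis, so no such $i,k$ exist and the lemma follows.

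The only delicate step is choosing which four positions to use in the second case. The quadruple $a,i,k,m$ gives only the pattern $2413$, which is allowed. One must instead use the occupied row $b$ as the large entry, so that the last occupied row, and not $i$, plays that role. Everything else is bookkeeping with the inequalities above. The vexillary characterization quoted before the lemma is not needed here; only the $2143$ avoidance is used, and it is used directly.
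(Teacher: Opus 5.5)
Your proof is correct and is essentially the paper's argument. The paper also takes a missing row strictly between two occupied rows of $[a,b]$ and compares $u_a$ with the missing value. It gets $1423$ from $a,p,q,s$ when $u_a<u_q$, and $2143$ from $a,q,r,s$ when $u_a>u_q$, with the later occupied row $r\le b$ in the same role as your $b$.
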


\begin{proof}
All rows above $a$ belong to $D_j$. Suppose that the occupied rows in
$[a,b]$ do not form a terminal interval. Then there exist
$a<p<q<r\leq b$ with $p,r\in D_j$ and $q\notin D_j$. Put $s=u^{-1}(j)$. Since
$b<s$,
\[ u_a,u_q<j, \qquad u_p,u_r>j. \]
If $u_a<u_q$, the positions $a,p,q,s$ form a $1423$-pattern. If
$u_a>u_q$, the positions $a,q,r,s$ form a $2143$-pattern. Both
are excluded.
\end{proof}

\begin{proof}[Proof of Proposition~\ref{prop:weight-orbit-Schubert}]
Assume first that $u$ avoids the three patterns. By the vexillary
characterization above, the Rothe columns are nested. Since
$13254$ contains $2143$, \eqref{eq:lattice-free-Schubitope} shows
that their nonempty movable intervals are pairwise disjoint. By
Lemma~\ref{lem:movable-column-shape}, an active column with movable
interval $I=[a,b]$ has the form
$D_j=[1,a-1]\sqcup[t,b]$.
Put $h=b-t+1$. The Gale inequalities show that the corresponding
Schubert matroid bases are exactly
\[ [1,a-1]\sqcup B, \qquad B\subseteq I, \qquad |B|=h. \]
Hence its base polytope is a fixed vector plus the hypersimplex
\[ \Delta(h,I)=\op{conv} \{\mathbf1_B:B\subseteq I,\ |B|=h\}. \]
Columns with empty movable interval contribute fixed vectors.
Consequently,
\begin{equation}\label{eq:disjoint-hypersimplex-product}
 \op{Newt}(\Sch_u)
 =\eta+\sum_\nu\Delta(h_\nu,I_\nu),
\end{equation}
where the intervals $I_\nu$ are pairwise disjoint.

Fix one such interval $I=[a,b]$. Every other active column is constant
on $I$, because its movable interval lies wholly to one side of
$I$. An inactive column is an initial interval; nestedness with
$[1,a-1]\sqcup[t,b]$ forces its endpoint to be smaller than $a$ or
at least $b$, so it too is constant on $I$. It follows that, for
some integer $q_I$,
\begin{equation}\label{eq:two-level-Lehmer-restriction}
 L(u)|_I
 =(
 \underbrace{q_I,\ldots,q_I}_{|I|-h},
 \underbrace{q_I+1,\ldots,q_I+1}_{h}).
\end{equation}
The lattice points of a hypersimplex are its $0/1$-vertices. Hence
every lattice point of \eqref{eq:disjoint-hypersimplex-product} is obtained
by choosing an arbitrary $h_\nu$-subset of each $I_\nu$. By
\eqref{eq:two-level-Lehmer-restriction}, these choices merely permute the
coordinates of $L(u)$ inside the disjoint intervals. Every monomial
exponent is therefore a coordinate permutation of $L(u)$.

Conversely, suppose that every monomial exponent is a coordinate
permutation of $c=L(u)$. For each column $D_j$, the initial interval
$[|D_j|]$ is a Schubert matroid basis. Their sum is the lattice point
$\gamma$ with
\[ \gamma_i=\#\{j:|D_j|\geq i\}. \]
By \eqref{eq:Schubitope-Minkowski}, $\gamma$ is a monomial exponent.
It is weakly decreasing, and hence $\gamma=c^\downarrow$.

Choose an ordering $\rho_1,\ldots,\rho_N$ of the rows for which
$c_{\rho_1}\geq\cdots\geq c_{\rho_N}$, and put
$R_r=\{\rho_1,\ldots,\rho_r\}$. Then
\[ \sum_{i=1}^r c_i^\downarrow =\sum_j|D_j\cap R_r| \leq\sum_j\min(r,|D_j|) =\sum_{i=1}^r\gamma_i. \]
Equality holds for every $r$. Since every summand in the middle
deficit is nonnegative, one has
$|D_j\cap R_r|=\min(r,|D_j|)$ for every $j,r$. For every nonempty column, taking
$r=|D_j|$ gives $D_j=R_{|D_j|}$.
The empty columns cause no exception, so the columns are nested. The
vexillary characterization above gives $2143$-avoidance.

Finally, every lattice point of $\op{Newt}(\Sch_u)$ is a
monomial exponent by \eqref{eq:Schubitope-Minkowski}, and hence belongs to
the orbit of $c$. Every vertex of the Newton polytope is therefore an orbit point, so
\[ \op{Newt}(\Sch_u) \subseteq\op{conv}(S_N\!\cdot c). \]
Every orbit point is a vertex of this permutahedron. Hence every lattice
point of the Newton polytope is also a vertex of the Newton polytope. The
latter is therefore lattice-free. By
\eqref{eq:lattice-free-Schubitope}, $u$ avoids $1423$, $1432$, and
$13254$. Together with $2143$-avoidance this proves~\textup{(ii)}.
\end{proof}

\subsection{Partial-sum dominance for Lehmer codes}
\label{app:Lehmer-uniqueness-proof}

\begin{proof}[Proof of Lemma~\ref{lem:Lehmer-code-uniqueness}]
The proof of Proposition~\ref{prop:weight-orbit-Schubert} shows that
$c$ is weakly increasing on every nonempty movable interval. We first
observe that
\begin{equation}\label{eq:ascent-localization}
 p<q,\quad c_p<c_q
 \quad\Longrightarrow\quad
 [p,q]\text{ is contained in one nonempty movable interval}.
\end{equation}
Indeed, $c_i$ is the number of Rothe columns containing row $i$. If
$c_p<c_q$, some column contains $q$ but not $p$. Its first missing
row is at most $p$, and its last occupied row is at least $q$, which
proves \eqref{eq:ascent-localization}.

Decompose $c$ into its maximal contiguous weakly increasing runs
\[ c|_{R_1},\ldots,c|_{R_s}. \]
If $r<s$, $p\in R_r$, and $q\in R_s$, then
$c_p\geq c_q$.
Otherwise \eqref{eq:ascent-localization} places $[p,q]$ in one movable
interval, on which $c$ is weakly increasing; this contradicts the
strict descent separating the two runs.

Let $m=|R_1|$. Since $c_p\geq c_q$, the first run
consists of the $m$ largest entries of the multiset of $c$. Hence
any rearrangement satisfies
$\sum_{i=1}^m a_i\leq\sum_{i=1}^m c_i$.
Together with \eqref{eq:Lehmer-partial-sum-dominance}, this is an equality, so
the first $m$ entries of $a$ have the same multiset as the first run
of $c$. Condition \eqref{eq:Lehmer-ascent-preservation} makes both
initial segments weakly increasing. Therefore $(a_1,\ldots,a_m)=(c_1,\ldots,c_m)$.
Remove this common initial segment. The equality of its sums preserves
\eqref{eq:Lehmer-partial-sum-dominance} on the two tails, and
\eqref{eq:Lehmer-ascent-preservation} remains valid on the later runs.
Induction on $s$ gives $a=c$.
\end{proof}

\section{The \texorpdfstring{$S_{10}$}{S10} calculation}
\label{app:S10-computation}

This appendix gives the computations used in
Proposition~\ref{prop:S10-deformation}. The finite calculations are exact
and use integer arithmetic. The ancillary program
\path{verify_s10_schubert_counterexample.py} checks the crystal
word, the endpoint multisegments, and the two permutations; the ancillary
program \path{verify_s10_full_transition.py} reproduces the MV-polytope
enumeration, the Zelevinsky interval data, the parabolic
Kazhdan--Lusztig calculation, and all four transition coefficients below.

\subsection{The type-\texorpdfstring{$A_5$}{A5} core}

The weight of $X_0$ and $Y_0$ is
$2\alpha_1+4\alpha_2+4\alpha_3+4\alpha_4+2\alpha_5$.
It has $1138$ Kostant partitions. We compute the Berenstein--Zelevinsky
data of their MV polytopes by the type-$A$ collapse procedure of
Anderson--Kogan~\cite{AndersonKoganPolytopes}. Exactly two polytopes are contained
in $\op{Pol}(X_0)$, namely those indexed by $X_0$ and $Y_0$;
moreover,
\[ \op{Pol}(Y_0)\subsetneq\op{Pol}(X_0). \]
The transition is unitriangular with respect to MV-polytope inclusion. The coefficient of $\MV_{X_0}$ is one, and
$D(\MV_{Y_0})\neq0$. Since no third polytope lies below
$\op{Pol}(X_0)$, the identity of
Baumann--Kamnitzer--Knutson~\cite[Theorem~A.13]{BKKMV} determines the
coefficient before applying their noninjective map $D$:
\[ \Gen_{X_0}=\MV_{X_0}+2\MV_{Y_0}. \]
Together with the Geiss--Leclerc--Schr\"oer relation, this gives
\[ \Can_{X_0}=\MV_{X_0}+\MV_{Y_0}. \]

\subsection{The canonical coefficient}

The rank words of $w$ and $v$ are
\[ r(w)=(1,2,1,1,3,3,2,2,4,4), \qquad r(v)=(1,2,3,1,4,2,3,1,4,2). \]
Their common root degree, equivalently their common equioriented dimension vector, is
\[ (2,4,5,6,5,4,3,2,1). \]
The forward Zelevinsky ideal of $w$ has $1350$ elements, the backward ideal
of $v$ has $782$ elements, and their interval has $161$ elements. The
distance is four and there are twenty-four shortest paths.

The four support conditions of Theorem~\ref{thm:transition} leave eighteen
possible sources for the target $v$. Nil-Hecke extraction gives
\[ \Pi_{10}(w,v)=26. \]
The parabolic Kazhdan--Lusztig polynomial is
\[ P^{\rm par}_{w,v}(q)=1+4q+6q^2+4q^3. \]
Only the following five terms contribute to the triangular inversion for
$A_{10}^{\rm can}(w,v)$:
\[
\begin{array}{c|c|c|c}
 z&P^{\rm par}_{w,z}(q)&A_{10}^{\rm can}(z,v)&
 P^{\rm par}_{w,z}(1)A_{10}^{\rm can}(z,v)\\ \hline
 (7,8,10,3,9,4,6,1,5,2)&1+4q+6q^2+4q^3&1&15\\
 (7,9,10,2,8,6,3,1,5,4)&1+q&2&4\\
 (3,10,9,6,8,7,2,1,5,4)&1&1&1\\
 (7,10,6,1,9,8,5,2,4,3)&1&1&1\\
 (8,10,4,1,9,7,6,3,5,2)&1+q&1&2
\end{array}
\]
Their sum is $23$, and therefore
\[ A_{10}^{\rm can}(w,v)=26-23=3. \]

\subsection{Isolation of the target coefficient}

Among the same eighteen source permutations, an exact comparison of
Berenstein--Zelevinsky data shows that only $w$ and $v$ have MV polytopes
comparable with $\op{Pol}(w)$. The support theorem for the three
Schubert transitions and unitriangularity with respect to MV-polytope inclusion therefore imply
that no other comparison term can affect the coefficient of $\Sch_v(c)$.
The three values in \eqref{eq:S10-Schubert-coefficients} follow at once
from \eqref{eq:S10-basis-coefficients}.

The two ancillary verifiers also check the sixteen equal-string conditions,
recover the two permutations from their multisegments, and count the
relevant Zelevinsky intervals and shortest paths.

\bigskip
\noindent\textbf{Further appendices.}
Appendix~C gives the microlocal proof of the semicanonical cover
coefficient used in Part~III. Remark~\ref{rem:Hessenberg-quantum} is independent of the main
arguments.

\section{Proof of semicanonical multiplicity one}
\label{app:classical-transitions}

This appendix proves the semicanonical cover coefficient stated in
Proposition~\ref{prop:generic-cover}. The argument is formulated for
an arbitrary codimension-one degeneration of equioriented type-$A$
representations. Let $Q=(I,\Omega)$ be an equioriented quiver of type $A$, let $V$ be an
$I$-graded vector space, and put
$E=\op{Rep}(Q,V)$ and $G=\prod_{i\in I}\op{GL}(V_i)$.

The following flag criterion will be used for the semicanonical cover
coefficient.
\begin{lemma}\label{lem:conormal-flag-criterion}

Fix vertices $\mathbf s=(s_1,\ldots,s_t)$ and positive integers
$\mathbf a=(a_1,\ldots,a_t)$ such that
$\sum_{k:s_k=i}a_k=\dim V_i$ for every $i$. Let
$\mathcal F_{\mathbf s,\mathbf a}$ be the variety of graded flags
\[ V=F^0\supset F^1\supset\cdots\supset F^t=0, \qquad F^{k-1}/F^k\simeq S_{s_k}^{\oplus a_k}, \]
and let
\[ \widetilde E_{\mathbf s,\mathbf a} =\{(x,F):xF^k\subseteq F^k\text{ for every }k\} \xrightarrow{\ \pi\ }E. \]
Identify $E^*$ with the reverse-arrow tuples by the trace pairing. If
$(x,F)\in\widetilde E_{\mathbf s,\mathbf a}$ and $y\in E^*$ satisfy the
preprojective moment-map equation, then
\begin{equation}\label{eq:conormal-flag-criterion}
 F\text{ is stable under both }x\text{ and }y
 \quad\Longleftrightarrow\quad
 (y,0)\in N^*_{(x,F)}\widetilde E_{\mathbf s,\mathbf a}.
\end{equation}
If $X\subseteq E$ is smooth at $x$ and $\pi$ is an isomorphism from a
neighborhood of $(x,F)$ onto a neighborhood of $x$ in $X$, then
\begin{equation}\label{eq:stable-flag-normal-space}
 F\text{ is $(x,y)$-stable}
 \quad\Longleftrightarrow\quad
 y\in N^*_{X/E,x}.
\end{equation}
\end{lemma}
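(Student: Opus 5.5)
The plan is to compute the tangent space of $\widetilde E_{\mathbf s,\mathbf a}$ at $(x,F)$ explicitly and then read off its annihilator under the trace pairing. First, $\widetilde E_{\mathbf s,\mathbf a}$ is smooth. It is a homogeneous vector bundle over $\mathcal F_{\mathbf s,\mathbf a}=G/P_F$, where $P_F$ is the stabilizer of $F$, and its fiber over $F$ is the linear space
\[
E_F=\{x\in E:\ xF^k\subseteq F^k\ \text{for all }k\}.
\]
Since $\mathfrak g=\bigoplus_i\mathfrak{gl}(V_i)$ surjects onto $T_F\mathcal F_{\mathbf s,\mathbf a}=\mathfrak g/\mathfrak p_F$ through the group action, the tangent space at $(x,F)$ is spanned by two kinds of vectors. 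The vertical vectors are $(\dot x,0)$ with $\dot x\in E_F$. The orbit vectors are $(\xi\cdot x,\ \xi\cdot F)=([\xi,x],\ \xi\,\mathrm{mod}\,\mathfrak p_F)$ with $\xi\in\mathfrak g$. The covector $(y,0)$ kills the flag direction, so it lies in the conormal space exactly when two conditions hold:
\[
\op{tr}(y\dot x)=0\ \ (\dot x\in E_F),
\qquad
\op{tr}(y[\xi,x])=0\ \ (\xi\in\mathfrak g).
\]

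The second condition is the moment map equation. Cyclicity gives $\op{tr}(y[\xi,x])=\op{tr}(\xi[x,y])$, and by hypothesis $[x,y]=0$ in $\mathfrak g$, i.e.\ $\sum(x_a y_{a^*}-y_{a^*}x_a)$ vanishes at each vertex. So this condition holds automatically.

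The first condition is a linear algebra statement about $E_F$. Choose a graded splitting of $F$ with pieces $G_k\simeq F^{k-1}/F^k$. Then $E_F$ consists of the arrow tuples that are block upper triangular, so its annihilator under the trace pairing consists of the reverse-arrow tuples that are strictly block upper triangular, i.e.\ those with $yF^{k-1}\subseteq F^k$.

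The key point is that each subquotient $G_k$ is concentrated at the single vertex $s_k$. An arrow or reverse arrow connects distinct vertices, so its diagonal block $G_k\to G_k$ is zero. For any arrow tuple, therefore, "preserves $F$" and "maps $F^{k-1}$ into $F^k$" are the same condition. This identifies $E_F^\perp$ with the set of $y$ stabilizing $F$, and since $x$ stabilizes $F$ by assumption, \eqref{eq:conormal-flag-criterion} follows. This block-diagonal vanishing is the one place where the hypothesis $F^{k-1}/F^k\simeq S_{s_k}^{\oplus a_k}$ is used, and it is the step I would check most carefully, including the orientation conventions of the pairing so that "upper" pairs with "strictly upper".

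For \eqref{eq:stable-flag-normal-space}, suppose $\pi$ restricts to an isomorphism from a neighborhood of $(x,F)$ in $\widetilde E_{\mathbf s,\mathbf a}$ onto a neighborhood of $x$ in the smooth variety $X$. Then $d\pi$ maps $T_{(x,F)}\widetilde E_{\mathbf s,\mathbf a}$ isomorphically onto $T_xX$. Because $\pi$ is the projection onto the $E$-factor, $\pi^*y=(y,0)$. Hence $y$ annihilates $T_xX$ if and only if $(y,0)$ annihilates $T_{(x,F)}\widetilde E_{\mathbf s,\mathbf a}$. Combining this with \eqref{eq:conormal-flag-criterion} gives \eqref{eq:stable-flag-normal-space}.
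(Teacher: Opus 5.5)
Your proposal is correct and follows essentially the same route as the paper's proof. Both arguments realize $\widetilde E_{\mathbf s,\mathbf a}$ as the homogeneous bundle $G\times^P E_F$ and identify $d\pi\bigl(T_{(x,F)}\widetilde E_{\mathbf s,\mathbf a}\bigr)=E_F+[\mathfrak g,x]$. They use the moment-map equation to kill the orbit directions, use a splitting into vertex-pure subquotients to identify $E_F^\perp$ with the $F$-stable reverse-arrow tuples, and then transfer the result through the local isomorphism; you supply the block-matrix details that the paper leaves implicit.
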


\begin{proof}
Let $P\subseteq G$ be the stabilizer of $F$ and put
$E_F=\{u\in E:uF^k\subseteq F^k\}$. The incidence variety is the smooth
homogeneous vector bundle $G\times^P E_F$. At $(x,F)$ the image of its
tangent space under $d\pi$ is $E_F+[\mathfrak g,x]$. The tuple $y$
annihilates $[\mathfrak g,x]$ precisely when the moment-map equation
holds. A splitting of the flag into its vertex-pure successive quotients
shows that $E_F^\perp$ is exactly the space of reverse-arrow tuples
preserving $F$. This proves \eqref{eq:conormal-flag-criterion}; the local
isomorphism identifies the tangent image with $T_xX$ and gives
\eqref{eq:stable-flag-normal-space}.
\end{proof}

\begin{proposition}\label{thm:semicanonical-normal-cover}
Let $M$ and $N$ be representations of an equioriented type-$A$ quiver with
the same dimension vector. If
$\cO_N\subset\overline{\cO_M}$ with $\dim\cO_M-\dim\cO_N=1$, 
then the primal semicanonical function indexed by $M$ satisfies $f_M(N)=1$.
\end{proposition}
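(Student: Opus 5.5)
The plan is to read $f_M(N)$ as the value of the semicanonical constructible function $f_{Z_M}$ at the zero-section point $(N,0)\in\Lambda$. This is the reading forced by the PBW relation $f_{\mathfrak n}=\sum_{\mathfrak m}f_{\mathfrak n}(X_{\mathfrak m})\kappa_{\mathfrak m}$. We compute that value by comparing $f_{Z_M}$ with a single Lusztig flag function. Throughout, $\mathcal F_{\mathbf s,\mathbf a}$, $\widetilde E_{\mathbf s,\mathbf a}$ and $\pi$ are as in Lemma~\ref{lem:conormal-flag-criterion}.

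First, choose the flag type $(\mathbf s,\mathbf a)$ so that $\pi:\widetilde E_{\mathbf s,\mathbf a}\to E$ is a resolution of $X=\overline{\cO_M}$. For the equioriented type-$A$ quiver this can be a Reineke-type resolution from a directed ordering of the indecomposable summands. Let $\varphi$ be the associated flag function
\[
\varphi(x,y)=\chi\{F\in\mathcal F_{\mathbf s,\mathbf a}: F \text{ is } (x,y)\text{-stable}\}.
\]
It lies in the span of the semicanonical functions. Since $\{f_Z\}$ is dual to generic values, we have $\varphi=\sum_Z\varphi(\text{gen}\,Z)\,f_Z$.

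Second, we cut down the sum using supports. The function $\varphi$ is supported over $\overline{\cO_M}$. A function $f_{Z'}$ can be nonzero at $(N,0)$ only if $(N,0)\in Z'$, so $Z'=Z_{M'}$ with $\cO_N\subseteq\overline{\cO_{M'}}$. A component with nonzero coefficient must have $\overline{\cO_{M'}}\subseteq\overline{\cO_M}$. The codimension-one hypothesis leaves only $M'\in\{M,N\}$, and hence
\[
f_M(N,0)=\varphi(N,0)-\varphi(\text{gen}\,Z_N)\,f_N(N,0)=\varphi(N,0)-\varphi(\text{gen}\,Z_N).
\]
We also need $\varphi(\text{gen}\,Z_M)=1$. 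This holds because $\pi$ is an isomorphism over $\cO_M$, and by \eqref{eq:stable-flag-normal-space} the unique flag is stable for conormal $y$.

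Third, we evaluate the two remaining terms, and this is the main obstacle. At $y=0$ every $x$-stable flag is stable, so $\varphi(N,0)=\chi(\pi^{-1}(N))$, and we need this to equal $1$. For $\varphi(\text{gen}\,Z_N)=0$, the plan is to use that codimension-one degenerations in equioriented type $A$ are smooth, by Bongartz's theorem on minimal degenerations of representation-directed algebras. One would then arrange, or verify, that $\pi$ is a local isomorphism over $\cO_N$. Then \eqref{eq:stable-flag-normal-space} applies. The unique flag is $(x,y)$-stable iff $y\in N^*_{X/E,x}$, and a general $y\in N^*_{\cO_N,x}$ is not in the smaller space $N^*_{X/E,x}$, because $\cO_N$ has codimension one in the smooth $X$. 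Hence $\varphi(\text{gen}\,Z_N)=0$ and $\varphi(N,0)=1$, which gives $f_M(N)=1$.

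If the chosen resolution is not an isomorphism over $\cO_N$, I would first try a different directed ordering of the summands, choosing one that realizes the local smooth model of the minimal degeneration. Failing that, I would compute the fiber $\pi^{-1}(N)$ directly as a single point or an affine-paved variety with $\chi=1$. In the same situation, I would apply \eqref{eq:conormal-flag-criterion} flagwise to show that no flag in that fiber is stable for a general conormal $y$.
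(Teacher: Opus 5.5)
Your architecture is the paper's. You use a flag function attached to a Reineke resolution of $X=\overline{\cO_M}$, expand it as $\varphi=\sum_Z\varphi(\mathrm{gen}\,Z)f_Z$, and reduce to the two components $Z_M$ and $Z_N$. You get $\varphi(\mathrm{gen}\,Z_M)=1$ from \eqref{eq:stable-flag-normal-space}, and the proper hyperplane $N^*_{X/E,x}\subsetneq N^*_{\cO_N/E,x}$ gives $\varphi(\mathrm{gen}\,Z_N)=0$.

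The gap is exactly at the step you call the main obstacle. You never prove that $\pi$ is an isomorphism over $\cO_N$, yet two of your conclusions depend on it: $\varphi(N,0)=\chi(\pi^{-1}(N))=1$, and the use of \eqref{eq:stable-flag-normal-space} at points of $\cO_N$. Smoothness of $X$ along $\cO_N$ is a statement about $X$, not about the fibers of your resolution. Writing ``arrange, or verify'', with fallbacks such as changing the ordering, computing the fiber by hand, or arguing flagwise, is not an argument.

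The missing link is the one the paper supplies. Equioriented type-$A$ orbit closures are normal (Kinser--Rajchgot). Your Bongartz smoothness would serve equally well, since it makes the local ring $\mathcal O_{X,\eta}$ at the generic point $\eta$ of $\overline{\cO_N}$ a DVR. Then the proper birational morphism $\pi$ from the irreducible variety $\widetilde E_{\mathbf s,\mathbf a}$ is an isomorphism over a neighborhood of $\eta$. By the valuative criterion of properness, the inverse rational map extends over $\eta$. Separatedness and irreducibility make this extension a two-sided inverse. Since $\pi$ is $G$-equivariant, its isomorphism locus is $G$-stable, so it contains all of $\cO_N$. Hence $\pi^{-1}(N)$ is a single point, and your third step goes through for any Reineke ordering.

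There is also a secondary point. In your second step you justify $f_{Z'}(N,0)\neq0\Rightarrow(N,0)\in Z'$ as though semicanonical functions were supported on their components. This does not follow formally from the definition, which only prescribes generic values. What you actually need is $f_{Z_{M'}}(N,0)\neq0\Rightarrow\cO_N\subseteq\overline{\cO_{M'}}$. This is the degeneration-order support of the PBW--semicanonical transition, which the paper quotes from Yin--Zhang. With that citation and the resolution argument above, your proof coincides with the paper's.
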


\begin{proof}
Put $X=\overline{\cO_M}$. Choose a Reineke monomial
$(\mathbf s,\mathbf a)$ for $M$, deleting zero-multiplicity entries. The
incidence morphism
$\pi:\widetilde E_{\mathbf s,\mathbf a}\longrightarrow X$
is proper and birational, its source is smooth, and it is an isomorphism
over $\cO_M$ \cite[Theorem~2.2]{ReinekeDesing}. Equioriented type-$A$
orbit closures are normal \cite[Proposition~6.1]{KinserRajchgot}. Hence a
proper birational morphism is an isomorphism at the generic point of every
codimension-one orbit; the isomorphism locus is $G$-stable and therefore
contains $\cO_N$.

For a preprojective module $L=(x,y)$, let
\[ h_{\mathbf s,\mathbf a}(L) =\chi\{F\in\mathcal F_{\mathbf s,\mathbf a}:F\text{ is }L\text{-stable}\}. \]
Expand this divided-power flag function in the primal semicanonical basis:
\begin{equation}\label{eq:Reineke-semicanonical-expansion}
 h_{\mathbf s,\mathbf a}=\sum_Bc_Bf_B,
 \qquad
 c_B=\left.h_{\mathbf s,\mathbf a}\right|_{Z_B}^{\rm gen}.
\end{equation}
Over $\cO_M$, the map $\pi$ is an isomorphism, so there is a unique flag.
Lemma~\ref{lem:conormal-flag-criterion} shows that every conormal covector
preserves it; hence $c_M=1$. Over $\cO_N$ there is again a unique forward
flag, but $\cO_N$ is a divisor in the smooth locus of $X$. Its stability
cuts out the proper hyperplane $N^*_{X/E,N}$ inside
$N^*_{\cO_N/E,N}$, so a general point of $Z_N$ does not preserve the flag
and $c_N=0$. Setting the reverse arrows to zero gives
$h_{\mathbf s,\mathbf a}(N,0)=\chi(\pi^{-1}(N))=1$.

If $c_B\neq0$, then $\cO_B\subseteq\overline{\cO_M}$. If
$f_B(N)\neq0$, PBW--semicanonical support gives
$\cO_N\subseteq\overline{\cO_B}$ \cite[Theorem~3.1]{YinZhang}. The
codimension-one hypothesis leaves only $B=M,N$. Evaluating
\eqref{eq:Reineke-semicanonical-expansion} at $(N,0)$ therefore gives
$1=f_M(N)$.
\end{proof}

\begin{proof}[Proof of Proposition~\ref{prop:generic-cover}]
Propositions~\ref{prop:bruhat-interval} and
\ref{prop:framed-right-regular} identify a right-regular Zelevinsky cover
with a codimension-one orbit degeneration. Apply
Proposition~\ref{thm:semicanonical-normal-cover} and \eqref{eq:Dgen}.
\end{proof}

\section*{Declaration of generative AI and AI-assisted technologies}
During the development of Appendix A, the author used ChatGPT to assist in exploring and formulating several proof arguments. The author subsequently checked, revised, and independently verified all arguments and takes full responsibility for the mathematical content of the paper.

\bibliographystyle{amsalpha}
\bibliography{Universal_Schubert_I}

\end{document}